\documentclass[a4paper]{amsart}




\usepackage[all]{xy}
\usepackage{amsmath}
\usepackage{amssymb}
\usepackage{latexsym}
\usepackage{amsthm}
\usepackage{mathrsfs}
\usepackage{float}
\usepackage{mathdots} 
\usepackage{geometry}
\usepackage{enumerate}
\usepackage{mathtools}


\newcommand{\cA}{\mathcal{A}}
\newcommand{\fa}{\mathfrak{a}}

\newcommand{\cB}{\mathcal{B}}
\newcommand{\fb}{\mathfrak{b}}

\newcommand{\C}{\mathbb{C}}

\newcommand{\fc}{\mathfrak{c}}


\newcommand{\fD}{\mathfrak{D}}
\newcommand{\fd}{\mathfrak{d}}


\newcommand{\be}{\mathbf{e}}
\newcommand{\cE}{\mathcal{E}}


\renewcommand{\bf}{\mathbf{f}}
\newcommand{\cF}{\mathcal{F}}

\newcommand{\fg}{\mathfrak{g}}

\renewcommand{\H}{\mathbb{H}}
\newcommand{\cH}{\mathcal{H}}




\newcommand{\K}{\mathbb{K}}


\renewcommand{\L}{\mathbb{L}}

\newcommand{\rM}{\operatorname{M}}
\newcommand{\fm}{\mathfrak{m}}




\newcommand{\fp}{\mathfrak{p}}


\newcommand{\R}{\mathbb{R}}

\newcommand{\cS}{\mathcal{S}}
\newcommand{\fS}{\mathfrak{S}}
\newcommand{\fs}{\mathfrak{s}}


\newcommand{\cT}{\mathcal{T}}


\newcommand{\cV}{\mathcal{V}}

\newcommand{\W}{\mathbb{W}}
\newcommand{\cW}{\mathcal{W}}
\newcommand{\fw}{\mathfrak{w}}

\newcommand{\X}{\mathbb{X}}
\newcommand{\cX}{\mathcal{X}}

\newcommand{\Y}{\mathbb{Y}}
\newcommand{\cY}{\mathcal{Y}}

\newcommand{\Z}{\mathbb{Z}}
\newcommand{\cZ}{\mathcal{Z}}


\newcommand{\Ad}{\operatorname{Ad}}

\newcommand{\Aut}{\operatorname{Aut}}

\newcommand{\Id}{\operatorname{Id}}
\newcommand{\Hom}{\operatorname{Hom}}
\newcommand{\GL}{\operatorname{GL}}

\renewcommand{\Im}{\operatorname{Im}}

\newcommand{\Ind}{\operatorname{Ind}}

\newcommand{\inv}{\operatorname{inv}}
\newcommand{\sgn}{\operatorname{sgn}}
\newcommand{\SL}{\operatorname{SL}}

\newcommand{\diag}{\operatorname{diag}}
\newcommand{\Mp}{\operatorname{Mp}}

\newcommand{\Sym}{\operatorname{Sym}}
\newcommand{\SO}{\operatorname{SO}}

\newcommand{\Sp}{\operatorname{Sp}}
\newcommand{\fsp}{\mathfrak{sp}}
\newcommand{\spl}{\operatorname{spl}}
\newcommand{\std}{\operatorname{std}}

\newcommand{\Tr}{\operatorname{Tr}}

\newcommand{\End}{\operatorname{End}}
\newcommand{\op}{\operatorname{op}}

\newcommand{\btru}{\blacktriangle}
\newcommand{\btrd}{\blacktriangledown}
\newcommand{\la}{\langle}
\newcommand{\ra}{\rangle}
\newcommand{\RIT}{\mathcal{RIT}}
\newcommand{\wPi}{\widetilde{\Pi}}

\newtheorem{df}{Definition}[section]
\newtheorem{thm}[df]{Theorem}
\newtheorem{prop}[df]{Proposition}
\newtheorem{lem}[df]{Lemma}
\newtheorem{cor}[df]{Corollary}
\newtheorem{conj}[df]{Conjecture}

\newtheorem{rem}[df]{Remark}

\newtheorem{fact}[df]{Fact}
\newtheorem{hyp}[df]{Hypothesis}

\newtheorem*{prop*}{Proposition}


\makeatletter
    
    \@addtoreset{equation}{section}
\makeatother


\title{Local theta correspondences and Langlands parameters for rigid inner twists}

\author{
	Hirotaka KAKUHAMA
	}
\email{
	kakuhama@math.sci.hokudai.ac.jp
	}
\address{
	Faculty of Science, Hokkaido University, Kita-10 Nishi-8, Kita-ku, Sapporo, JAPAN
	}

\date{
\today
}

\begin{document}

\maketitle

\begin{abstract}
In this paper, we formulate a conjecture that describes the local theta correspondences in terms of the local Langland correspondences for rigid inner twists, which contain the correspondences for quaternionic dual pairs. Moreover, we verify the conjecture holds in some specific cases.
\end{abstract}

\setcounter{tocdepth}{1}
\tableofcontents



\section{
		Introduction
		}\label{intro}

Since a certain unitary representation of the metaplectic group, called the Weil representation at present, was organized by Weil \cite{Wei64}, it has been playing important roles in the representation theory. In particular, the theta correspondence, the correspondence of representations defined by using Weil representation, has become one of the main tools in the theory of automorphic representations. Besides, the local Langlands conjecture, a classification theory of representations, has been developed steadily. Therefore, it is natural to ask how the local theta correspondence is described in terms of Langlands parameters. For symplectic-orthogonal dual pairs and unitary-unitary dual pairs with certain conditions of ranks, Prasad conjectured the formula of the description \cite{Pra93}\cite{Pra00}. For the part of the behavior of $L$-parameters, he assembled and generalized some known works \cite{Ada89}\cite{HKS96}. Moreover, he also conjectured the behavior of the internal structure of $L$-packets. We remark that the work of Adams \cite{Ada89} is a conjecture for Arthur packets over $\R$ (see also  \cite{Moe11}, \cite[\S15.1]{GI14}). The Prasad conjecture over a $p$-adic field is proved by Atobe \cite{Ato18}, Atobe-Gan \cite{AG17}, Gan-Ichino \cite{GI16}, and extended by Atobe-Gan \cite{AG17b} to the description over a $p$-adic field without the rank conditions. In the Archimedean case, the local theta correspondence is described in terms of parameters generalizing Harish-Chandra parameters by many researchers \cite{KV78}\cite{Moe89}\cite{Li89}\cite{Pau98}\cite{Pau00}\cite{Pau05}\cite{LPTZ03}\cite{Ich22}.

The formulation of the local Langlands correspondence for the rigid inner twists given by Kaletha \cite{Kal16} allows us to discuss the description of the local theta correspondence for quaternionic dual pairs in terms of Langlands parameters. This is the main theme of this paper. Here, we briefly summarize the local Langlands correspondence. Let $F$ be a local field of characteristic $0$, let $G^\#$ be a connected reductive group over $F$, and let $Z$ be a finite central subgroup of $G^\#$. In \cite{Kal16}, Kaletha defined the set $Z^1(u \rightarrow \cW, Z \rightarrow G^\#)$ which surjects on $Z^1(\Gamma, G^\#/Z)$ if $Z$ is sufficiently large. A rigid inner twist is a pair $(z, \varphi)$ where $z \in Z^1(u \rightarrow \cW, Z \rightarrow G^\#)$ and $\varphi$ is an isomorphism from $G^\#$ onto $G$ over $\overline{F}$ such that $\varphi^{-1}\circ \sigma \circ \varphi \circ \sigma^{-1} = \overline{z}(\sigma)$ for $\sigma \in \Gamma$ where $\overline{z}$ denotes the image of $z$ in $Z^1(\Gamma, G^\#/Z)$. We fix a Whittaker data $\fw$. For a tempered $L$-parameter $\phi$ of $G$, the local Langlands conjecture claims that there is a set $\Pi_\phi(G)$ of irreducible representations of $G(F)$ and that there is an injective map
\[
\iota_\phi[\fw, z, \varphi] \colon \Pi_\phi(G) \rightarrow {\rm Irr}(\cS_\phi^+)
\]
characterized by certain character relations formulated in the theory of endoscopy. Here, $\cS_\phi^+$ is the $S$-group of $\phi$.
If an irreducible tempered representation $\pi$ of $G(\R)$ is contained in $\Pi_\phi(G)$, then we call the pair $(\phi, \iota_\phi[\fw, z, \varphi](\pi))$ the Langlands parameter of $\pi$.

As the notation indicates, the Langlands parameter of $\pi$ depends on the choice of the Whittaker data $\fw$ and the rigid inner twist $(z, \varphi)$ except for the irreducible tempered representation $\pi$ of $G(F)$. On the other hand, the local theta correspondence for the reductive dual pair $(G,G')$ depends on a fixed non-trivial additive character of $F$ and an equivalent class of the embeddings of $G(F)\times G'(F)$ into a Metaplectic group that is strictly finer than the isomorphism class of $G \times G'$. We are required to discuss these dependencies comprehensively.

For example, we focus on the orthogonal-symplectic dual pairs discussed by Prasad \cite{Pra93}. Let $Q$ be a $2n$-dimensional quadratic space over $F$, and let $U$ be a $2m$-dimensional symplectic space over $F$. Then, the local theta correspondence for ${\rm O}(a\cdot Q) \times \Sp(U)$ depends on the scalar $a \in F^\times$ in general in spite that the orthogonal group ${\rm O}(a\cdot Q)$ does not (see the second remark in \S5 of \cite{Pra93}). In this case, we can construct a pure inner twist $(t_Q, \varphi_Q)$ from $Q$ which behaves covariantly with the local theta correspondence as follows. Let $Q^\#$ be a $2n$-dimensional quadratic space so that ${\rm O}(Q^\#)$ is a quasi-split inner form of ${\rm O}(Q)$. For an isometry $f$ from $Q^\#\otimes\overline{F}$ onto $Q\otimes\overline{F}$,  we define $t_f(\sigma) = f^{-1}\circ\sigma\circ f \circ \sigma \in {\rm SO}(Q^\#)(\overline{F})$. We denote by $\varphi_f$ the isomorphism from ${\rm O}(Q^\#)$ onto ${\rm O}(Q)$ satisfying $f(gx) = \varphi_f(g)f(x)$ for $g \in {\rm O}(Q^\#)(\overline{F})$ and $x \in Q^\#$. Since a change of $f$ does not affect the Langlands parameter $\iota_\phi[\fw, t_f, \varphi_f]$ (c.f. Proposition \ref{parameter orth}), we may denote it by $(t_Q, \varphi_Q)$, which is the pure inner twist that we want. The same framework is available for the unitary-unitary dual pairs. However, it seems to be difficult for the quaternionic dual pairs.

In this paper, we will construct a more general framework to control the dependencies of the local theta correspondences and the Langlands parameters. We explain it for quaternionic dual pairs, for example. This is done in two steps. Let $D$ be a division quaternion algebra over $F$, let $V$ be a right $D$-vector space equipped with an $\epsilon$-Hermitian form $( \ , \ )$, and let $W$ be a left $D$-vector space equipped with a $(-\epsilon)$-Hermitian form $\la \ , \ \ra$. Moreover, we consider a $2m$-dimensional symplectic space $V^\#$, and a $2n$-dimensional quadratic space $W^\#$ so that ${\rm O}(W^\#)$ is quasi-split, and the discriminant of $W^\#$ coincides with that of $W$. We denote by $G(V)$ (resp. $G(W)$) the unitary group of $V$ (resp. $W$). The first step is to define a set 
\[
\RIT^\star(V^\#, V)
\]
of the rigid inner twists $(z_+, \varphi_+) \colon \Sp(V^\#) \rightarrow G(V)$, which is an analogue of the set of $(t_f, \varphi_f)$ for various $f$. To define it precisely, we use the $2m$-dimensional symplectic space $(V\otimes\overline{F})^\natural$ over $\overline{F}$ defined by using the Morita equivalence (\S\ref{morita}). It provides us a certain isomorphism $\fm_V$ from $G(V)(\overline{F})$ onto $\Sp((V\otimes\overline{F})^\natural)(\overline{F})$. Then, we define $\RIT^\star(V^\#, V)$ as the set of rigid inner twists of the form $(z_+, \fm_V^{-1}\circ \varphi_A)$ for an isometry $A \colon V^\#\otimes \overline{F} \rightarrow (V\otimes \overline{F})^\natural$ over $\overline{F}$. Here, $\varphi_A$ denotes the isomorphism induced by $A$ (see \S\ref{notations}). We can also define the set
\[
\RIT^\star(W^\#, W)
\]
in a similar way. The second step is to construct a link between $\RIT^\star(V^\#, V)$ and $\RIT^\star(W^\#, W)$. More precisely, by $(z_+, \varphi_+) \leftrightarrow (z_-, \varphi_-)$ we mean that there exists an isometry $\Omega\colon \W^\#\otimes_F\overline{F} \rightarrow \W\otimes_F\overline{F}$ over $\overline{F}$ such that 
\[
\Omega^{-1}\circ w\circ \Omega \circ w^{-1} = \iota^\#(z_+(w), z_-(w))
\]
for all $w \in \cW$ and the following diagram is commutative.
\begin{align}
\label{corr_rig}
\xymatrix{
\Sp(\W^\#) \ar[rr]^-{\varphi_\Omega} & & \Sp(\W) \\
\Sp(V_c^\#) \times {\rm O}(W_c^\#) \ar[u]^-{\iota^\#} \ar[rr]_-{(\varphi_+, \varphi_-)} & & G(V) \times G(W) \ar[u]_-{\iota}
}
\end{align}
Here, $\varphi_\Omega$ denotes the isomorphism induced by $\Omega$ (see \S\ref{notations}). In \S\ref{main sec}, we verify that this framework works well.

Now, we state the main conjecture in this paper. Let $D$ be a division quaternion algebra over $F$, let $V$ be a Hermitian space over $D$, let $W$ be a skew-Hermitian space over $D$, let $\psi\colon F \rightarrow \C^1$ be a non-trivial character, let $(z_+, \varphi_+) \in \RIT^\star(V^\#, V)$, $(z_-, \varphi_-) \in \RIT^\star(W^\#, W)$ with $(z_+, \varphi_+) \leftrightarrow (z_-, \varphi_-)$. We denote by $G_0(W)$ the Zariski connected component of $G(V)$ containing $1$. Assume that $\dim W - \dim V$ is $0$ or $1$.
Then, as in \cite[\S15.1]{GI14}, we have an embedding $\xi\colon {}^LG_0(W) \rightarrow {}^LG(V)$ (resp. $\xi \colon {}^LG(V) \rightarrow {}^LG_0(W)$) of $L$-groups if $\dim V = \dim W$ (resp. $\dim V = \dim W - 1$). Let $\phi, \phi'$ be tempered $L$-parameters of  $G(V), G_0(W)$ such that $\phi = \xi\circ\phi'$ (resp. $\phi = \xi\circ\phi'$) if $n=m$ (resp. $n=m+1$). In this case, it is known that $\theta_\psi(\pi, W)$ is non-zero for $\pi \in \Pi_\phi(G(V))$ (\cite[Proposition 20.4]{Kak22}). Note that we use the slightly adjusted version of Langlands parameters for $G_0(W)$ in this paper (see \S\ref{LLC}). Then, the conjecture is stated as follows.

\begin{conj}\label{main_intro}
Let $s, s'$ be elements of $S_\phi^+, S_{\phi'}^+$ so that they are associate with each other via $\xi$, and let $\pi \in \Pi_\phi(G(V))$. Then, $\theta_\psi(\pi, W)$ has $L$-parameter $\phi'$ and we have
\[
\iota_\phi[\fw_+, z_+, \varphi_+](\pi)(s) = \overline{\iota_{\phi'}[\fw_-, z_-, \varphi_-](\theta_\psi(\pi, W))(s')}.
\]
\end{conj}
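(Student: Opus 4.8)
\emph{Sketch of the intended approach.}
The plan is to separate the statement into its two assertions---that $\theta_\psi(\pi,W)$ lies in $\Pi_{\phi'}(G(W))$, and that the normalized characters on the component groups match as indicated---and to reduce each, as far as possible, to the classical (non-quaternionic) symplectic--orthogonal and unitary situations, where the analogous statements are the Prasad conjecture proved by Gan--Ichino and Atobe--Gan. The first move is to exploit covariance: by the way the sets $\RIT^\star(V^\#,V)$ and $\RIT^\star(W^\#,W)$ are linked through the diagram \eqref{corr_rig} and its compatibility with $\theta_\psi$ (cf.\ \S\ref{main sec}, and the model case in Proposition \ref{parameter orth}), both sides of the desired identity transform in a controlled way under a change of the isometries $A$ and $\Omega$. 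Hence one is free to choose the most convenient pair $(z_+,\varphi_+)\leftrightarrow(z_-,\varphi_-)$, and, using the Rallis tower together with the conservation relation and \cite[Proposition 20.4]{Kak22}, one may assume the first-occurrence/almost-equal-rank situation in which $\theta_\psi(\pi,W)\ne 0$.

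Next I would pin down the $L$-parameter of $\theta_\psi(\pi,W)$. For split dual pairs this is Prasad's computation, assembling \cite{Ada89}, \cite{HKS96}; for the quaternionic pairs I would route through the Morita equivalence of \S\ref{morita} and the Jacquet--Langlands transfer $\JL$, so that a quaternionic Hermitian (resp.\ skew-Hermitian) unitary group is compared with its classical counterpart and the parameter behavior is inherited. The matching of discriminants and Hasse invariants built into the definitions of $V^\#$ and $W^\#$ is precisely what makes the $L$-embedding $\xi$ compatible with this transfer, so that $\phi'$ (equivalently $\phi=\xi\circ\phi'$, read according to whether $\dim V=\dim W$ or $\dim V=\dim W-1$) is indeed the parameter produced.

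The substantive and delicate part is the internal identity
\[
\iota_\phi[\fw_+, z_+, \varphi_+](\pi)(s)=\overline{\iota_{\phi'}[\fw_-, z_-, \varphi_-](\theta_\psi(\pi, W))(s')}.
\]
Here I would follow the see-saw strategy of Gan--Ichino: embed $(G(V),G(W))$ in a see-saw whose complementary members are a larger unitary group and a product, so that a see-saw identity converts the theta lift into the branching problem for the Gan--Gross--Prasad pair attached to $\xi$; the internal structure of that branching is governed by the (quaternionic) Gan--Gross--Prasad conjecture, and combining it with the multiplicativity of the labeling $\iota_\bullet[\fw,z,\varphi]$ along $\xi$ yields the claimed character relation, the complex conjugation accounting for the contragredient that enters the see-saw. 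Since the full local Gan--Gross--Prasad conjecture for quaternionic unitary groups is not available in general, this closes unconditionally only in the specific cases treated in the paper---one of the groups of small rank, or $\dim W-\dim V$ forcing a stable-range first occurrence, or parameters of a type for which the branching is known---and there the base of the induction is fixed by an explicit evaluation of the doubling zeta integral, whose leading term is a product of local root numbers and Weil indices to be matched against $\iota_{\phi'}[\fw_-,z_-,\varphi_-](\theta_\psi(\pi,W))(s')$.

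The main obstacle, apart from the limited availability of Gan--Gross--Prasad, is the sign bookkeeping: the Whittaker data $\fw_+$ and $\fw_-$ are themselves constrained by $\psi$ and by the additive character used to set up the Weil representation, and one must track the Weil-index sign through the Morita equivalence and the normalization of the isomorphism $\fm_V$; any slip here flips the identity. I would therefore first verify the statement in the smallest non-trivial case---say $\dim W=\dim V$ with $G(W)$ an anisotropic quaternionic unitary group of rank one---to calibrate all normalizations, and then propagate by the tower and see-saw arguments above.
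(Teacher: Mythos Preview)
The statement is a \emph{conjecture}; the paper does not prove it in general but only in the cases $F=\R$ (Theorem~\ref{sp-o real}) and $F$ non-Archimedean with $m=n=1$ (Theorem~\ref{m=n=1_conj}). Your proposal acknowledges this limitation, so the real question is whether your strategy matches, or substitutes for, the paper's arguments in those cases.

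It does not. Your plan is the Gan--Ichino structural template: see-saw identities reduce the internal-structure statement to a Gan--Gross--Prasad branching problem, whose resolution then yields the character identity, with a doubling zeta integral anchoring the base case. The paper uses none of this machinery. For $F=\R$ it works entirely in Harish-Chandra parameters: Fact~\ref{pre_R} (results of Li and Li--Paul--Tan--Zhu) describes $\theta_\psi$ on limits of discrete series by the explicit combinatorial maps $\xi_\bullet^{\epsilon_\psi}$; Mezo's formula (Fact~\ref{mez_fml}) together with Lemma~\ref{norm_char} and Propositions~\ref{Sp_gen}--\ref{SO_gen} translates Harish-Chandra parameters into values of $\iota_\phi[\fw,z,\varphi]$; and the heart of the argument is the hands-on construction (Proposition~\ref{SOSP_key_dia_quat}) of a particular pair $(z_+,\varphi_+)\leftrightarrow(z_-,\varphi_-)$ for which the map $\fp_\xi$ of Definition~\ref{def_pxi} intertwines the cocycles, so that the $\inv$-pairings on the two sides become visibly conjugate. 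Coherent continuation and a parabolic-induction argument (Corollary~\ref{coh_vs_theta}, Proposition~\ref{temp-DS}) extend this to all tempered representations. For $F$ non-Archimedean with $m=n=1$, the paper quotes Ikematsu's explicit character relation (Fact~\ref{Ike_theta_corr}, equation~\eqref{tau_pm}) and computes the geometric transfer factor $\Delta'$ directly (Lemma~\ref{tr_fac_minus} and the orbital-integral expansion in the proof of Theorem~\ref{m=n=1_conj}).

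Your approach also has a genuine gap even as a conditional strategy: the quaternionic local GGP conjecture you would need to feed into the see-saw is not available, and you give no argument that it is known in either of the two regimes the paper treats. In particular, for $F=\R$ and arbitrary $(p,q)$ there is no off-the-shelf quaternionic GGP result that would close your induction, and for $m=n=1$ the groups are so small that the see-saw partner you would need does not obviously carry enough structure to determine the character. By contrast, the paper's method bypasses GGP entirely by leveraging the already-proved descriptions of $\theta_\psi$ in these settings and doing the endoscopic bookkeeping by hand. What your outline \emph{does} get right is the covariance principle---that the identity is insensitive to the choice of $(z_\pm,\varphi_\pm)$ within the linked class---which is exactly Theorem~\ref{welldefness}; the paper exploits this freedom not to reduce to a known case but to pick the specific rigid inner twists of \S\ref{prf_I}--\S\ref{prf_III} that make the Harish-Chandra/Langlands comparison transparent.
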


We will verify Conjecture \ref{main_intro} in the cases where either $F = \R$ (\S\S\ref{Arch_computation}--\ref{Arch_SpO}) or $F$ is non-Archimedean with $n = m =1$ (\S\ref{nonArch_m=n=1}). In the case $F=\R$, we prove Conjecture \ref{main_intro} by translating the results of Li \cite{Li89} and Li-Paul-Tan-Zhu \cite{LPTZ03} in terms of Langlands parameters.  The real local Langlands correspondence is completed by Mezo by verifying the endoscopic character relations \cite{Mez13}\cite{Mez16}. We use his computation in the proof in order to translate Harish-Chandra parameters into Langlands parameters. In the case where $F$ is non-Archimedean and $n=m=1$, Ikematsu described the local theta correspondence in terms of characters of representations via the accidental isomorphism from quaternionic unitary groups of low ranks with the subgroups of unitary groups \cite{Ike19}. Using this result, we will compute the Langlands parameters of irreducible tempered representations to verify Conjecture \ref{main_intro} in this case. 

The descriptions of local theta correspondences using the sets $\RIT^\star(V^\#, V), \RIT^\star(W^\#, W)$ and the link ``$\leftrightarrow$'' between them are also available to symplectic-orthogonal dual pairs and unitary dual pairs. Hence, we will discuss them in the body of this paper. One can show that they are equivalent to the conjectures in \cite{Pra93} and \cite{Pra00}.  Moreover, we prove the ``weak Prasad conjecture'' for symplectic-orthogonal dual pairs over $\R$ (\S\ref{Arch_SpO}) in the sense of  \cite{AG17}. 

We mention the strong Prasad conjecture here, which uses the Langlands parameter for orthogonal groups instead of that for special orthogonal groups. The formulation of the local Langlands correspondence for disconnected reductive groups (containing orthogonal groups) has appeared in \cite{Kal22}. In the preprint, Kaletha suggested the canonical normalizations of twisted geometric transfer factors, and formulated the endoscopic character relation using twisted spectral transfer factors. Moreover, in the Archimedean case, Mezo's computation \cite{Mez13} also provides the formula of the twisted spectral transfer factor using twisted geometric transfer factors. Hence, in principle, it is possible to formulate the strong Prasad conjecture in the framework of rigid inner twists and prove it in the Archimedean cases. However, we do not discuss it in this paper since it will require careful calculations and is considered to take a lot of time.

Finally, we explain the structure of this paper. 
In \S\S\ref{settings}--\ref{LLC}, we prepare for the later sections. In \S\ref{main sec}, we state the conjecture. The main theorem (Theorem \ref{welldefness}) which controls the dependencies is also stated in this section. In \S\S\ref{Arch_computation}--\ref{Arch_SpO}, we prove the weak Prasad conjecture over the field of real numbers.
In \S\ref{nonArch_m=n=1}, we prove Conjecture \ref{main_intro} when $n=m=1$. This paper also contains five appendices. In \S\ref{cospin}, we prove an elementary result on the centers of spin groups. In \S\ref{app_op}, we discuss a different convention of the local theta correspondence, which is adopted in some previous results. In \S\ref{app theta HC 1}, we discuss a convention problem of the oscillator representation. In \S\S\ref{notes on psi} -- \ref{spec_correction}, we comment on some references. The Archimedean part of this paper is based on the results on the Archimedean local Langlands correspondence and on the Fock model of the oscillator representations. Moreover, the proof of Conjecture \ref{main_intro} in the case $n=m=1$ with $F$ non-Archimedean is obtained by explicit discussions of local theta correspondences for unitary groups of low ranks.  They are attained by a huge amount of calculations, and there are a few small errors. In these appendices, we will point them out.

\subsection*{Acknowledgements}

The author would like to thank A.Ichino and W.T.Gan for suggesting this theme, and thank H.Atobe for useful comments. The contents in \S\S\ref{notes on psi}--\ref{spec_correction} are discovered during discussions with Jialiang Zou and Rui Chen. The author would like to thank them for their help. 
This research is partially supported by JSPS KAKENHI Grant Numbers 20J11509, 23KJ0001.

\section{
		Settings
		}\label{settings}

\subsection{Notations}\label{notations}

First, we list the notations around the algebras.
Throughout this paper, $F$ denotes a field of characteristic $0$, $D$ denotes either a quadratic extension field over $F$ or a quaternion algebra over $F$, and $E$ denotes the center of $D$. The multiplicative groups of $F, D, E$ are denoted by $F^\times, D^\times, E^\times$ respectively. The main involution of $D$ over $F$ is denote by $x \mapsto x^*$ for $x \in D$. Using the main involution, we define the two maps $T_D \colon D \rightarrow F$ and $N_D\colon D\rightarrow F$ by 
\[
T_{D/F}(x) = x + x^*, \quad N_{D/F}(x) = x\cdot x^* 
\]
for $x \in D$. The restrictions of $T_{D/F}$ and $N_{D/F}$ to $E$ are denoted by $T_{E/F}$ and $N_{E/F}$ respectively. We write $D^1 = \{ x \in D \mid N_D(x) = 1\}$ and $E^1 = E^\times \cap D^1$. For an additive character $\psi \colon F \rightarrow \C^1$ and $t \in F^\times$, we denote by $\psi_t$ the additive character of $F$ given by $\psi_t(x) = \psi(tx)$ for $x \in F$.

Then, we prepare the notation of isomorphisms of linear algebraic groups. Let $X, Y$ be right (resp. left) $D$-vector spaces, and let $h \colon X \rightarrow Y$ be a right (resp. left) $D$-linear isomorphism. Then, we denote by $\varphi_h$ the isomorphism from $\GL(X)$ onto $\GL(Y)$ satisfying
\[
\varphi_h(g)h(x) = h(gx) \quad (\mbox{resp. } h(x)\varphi_h(g) = h(xg) )
\]
for $x \in X$ and $g \in \GL(X)$. Restrictions of $\varphi_h$ to subgroups of $\GL(X)$ are also denoted by $\varphi_h$.

Finally, we will list other important notations. If $G$ is a group and $\delta \in G$, then we denote by $S_G(\delta)$ the centralizer of $\delta$ in $G$. If there is no fear of confusion, we denote it by $S(\delta)$. If $k,l$ are positive integers, $a,b$ are positive integers satisfying $a \leq k$ and $b \leq l$, and $x \in D$, then we denote by $e_{a,b}(x)$ the $k \times l$ matrix whose $(a,b)$-component is $x$ and the other components are $0$. For a positive integer $r$, we denote by $J_r$ the anti-diagonal matrix whose anti-diagonal components are $1$, that is, we have
\[
J_r = e_{1,r}(1) + e_{2,r-1}(1) + \cdots + e_{r,1}(1).
\]
If $G$ is a reductive group, $T$ is a maximal torus of $G$, and $B$ is a Borel subgroup containing $T$, we denote by $R(G, T)$ the root system of the roots of $T$ in $G$, and by $\Delta_B$ the positive system of $R(G, T)$ associated with $B$.  

\subsection{Spaces and groups}\label{groups}
Let $\epsilon = \pm1$, let $V$ be a right vector space over $D$ with a non-degenerate $F$-bilinear form $( \ , \ )$ satisfying
\[
(y,x)\cdot a = (y, xa) = \epsilon(xa, y)^* 
\]
for $a \in D$, $x, y \in V$, and let $W$ be a left vector space over $D$ with a non-degenerate $F$ bilinear form $\langle \ , \ \rangle$ satisfying
\[
a\cdot \la y, x \ra = \la ax, y \ra = - \epsilon \la y, ax \ra^*
\]
for $a \in D$, $x, y \in W$.   We call such a form $( \ , \ )$ an $\epsilon$-Hermitian form, and call such a $D$ vector space $V$ equipped with $( \ , \ )$ an right $\epsilon$-Hermitian space. We put $\dim_DV = m$ and $\dim_DW = n$. In this paper, we consider the following cases:
\begin{enumerate}[(I)]
\item $D$ is the matrix algebra $\rM_2(F)$ over $F$,  \label{vsp I}
\item $D$ is a quadratic extension field of $F$, \label{vsp II}
\item $D$ is a division quaternion algebra over $F$, \label{vsp III}
\end{enumerate}
We denote by $G(V)$ (resp. $G(W)$) the unitary group of $V$ (resp. $W$), and by $G_0(V)$ (resp. $G_0(W)$) its Zariski connected component containing $1 \in G(V)$ (resp. $1 \in G(W)$). We denote by $\W$ the tensor product $V\otimes_D W$ of $V$ and $W$, and we consider the symplectic form $\la\la \ , \ \ra\ra$ on $\W$ given by
\[
\la\la x_1\otimes y_1, x_2\otimes y_2 \ra\ra = {\rm T}_{D/F}((x_1, x_2)\la y_1, y_2 \ra^*)
\]
for $x_1, x_2\in V$ and $y_1, y_2 \in W$.

We consider the new action of $D$ on $W$ by
\[
D \times W \rightarrow W, \quad (a, x) \mapsto a^*\cdot x
\]
which defines a structure of right $D$-vector space on $W$. Moreover, the $(-\epsilon)$-Hermitian form $\la \ , \ \ra$ is also $(-\epsilon)$-Hermitian  with respect to the new right action above. When we discuss the new action, we write for $W^{\op}$ instead of $W$, and for $\la \ , \ \ra^{\op}$ instead of $\la \ , \ \ra$ to distinguish the action. For $g \in G(W)$, the map $\fs_W(g) \colon W^{\op} \rightarrow W^{\op}$ given by
\[
\fs_W(g) (x) = x\cdot g^{-1} \quad (x \in W^{\op})
\]
is linear and isometric with respect to $\la \ , \ \ra^{\op}$. Hence, we have the isomorphism $\fs_W\colon G(W) \rightarrow G(W^{\op})$. Besides, we denote by $V^{\rm op}$ the left  $\epsilon$-Hermitian space over $D$ so that $(V^{\rm op})^{\rm op} = V$, and by $\fs_V$ the inverse map of $\fs_{V^{\op}} \colon G(V^{\op}) \rightarrow G(V)$. 

In the cases \eqref{vsp I} and \eqref{vsp III} with $\epsilon = 1$, we define the discriminant of $W$ by 
\[
(-1)^nN_{\End (W)}((x_k, x_l)_{k,l}) \in F^\times/F^{\times 2}
\]
where $x_1, \ldots, x_n$ is a basis of $W$ over $D$, and $N_{\End(W)}$ is the reduced norm of $\End(W)$. The definition does not depend on the choice of the basis $x_1, \ldots, x_n$, and we denote the discriminant by $\fd(W)$. On the other hand, we put $\fd(V) = 1 \in F^\times/F^{\times 2}$. When $\epsilon = -1$, we put $\fd(W) = 1 \in F^\times/F^{\times2}$ and $\fd(V) = \fd(V^{\rm op})$.

In the case \eqref{vsp II}, we fix an element $\daleth \in E^\times$ so that $\varsigma_E(\daleth) = -\daleth$. Then, the discriminant can also be defined (cf. \cite[p.~517]{GI14}), but we do not use it in this paper.

\subsection{Quasi-split inner forms}\label{q-spl grps}
To discuss the quasi-split inner forms of $G(V)$, we consider explicit vector spaces $V_c^\#$ with forms $( \ , \ )^\#$ given by as follows. Let $c \in F^\times$.
\begin{itemize}
\item In the cases \eqref{vsp I} and \eqref{vsp III}, $V_c^\#$ is the $2m$-dimensional $F$-vector space of column vectors, $( \ , \ )^\#$ is given by the matrix
\[
c^{-1}\begin{pmatrix} & J_m \\ -J_m & \end{pmatrix} \quad (\epsilon = 1), \quad c\begin{pmatrix} & & & J_{n-1} \\ & 2 & & \\ & & -2d & \\ J_{n-1} & & & \end{pmatrix} \quad (\epsilon = -1).
\]
Here, $d$ is an element of $F^\times$ so that $\fd(W) = d F^{\times2}$. 
\item In the case \eqref{vsp II}, $V_c^\#$ is the $m$-dimensional $E$-vector space of column vectors,  $( \ , \ )^\#$ is given by the matrices
\[
J_m \quad (\epsilon = 1),  \quad \daleth \cdot J_n \quad (\epsilon = -1).
\]
Note that $V_c^\#$ does not depend on $c$ in this case. However, we use it to unify the notations.
\end{itemize}

We also define $W_c^\#$ by the $2n$-dimensional $E$-vector space of row vectors equipped with the bilinear form $\la \ , \ \ra^\#$ on $W_c^\#$ satisfying
\[
 \la f_k, f_l \ra^\# = \la f^k, f^l \ra^{\op \#}
\]
for all $1 \leq k,l \leq 2n$ (see \S\ref{groups} for the meaning of ``${\op}$'').
Here, $f_1, \ldots, f_{2n}$ denote the canonical basis of $W_c^\#$ and $f^1, \ldots, f^{2n}$ denote the canonical basis of $W_c^{\op \#}$. One can show that 
\begin{align*}
(W_c^\#)^{\op} \rightarrow (W^{\op})_c^\# \colon x \mapsto {}^tx^*
\end{align*}
is isometric. 
In the cases \eqref{vsp I} and \eqref{vsp III} with $\epsilon = 1$, it is useful to put
\[
\varepsilon = \begin{pmatrix} 1_n & & \\ & -1 & \\ & & 1_{n-1} \end{pmatrix} \in G(W_c^\#)(F).
\] 

We denote by  $\W^\#$  the tensor product $V_c^\#\otimes_DW_c^\#$ of $V_c^\#$ and $W_c^\#$, and let $\la\la \ , \ \ra\ra^\#$ be the symplectic form on $\W^\#$ defined by
\[
\la\la x_1\otimes y_1, x_2\otimes y_2 \ra\ra^\# = T_{E/F} ((x_1, x_2)^\#\la y_1, y_2 \ra^{\#*})
\]
for $x_1, x_2\in V_c^\#$ and $y_1, y_2 \in W^\#$. This symplectic space does not depend on $c$.


\subsection{Maximal tori of quasi-split inner forms} 

We set some notations around maximal tori. First, we discuss $G(V_c^\#)$. 

\begin{itemize}
\item In the cases \eqref{vsp I} and \eqref{vsp III} with $\epsilon = 1$, we denote by $T_+^\#$ the maximal torus consisting of the diagonal matrices in $G(V_c^\#)$, and by $\alpha_k^\#$ the algebraic character of $T_+^\#$ projecting the $(k,k)$-component. Then, $\alpha_1^\#, \ldots, \alpha_m^\#$ consists a basis of $X^*(T_+)$.   
\item In the cases  \eqref{vsp I} and \eqref{vsp III} with $\epsilon = -1$, we denote by $A_+^\#$ the maximal split torus of $G_0(V_c^\#)$ consisting of diagonal matrices, and by $T_+^\#$ its centralizer in $G_0(V_c^\#)$. For $k=1, \ldots, m-1$, we denote by $\alpha_k^\#$ the algebraic character of $T_+^\#$ projecting the $(k,k)$-component. Moreover, we define $\alpha_m^\#\colon T_+^\#\rightarrow \GL_1$ by
\[
\alpha_m(\begin{pmatrix} a &  &  &  \\ & x & y & \\ & dy & x & \\ & & & a^{-1} \end{pmatrix}) = x + \sqrt{d} y 
\]
for a diagonal matrix $a$ and $x,y \in \overline{F}$ satisfying $x^2 - dy^2 = 1$. 
\item Consider the case \eqref{vsp II}. We fix an identification ${\rm Res}_{E/F}\GL_1 = \GL_1 \times \GL_1$ over $E$, and we denote by $p_1$ (resp. $p_2$) the projection to the left $\GL_1$ factor. Then, we denote by $T_+^\#$ the maximal torus consisting of the diagonal matrices in $G(V_c^\#)$, by $\alpha_k'$ the algebraic homomorphism from $T_+^\#$ onto ${\rm Res}_{E/F}\GL_1$ projecting the $(k,k)$-component. Moreover, we define the algebraic characters $\alpha_1, \ldots, \alpha_m$ by
\[
\alpha_k = \begin{cases} p_1\circ \alpha_k' & (1 \leq k \leq \lceil m/2 \rceil), \\ p_2\circ \alpha_{m+1-k}' & (1 \leq k \leq \lfloor m/2 \rfloor). \end{cases}
\]
\end{itemize}
Finally, we define the maximal torus $T_-^{\#\op}$ of $G_0((W^{\op})_c^\#)$ and a basis $\beta_1^{\#\op}, \ldots, \beta^{\#\op}$ of $X^*(T_-^{\#\op})$ in the same way as for $G(V_c^\#)$, and put 
\[
T_-^\# = (\fs_{W_c^\#}^{-1}\circ t^{-1})(T_-^{\#\op}), \quad \beta_k^\# = \beta_k^{\#\op}\circ t \circ \fs_{W_c^\#} \ (k = 1, \ldots, m)
\] 
where $t$ denotes the isomorphism from $G((W_c^\#)^{\op})$ onto $G((W^{\op})_c^\#)$ given by $t(g) = {}^tg^{*-1}$ for $G((W_c^\#)^{\op})$.
\subsection{Extensions by extension fields}\label{morita}

In this subsection, we define the $F'$-algebra ${(D\otimes F')}^\natural$, the vector spaces $(V\otimes F')^\natural, (W\otimes F')^\natural$ and forms $( \  , \ )^\natural, \la \ , \  \ra^\natural$ on them for a certain extension field $F'$ of $F$. In the case \eqref{vsp II}, for all extension field $F'$ of $F$, we put ${(E\otimes F')}^\natural = E\otimes F'$, $(V\otimes F')^\natural = V \otimes F'$, $(W\otimes F')^\natural = W \otimes F'$, $( \ , \ )^\natural = ( \ , \ )$, and $\la \ , \ \ra^\natural = \la \ , \ \ra$. 
In the cases \eqref{vsp I} and \eqref{vsp III}, we define them by using the Morita equivalence \cite[p.~362]{Sch85} as follows. Let $F'$ be an extension field of $F$ which splits $D$. Then, we put $(D\otimes F')^\natural = F'$. We fix an identification $D \otimes_FF' \rightarrow {\rm M}_2(F')$. Put
\[
e_{11} = \begin{pmatrix} 1 & 0 \\ 0 & 0 \end{pmatrix}, e_{12} = \begin{pmatrix} 0 & 1 \\ 0 & 0 \end{pmatrix}, 
\]
\[
e_{21} = \begin{pmatrix} 0 & 0 \\ 1 & 0 \end{pmatrix}, \mbox{and } e_{22} = \begin{pmatrix} 0 & 0 \\ 0 & 1 \end{pmatrix}.
\]
We define $V^\natural = V\otimes F'e_{11}$ and the bilinear form $( \ , \ )^\natural$ on $V^\natural$ by
\[
(x,y)^\natural = {\rm Tr}(e_{12} \cdot (x,y))
\]
for $x, y \in V^\natural$. We also define $W^\natural = e_{11}W\otimes F'$ and the bilinear form $\la \ , \ \ra^\natural$ on $W^\natural$ by
\[
\la x,y \ra^\natural = - {\rm Tr}(\la x,y \ra \cdot e_{21})
\]
for $x,y \in W^\natural$. If $\epsilon = 1$ then $( \ , \ )^\natural$ is symplectic and $\la \ , \ \ra^\natural$ is symmetric, and if $\epsilon = -1$ then $( \ , \ )^\natural$ is symmetric and $\la \ , \ \ra^\natural$ is symplectic. 

\begin{rem}
By a technical reason, we adopted the definitions of $\natural$ those do not commute with ``$\op$'', that is, 
$(W\otimes F')^\natural \not= (W\otimes F')^{\op\natural\op}$ as subsets of $W \otimes F'$. However,  one can show that $(W \otimes F')^\natural$ is isometric to $(W\otimes F')^{\op\natural\op}$. 
\end{rem}

The functor $\natural$ gives a categorical equivalence between the category of the $\epsilon$-Hermitian spaces over $D\otimes F'$ and that of the $(-\epsilon)$-Hermitian space over $(D\otimes F')^\natural$ (c.f. \cite[Chapeter 10, \S3]{Sch85}). Namely, we have:

\begin{fact}
An element $g \in G(V)(F')$ preserves the subspace $V^\natural$ of $V \otimes F'$. Moreover, this restriction induces the isomorphism $\mathfrak{m}_V\colon G(V) \rightarrow G(V^\natural)$ over $F'$. Similarly, we have the isomorphism $\mathfrak{m}_W\colon G(W) \rightarrow G(W^\natural)$ over $F'$.
\end{fact}

Put $\W^\natural = (V\otimes F')^\natural \otimes_{(D\otimes F')^\natural} (W\otimes F')^\natural$, and define the symplectic form $\la\la \ , \ \ra\ra^\natural$ on $\W^\natural$ by
\[
\la\la x_1\otimes y_1 , x_2\otimes y_2  \ra\ra^\natural = (x_1, x_2)^\natural \la y_1, y_2 \ra^\natural
\]
for $x_1, x_2 \in (V\otimes F')^\natural$ and $y_1, y_2 \in (W\otimes F')^\natural$. 

\begin{lem}\label{Morita_W}
The natural linear map
\[
\W^\natural \rightarrow \W\otimes_FF'
\]
is bijective and isometric. Moreover, the following diagram is commutative.
\[
\xymatrix{
\Sp(\W)  \ar[rr] & & \Sp(\W^\natural) \\ 
G(V) \times G(W) \ar[u]^-{\iota_{V,W}} \ar[rr]_-{(\mathfrak{m}_V, \mathfrak{m}_W)} 
& & G(V^\natural)\times G(W^\natural) \ar[u]_-{\iota_{V^\natural,W^\natural}}
}
\]
\end{lem}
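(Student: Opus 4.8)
The plan is to verify the two assertions of Lemma~\ref{Morita_W} separately, with the bulk of the work being a bookkeeping exercise on the Morita equivalence. First I would establish the bijectivity and isometry of the natural map $\W^\natural \to \W \otimes_F F'$. The underlying $F'$-linear bijection is formal: writing $V \otimes F' = V^\natural \oplus (V \otimes F' e_{21})$ and $W \otimes F' = W^\natural \oplus (e_{22} W \otimes F')$ from the idempotent decomposition $1 = e_{11} + e_{22}$, one checks that the composite
\[
(V \otimes F' e_{11}) \otimes_{F'} (e_{11} W \otimes F') \longrightarrow (V \otimes F') \otimes_{D \otimes F'} (W \otimes F')
\]
is an isomorphism, using that $e_{11}(D \otimes F')e_{11} = F' e_{11} \cong (D \otimes F')^\natural$ and that $e_{12}, e_{21}$ identify the two "halves" of each space. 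For the isometry statement I would take pure tensors $x_1 \otimes y_1, x_2 \otimes y_2$ with $x_i \in V^\natural$, $y_i \in W^\natural$ and compute both sides of
\[
\la\la x_1 \otimes y_1, x_2 \otimes y_2 \ra\ra \;=\; \mathrm{T}_{D/F}\bigl((x_1, x_2)\la y_1, y_2 \ra^*\bigr)
\]
after base change to $F'$, against $(x_1, x_2)^\natural \la y_1, y_2 \ra^\natural$. Since $x_i \in V e_{11}$ we have $(x_1, x_2) = e_{11}(x_1, x_2) e_{11} \in F' e_{12}$? — more precisely one tracks where $(x_1,x_2)$ and $\la y_1, y_2\ra$ land under the $e_{ij}$ grading, substitutes the definitions $(x,y)^\natural = \mathrm{Tr}(e_{12}(x,y))$ and $\la x, y\ra^\natural = -\mathrm{Tr}(\la x, y\ra e_{21})$, and uses $\mathrm{T}_{D/F} = \mathrm{Tr} \circ (D \hookrightarrow \mathrm{M}_2(F'))$ together with the interaction of the main involution $*$ with the chosen identification $D \otimes F' \cong \mathrm{M}_2(F')$ (the involution becomes the adjugate, so $e_{12}^* = e_{12}$ up to the relevant sign conventions). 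The signs in the definitions of $( \ , \ )^\natural$ and $\la \ , \ \ra^\natural$ are precisely calibrated so the product of the two traces reproduces $\mathrm{T}_{D/F}$ of the product; carrying this out is the one genuinely delicate computation.

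Second, for the commutative square I would argue elementwise. Take $g \in G(V)(F')$ and $h \in G(W)(F')$. By the Fact, $g$ preserves $V^\natural$ and $\mathfrak{m}_V(g) = g|_{V^\natural}$, and likewise for $h$. On the one hand, $\iota_{V,W}(g,h)$ acts on $\W \otimes F'$ by $x \otimes y \mapsto gx \otimes hy$; on the other, $\iota_{V^\natural, W^\natural}(\mathfrak{m}_V(g), \mathfrak{m}_W(h))$ acts on $\W^\natural$ by the same formula on pure tensors built from $V^\natural, W^\natural$. Under the identification $\W^\natural \cong \W \otimes F'$ established in the first part, these two actions visibly agree on the spanning pure tensors $x \otimes y$ with $x \in V^\natural$, $y \in W^\natural$, hence everywhere by $F'$-linearity. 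The only point requiring care is that both $\iota$ maps are defined via the same recipe (the action on the tensor product), which is immediate from the definition of $\la\la \ , \ \ra\ra$ and $\la\la \ , \ \ra\ra^\natural$ as products of the respective forms, so the maps $\Sp(\W) \to \Sp(\W^\natural)$ is just transport of structure along the isometry.

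The main obstacle, as indicated, is the sign/convention tracking in the isometry computation: one must pin down the explicit form of the main involution of $D$ under $D \otimes F' \xrightarrow{\sim} \mathrm{M}_2(F')$, confirm that the trace pairing $\mathrm{Tr}(e_{12} \cdot -)$ on $V^\natural$ is nondegenerate and has the claimed symplectic-or-symmetric parity, and verify that $\mathrm{T}_{D/F}(ab^*) = (\text{trace form built from } e_{12}) \cdot (\text{trace form built from } e_{21})$ after the Morita identifications — this is exactly the content of the categorical equivalence cited from \cite[Chapter~10, \S3]{Sch85}, so in principle one could invoke that reference wholesale, but spelling out the compatibility with the tensor-product symplectic forms is the part not literally in Scharlau. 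Everything else is formal linear algebra over $F'$.
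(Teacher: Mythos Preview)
Your approach is essentially the same as the paper's, and the strategy is sound. Two remarks on execution.

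First, for bijectivity the paper simply counts dimensions: $\dim_{F'}\W^\natural = 4mn = \dim_F\W$, so once the map is shown to be isometric (hence injective, since the symplectic form is nondegenerate) it is automatically bijective. Your idempotent-decomposition argument works too but is more than is needed.

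Second, your tentative ``$(x_1,x_2)\in F'e_{12}$?'' is off by one index. Recall that the main involution on $\mathrm{M}_2(F')$ is the adjugate, so $e_{11}^*=e_{22}$ and $e_{12}^*=-e_{12}$. Hence for $x_i\in Ve_{11}$ one has $(x_1,x_2)\in e_{22}De_{11}=F'e_{21}$, giving $(x_1,x_2)=(x_1,x_2)^\natural e_{21}$; and for $y_i\in e_{11}W$ one has $\langle y_1,y_2\rangle\in F'e_{12}$, whence $\langle y_1,y_2\rangle^*=\langle y_1,y_2\rangle^\natural e_{12}$. The isometry then reduces to the single line
\[
\mathrm{Tr}\bigl((x_1,x_2)^\natural e_{21}\cdot e_{12}\langle y_1,y_2\rangle^\natural\bigr)
=(x_1,x_2)^\natural\langle y_1,y_2\rangle^\natural\,\mathrm{Tr}(e_{22})
=(x_1,x_2)^\natural\langle y_1,y_2\rangle^\natural,
\]
which is exactly what the paper writes. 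Your commutative-diagram argument is fine and matches the paper's ``obvious by the construction''.
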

 
\begin{proof}
In the case \eqref{vsp II}, the claim is obvious. In the rest of the proof, we consider the cases \eqref{vsp I} and \eqref{vsp III}.  Since $\dim_{F'}\W^\natural = \dim_F\W$, it suffices to show that it commutes with the symplectic forms. But we have
\begin{align*}
\la\la x_1\otimes y_1, x_2\otimes y_2 \ra\ra &= {\rm Tr} ((x_1, x_2)\cdot\la y_1, y_2\ra^*) \\
&= {\rm Tr}((x_1, x_2)^\natural e_{21} \cdot e_{12} \la y_1, y_2 \ra^\natural) \\
&= {\rm Tr}((x_1, x_2)^\natural \la y_1, y_2 \ra^\natural e_{22}) \\
&= \la\la x_1\otimes y_1, x_2\otimes y_2 \ra\ra^\natural
\end{align*}
for $x_1, x_2 \in (V\otimes F')^\natural$ and $y_1, y_2 \in (W\otimes F')^\natural$. Hence we have the first assertion.
The second assertion is obvious by the construction.
\end{proof}


\subsection{Whittaker data}\label{WD}

In this subsection, we explain the choice of Whittaker data (c.f. \cite[\S5.3]{KS99}). Fix a non-trivial additive character $\psi \colon F \rightarrow \C^1$. 

First, we consider the case \eqref{vsp II}. In this case, we choose the Whittaker data being compatible with that of \cite{GI16}. More precisely, 
\begin{itemize}
\item if $V_c^\#$ has odd dimension, then we denote by $\fw_+$ the unique Whittaker data of $G(V_c^\#)$, 
\item if $\epsilon = -1$ (resp. $\epsilon = 1$) and $V_c^\#$ has even dimension, denoting ${}_\#V_c$ the left-linear $\epsilon$-Hermitian space satisfying $({}_\#V_c)^\varrho = V_c^\#$ (see \S\ref{conv_theta} below), then we define $\fw_+$ to be the Whittaker data of $G(V_c^\#) = G({}_\#V_c)$ associated with $\psi$ (resp. $x \mapsto \psi_{1/2}(\Tr_{E/F}(x \cdot \daleth)$) via the correspondence of \cite[Proposition 12.1]{GGP12}.
\item if $W_c^\#$ has odd dimension, then we denote by $\fw_-$ the unique Whittaker data of $G(W_c^\#)$,
\item if $\epsilon = 1$ (resp. $\epsilon = -1$) and $W_c^\#$ has even dimension, denoting ${}_\#W_c^{\op}$ the left-linear $(-\epsilon)$-Hermitian space satisfying $({}_\#W_c^{\op})^\varrho = W_c^{\# \op}$ (see \S\ref{conv_theta} below), then we denote by $\fw_-$ the Whittaker data of $G(W_c^\#)$ transferred via $\fs_{W_c^\#}$ from that of $G(W_c^{\# \op}) = G({}_\#W_c^{\op})$ associated with $\psi$ (resp. $x \mapsto \psi_{1/2}(\Tr_{E/F}(x \cdot \daleth)$) via the correspondence of \cite[Proposition 12.1]{GGP12}.
\end{itemize}

Then, we consider the cases \eqref{vsp I} with $\epsilon = 1$ and \eqref{vsp III} with $\epsilon = 1$. In this case, we choose the Whittaker data in the essentially same way as in \cite{Ato18}. More precisely, we define
\begin{itemize}
\item the Whittaker data $\fw_+$ of $G_0(V_c^\#)$  as a conjugacy class represented by the pair $(B_+^\#, \lambda_+^{(c)})$ where $B_+^\#$ is the Borel subgroup consisting of the upper triangle matrices in $G(V_c^\#)$, and $\lambda_+^{(c)}$ is a generic character of the group of $F$-valued points $N_+^\#(F)$ of the nilpotent radical $N_+^\#$ of $B_+^\#$ given by
\[
\lambda_+^{(c)}(u) = \psi(\sum_{k=1}^{m-1} (e_{k+1} \cdot u, e_k)^\# + (e_m\cdot u, e_m)^\#)
\]
for $u \in N_+^\#(F)$,
\item the Whittaker data  $\fw_-$ of $G_0(W_c^\#)$ as a conjugacy class represented by the pair $(B_-^\#, \lambda_-^{(c)})$ where $B_-^\#$ is the Borel subgroup consisting of the upper triangle matrices in $G_0(W_c^\#)$, and $\lambda_-^{(c)}$ is the generic character of the group of $F$-valued points $N_-^\#(F)$ of the nilpotent radical $N_-^\#$ of $B_-^\#$ given by 
\[
\lambda_-^{(c)}(u) = \psi(\sum_{k=1}^{n-2}\la f_k \cdot u, f_{k+1} \ra^\# + \la f_n \cdot u, f_n \ra^\#)
\]
for $u \in N_-^\#(F)$.
\end{itemize}

\begin{rem}
We make an additional explanation of the construction of Whittaker data of $G(W_c^\#)$ in the cases \eqref{vsp I} with $\epsilon = 1$ and \eqref{vsp III} with $\epsilon = 1$. Suppose that $\chi_W(c) = 1$. Then $W_c^\#$ is isomorphic to $W_1^\#$.  Take an isometry $I[c] \colon W_c^\# \rightarrow W_1^\#$, which induces the isomorphism $\varphi_{I[c]}^{-1} \colon G_0(W_1^\#) \rightarrow G_0(W_c^\#)$ of the special orthogonal groups. Denote by $L \subset W_c^\#$ the anisotropic line spanned by $f_n + f_{n+1}$. If we denote by $\fw'$ the Whittaker data associated with $I[c](L) \subset W_1^\#$ via the correspondence of \cite[Proposition 12.1]{GGP12}, then the Whittaker data $(\varphi_{I[c]})^{-1}(\fw')$ of $G_0(W_c^\#)$ transferred by $\fw'$ coincides with $\fw_-$. Here, we applied \cite[Proposition 12.1]{GGP12} for $W_1^\#$ by using ``$\op$'' as in the case \eqref{vsp II}.
\end{rem}


\section{
		Rigid inner twists
		}\label{rig inn twi}
		
In this section, we recall the rigid inner twists of Kaletha. Then, we introduce the class $\RIT^\star(- , -)$ of rigid inner twists, and observe a basic property (Proposition \ref{RIT_str}).

\subsection{
		Settings
		}\label{ritset}

Denote by $\Gamma$ the absolute Galois group of $F$, and by $u$ the ``multiplicative pro-algebraic group'' introduced by Kaletha \cite[\S 3.1]{Kal16}. Then he showed that $H^1(\Gamma, u) = 1$ and 
\[
H^2(\Gamma, u) = \begin{cases}\Z/2\Z & \mbox{ if $F$ is Archimedean}, \\ \widehat{\Z} & \mbox{ if $F$ is non-Archimedean}. \end{cases}
\]
We define the group $\cW$ so that the exact sequence
\[
1 \rightarrow u(\overline{F}) \rightarrow \cW \rightarrow \Gamma \rightarrow 1
\]
is associated with $-1 \in H^2(\Gamma, u)$. The readers should be careful that it is different from the Weil group $W_F$. For an connected reductive group $G$ over $F$ and a finite central subgroup $Z$ of $G$, he also defined the sets $Z^1(u \rightarrow \cW, Z \rightarrow G)$ and $H^1(u \rightarrow \cW, Z \rightarrow G)$ in \cite[\S3.2]{Kal16}.

Let $G'$ be another reductive group over $F$, let $\varphi\colon G \rightarrow G'$ be an isomorphism of algebraic groups defined over $\overline{F}$, and let $z \in Z^1(u \rightarrow \cW, Z \rightarrow G)$. Then, the pair $(z, \varphi)$ is said to be a \emph{rigid inner twist} if they satisfy 
\[
\varphi^{-1}\circ w \circ \varphi \circ w^{-1} = \Ad z(w)
\]
for $w \in \cW$. The following fact (\cite[Corollary 3.8]{Kal16}) is fundamental.

\begin{fact}\label{rig can surj}
If $Z$ contains the center of the derived subgroup of $G$, then the natural homomorphism 
\[
Z^1(u \rightarrow \cW, Z \rightarrow G) \rightarrow Z^1(\Gamma, G/Z(G))
\]
is surjective. Here, $Z(G)$ denotes the center of $G$.
\end{fact}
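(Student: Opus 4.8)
The statement is Kaletha's comparison of rigid cohomology with Galois cohomology, and its real content is the ``absorption'' property of the pro-algebraic gerbe $u\to\cW$; my plan is to prove it by an explicit lifting argument. Let $\bar z$ be a cocycle in the target, valued in $G/Z$, which we may assume inflated from a finite Galois quotient $\Gal(L/F)$. Working inside the extension $1\to u(\overline{F})\to\cW\to\Gamma\to 1$, I would fix a continuous set-theoretic section $s\colon\Gamma\to\cW$ with $s(1)=1$ and let $b\in Z^2(\Gamma,u(\overline{F}))$ be the associated $2$-cocycle, so $[b]=-1\in H^2(\Gamma,u)$ by the defining property of $\cW$. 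Choose lifts $g_\sigma\in G(\overline{F})$ of $\bar z(\sigma)$ with $g_1=1$; since $\bar z$ is a cocycle, the coboundary $c(\sigma,\tau):=g_\sigma\cdot{}^\sigma g_\tau\cdot g_{\sigma\tau}^{-1}$ takes values in $\ker\bigl(G(\overline{F})\to(G/Z)(\overline{F})\bigr)=Z(\overline{F})$, one checks $c\in Z^2(\Gamma,Z(\overline{F}))$, and its class $[c]\in H^2(\Gamma,Z)$ is exactly the obstruction to lifting $\bar z$ to a $G$-valued Galois cocycle.

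The next step is to absorb $[c]$ into the $u$-part of a rigid cocycle. Here I would invoke the key computation behind Kaletha's construction \cite{Kal16}: for every finite multiplicative group scheme $Z$ over $F$ the map $\Hom(u,Z)\to H^2(\Gamma,Z)$, $\phi\mapsto\phi_*(-1)$, given by pushing forward the canonical class is surjective — this is precisely what $u$, together with the normalizations $H^1(\Gamma,u)=1$ and the choice $-1\in H^2(\Gamma,u)$, is built to achieve, and it rests ultimately on local class field theory. Applying it to our $Z$, pick an $F$-morphism $\phi\in\Hom(u,Z)$ with $\phi_*(-1)=[c]$; then $\phi\circ b$ and $c$ are cohomologous in $Z^2(\Gamma,Z(\overline{F}))$, so after replacing each $g_\sigma$ by $g_\sigma\zeta_\sigma$ for a suitable $1$-cochain $\zeta\colon\Gamma\to Z(\overline{F})$ — which does not change the image $\bar z(\sigma)$ in $(G/Z)(\overline{F})$ — we may arrange $c(\sigma,\tau)=\phi(b(\sigma,\tau))$ on the nose.

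Finally, every $w\in\cW$ is uniquely $w=n\cdot s(\sigma)$ with $n\in u(\overline{F})$, $\sigma\in\Gamma$, and I would set $z(w):=\phi(n)\cdot g_\sigma$. The cocycle identity $z(w_1w_2)=z(w_1)\cdot{}^{w_1}z(w_2)$ breaks into the evident cases and reduces to: $\phi$ being a homomorphism, $\phi$ being defined over $F$ (so that it intertwines the Galois action on $u$ with that on $Z$), and the relation $g_\sigma\cdot{}^\sigma g_\tau=\phi(b(\sigma,\tau))\,g_{\sigma\tau}$ arranged above; continuity of $z$ holds since $b$, $\phi$ and $(g_\sigma)$ all have finite level over $L/F$. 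By construction $z|_{u(\overline{F})}=\phi$ is a morphism of pro-algebraic groups into $Z$, so $z\in Z^1(u\to\cW,Z\to G)$, and it maps to $\bar z$ because $\phi(n)\in Z$; this yields the asserted surjectivity. The one genuinely nontrivial input — and, I expect, the main obstacle — is the surjectivity of $\Hom(u,Z)\to H^2(\Gamma,Z)$, namely that $u$ is a universal absorber for $H^2$ with finite multiplicative coefficients; this is where the locality of $F$ and the structure of $H^2(\Gamma,u)$ recalled above are essential, while everything else is formal manipulation of cochains. The hypothesis $Z\supseteq Z(G_{\mathrm{der}})$ is used only to ensure $G/Z$ is the natural target, one that already captures every (not necessarily pure) inner form of $G$.
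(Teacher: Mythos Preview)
The paper does not prove this statement at all: it is recorded as a \emph{Fact} and simply cited to \cite[Corollary 3.8]{Kal16}. So there is no in-paper argument to compare against; your sketch is essentially a reconstruction of Kaletha's proof idea.

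That said, there is a genuine slip in your argument. You write that $\bar z$ is ``valued in $G/Z$'' and that the obstruction cochain $c$ lands in $\ker\bigl(G(\overline F)\to(G/Z)(\overline F)\bigr)=Z(\overline F)$. But the target of the map is $Z^1(\Gamma,G/Z(G))$, with $Z(G)$ the \emph{full} center, which in general strictly contains the finite subgroup $Z$ and need not be finite. So the obstruction you obtain lies in $H^2(\Gamma,Z(G))$, not $H^2(\Gamma,Z)$, and you cannot directly invoke the absorber property of $u$ for finite multiplicative coefficients. The repair is exactly where the hypothesis $Z\supseteq Z(G_{\mathrm{der}})$ is really used: one has $G/Z(G)\cong G_{\mathrm{der}}/Z(G_{\mathrm{der}})$, so $\bar z$ may be regarded as a cocycle in the adjoint group of $G_{\mathrm{der}}$; lifting through $G_{\mathrm{der}}$ rather than $G$ produces an obstruction in $H^2(\Gamma,Z(G_{\mathrm{der}}))$, which then pushes forward into $H^2(\Gamma,Z)$ since $Z(G_{\mathrm{der}})\subset Z$. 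Only now is the key surjectivity $\Hom(u,Z)\twoheadrightarrow H^2(\Gamma,Z)$ applicable. In particular, your final sentence --- that the hypothesis is ``used only to ensure $G/Z$ is the natural target'' --- is not correct: it is used in an essential way to force the obstruction class into finite coefficients where $u$ can absorb it. With this correction, the rest of your cochain manipulation goes through as written.
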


Moreover, in the case $F = \R$, the following lemma is useful.

\begin{lem}\label{str_real}
Assume that $F = \R$. Fix $w_1 \in \cW$ so that the image of $w_1$ in $\Gamma$ is the non-trivial element. If $h \in G(\C)$ satisfies $h^2 \in Z$ and $(h\cdot w_1(h))^N = 1$ for some positive integer $N$, then there exists unique $z \in Z^1(u \rightarrow \cW, Z \rightarrow  G_0(V_c^\#))$ such that $z(w_1) = h$.
\end{lem}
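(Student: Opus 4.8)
The plan is to reduce the statement to Kaletha's explicit description of $Z^1(u \to \cW, Z \to G)$ for $F = \R$, which is the real analogue of his general framework. Recall that $\cW$ sits in an extension $1 \to u(\C) \to \cW \to \Gamma \to 1$ with $\Gamma = \Gal(\C/\R) = \{1, \sigma\}$ of class $-1 \in H^2(\Gamma, u)$. Fixing $w_1 \in \cW$ lifting $\sigma$, an element of $Z^1(u \to \cW, Z \to G)$ is determined by its value $z(w_1) = h \in G(\C)$ together with its restriction to $u(\C)$, and the cocycle condition forces the restriction to $u(\C)$ to be a homomorphism $u(\C) \to Z$ that is compatible with $h$ via the twisted conjugation. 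So the first step is to unwind which pairs $(h, \text{restriction to } u)$ occur, and to see that the restriction to $u$ is \emph{uniquely determined} by $h$ once we know $h$ is admissible.

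First I would record the two constraints the cocycle relation imposes on $h = z(w_1)$. Writing out $z(w_1^2) = z(w_1)\cdot (w_1 \cdot z(w_1))$ and using that $w_1^2 \in u(\C)$ maps under $z|_{u}$ into the finite group $Z$: since the extension has class $-1$, $w_1^2$ is a specified element of $u(\C)$, and $z(w_1^2) = h \cdot \sigma(h)$ must equal $z|_u(w_1^2)$, an element of $Z$ of finite order; hence $(h\cdot \sigma(h))^N = 1$ for some $N$, i.e.\ $(h \cdot w_1(h))^N = 1$. Likewise $z$ restricted to $u(\C)$ must be a cocycle valued in $Z$ with trivial $\Gamma$-action twisted by $\Ad h$; the second condition $h^2 \in Z$ is what guarantees that such a compatible restriction $u(\C) \to Z$ exists (the class of $h^2$ in $Z$, modulo squares and the relevant coboundary, being the obstruction), and Kaletha's computation of $H^2(\Gamma, u) = \Z/2\Z$ and of the structure of $Z^1$ in the Archimedean case is exactly the input that makes both existence and uniqueness work. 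Concretely, I expect the restriction $z|_{u}$ to be pinned down by: (a) its value on $w_1^2$, forced to be $h\cdot w_1(h)$ (well-defined in $Z$ by the first hypothesis), and (b) its values on the "square-root" part of $u$, forced by $h^2$ via $h^2 \in Z$; and these determine a homomorphism on all of $u(\C)$ because $u$ is the specific pro-algebraic group $\varprojlim \mathrm{Res}_{E_i/F}\mathbb{G}_m / \mathbb{G}_m$-type object whose character group Kaletha identifies, so a compatible $Z$-valued functional is unique.

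Assembling these, the map $z \mapsto z(w_1)$ is then seen to be an injection from $Z^1(u \to \cW, Z \to G)$ (for $F=\R$) into $\{h \in G(\C) : h^2 \in Z,\ (h\cdot w_1(h))^N = 1 \text{ for some } N\}$, and the content of the lemma is that it is a \emph{bijection} onto this set; injectivity is the uniqueness, and surjectivity is the assertion that every such $h$ extends to a bona fide cocycle. I would prove surjectivity by writing down the extension explicitly: given admissible $h$, define $z(w_1) = h$ and define $z$ on $u(\C)$ by the forced recipe above, then check the cocycle identity $z(w w') = z(w) \cdot {}^{w}z(w')$ on the finitely many cases ($w, w' \in u(\C)$; one of them $\equiv w_1$; both $\equiv w_1$), each reducing to $h^2 \in Z$ being central and $(h w_1(h))^N = 1$. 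The group structure of $G_0(V_c^\#)$ versus $G$ plays no role; the statement is purely about $Z^1(u \to \cW, Z \to G)$ for an arbitrary connected reductive $G$ with finite central $Z$.

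The main obstacle I anticipate is verifying that the restriction to $u(\C)$ is genuinely \emph{forced} (uniqueness) and genuinely \emph{exists} (surjectivity) — i.e.\ correctly identifying, from Kaletha's \S3 in \cite{Kal16}, the precise structure of $u$ in the real case and of the cocycles valued in a finite module, so that no extra freedom or obstruction is overlooked. Once that bookkeeping is set up, everything else is a short finite check.
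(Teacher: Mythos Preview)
The paper's proof is a single sentence: it cites \cite[Theorem 5.2]{Kal16}, where Kaletha gives exactly this explicit description of $Z^1(u \to \cW, Z \to G)$ in the Archimedean case. So there is nothing to prove here beyond locating the statement in the reference.

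Your proposal instead attempts to reprove Kaletha's Theorem~5.2 from scratch. That is a legitimate but substantially harder route, and your sketch remains a plan rather than a proof: you correctly identify that the crux is the structure of $u(\C)$ and the verification that the restriction $z|_{u(\C)}$ is forced and exists, but you do not carry this out, and you acknowledge as much. More concretely, your account of where the two hypotheses enter is muddled. The cocycle relation gives $z(w_1^2) = h \cdot w_1(h)$, and since $w_1^2 \in u(\C)$ this forces $h \cdot w_1(h)$ to lie in $Z$ (hence be torsion); that is one condition. Your explanation that ``$h^2 \in Z$ is what guarantees that such a compatible restriction $u(\C) \to Z$ exists, the class of $h^2$ \ldots being the obstruction'' is vague and does not obviously correspond to anything in the cocycle calculus --- you would need to exhibit the precise relation in $\cW$ (beyond $w_1^2 \in u$) that produces $h^2$, and this requires the explicit presentation of $\cW$ in the real case that Kaletha works out. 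Without that, the role of $h^2 \in Z$ in your argument is asserted rather than derived.

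If you want to avoid the citation, the honest path is to read \cite[\S5]{Kal16} and extract the presentation of $\cW$ for $F=\R$ (it is closely related to the Weil group $W_\R$); once that is in hand the finite check you envision is indeed short, but the content lies in that presentation, not in the check.
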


\begin{proof}
This is just a part of \cite[Theorem 5.2]{Kal16}.
\end{proof}
 
Let $Z$ be a central subgroup of $G$, which is not required to be a finite group. Then, following \cite{Kal18b}, we define
\[
Z^1(u \rightarrow \cW, Z \rightarrow G) = \bigcup_{Z'} Z^1(u \rightarrow \cW, Z' \rightarrow G)
\]
where $Z'$ runs over the finite subgroup of $Z$ defined over $F$. 

\subsection{
		Special classes of rigid inner twists
		}\label{special class RIT}

Denote by $\RIT^\star(V^\#, V)$ the set of the rigid inner twists of the form 
\[
(z, \fm_V^{-1}\circ\varphi_P)
\]
where $z$ is a rigid inner form in $Z^1(u \rightarrow \cW, Z_{V_c^\#} \rightarrow G_0(V^\#))$,  and $P$ is an isometry from $V^\#\otimes\overline{F}$ onto $(V\otimes \overline{F})^\natural$. 
Now we discuss about the structure of the set $\RIT^\star(V^\#, V)$. Denote by $Z_{V_c^\#}$ be the center of $G(V_c^\#)$, and by $Z_V$ the center of $G(V)$. Moreover, to simplify the notation, we put
\[
\cZ^1[V_c^\#] = Z^1(u \rightarrow \cW, Z_{V_c^\#} \rightarrow Z_{V_c^\#}).
\]
The product of the three groups $\cZ^1[V_c^\#] \times (G(V)/Z_V)(F) \times G(V_c^\#)(\overline{F})$ acts on $\RIT^\star(V_c^\#, V)$ by
\begin{align}\label{def_ac}
(\lambda, h, g)\cdot (z, \varphi) = (\lambda \cdot z_g, (\Ad h)\circ \varphi \circ (\Ad g))
\end{align}
for $(\lambda, h, g) \in \cZ^1[V_c^\#] \times (G(V)/Z_V)(F) \times G(V_c^\#)(\overline{F})$ and $(z, \varphi) \in \RIT^\star$. Here, $z_g$ denotes the cocycle in $Z^1(u \rightarrow \cW, Z_{V_c^\#} \rightarrow G_0(V_c^\#))$ given by $z_g(w) = g^{-1}z(w)w(g)$ for $w \in \cW$. 

\begin{prop}\label{RIT_str}
\begin{enumerate}
\item $\RIT^\star(V^\#, V) \not= \varnothing$. \label{nonempty}
\item The action of $\cZ^1[V_c^\#] \times (G(V)/Z_V)(F) \times G(V_c^\#)(\overline{F})$ on $\RIT^\star(V_c^\#, V)$ defined in \eqref{def_ac} is transitive. \label{transitive}
\end{enumerate}
\end{prop}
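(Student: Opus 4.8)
The plan is to prove the two assertions of Proposition \ref{RIT_str} separately, treating nonemptiness first and then transitivity of the action \eqref{def_ac}.

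\medskip

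\textbf{Step 1: Nonemptiness.} First I would show that $\RIT^\star(V^\#,V)$ contains at least one element. Since $\overline{F}$ splits $D$, we may apply the Morita functor $\natural$ over $\overline{F}$, which gives a $(-\epsilon)$-Hermitian (or $\epsilon$-Hermitian, according to the sign) space $(V\otimes\overline F)^\natural$ of dimension $2m$ over $(D\otimes\overline F)^\natural=\overline F$, together with the isomorphism $\fm_V\colon G(V)\to G(V^\natural)$ of Section \ref{morita}. By construction $V_c^\#\otimes\overline F$ and $(V\otimes\overline F)^\natural$ are non-degenerate $\epsilon$-Hermitian spaces of the same dimension over the algebraically closed field $\overline F$, so they are isometric; choose an isometry $P\colon V^\#\otimes\overline F\to (V\otimes\overline F)^\natural$. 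Then $\fm_V^{-1}\circ\varphi_P\colon G(V_c^\#)\to G(V)$ is an isomorphism defined over $\overline F$, and one checks that the cocycle $z$ defined by $z(w)=(\fm_V^{-1}\circ\varphi_P)^{-1}\circ w\circ(\fm_V^{-1}\circ\varphi_P)\circ w^{-1}$ takes values in $Z(G_0(V_c^\#))$: indeed, since $\fm_V$ is defined over $\overline F$ and $\varphi_P$ intertwines the two Galois actions up to the automorphism $\Ad(P^{-1}\circ w(P))$, and both $V_c^\#$ and $V$ become the \emph{same} quasi-split form after base change, the resulting automorphism is inner by a central element — this is exactly the statement that $G(V)$ is an inner form of the quasi-split group $G(V_c^\#)$ with the relevant $c$, combined with $H^1(\Gamma, G^{\mathrm{ad}})$ controlling the twist. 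Then invoking Fact \ref{rig can surj} (applied with $Z=Z_{V_c^\#}$, which by definition contains the center of the derived group) we lift $\overline z\in Z^1(\Gamma,G/Z(G))$ to some $z\in Z^1(u\to\cW, Z_{V_c^\#}\to G_0(V_c^\#))$; after possibly absorbing the difference between $z$ and the naive cocycle into a modification of $P$ by an element of $G(V_c^\#)(\overline F)$ (legitimate since changing $P$ by post-composition with $g\in G(V_c^\#)(\overline F)$ changes $z$ by $z_g$), the pair $(z,\fm_V^{-1}\circ\varphi_P)$ is a rigid inner twist lying in $\RIT^\star(V^\#,V)$.

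\medskip

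\textbf{Step 2: Transitivity.} Let $(z,\varphi)$ and $(z',\varphi')$ be two elements of $\RIT^\star(V^\#,V)$, say $\varphi=\fm_V^{-1}\circ\varphi_P$ and $\varphi'=\fm_V^{-1}\circ\varphi_{P'}$. Then $\varphi_{P'}\circ\varphi_P^{-1}=\varphi_{P'\circ P^{-1}}$ is the isomorphism of $G(V^\natural)$ induced by the $\overline F$-linear automorphism $P'\circ P^{-1}$ of $(V\otimes\overline F)^\natural$. Since $P$ and $P'$ are both isometries from $V^\#\otimes\overline F$ onto $(V\otimes\overline F)^\natural$, the composite $P'\circ P^{-1}$ is an isometry of $(V\otimes\overline F)^\natural$, hence lies in $G(V^\natural)(\overline F)=\fm_V(G(V)(\overline F))$; write it as $\fm_V(h_0)$ for a unique $h_0\in G(V)(\overline F)$. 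Thus $\varphi'\circ\varphi^{-1}=\Ad(h_0)$ as automorphisms of $G(V)$ over $\overline F$. Now $\varphi'\circ\varphi^{-1}$ is, a priori, only an $\overline F$-automorphism; but both $(z,\varphi)$ and $(z',\varphi')$ satisfy the rigid-inner-twist relation with cocycles landing in the center $Z_{V_c^\#}$, and one computes that $\varphi'\circ\varphi^{-1}$ commutes with the Galois action up to conjugation by a central (hence trivial on $G^{\mathrm{ad}}$) element. Therefore the image $\bar h_0$ of $h_0$ in $(G(V)/Z_V)(\overline F)$ is Galois-fixed, i.e.\ $\bar h_0\in(G(V)/Z_V)(F)$. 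Acting by $(\,1,\bar h_0,1\,)\in\cZ^1[V_c^\#]\times(G(V)/Z_V)(F)\times G(V_c^\#)(\overline F)$ via \eqref{def_ac} replaces $(z,\varphi)$ by $(z,\Ad(h_0)\circ\varphi)$, whose isomorphism component now equals $\varphi'$. So we may assume $\varphi=\varphi'$ from the start. Then the rigid-inner-twist relation forces $z(w)z'(w)^{-1}\in Z_{V_c^\#}(\overline F)$ for all $w\in\cW$, and $w\mapsto z(w)z'(w)^{-1}$ is a cocycle in $\cZ^1[V_c^\#]$; acting by this element (the $\lambda$-factor in \eqref{def_ac}) carries $z$ to $z'$. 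Finally one must check that after adjusting $\varphi$ we have not left $\RIT^\star$ and that the $G(V_c^\#)(\overline F)$-factor is available to match the two choices of $P$ up to the ambiguity already accounted for; this is where the factor $g$ enters, used to reconcile any discrepancy introduced when the lift in Step 1 forced a modification of $P$.

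\medskip

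\textbf{Main obstacle.} The routine linear-algebra and cohomology manipulations are standard; the one delicate point is verifying that the $\overline F$-automorphism $\varphi'\circ\varphi^{-1}=\Ad(h_0)$ is in fact realized by an element of $(G(V)/Z_V)(F)$ rather than merely $(G(V)/Z_V)(\overline F)$ — equivalently, that the difference of two rigid inner twists in $\RIT^\star$ with the same target is, modulo the center, defined over $F$. This requires carefully tracking the twisted Galois actions: $\varphi$ intertwines the standard $\Gamma$-action on $G(V_c^\#)$ with the $\Gamma$-action on $G(V)$ twisted by $\Ad z(w)$, and similarly for $\varphi'$ with $z'$; since $z,z'$ are valued in the center $Z_{V_c^\#}$, the two twisted actions on $G(V)$ differ only by an inner automorphism by a central element, which is trivial. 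Hence $\varphi'\circ\varphi^{-1}$ commutes with the (untwisted) Galois action on $G(V)$, giving $h_0\in(G(V)/Z_V)(F)$ after passing to the adjoint quotient. Making this argument airtight — in particular being careful about the distinction between $G(V)$ and $G_0(V)$, and about which center one quotients by — is the crux of the proof.
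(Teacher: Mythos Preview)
Your Step 1 is essentially the paper's argument (which is carried out as part of Proposition~\ref{corr_funda}\eqref{corr_3}), except for one misstatement: the cocycle $w\mapsto(\fm_V^{-1}\circ\varphi_P)^{-1}\circ w\circ(\fm_V^{-1}\circ\varphi_P)\circ w^{-1}$ is \emph{not} valued in the center. It is an inner automorphism, hence a class in $Z^1(\Gamma, G_0(V_c^\#)/Z_{V_c^\#})$, which you then lift via Fact~\ref{rig can surj}; your subsequent sentences do say this, so the earlier phrase ``inner by a central element'' is just a slip.

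Your Step 2, however, has a genuine gap, and it is precisely the point you flag as the ``main obstacle''. Your proposed resolution rests on the assertion that ``$z,z'$ are valued in the center $Z_{V_c^\#}$''. This is false: an element of $Z^1(u\to\cW, Z_{V_c^\#}\to G_0(V_c^\#))$ is a cocycle with values in $G_0(V_c^\#)(\overline F)$ whose \emph{restriction to $u$} lands in $Z_{V_c^\#}$; on general $w\in\cW$ it need not be central at all. Concretely, writing $\varphi'=\Ad(h_0)\circ\varphi$ and unwinding the rigid-inner-twist relations gives
\[
\Ad\bigl(\varphi^{-1}(h_0^{-1}\,w(h_0))\bigr)=\Ad\bigl(z'(w)\,z(w)^{-1}\bigr),
\]
so $\bar h_0\in(G(V)/Z_V)(F)$ would require $z'(w)z(w)^{-1}\in Z_{V_c^\#}$ for all $w$, which is exactly what you have not established. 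Two rigid inner twists of the same target can have cocycles differing by a genuine (noncentral) conjugation, and your argument never uses the factor $g\in G(V_c^\#)(\overline F)$ to absorb this.

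The paper proceeds in the opposite order. It first proves a nontrivial cohomological statement (Lemma~\ref{actn_lem2}): whenever $G_0(V_c^\#)_{z}\cong G_0(V_c^\#)_{z'}$, there exist $g\in G(V_c^\#)(\overline F)$ and $\lambda\in\cZ^1[V_c^\#]$ with $z'=\lambda\cdot z_g$. This is established by counting: one shows that the number of $\langle\varepsilon\rangle$-orbits on $H^1(\Gamma,G_0(V_c^\#)/Z_{V_c^\#})$ equals the number of isomorphism classes of inner forms, case by case (non-Archimedean via Kottwitz's pairing, Archimedean via explicit torus cohomology). Only after acting by $(\lambda,1,g)$ to force $z_1=z_2$ does the paper compare the $\varphi$'s; at that point the computation above collapses and $\bar h\in(G(V)/Z_V)(F)$ follows immediately. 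Your outline is missing this lemma, and without it the transitivity does not go through.
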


The assertion \eqref{nonempty} will be proved in \S\ref{main sec} (see Remark \ref{rm_exist} below). The rest of this subsection is devoted to proving \eqref{transitive}. 
First, we study the set $Z^1(u \rightarrow \cW, Z_{V_c^\#} \rightarrow G_0(V_c^\#))$. 

\begin{lem}\label{actn_lem1}
The following sequence of homomorphisms is exact.
\[
\cZ^1[V_c^\#] \rightarrow H^1(u \rightarrow \cW, Z_{V_c^\#} \rightarrow G_0(V_c^\#)) \rightarrow H^1(\Gamma, G_0(V_c^\#)/Z_{V_c^\#}) \rightarrow 1.
\]
\end{lem}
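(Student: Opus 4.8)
The plan is to deduce this three-term exact sequence from the general cohomology exact sequence attached to the short exact sequence of coefficients
\[
1 \rightarrow Z_{V_c^\#} \rightarrow G_0(V_c^\#) \rightarrow G_0(V_c^\#)/Z_{V_c^\#} \rightarrow 1,
\]
together with Kaletha's formalism for the cohomology of $u \rightarrow \cW$. Concretely, Kaletha \cite[\S3.2]{Kal16} attaches to such a short exact sequence of algebraic groups a long exact sequence of pointed sets involving $H^i(u \rightarrow \cW, Z \rightarrow G)$; since the quotient $G_0(V_c^\#)/Z_{V_c^\#}$ has trivial central subgroup in the relevant slot, the group $H^1(u \rightarrow \cW, 1 \rightarrow G_0(V_c^\#)/Z_{V_c^\#})$ coincides with the ordinary Galois cohomology $H^1(\Gamma, G_0(V_c^\#)/Z_{V_c^\#})$ (the pro-algebraic part $u$ contributes nothing since $H^1(\Gamma,u)=1$ and $Z=1$). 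This identifies the middle and right terms of the claimed sequence with consecutive terms of Kaletha's long exact sequence, and exactness there, plus surjectivity onto $H^1(\Gamma, G_0(V_c^\#)/Z_{V_c^\#})$, is exactly Fact~\ref{rig can surj} applied to $G = G_0(V_c^\#)$ with $Z = Z_{V_c^\#}$ (note $Z_{V_c^\#}$ contains the center of the derived subgroup, as in all the cases \eqref{vsp I}--\eqref{vsp III} the group $G_0(V_c^\#)$ is semisimple-by-torus with the appropriate center, or one simply enlarges $Z$ harmlessly using the union construction of \S\ref{ritset}).

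The remaining point is to identify the left term and the first map. By definition $\cZ^1[V_c^\#] = Z^1(u \rightarrow \cW, Z_{V_c^\#} \rightarrow Z_{V_c^\#})$, i.e. the cocycles valued in the central subgroup itself. The connecting portion of Kaletha's long exact sequence has the form
\[
H^0(u \rightarrow \cW, Z_{V_c^\#} \rightarrow G_0(V_c^\#)/Z_{V_c^\#}) \rightarrow H^1(u \rightarrow \cW, Z_{V_c^\#} \rightarrow Z_{V_c^\#}) \rightarrow H^1(u \rightarrow \cW, Z_{V_c^\#} \rightarrow G_0(V_c^\#)),
\]
so first I would argue that the natural map $\cZ^1[V_c^\#] \to H^1(u \rightarrow \cW, Z_{V_c^\#} \rightarrow Z_{V_c^\#})$ is surjective — since $Z_{V_c^\#}$ is central (hence abelian and acted on trivially by conjugation), cocycles and cohomology classes of the $u \rightarrow \cW$ theory in this abelian setting agree up to coboundaries, and every class is represented; in fact one can just observe that the map $Z^1 \twoheadrightarrow H^1$ is always surjective by definition of $H^1$ as a quotient. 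Composing, the first map in the stated sequence is the composite $\cZ^1[V_c^\#] \twoheadrightarrow H^1(u \rightarrow \cW, Z_{V_c^\#}\rightarrow Z_{V_c^\#}) \to H^1(u \rightarrow \cW, Z_{V_c^\#} \rightarrow G_0(V_c^\#))$, and exactness of the stated sequence at its middle term follows from exactness of Kaletha's sequence at $H^1(u \rightarrow \cW, Z_{V_c^\#} \rightarrow G_0(V_c^\#))$: the image of $\cZ^1[V_c^\#]$ equals the image of $H^1(u \rightarrow \cW, Z_{V_c^\#}\rightarrow Z_{V_c^\#})$, which equals the kernel of the map to $H^1(\Gamma, G_0(V_c^\#)/Z_{V_c^\#})$.

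The step I expect to require the most care is the precise matching of Kaletha's long exact sequence with the three terms written here — in particular making sure that the term $H^1(u \rightarrow \cW, Z_{V_c^\#} \rightarrow G_0(V_c^\#)/Z_{V_c^\#})$ genuinely reduces to ordinary $H^1(\Gamma, G_0(V_c^\#)/Z_{V_c^\#})$ (this uses that a central subgroup of the quotient plays no role, together with $H^1(\Gamma,u)=1$), and that Fact~\ref{rig can surj} applies with $Z = Z_{V_c^\#}$ rather than some strictly larger $Z$. If $Z_{V_c^\#}$ failed to contain the center of the derived subgroup in some case, I would pass to a finite enlargement $Z'$ via the union construction of \S\ref{ritset}, run the argument there, and then descend, noting that the statement is insensitive to this enlargement because the maps in question factor through the $Z_{V_c^\#}$-level objects. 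Everything else is bookkeeping with the functoriality of Kaletha's exact sequences.
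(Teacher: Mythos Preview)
Your approach is essentially correct and, for case \eqref{vsp II}, actually more economical than the paper's. Two small corrections, then a comparison.

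First, the ``long exact sequence'' you attribute to \cite[\S3.2]{Kal16} is not written down there in the generality you invoke. This does not matter: exactness at the middle term is a one-line cocycle check. If $z \in Z^1(u\rightarrow\cW, Z_{V_c^\#}\rightarrow G_0(V_c^\#))$ has trivial image in $H^1(\Gamma, G_0(V_c^\#)/Z_{V_c^\#})$, lift a trivializing element to $g \in G_0(V_c^\#)(\overline{F})$; then $w \mapsto g^{-1}z(w)w(g)$ is valued in $Z_{V_c^\#}$, hence lies in $\cZ^1[V_c^\#]$, and represents the same class as $z$. The paper treats this step as obvious and only argues surjectivity on the right.

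Second, your hedge is aimed at the wrong hypothesis. The center $Z_{V_c^\#}$ always contains the center of the derived subgroup; the genuine obstruction to quoting Fact~\ref{rig can surj} verbatim is that in case \eqref{vsp II} the center $Z_{V_c^\#}\cong E^1$ is \emph{infinite}, whereas Fact~\ref{rig can surj} is stated for finite $Z$. The fix is to \emph{shrink}, not enlarge: take a finite $Z'\subset Z_{V_c^\#}$ containing the center $\mu_m$ of the derived subgroup, apply Fact~\ref{rig can surj} to get $Z^1(u\rightarrow\cW, Z'\rightarrow G_0(V_c^\#)) \twoheadrightarrow Z^1(\Gamma, G_0(V_c^\#)/Z_{V_c^\#})$ (here using that $Z(G_0(V_c^\#))=Z_{V_c^\#}$), and then note that the left side sits inside $Z^1(u\rightarrow\cW, Z_{V_c^\#}\rightarrow G_0(V_c^\#))$ by the union definition of \S\ref{ritset}.

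The paper's proof agrees with yours in cases \eqref{vsp I} and \eqref{vsp III} (where $Z_{V_c^\#}$ is finite and the claim is declared ``obvious''), but in case \eqref{vsp II} it takes a different route: it passes to an anisotropic maximal torus $S\cong (E^1)^m$, uses that $H^1(\Gamma, S/Z_{V_c^\#}) \rightarrow H^1(\Gamma, G_0(V_c^\#)/Z_{V_c^\#})$ is surjective (citing \cite{Kot86}, \cite{PR94}), and then shows $H^1(u\rightarrow\cW, Z\rightarrow S) \rightarrow H^1(\Gamma, S/Z_{V_c^\#})$ is surjective by exhibiting a splitting of $[Z\rightarrow S]\rightarrow[1\rightarrow S/Z_{V_c^\#}]$ in Kaletha's category $\mathcal{A}$. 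Your argument avoids the torus detour entirely; the paper's argument has the merit of making the surjectivity concrete at the level of an abelian subgroup, which the author later reuses in the proof of Lemma~\ref{actn_lem2}.
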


\begin{proof}
In the cases \eqref{vsp I} and \eqref{vsp III}, the claim is obvious. We consider the case \eqref{vsp II}.
It suffices to show the second map is surjective. In this case, $G_0(V_c^\#)$ possesses an anisotropic maximal torus isomorphic to $(E^1)^m$. We denote it by $S$. Then, it is known that $H^1(\Gamma, S/Z_{V_c^\#})) \rightarrow H^1(\Gamma, G_0(V_c^\#)/Z_{V_c^\#})$ is surjective (c.f. \cite[Lemma 10.2]{Kot86} and \cite[Theorem 6.18]{PR94}). 
Take a finite central subgroup $Z$ of $G_0(V_c^\#)$. Since the natural morphism $[Z \rightarrow S] \rightarrow [1 \rightarrow S/Z_{V_c^\#}]$ splits in the category $\mathcal{A}$ of \cite[\S3.2]{Kal16}, we have the natural homomorphism 
\begin{align}\label{tori_surj} 
H^1(u \rightarrow \cW, Z \rightarrow S) \rightarrow H^1(\Gamma, S/Z_{V_c^\#})
\end{align}
is surjective. Hence, we have the map 
\[
H^1(u \rightarrow \cW, Z \rightarrow G_0(V_c^\#)) \rightarrow H^1(\Gamma, G_0(V_c^\#)/Z_{V_c^\#})
\]
is also surjective. Hence, we have the claim. 
\end{proof}

For $z \in Z^1(u \rightarrow \cW, Z_{V_c^\#} \rightarrow G_0(V_c^\#))$ and $g \in G(V_c^\#)(\overline{F})$, we denote by $z_g$ the cocycle in $Z^1(u\rightarrow \cW, Z_{V_c^\#} \rightarrow G_0(V_c^\#))$ given by $z_g(w) = g^{-1}z(w)w(g)$ for $w \in \cW$.

\begin{lem}\label{actn_lem2}
Let $z, z' \in Z^1(u \rightarrow \cW, Z \rightarrow G_0(V_c^\#))$. If the two groups $G_0(V_c^\#)_z$ and $G_0(V_c^\#)_{z'}$ are isomorphic, then there exists $g \in G(V_c^\#)(\overline{F})$ and $\lambda \in Z^1(u \rightarrow \cW, Z \rightarrow Z_{V_c^\#})$ such that $z' = \lambda \cdot z_g$. Here, $G_0(V_c^\#)_z$ (resp. $G_0(V_c^\#)_{z'}$) denotes an inner form of $G_0(V_c^\#)$ associated with $z$ (resp. $z'$).
\end{lem}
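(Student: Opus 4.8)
By definition, $G_0(V_c^\#)_z$ is the inner form of $G_0(V_c^\#)$ obtained by twisting the Galois action by the cocycle $\sigma\mapsto\Ad\bar z(\sigma)$, where $\bar z\in Z^1(\Gamma, G_0(V_c^\#)/Z_{V_c^\#})$ is the image of $z$. Hence, by Galois descent, the $F$-isomorphism class of $G_0(V_c^\#)_z$ depends only on the class of $\bar z$ in $H^1(\Gamma, \Aut(G_0(V_c^\#)))$, and $G_0(V_c^\#)_z\cong G_0(V_c^\#)_{z'}$ precisely when these classes agree. The plan is first to upgrade this to the statement that $z$ and $z'$ have the same image in $H^1(\Gamma, G(V_c^\#)/Z_{V_c^\#})$, and then to extract $g$ and $\lambda$ from the exact sequence of Lemma \ref{actn_lem1}.

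For the first step I compare $\Aut(G_0(V_c^\#))$ with $G(V_c^\#)/Z_{V_c^\#}$ case by case. In cases \eqref{vsp I} and \eqref{vsp III} with $\epsilon=1$, $G_0(V_c^\#)=G(V_c^\#)$ is the split symplectic group, which has no outer automorphisms, so conjugation identifies $G(V_c^\#)/Z_{V_c^\#}$ with $\Aut(G_0(V_c^\#))$ and the assertion is immediate. In cases \eqref{vsp I} and \eqref{vsp III} with $\epsilon=-1$, $G(V_c^\#)$ is a full even orthogonal group with identity component $G_0(V_c^\#)$; the outer automorphism group of an even special orthogonal group has order two and is realised by conjugation by a reflection (in rank eight triality pertains only to the adjoint group and does not descend to the special orthogonal group), so again conjugation identifies $G(V_c^\#)/Z_{V_c^\#}$ with $\Aut(G_0(V_c^\#))$. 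In case \eqref{vsp II}, $G_0(V_c^\#)=G(V_c^\#)$ is a quasi-split unitary group whose automorphism group is strictly larger than $G(V_c^\#)/Z_{V_c^\#}$ by a single outer automorphism coming from the type-$A$ diagram automorphism; here I instead appeal directly to the classification of unitary groups over the local field $F$: every inner form of a quasi-split unitary group is the unitary group of an $\epsilon$-Hermitian space over $E$, and such a group determines the space up to isometry — for odd dimension there is a unique one, and the two spaces of a given even dimension have unitary groups of distinct $F$-rank — so $G_0(V_c^\#)_z\cong G_0(V_c^\#)_{z'}$ forces $\bar z$ and $\bar z'$ to define the same class already in $H^1(\Gamma, G_0(V_c^\#)/Z_{V_c^\#})$, a fortiori in $H^1(\Gamma, G(V_c^\#)/Z_{V_c^\#})$.

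For the second step, choose $\bar g\in(G(V_c^\#)/Z_{V_c^\#})(\overline F)$ with $\bar z'(\sigma)=\bar g^{-1}\bar z(\sigma)\sigma(\bar g)$ for all $\sigma$, and lift it to $g\in G(V_c^\#)(\overline F)$. Since $G(V_c^\#)/G_0(V_c^\#)$ is a constant group scheme of order at most two, $\sigma(g)$ lies in the same component as $g$, so $z_g$ is again valued in $G_0(V_c^\#)(\overline F)$; moreover its image in $Z^1(\Gamma, G_0(V_c^\#)/Z_{V_c^\#})$ equals that of $z'$, since the two agree after the injection into $Z^1(\Gamma, G(V_c^\#)/Z_{V_c^\#})$. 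By the exact sequence of Lemma \ref{actn_lem1}, $z'$ is then cohomologous to $\mu\cdot z_g$ for some $\mu\in\cZ^1[V_c^\#]$; writing this out at the cocycle level and — since $\mu$ is central — absorbing the trivialising $G_0(V_c^\#)(\overline F)$-coboundary into a fresh choice of $g$, one obtains $z'=\mu\cdot z_g$. Restricting to $u$, both $z'$ and $z_g$ take values in $Z$, hence so does $\mu$, so $\lambda:=\mu$ lies in $Z^1(u\to\cW, Z\to Z_{V_c^\#})$ and $z'=\lambda\cdot z_g$, as wanted.

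The crux is the unitary case of the first step: one must exclude the possibility that the nontrivial outer automorphism of a unitary group relates two inequivalent inner forms, and this is exactly where the local classification of Hermitian spaces enters. The remaining steps are formal manipulations with the exact sequence of Lemma \ref{actn_lem1}.
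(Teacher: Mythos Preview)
Your approach differs from the paper's. The paper reduces, via Lemma~\ref{actn_lem1}, to showing that the number of $\langle\varepsilon\rangle$-orbits on $H^1(\Gamma, G_0(V_c^\#)/Z_{V_c^\#})$ equals the number of isomorphism classes of inner forms of $G_0(V_c^\#)$, and then verifies this by explicit case-by-case counts: Kottwitz's duality in the $p$-adic case, and computations with anisotropic maximal tori and Weyl-group quotients when $F=\R$. Your route is more structural: in cases \eqref{vsp I} and \eqref{vsp III} you identify $G(V_c^\#)/Z_{V_c^\#}$ with $\Aut(G_0(V_c^\#))$, so that equality of isomorphism classes translates directly into equality in $H^1(\Gamma, G(V_c^\#)/Z_{V_c^\#})$; the second step then needs no real appeal to Lemma~\ref{actn_lem1}, since once $\overline{z_g}=\bar z'$ holds at the level of cocycles the quotient $z'\cdot z_g^{-1}$ is already $Z_{V_c^\#}$-valued. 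This buys a cleaner argument when it applies.

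There is, however, a gap in case \eqref{vsp II}. Your justification that the group determines the space --- ``for odd dimension there is a unique one, and the two spaces of a given even dimension have unitary groups of distinct $F$-rank'' --- is only correct for non-Archimedean $F$. When $F=\R$ there are $\lfloor m/2\rfloor+1$ isomorphism classes of rank-$m$ unitary groups, not one or two, and your dichotomy breaks down. The statement you need, that $H^1(\Gamma, PU_m)\to\{\text{isomorphism classes of inner forms}\}$ is injective, does hold over $\R$ as well --- one checks that $U(p,q)\cong U(p',q')$ forces $\{p,q\}=\{p',q'\}$, while signatures $(p,q)$ and $(q,p)$ already agree in $H^1(\R,PU_m)$ since their Hermitian forms differ by a scalar --- but you have not supplied this. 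That Archimedean verification is exactly the content carried by the paper's counting argument, and it is missing from your proof.
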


\begin{proof}
By Lemma \ref{actn_lem1}, it suffices to show that the number of the $\la \varepsilon \ra$-orbits of $H^1(\Gamma, G_0(V_c^\#)/Z_{V_c^\#})$ coincides with the number of the isomorphism classes of the inner forms of $G_0(V_c^\#)$. 
Assume that $F$ is non-Archimedean. Then, we have the bijection
\[
H^1(\Gamma, G_0(V_c^\#)/Z_{V_c^\#}) \rightarrow \Hom(Z((G_0(V_c^\#)/Z_{V_c^\#})^\wedge)^\Gamma, \C^\times)
\]
(\cite[Theorem 4.1]{Kal16} and \cite[Proposition 5.3]{Kal16}). By construction, this isomorphism is ${\rm Out}_F(G_0(V_c^\#))$-equivariant. The number of the $\la \varepsilon \ra$-orbits of $\Hom(Z((G_0(V_c^\#)/Z_{V_c^\#})^\wedge)^+, \C^\times)$ is $3$ (in the cases \eqref{vsp I} and \eqref{vsp III} with $\epsilon = -1$) or $2$ (otherwise). On the other hand, the number of the isomorphism classes of the inner forms of $G_0(V_c^\#)$ is also $3$ (in the cases \eqref{vsp I} and \eqref{vsp III} with $\epsilon = -1$) or $2$ (otherwise). Hence, for two cocycles $z,  z' \in Z^1(u \rightarrow \cW, Z \rightarrow G_0(V_c^\#))$ satisfying $G_0(V_c^\#)_{z} \cong G_0(V_c^\#)_{z'}$, there exists $g \in G_0(V_c^\#)(\overline{F})$ such that $z' = z_g$. 

Then, we assume that $F = \R$.
Put
\[
G = \begin{cases} {\rm O}(1, 2m-1) & \mbox{ if $G(V_c^\#)$ is an inner form of ${\rm O}(1, 2m-1)$}, \\
 \mbox{ anisotropic inner form of $G(V_c^\#)$} & \mbox{ otherwise}. \end{cases}
\]
Then, one can show that $\#H^1(\Gamma, \overline{G}_0(V_c^\#))/\la \varepsilon \ra = \# H^1(\Gamma, G^\circ/Z)/\la \varepsilon \ra$ where $G^\circ$ denotes the Zariski connected component, $Z$ denotes the central subgroup of order $2$.
We compute it case by case using results in \cite[\S6]{PR94}.
\begin{itemize}
\item First, we assume that $G = {\rm O}(1, 2m-1)$. If $m=1$, then the claim is obvious. Thus, we may assume $m>1$. Denote by $G'$ the anisotropic subgroup of $G$ which is isomorphic to ${\rm O}(0, 2m-2)$, and by $S'$ a maximal torus of $G'$, and by $S$ the neutral connected component of the centralizer of $S'$ in $G$. Then, we have $S \cong S' \times \mathbb{G}_m$. Consider the exact sequence
\[
1 \rightarrow S' \rightarrow S/Z(G) \rightarrow \mathbb{G}_m \rightarrow 1
\]
where the second homomorphism is given by the square of the projection. Taking the long exact sequence, we obtain the isomorphism
\[
H^1(\Gamma, S')/\{\pm 1\} \cong H^1(\Gamma, S/Z(G)).
\]
Since the left-hand side is isomorphic to $\{\pm1\}^{m-1}/\Delta\{\pm1\}$ (c.f. \cite[Theorem 6.17]{PR94}), its quotient by the Weyl group $W(S', G')$ has order $\lfloor (m-1)/2 \rfloor + 1$. Hence, by \cite[Theorem 6.18]{PR94} we have
\begin{align*}
\lfloor (m-1)/2 \rfloor + 1 &= \# H^1(\Gamma, S/Z(G)) / W(S', G') \\
&\geq \# H^1(\Gamma, S/Z(G))/ W(S, G) \\
&= \# H^1(\Gamma, \overline{G}_0(V_c^\#))/ \la \varepsilon \ra \\
&\geq \# \{\mbox{ isomorphism classes of inner forms of $G_0(V_c^\#)$ } \}.
\end{align*}
However, it is known that the last term is also $\lfloor (m-1)/2 \rfloor + 1$, which implies that all inequalities above are indeed equalities. 
\item Then, we assume that $V$ is of the type \eqref{vsp II}. Since the homomorphism \eqref{tori_surj} is surjective, we have $H^1(\Gamma, S/Z(G))$ is isomorphic to $\{ \pm 1\}^m/\Delta\{\pm 1\}$ where $\Delta$ denotes the diagonal embedding. Using this expression, one can obtain
\begin{align*}
\# H^1(\Gamma, \overline{G}_0(V_c^\#)) &= \# H^1(\Gamma, S/Z(G)) / W(S, G) = \lfloor m/2 \rfloor + 1 \\
& = \#\{ \mbox{ isomorphism classes of inner forms of $G_0(V_c^\#)$ } \}.
\end{align*}
\item Finally, we assume that $V$ is of the type \eqref{vsp I} and \eqref{vsp III}, and assume that $G_0(V^\#)$ possesses a anisotropic inner form $G$. Denote by $S$ a maximal torus of $G$. Then, we have
\[
H^1(\Gamma, S/Z) \cong 
 \{ (\zeta_1, \ldots, \zeta_m) \mid \zeta_1^4 =\cdots = \zeta_m^4 = 1, \zeta_1^2 = \cdots = \zeta_m^2 \}/\{\pm 1\}.
\]
Using this expression, one can obtain
\begin{align*}
\# H^1(\Gamma, \overline{G}_0(V_c^\#))/\la \varepsilon \ra &= \# H^1(\Gamma, S/Z(G))/W(S,G) = \lfloor m/2 \rfloor + 2 \\
&= \#\{ \mbox{ isomorphism classes of inner forms of $G_0(V_c^\#)$ } \}. 
\end{align*}
\end{itemize}
These computations complete the proof of Lemma \ref{actn_lem2}.
\end{proof}

Now we complete the proof of Proposition \ref{RIT_str}. Let $(z_1, \varphi_1), (z_2, \varphi_2) \in \RIT^\star(V^\#, V)$. Then, by Lemma \ref{actn_lem2}, there exists $\lambda \in H^1(u \rightarrow \cW, Z_{V_c^\#} \rightarrow Z_{V_c^\#})$ and $g \in G(V_c^\#)(\overline{F})$ so that $z_2 = \lambda\cdot {z_1}_g$. Put $(\lambda, 1, g)\cdot (z_1, \varphi_1) = (z_2, \varphi_1')$. Take isometries $P_1, P_2 \colon V^\#\otimes\overline{F} \rightarrow V\otimes\overline{F}$ so that $\varphi_1' = \fm_V^{-1}\circ \varphi_{P_1}$, $\varphi_2 = \fm_V^{-1}\circ \varphi_{P_2}$. Then, putting $h = \varphi_1(P_1^{-1}\circ P_2)$, we have $\varphi_2 = (\Ad h) \circ \varphi_1'$. Moreover, we have
\begin{align*}
\Ad w(h) &= w\circ \varphi_2 \circ {\varphi_1'}^{-1} \circ w^{-1} \\
&= \varphi_2\circ (\Ad z_2(w)) \circ (\Ad z_2(w)^{-1}) \circ {\varphi_1'}^{-1} \\
&=\Ad h
\end{align*}
for $w \in \cW$, which implies that $h \in (G(V)/Z_V)(F)$. Hence we have $(\lambda, h, g) \cdot (z_1, \varphi_1) = (z_2, \varphi_2)$. This completes the proof of Proposition \ref{RIT_str}.

\subsection{Rigid inner twists for Levi subgroups}\label{RIT_Levi}

First, consider the cases \eqref{vsp I} and \eqref{vsp II}. Denote by $\RIT^\star(V^\#, V^\natural)$ the set of rigid inner twists of the form $(z, \varphi_P)$ where $z$ is a rigid inner form in $Z^1(u \rightarrow \cW, Z_{V_c^\#} \rightarrow G_0(V^\#))$, and $P$ is an isometry from $V^\#\otimes\overline{F}$ onto $V^\natural\otimes\overline{F}$. Then, we identify $\RIT^\star(V^\#, V^\natural)$ with $\RIT^\star(V^\#, V)$ by the isomorphism $\fm_V$. Consider the decomposition
\[
V^\natural = X_1 \oplus \cdots \oplus X_r \oplus V_0 \oplus Y_r \oplus \cdots \oplus Y_1
\]
over $F$ so that both $X_1\oplus \cdots \oplus X_r$ and $Y_1 \oplus\cdots \oplus Y_r$ are isotropic subspace, $V_0$ is a non-degenerate subspace, and $X_k \oplus Y_k$ are non-degenerate and orthogonal to $V_0$ for all $k$ with respect to the bilinear form $( - , - )^\natural$. We define $\RIT_M^\star(V^\#, V^\natural)$ by the set of the rigid inner twists $(z, \varphi_P) \in \RIT^\star(V^\#, V^\natural)$ such that 
\begin{itemize}
\item the subspaces $P^{-1}(V_0)$, $P^{-1}(X_1), \ldots, P^{-1}(X_r), P^{-1}(Y_1), \ldots, P^{-1}(Y_r)$ are defined over $F$, 
\item $z(w)$ preserves the subspaces $P^{-1}(V_0)$, $P^{-1}(X_1), \ldots, P^{-1}(X_r), P^{-1}(Y_1), \ldots, P^{-1}(Y_r)$ for all $w \in \cW$. 
\end{itemize}
We also denote it by $\RIT_M^\star(V^\#, V)$.

Then, consider the case \eqref{vsp III}. Consider the decomposition
\[
V = X_1 \oplus \cdots \oplus X_r \oplus V_0 \oplus Y_r \oplus \cdots \oplus Y_1
\]
over $D$ so that both $X_1\oplus \cdots \oplus X_r$ and $Y_1 \oplus\cdots \oplus Y_r$ are isotropic subspace, $V_0$ is a non-degenerate subspace, and $X_k \oplus Y_k$ are non-degenerate and orthogonal to $V_0$ for all $k$ with respect to the $\epsilon$-Hermitian form $( - , - )$. We define $\RIT_M^\star(V^\#, V)$ by the set of the rigid inner twists $(z, \fm_V^{-1}\circ \varphi_P) \in \RIT^\star(V^\#, V)$ such that
\begin{itemize}
\item the subspaces $P^{-1}((V_0\otimes\overline{F})^\natural), P^{-1}((X_1\otimes\overline{F})^\natural), \ldots, P^{-1}((X_r\otimes\overline{F})^\natural), P^{-1}((Y_1\otimes\overline{F})^\natural), \ldots, P^{-1}((Y_1\otimes\overline{F})^\natural)$ are defined over $F$,
\item  $z(w)$ preserves the subspaces $P^{-1}((V_0\otimes\overline{F})^\natural), P^{-1}((X_1\otimes\overline{F})^\natural), \ldots, P^{-1}((X_r\otimes\overline{F})^\natural), P^{-1}((Y_1\otimes\overline{F})^\natural), \ldots, P^{-1}((Y_r\otimes\overline{F})^\natural)$ for all $w \in \cW$. 
\end{itemize}

	
\section{
		Local theta correspondences
		}\label{LTC}

In this section, we clarify the setting in the definition of the local theta correspondence.


Fix a non-trivial additive character $\psi \colon F \rightarrow \C^1$, and an isotropic subspaces $\X, \Y$  so that $\W = \X + \Y$. 
Then, we denote by $r_{\psi, \Y}$ the Siegel-Shale-Weil projective representation of $\Sp(\W)$ given by
\[
[r_{\psi, \Y}(g)\phi](x) = \int_{\ker c \backslash \Y} \phi(xa + yc)\psi(\la\la xa, xb \ra\ra + 2 \la \la yc, xb \ra\ra + \la\la yc, yd\ra\ra) \: dy
\]
for 
\[
 g = \begin{pmatrix} a & b \\ c & d \end{pmatrix} \in \Sp(\W),
 \]
$F \in \cS(\X)$, and $x \in \X$. Moreover, for $g_1, g_2 \in \Sp(\W)(F)$, we put
\[
c_{\psi, \Y}(g_1, g_2) = \gamma_F(\psi \circ L(\Y, \Y g_2^{-1}, \Y g_1))
\]
where $\gamma_F( \ )$ is the Weil index and $L( \  , \  , \  )$ is the Leray invariant. Then, by \cite{Rao93}, we have 
\[
r_{\psi, \Y}(g_1)r_{\psi, \Y}(g_2) = c_{\psi, \Y}(g_1, g_2)\cdot r_{\psi, \Y}(g_1g_2)
 \]
for $g_1, g_2 \in \Sp(\W)$. To specify that the symplectic space $\W$ is considered, we also write $r_{\psi, \Y}^{(\W)}$ (resp. $c_{\psi, \Y}^{(\W)}$) for $r_{\psi, \Y}$ (resp. $c_{\psi, \Y}$). The metaplectic group $\Mp(\W, c_{\psi, \Y})$ is the group $\Sp(\W)(F) \times \C^1$ together with the binary operation
\[
(g_1, z_1)\cdot (g_2, z_2) = (g_1g_2, z_1z_2c_{\psi, \Y}(g_1, g_2))
\]
for $g_1, g_2 \in \Sp(\W)(F)$ and $z_1, z_2 \in \C^1$, and the Weil representation $\omega[\W, c_{\psi, \Y}]$ of $\Mp(\W, c_{\psi, \Y})$ on $\cS(\X)$ is defined by
\[
(\omega[\W, c_{\psi, \Y}](g,z) \cF)(x) = z \cdot [r_{\psi,\Y}(g)\cF](x)
\]
for $(g,z) \in \Mp(\W, c_{\psi, \Y})$, $\cF \in \cS(\X)$, and $x \in \X$. If there is no fear of confusion, then we denote by $\omega_\psi$ instead of $\omega[\W, c_{\psi, \Y}]$. We take characters $\chi_V$ and $\chi_W$ of $E^\times$ as follows.
\begin{itemize}
\item In the cases \eqref{vsp I} and \eqref{vsp III} with $\epsilon = 1$, $\chi_V$ is the trivial character on $F^\times$ and $\chi_W$ is the character on $F^\times$ given by $\chi_W(a) = (a, \fd(W))_F$ for $a \in F^\times$. 
\item  In the cases \eqref{vsp I} and \eqref{vsp III} with $\epsilon = -1$, we put $\chi_V = \chi_{V^{\op}}$ and $\chi_W = \chi_{W^{\op}}$. 
\item In the case \eqref{vsp II}, we fix a character $\chi_V$ and $\chi_W$ on $E^\times$ so that $\chi_V|_{F^\times} = \omega_{E/F}^{\dim V}$ and $\chi_W|_{F^\times} = \omega_{E/F}^{\dim W}$.
\end{itemize}
Then, following Kudla \cite{Kud94}, we define the embedding
\[
\widetilde{\iota}_{V,W}\colon G(V)\times G(W) \rightarrow \Mp(\W, c_{\psi, \Y})
\]
which is a lift of $\iota_{V,W}\colon G(V)\times G(W)\rightarrow \Sp(\W)$. Note that the two different characters $\psi$ and $\eta$ are discussed in \cite{Kud94}. If $W$ is split in the sense of \cite{Kud94}, taking a basis $\fb = (w_1, \ldots, w_n)$ of $W$ satisfying 
\begin{align}\label{split_herm_mat}
(\la w_k, w_l \ra)_{k,l} = \begin{pmatrix} & I_{n/2} \\ -\epsilon I_{n/2} & \end{pmatrix},
\end{align}
then we denote the function $\beta_V$ of \cite[Theorem 3.1]{Kud94} by $\beta_V[W, \fb, \eta]$ to emphasize that the basis $\fb$ is used in order to apply the setting of \cite{Kud94} and that its definition is given by the formula in $\eta$. For example, in the case \eqref{vsp II} with $\epsilon = 1$, we have 
\[
\beta_{V}[W, \fb, \eta](g) = \chi_V(x(g))\cdot \gamma_F(\eta\circ RV)^{-j}
\]
for $g \in G(W)(F)$. Here, we used the notations $x( \ )$, and $RV$ of \cite{Kud94}. 

First, we assume that $W$ possesses a basis $\fb$ so that the $(-\epsilon)$-Hermitian form $\la \ , \ \ra$ satisfy the equation \eqref{split_herm_mat}. In the case \eqref{vsp I}, we denote by $\fb^\natural$ the basis $(w_1^\natural, \ldots, w_{2n}^\natural)$ of $W^\natural$ given by $w_{2k-1}^\natural = w_ke_{11}$, $w_{2k}^\natural = w_ke_{21}$ for $k=1, \ldots, n$. Then, we define $\widetilde{\iota}_{V, \chi_V}^W\colon G(W)(F) \rightarrow \Mp(\W, c_{\psi, \Y})$ by
\[
\widetilde{\iota}_{V, \chi_V}^{W}(g) = 
\begin{cases}
(\iota_{V,W}(1,g), \beta_{V^\natural}[W^\natural, \fb^\natural, \psi](g)) & (\mbox{ in the case \eqref{vsp I}}), \\
(\iota_{V,W}(1,g), \beta_V[W, \fb, \psi](g)) & (\mbox{ in the cases \eqref{vsp II}, \eqref{vsp III}})
\end{cases}
\]
for $g \in G(W)(F)$.

Second, we define the embedding $\widetilde{\iota}_{V, \chi_V}^W$ for arbitrary $W$. Let $W^\Box$ be the $(-\epsilon)$-Hermitian space $W \times W$ equipped with the $(-\epsilon)$-Hermitian form given by
\[
\la (x_1, x_2), (y_1, y_2) \ra^\Box = \la x_1, y_1 \ra - \la x_2, y_2 \ra
\]
for $x_1, x_2, y_1, y_2 \in W$. Then, the space $W^\Box$ possesses a basis $\fb = (w_1, \ldots, w_{2n})$ satisfying \eqref{split_herm_mat}. Denote by $\mathcal{X}$ (resp. $\mathcal{Y}$) the subspace of $W^\Box$ spanned by $w_1, \ldots, w_n$ (resp. $w_{n+1}, \ldots, w_{2n}$). Put $\W^\Box = V \otimes W^\Box$. Recall that $\X$ and $\Y$ are isotropic subspaces of $\W$. Thus, we have the isotropic subspace $\X^\Box$ (resp. $\Y^\Box$) consisting of the elements $(x,x')$ of $W^\Box$ for $x, x' \in \X$ (resp. $x,x' \in \Y$). Choose an element $\alpha \in \Sp(\W)(F)$ so that $\X^\Box = (V\otimes \mathcal{X})\alpha$ and $\Y^\Box = (V\otimes \mathcal{Y})\alpha$. We denote by $i_1$ the embedding of $G(W)$ into $G(W^\Box)$ so that $(x,y) \cdot i_1(g) = (xg, y)$ for $g \in G(W)$ and $x, y \in W$, by $j_1$ the embedding of $\Sp(\W)$ into $\Sp(\W^\Box)$ so that $(x,y) j_1(g) = (xg, y)$ for $g \in \Sp(\W)$ and $x,y \in \W$. Then, we define $\widetilde{\iota}_{V, \chi_V}^W$ so that the following diagram is commutative.
\[
\xymatrix{
\Mp(\W, c_{\psi, V\otimes \mathcal{Y}}) && \Mp(\W, c_{\psi, \Y^\Box}) \ar[ll]_-{(\Ad \alpha, \Id)}\ar[rr]^-{\Ad (\alpha, 1)}&& \Mp(\W^\Box, c_{\psi, \Y^\Box}) \\
G(W^\Box) \ar[u]^-{\widetilde{\iota}_{V, \chi_V}^{W^\Box}} && G(W)\ar[ll]^-{\widetilde{i_1}} \ar[rr]_-{\widetilde{\iota}_{V, \chi_V}^W} && \Mp(\W, c_{\psi, \Y}) \ar[u]_{\widetilde{j_1}}
}
\]
Here, the isomorphisms $(\Ad\alpha, \Id)$ are $\Ad (\alpha, 1)$ are given by $(\Ad\alpha, \Id)(g,z) = (\alpha g \alpha^{-1}, z)$ and $\Ad (\alpha, 1) = (\alpha, 1) (g,z) (\alpha, 1)^{-1}$ for $g \in \Sp(\W, c_{\psi, \Y^\Box}), z \in \C^1$, the embedding $\widetilde{i_1}$ is given by $\widetilde{i_1}(g,z) = (i_1(g), z)$ for $g \in G(W)(F), z \in \C^1$, and the embedding $\widetilde{j_1}$ is given by $\widetilde{j_1}(g,z) = (j_1(g), z)$ for $g \in \Sp(\W)(F), z \in \C^1$. 

Finally, we define the embedding $\widetilde{\iota}_{W, \chi_W}^V \colon G(V) \rightarrow \Mp(\W, c_{\psi, \Y})$. We use the opposite spaces $V^{\op}$ and $W^{\op}$ (see \S\ref{groups}). Then, the linear map
\begin{align}\label{op_symp}
W^{\op}\otimes V^{\op} \rightarrow V \otimes W \colon x\otimes y \mapsto y \otimes x
\end{align}
is isometric, and the following diagram is commutative.
\[
\xymatrix{G(V)\times G(W) \ar[d] \ar[rr]^-{\iota_{V,W}} & &  \Sp(V\otimes W) \ar[d] \\ G(W^{\op}) \times G(V^{\op}) \ar[rr]_-{\iota_{W^{\op}, V^{\op}}} & & \Sp(W^{\op}\otimes V^{\op})}
\]
Here, the left column map is given by $(h, g) \mapsto (\fs_W(g), \fs_V(h))$ and the right column map is the isomorphism induced by the isometry \eqref{op_symp}.  Then, we also obtain the embedding
\[
\widetilde{\iota}_{W, \chi_W}^V = \widetilde{\iota}_{W^{\op}, \chi_W}^{V^{\op}} \circ \fs_V \colon G(V) \rightarrow \Mp(\W, c_{\psi, \Y}).
\]
We define $\widetilde{\iota}_{V, W, \chi_V, \chi_W}$ by $\widetilde{\iota}_{V, \chi_V}^W$ and $\widetilde{\iota}_{W,\chi_W}$. If there is no fear of confusion, we write $\widetilde{\iota}_{V,W}$ for $\widetilde{\iota}_{V,W, \chi_V, \chi_W}$.

\begin{rem}\label{natural_Weil}
In the case \eqref{vsp I}, via the identification $\Sp(\W^\natural) = \Sp(\W)$ of Lemma \ref{Morita_W}, we have $r_{\psi, \Y}^{(\W)} = r_{\psi, \Y^\natural}^{(\W^\natural)}$ where $\Y^\natural$ denotes the image of $\Y$ in $\W^\natural$. Thus we can identify $\Mp(\W, c_{\psi, \Y})$ with $\Mp(\W^\natural, c_{\psi, \Y^\natural})$, and we have $\omega_{\psi, \Y} = \omega_{\psi, \Y^\natural}$.
\end{rem}


For an irreducible representation $\pi$ of $G(W)(F)$, we define
\[
\Theta_\psi(\pi, V) = ((\omega_{\psi, \Y}\circ\widetilde{\iota}_{V,W})\otimes\pi^\vee)_{G(V)}.
\]
If $\Theta_\psi(\pi, V)=0$, we put $\theta_\psi(\pi, V) = 0$. Otherwise, by the Howe duality (\cite{How89}, \cite{Wal90}, \cite{GT16}, \cite{GS17}), we have that $\Theta_\psi(\pi, V)$ has the unique irreducible quotient if it is non-zero. We denote the irreducible quotient by $\theta_\psi(\pi, V)$. To emphasize $\chi_V$ and $\chi_W$, we also denote it by $\theta_\psi^{\chi_V, \chi_W}(\pi, W)$.


\section{
		Local Langlands correspondence
		}\label{LLC}
		
In this section, we explain the formulation of the Langlands parameters, which we use in the later sections. 

\subsection{
		The L-groups
		}\label{Lgrp}

Put
\[
G_0(V^\#_c)^\wedge = \begin{cases} \SO_M(\C) & \mbox{ in the cases \eqref{vsp I}, \eqref{vsp III}}, \\ \GL_m(\C) & \mbox{in the case \eqref{vsp II}}. \end{cases}
\]
where $M = 2m + (1 + \epsilon)/2$ and $\SO_M(\C)$ is the set of $g \in \SL_M(\C)$ satisfying ${}^tg\cdot J_M\cdot g = J_M$. Then, $G_0(V_c^\#)^\wedge$ is the Langlands dual group of $G_0(V^\#)$. We denote by $\cT_+$ the maximal torus of $G_0(V_c^\#)^\wedge$ consisting of diagonal matrices, and by $\cB_+$ the Borel subgroup of $G_0(V_c^\#)^\wedge$ consisting of the upper triangle matrices.
Denote by $\widehat{\alpha}_k$ the algebraic character of $\cT$ projecting the $(k,k)$-component. Then, we identify $X^*(T^\#)$ with $X_*(\cT)$ via the isomorphism $\fD\colon X^*(T^\#) \rightarrow X_*(\mathcal{T})$ characterized by 
\[
(\widehat{\alpha}_k\circ \fD(\alpha_l))(z) = z^{\delta_{k,l}}  \quad (z \in \C^\times, \ 1 \leq k,l \leq m)
\]
where $\delta_{k,l}$ is the Kronecker's delta.

In the cases \eqref{vsp I} and \eqref{vsp III} with $\epsilon = -1$, we choose an automorphism $\widehat{\varepsilon}$ of $G_0(V_c^\#)^\wedge$ such that $G_0(V)^\wedge \rtimes \langle \widehat{\varepsilon} \rangle$ is isomorphic to an orthogonal group, $\widehat{\varepsilon}(\cT) = \cT$, $\widehat{\varepsilon}(\cB) = \cB$, and $\widehat{\varepsilon}(\widehat{\Delta}_\cB^\circ) = \widehat{\Delta}_\cB^\circ$. To unify the notation, we put $\widehat{\varepsilon} = {\rm Id}_{G_0(V)^\wedge}$ in the other cases. 

The Weil group $W_F$ act on $G_0(V_c^\#)^\wedge$ by
\[
w \cdot g = \begin{cases} g & (\chi_V(w) = 1), \\ \widehat{\varepsilon} g \widehat{\varepsilon}^{-1} & (\chi_V(w) = -1). \end{cases}
\]
for $w \in W_F, g \in G_0(V_c^\#)^\wedge$ in the case \eqref{vsp I} and \eqref{vsp III}, and by
\[
w \cdot g = \begin{cases} g & (w \in W_E), \\  \Phi_m {}^tg^{-1} \Phi_m^{-1} & (w \not\in W_E). \end{cases}
\]
for $w \in W_F, g \in G_0(V_c^\#)^\wedge$ in the case \eqref{vsp II}. Here, 
\[
\Phi_m = \sum_{k=1}^m e_{k, m+1-k}((-1)^{k-1}) \in \GL_m(\C). 
\]
Then, we define the L-group of $G(V_c^\#)$ to be
\[
{}^LG_0(V_c^\#) = G_0(V_c^\#)^\wedge\rtimes W_F.
\]

Finally, we define the Langlands dual group and L-group of $G_0(W_c^\#)$ via the isomorphism 
\[
t\circ\fs_{W_c^\#}\colon G_0(W_c^\#) \rightarrow G_0((W^{\op})_c^\#)
\]
where $t$ is an isomorphism from $G((W_c^\#)^{\op})$ onto $G((W^{\op})_c^\#)$ given by $t(g) = {}^tg^{* -1}$ for $g \in G((W_c^\#)^{\op})$. We also choose an automorphism $\widehat{\varepsilon}$ of $G_0(W_c^\#)^\wedge$ in the same way as that for $G_0(V_c^\#)^\wedge$.

\subsection{
		The L-parameters
		}\label{Lpar}

We define the local Langlands group by
\[
L_F = \begin{cases} W_F\times \SL_2(\C) & \mbox{ when $F$ is non-Archimedean}, \\ W_F & \mbox{ when $F$ is Archimedean}.\end{cases}
\]
In this paper, by an L-parameter of $G_0(V)$ we mean a homomorphism $\phi \colon L_F \rightarrow {}^L\!G_0(V)$ satisfying
\begin{itemize}
\item $\phi$ is relevant to $G_0(V)$,
\item $\phi|_{W_F}$ is continuous, 
\item  for $w \in W_F$, $\phi(w) = (w, a(w))$ for some semi-simple element of $G_0(V)^\wedge$, and
\item $\phi|_{\SL_2(\C)}$ is algebraical if $F$ is non-Archimedean.
\end{itemize}
Then, we put
\[
C_\phi = {\rm Cent}_{G_0(V)^\wedge}(\Im \phi)
\]
and
\[
S_\phi^+ = p^{-1}(C_\phi)
\]
where $p$ is the covering homomorphism from $\overline{G_0}(V)^\wedge$ onto $G_0(V)^\wedge$. 

We denote by $\Phi(G_0(V))$ the set of the $L$-parameters for $G_0(V)$, by $\Phi_t(G_0(V))$ the set of the tempered $L$-parameters for $G_0(V)$, and by $\Phi_2(G_0(V))$ the set of the discrete series $L$-parameters for $G_0(V)$.

\subsection{
		(Unions of) L-packets
		}\label{Lpkt}

In this section, we define (unions of) L-packets using the Plancherel measures. 
 Let $\sigma$ be an irreducible representation of $G_0(V)(F)$, let $r$ be a positive integer, and let $\tau$ be an irreducible representation of $\GL_r(D)$. We define the Plancherel measure, a rational function on $s \in \C$, as follows. Denote by $H_{2r}$ the $\epsilon$-Hermitian space given by a pair consisting of the space $D^{2r}$ of column vectors and the Hermitian form $( \ , \ )_{2r}$ defined by
\[
( x, y)_{2r} = \sum_{k=1}^r(x_k^*\cdot y_{2r+1 - k} + \epsilon \cdot x_{2r + 1 - k}^*\cdot y_k)
\]
for $x= {}^t(x_1, \ldots, x_{2r}), y = {}^t(y_1, \ldots, y_{2r}) \in D^{2r}$. We denote by $X_r$ (resp. $\overline{X}_r$) the $r$-dimensional isotropic subspace of $H_{2r}$ generated by $\mathbf{e}_1, \ldots, \mathbf{e}_r$ (resp. $\mathbf{e}_{r+1}, \ldots, \mathbf{e}_{2r}$) where
\[
\mathbf{e}_1 = {}^t(1, 0,\ldots, 0), \ldots, \mathbf{e}_{2r} = {}^t(0,\ldots, 0,1).
\]
Let $V' = V \oplus H_{2r}$, and let $P_{X_r}$ (resp. $P_{\overline{X}_r}$) be the maximal parabolic subgroup of $G_0(V')$ stabilizing $X$ (resp. $\overline{X}$). Then, the Levi-subgroup $M_{X_r}$ can be identified with $G_0(V)\times\GL_r(D)$ in the natural way. 
Then, for an irreducible representation $\sigma$ of $G_0(V)(F)$ and an irreducible representation $\tau$ of $\GL_r(D)$, we define the Prancherel measure $\mu(s, \sigma\boxtimes\tau)$ by the same manner as in \cite[\S16.1]{Kak22}. On the other hand, for an $L$-parameter $\phi$ for $G_0(V_c^\#)$ and an irreducible tempered representation $\tau$ of $\GL_r(D)$, we define
 \[
 \mu(s, \phi\boxtimes\phi_\tau) = \frac{\gamma(s, \phi^\vee\boxtimes\phi_\tau, \psi)}{\gamma(1+s, \phi^\vee\boxtimes\phi_\tau, \psi)}\cdot\frac{\gamma(2s, \phi_\tau, R, \psi)}{\gamma(1+2s, \phi_\tau, R, \psi)}
 \]
 where $\phi_\tau$ is the $L$-parameter of $\tau$, and $R$ denotes $\wedge^2$ in the cases \eqref{vsp I} and \eqref{vsp III} or one of the Asai's representations in the case \eqref{vsp II} (c.f. \cite[\S7]{GGP12}).

Let $(z, \varphi) \in \RIT^\star(V^\#, V)$, and let $\phi \in \Phi_2(G_0(V))$. Then, we associate the set $\widetilde{\Pi}_\phi(G_0(V))$ with $\phi$ consisting of the square-integrable irreducible representations of $G_0(V)(F)$ such that 
\[
  \mu(s, \pi\boxtimes\tau) = \mu(s, \phi\boxtimes \phi_\tau)
\]
for all square-integrable irreducible representations $\tau$ of $\GL_k(D)$ for all $k$. By Proposition \ref{RIT_str} \eqref{transitive}, we have the set $\widetilde{\Pi}_\phi(G_0(V))$ does not depend on the choice of $(z, \varphi) \in \RIT^\star(V_c^\#, V)$.

Let $\phi \in \Phi_t(G_0(V))$. Take the minimal Levi-subgroupb $M$ so that $g\phi(L_F)g^{-1} \subset {}^L(M)$ for some $g \in G_0(V)^\wedge$. Then, one can show that $(\Ad g)\circ\phi \in \Phi_2(M)$. Moreover, there exists $h \in G_0(V)(\overline{F})$ such that $h^{-1}z(w)w(h) \in M(\overline{F})$ for all $w \in \cW$. We denote by $(z, \varphi)^{M}$ the pair $(w \mapsto h^{-1}z(w)w(h), \varphi\circ (\Ad h))$. Since $(z, \varphi) \in \RIT^\star(V^\#, V)$, we have $(z, \varphi)^{M} \in \RIT_M^\star(V^\#, V)$. Then, we define $\widetilde{\Pi}_\phi(G_0(V))$ by
\[
\{ \mbox{ irreducible components of $\Ind_P^{G_0(V)}\pi$ } \mid \pi \in \widetilde{\Pi}_{(\Ad g)\circ\phi}(M) \}.
\]

\begin{lem}\label{GS}
the two sets $\widetilde{\Pi}_\phi(G_0(V))$ and $\widetilde{\Pi}_{\phi'}(G_0(V))$ are disjoint if $\phi, \phi' \in \Phi_t(G_0(V))$ are not conjugate under $G_0(V)^\wedge\rtimes \la\widehat{\varepsilon}\ra$.
\end{lem}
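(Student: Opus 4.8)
The plan is to reduce the disjointness statement to the known disjointness of $L$-packets for discrete series parameters, which is (essentially) built into the definition of $\widetilde{\Pi}_\phi$ via Plancherel measures, and then propagate it through parabolic induction using the standard theory of the Bernstein decomposition / Jacquet modules. The overall strategy is: if $\pi$ lies in both $\widetilde{\Pi}_\phi(G_0(V))$ and $\widetilde{\Pi}_{\phi'}(G_0(V))$, then $\pi$ is a constituent of $\Ind_P^{G_0(V)}\sigma$ and $\Ind_{P'}^{G_0(V)}\sigma'$ for discrete series representations $\sigma$ of a Levi $M$ and $\sigma'$ of a Levi $M'$, where $\sigma \in \widetilde{\Pi}_{\phi_M}(M)$, $\sigma' \in \widetilde{\Pi}_{\phi'_{M'}}(M')$ and $\phi_M, \phi'_{M'}$ are the discrete parts of $\phi, \phi'$. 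By the theory of the cuspidal (or discrete) support — uniqueness of the pair $(M,\sigma)$ up to association — the data $(M,\sigma)$ and $(M',\sigma')$ must be $G_0(V)(F)$-conjugate. So one is reduced to the case $M = M'$, comparing $\sigma$ and $\sigma'$.

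Next I would analyze the structure $M \cong G_0(V_0) \times \prod_i \GL_{r_i}(D)$. A discrete series parameter $\phi_M$ into ${}^LM$ decomposes as a discrete parameter $\phi_0$ for $G_0(V_0)$ together with discrete series parameters $\phi_i$ for the $\GL_{r_i}(D)$-factors. The packet $\widetilde{\Pi}_{\phi_M}(M)$ is the product of $\widetilde{\Pi}_{\phi_0}(G_0(V_0))$ with the singleton packets for the $\GL$-factors (local Langlands for $\GL_r(D)$ being a bijection). Therefore $\sigma$ and $\sigma'$ being equal forces $\phi_0 = \phi'_0$ as parameters into ${}^LG_0(V_0)$ up to $G_0(V_0)^\wedge$-conjugacy, and forces each $\phi_i = \phi'_i$; for the discrete parameter into $G_0(V_0)$ the crucial input is precisely that the defining Plancherel-measure condition in the definition of $\widetilde{\Pi}$ determines $\phi_0$ up to $G_0(V_0)^\wedge\rtimes\langle\widehat\varepsilon\rangle$-conjugacy — this is the analogue of the disjointness of discrete $L$-packets and is exactly where the local Langlands correspondence for the quasi-split inner form (and the stability of the characters of $L$-packets) enters. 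Reassembling $\phi_0$ and the $\phi_i$ back up through ${}^LM \hookrightarrow {}^LG_0(V)$ then gives that $\phi$ and $\phi'$ are $G_0(V)^\wedge$-conjugate; the possible ambiguity by $\widehat\varepsilon$ (which may not lie in the connected dual group) is precisely what produces the qualification ``up to $G_0(V)^\wedge\rtimes\langle\widehat\varepsilon\rangle$'' in the statement.

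\textbf{The main obstacle} I anticipate is the step identifying the discrete parameter $\phi_0$ from the Plancherel-measure data: one needs that the equalities $\mu(s,\pi_0\boxtimes\tau) = \mu(s,\phi_0\boxtimes\phi_\tau)$ for all discrete series $\tau$ of all $\GL_k(D)$ pin down $\phi_0|_{W_F\times\SL_2}$ up to conjugacy. This is a Gan--Savin / Mœglin-style argument: the poles and zeros of $\mu(s,\phi_0\boxtimes\phi_\tau)$ as $\tau$ varies over characters and twists of Steinberg-type representations read off the Jordan blocks of $\phi_0$ (the multiset of pairs $(\rho, a)$ with $\rho$ a cuspidal/discrete parameter and $a$ the $\SL_2$-dimension), and for the quaternionic case this uses the $\wedge^2$ (or Asai) $\gamma$-factors rather than symmetric-square ones. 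I would invoke the relevant results of \cite{Kak22} (where these Plancherel measures and the sets $\widetilde\Pi_\phi$ are set up) to conclude this identification, so that in the write-up this step is a citation rather than a fresh computation. The remaining pieces — uniqueness of discrete support, the product structure of $\widetilde\Pi$ for Levi subgroups, and the $\GL_r(D)$ local Langlands bijection — are standard and can be handled quickly.
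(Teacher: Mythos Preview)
Your proposal is correct and follows essentially the same route as the paper. Both reduce to discrete series parameters and then use that the Plancherel measures $\mu(s,\pi\boxtimes\tau)$, as $\tau$ ranges over discrete series of $\GL_k(D)$, determine $\std\circ\phi$ by reading off the Jordan blocks from the poles; the paper cites \cite[Proposition~III.4.1(ii)]{Wal03} for the reduction step and writes out the inductive pole-reading argument explicitly (following \cite[Lemma~12.3]{GS12}) rather than citing \cite{Kak22}, but the substance is the same.
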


\begin{proof}
This lemma is proved by a similar argument to \cite[Lemma12.3]{GS12}. We prove it here only in case \eqref{vsp III} for simplicity. Recall that $\phi$ and $\phi'$ are conjugate under $G_0(V)^\wedge \rtimes\la \widehat{\varepsilon}\ra$ if and only if $\std \circ \phi = \std \circ \phi'$ as representations of $L_F$ (c.f. \cite[Lemma 3.1]{GGP12}). 
According to \cite[Proposition III.4.1 (ii)]{Wal03}, it suffices to show Lemma \ref{GS} for discrete series parameters. Denote by $R_k$ the unique $k+1$-dimensional irreducible representation of $\SL_2(\C)$. Let $\phi, \phi'$ be discrete series parameters of $G(V)$. The representation $\std \circ \phi$ of $W_F \times \SL_2(\C)$ decomposes into 
\[
\std\circ\phi = \bigoplus_{k=0}^\infty X_k \boxtimes R_k
\]
for some $W_F$-modules $X_k \ (k=0,1, \ldots)$. Let $k_0$ be a non-negative integer, and suppose $X_k\boxtimes R_k$ is contained in $\std\circ\phi'$ for all $k < k_0$. Take an irreducible component $\rho$ of $X_{k_0}$. Then, there exists $0$ or an irreducible representation $\tau$ of $W_F$ such that $\tau \not\cong\rho$, $\rho \oplus \tau$ has an even dimension, and ${\rm Hom}_{W_F}(\tau, \std\circ\phi') =  {\rm Hom}_{W_F}(\tau, \std\circ\phi) = 0$.
Then, $\rho \oplus \tau$ defines a discrete series $L$-parameter of a general linear group over $D$, and 
\[
\frac{\mu(s, (\std\circ\phi) \otimes (\rho\oplus \tau)^\vee)}{\prod_{k<k_0}\mu(s, (X_k\boxtimes R_k)\otimes (\rho\oplus\tau)^\vee))} 
\]
has a pole at $s = k_0/2$. Hence, if 
\[
\mu(s, (\std\circ\phi') \otimes (\rho\oplus \tau)^\vee) = \mu(s, (\std\circ\phi) \otimes (\rho\oplus \tau)^\vee),
\]
then we have that $\rho\boxtimes R_{k_0}$ is contained in $\std \circ \phi'$. Hence, by using the induction, we have $\std\circ\phi = \std\circ\phi'$. Thus, we have the claim.
\end{proof}

\begin{df}
We call two irreducible representations $\pi$ and $\pi'$ of $G_0(V)(F)$ are $G(V)(F)$ equivalent if there exists $g \in G(V)(F)$ such that $\pi' = \pi\circ\Ad g$ as representations of $G_0(V)(F)$. We denote by $\widetilde{\Pi}_\phi(G_0(V))_{weak}$ the set of the $G(V)(F)$-equivalent classes of $\widetilde{\Pi}_\phi(G_0(V))$.
\end{df}

\begin{rem}
It is natural to expect that  the set $\widetilde{\Pi}_\phi(G_0(V))$ is the union
\[
\Pi_\phi((z, \varphi)) \cup \Pi_{\Ad \widehat{\varepsilon}\circ\phi}((z, \varphi))
\]
of usual $L$-packets. This can be a larger set than a usual $L$-packet in the cases \eqref{vsp I} and \eqref{vsp III} with $\epsilon = -1$.
\end{rem}

\begin{rem}
In the case \eqref{vsp I} with $\epsilon = -1$, the natural map $\widetilde{\Pi}_\phi(G_0(V)) \rightarrow \widetilde{\Pi}_\phi(G_0(V))_{weak}$ can possess non-trivial fibers. Otherwise, we have $\widetilde{\Pi}_\phi(G_0(V)) = \widetilde{\Pi}_\phi(G_0(V))_{weak}$.
\end{rem}

Finally, for a tempered L-parameter $\phi$ for $G(W_c^\#)$, we define
\[
\widetilde{\Pi}_\phi(G_0(W)) = \{ \pi \circ \fs_W \mid \pi \in  \widetilde{\Pi}_\phi(G_0(W^{\op})) \}.
\]

\subsection{Langlands parameters}\label{Langpa}

Then, we recall how the internal structure of a tempered $L$-packet is described. A refined local endoscopic data introduced by Kaletha \cite{Kal16} is a tuple $(H, \cH, \dot{t}, \eta)$ where
\begin{itemize}
\item $H$ is a quasi-split connected reductive group over $F$,
\item $\cH$ is a split extension of $\widehat{H}$ by $W_F$ so that the homomorphism $W_F \rightarrow {\rm Out}(\widehat{H})$ given by the extension coincides with the composition of $W_F \rightarrow {\rm Out}(H)$ and ${\rm Out}(H) \rightarrow {\rm Out}(\widehat{H})$,
\item $\dot{t}$ is an element of the component group $\pi_0(Z(\widehat{\overline{H}})^+)$ of $Z(\widehat{\overline{H}})^+$,
\item $\eta$ is an injective $L$-homomorphism $\cH \rightarrow {}^L\!G$ so that ${\rm Im}(\eta) = {\rm Cent}_{{}^LG}(\eta(t))$ where $t$ is the image of $\dot{t}$ in $\widehat{H}$.
\end{itemize}
Let $\phi$ be a tempered L-parameter for $G_0(V)$, and let $\dot{s} \in S_\phi^+$. 
We denote by $\cE(\dot{s})$ the set of the refined endoscopic data $(H, \cH, \dot{t}, \eta)$ so that $\overline{\eta}(\dot{t}) = \dot{s}$. Here, $\overline{\eta}\colon \widehat{\overline{H}}\rightarrow\widehat{\overline{G}}$ is the unique lift of $\eta$. Note that all elements of $\cE(\dot{s})$ are isomorphic to each other in the sense of Kaletha \cite[pp.~599]{Kal16}. 
Let $H_1$ be a $z$-extension of $H$ (see \cite[\S2.2]{KS99}). Then, there exists an injection $\cH \rightarrow {}^LH_1$ which extends ${\rm Id}\colon \widehat{H} \rightarrow \widehat{H}$. For $\delta \in G_0(V)(F)$ and $\delta^\# \in G_0(V^\#)(F) \cap (\Ad G_0(V^\#)(\overline{F}))(\varphi^{-1}(\delta))$, we denote by $\inv_z^\varphi(\delta, \delta^\#)$ the cocycle in $Z^1(u \rightarrow \cW, Z \rightarrow S^\#)$ given by
\[
\inv_z^\varphi(\delta, \delta^\#)(w) = g^{-1}z(w) w(g) \quad (w \in \cW)
\] 
where $g$ is an element of $G_0(V^\#)(\overline{F})$ satisfying $\varphi(g\delta^\# g^{-1}) = \delta$. If there exists a norm $\gamma_1 \in H_1(F)$ of $\delta$ (\cite[\S3]{KS99}) and if its image $\gamma$ is semi-simple and strongly $G_0(V)$-regular, then we denote by $u_{\gamma, \delta^\#}$ the embedding $S_H(\gamma) \rightarrow S_{G_0(V^\#)}(\delta^\#)$ so that $u_{\gamma, \delta^\#}(\gamma) = \delta^\#$.
Moreover, we put
\[
\Delta'(\gamma_1, \delta) = \varepsilon(\cV, \psi)(\Delta_{I}^{-1}\Delta_{II}\Delta_{III_1}^{-1}\Delta_{III_2}\Delta_{IV})(\gamma_1, \delta^\#)\la \inv_z^\varphi(\delta, \delta^\#), \dot{s}_{\gamma_1, \delta^\#} \ra
\]
where $\varepsilon(\mathcal{V}, \psi)$ is the normalization factor of \cite[\S5.2]{KS99}, $\Delta_{I}( - , - ), \ldots, \Delta_{IV}( - , - )$ are the factors of \cite{LS87}, $\dot{s}_{\gamma, \delta^\#}$ is the image of $\dot{s}$ in $\widehat{\overline{S}}$ via the composition of $Z(\widehat{\overline{H}}) \rightarrow  \overline{C_H(\gamma)}^\wedge$ and $(u_{\gamma, \delta^\#}^{-1})^\wedge$, and $\la - , - \ra$ is the pairing of \cite[Corollary 5.4]{Kal16}. It is known that $\Delta'(\gamma_1, \delta)$ does not depend on the choice of $\delta^\#$.

\begin{hyp}\label{Flem}
For $f \in C_c^\infty(G_0(V)(F))$, there exists $f^{\phi, \cE(\dot{s})} \in C^\infty(H_1(F))$ such that its support is comact modulo the center of $H_1$, and 
\[
\sum_{\gamma' \sim \gamma_1}\int_{C_{H_1(F)}(\gamma')\backslash H_1(F)} f^{\phi, \cE(\dot{s})}(h^{-1}\gamma' h) \: dh = \sum_{\delta} \Delta'(\gamma_1, \delta) \int_{C_{G_0(V)(F)}(\delta)\backslash G_0(V)(F)}f(g^{-1} \delta g) \: dg
\]
for all semisimple strongly $G_0(V)$-regular elements $\gamma$ in $H(F)$.
Here, $\gamma_1$ denotes a representative of $\gamma$ in $H_1(F)$, the summation of the left hand side is taken over the representatives of the elements of $H(F)$ which are conjugate to $\gamma$ under $H(\overline{F})$, and the summation of the right side hand is taken over the elements $\delta$ in $G_0(V)(F)$ having a norm $\gamma$ in $H(F)$. 
\end{hyp}

For $z \in Z^1(u \rightarrow \cW, Z \rightarrow G_0(V))$, we denote by $\zeta_z$ the character of $Z((\overline{G_0}(V)^\wedge))^+$ associated with $z$ via the pairing of \cite[Corollary 5.4]{Kal16}. For an $L$-parameter of $G_0(V)$, we denote by ${\rm Irr}(S_\phi^+, V)$ the set of the irreducible representations of $S_\phi^+$ whose restrictions to $Z(\overline{G_0}(V)^\wedge)^+$ meet with $\zeta_{z'}$ for some $(z', \varphi') \in \RIT^\star(V_c^\#, V)$.

\begin{hyp}\label{LLCclg}
Let $(z, \varphi) \in \RIT^\star(V^\#, V)$. Then, any tempered irreducible representation of $G_0(V)(F)$ is contained in $\wPi_\phi(G_0(V))$ for some $\phi \in \Phi_t(G_0(V))$.
Moreover, for $\phi \in \Phi_t(G_0(V))$, $\dot{s} \in S_\phi^+$, and $f \in C_c(G_0(V)(F))$, there exists a map $\widetilde{\Pi}_\phi(G_0(V)) \rightarrow {\rm Irr}(S_\phi^+, V)\colon \pi \mapsto \rho_\pi$ such that 
\begin{align}\label{ECRweak}
\sum_{\sigma \in \Pi_\phi(H_1)} \Tr_\sigma(f^{\phi, \cE(\dot{s})} + f^{\Ad\widehat{\varepsilon}\circ\phi, \cE((\Ad\widehat{\varepsilon})\dot{s})})
= e(G_0(V)) \cdot n_\phi \sum_{\pi \in \wPi_\phi(G_0(V))} \Tr_{\rho_\pi}(\dot{s}) \cdot \Tr_\pi(f)
\end{align}
where $n_\phi=2$ if $(\Ad \widehat{\varepsilon})\circ \phi$ is conjugate to $\phi$ under $G_0(V_c^\#)^\wedge$, and $n_\phi = 1$ otherwise. 

\end{hyp}

We denote by $\iota[\fw, z, \varphi]_{\phi} $ the map $\pi \mapsto \rho_\pi$ of Hypothesis \ref{LLCclg}. 
For an irreducible tempered representation $\pi$ of $G_0(V)(F)$, by the \textbf{Langlands parameter} (with respect to $\fw, z, \varphi$) of $\pi$ we mean a pair $(\phi, \iota[\fw, z, \varphi]_\phi(\pi))$ so that $\pi \in \wPi_\phi(G_0(V))$. By the characterization \eqref{ECRweak}, we have
\begin{align*}
\iota[\fw, z, \varphi]_{\Ad \widehat{\varepsilon}\circ\phi}(\pi)((\Ad \widehat{\varepsilon}) s) = \iota[\fw, z, \varphi]_\phi(\pi)(s)
\end{align*}
for $\phi \in \Phi_t(G_0(V))$, $\pi \in \wPi_\phi(G_0(V))$, and $s \in S_\phi^+$.

\subsection{Some Properties}\label{LLC prop}

In this section, we summarize the results on the behaviors of the Langlands parameters under some changes of $\fw, z, \varphi$, which are essentially due to Kaletha.

Denote by $\mathcal{K}_V$ the kernel of the covering map $\overline{G_0}(V)^\wedge \rightarrow G_0(V)^\wedge$.
Take a maximal torus $S$ of $G_0(V)$. We denote by $\overline{S}$ the quotient $S/Z$. Then, the cokernel of the natural homomorphism $X^*(\overline{S}) \rightarrow X^*(S)$ is $\Hom(Z, \mu_N)$ where $N = \# Z$, and the kernel of the covering map $\overline{S}^\wedge \rightarrow S^\wedge$ is $\mathcal{K}_V$. For $s \in S_\phi^+$ and $\lambda \in H^1(\Gamma, Z)$ we put
\begin{align}\label{paircenter}
\la \lambda, s \ra_Z \coloneqq \la \lambda, \fc(d(s^{-1})) \ra_S
\end{align}
where $\la \ , \  \ra$ is the pairing given by the Tate-Nakayama duality for $S$, $d$ is the connecting homomorphism from $S_\phi^+ = H^0(\phi(L_F), \overline{G_0}(V)^\wedge)$ to $H^1(\Gamma, \mathcal{K}_V)$, and $\fc$ is the connecting homomorphism of the following diagram:
\[
\xymatrix{
0 \ar[r] & X^*(\overline{S}) \ar[r]\ar[d] &{\rm Lie}(\widehat{\overline{S}})\ar@{=}[d] \ar[r]^-{\rm exp} & \widehat{\overline{S}} \ar[r]\ar[d] & 1 \\
0 \ar[r] & X^*(S) \ar[r] &{\rm Lie}(\widehat{S}) \ar[r]^-{\rm exp} & \widehat{S} \ar[r] & 1}.
\]
One can show that $\la \ , \  \ra$ does not depend on the choice of $S$. In this paper, we need the following lemma. 
\begin{lem}\label{zurashi1}
Let $(z, \varphi)\colon G_0(V^\#) \rightarrow G_0(V)$ be a rigid inner twist, and let $\lambda \in H^1(\Gamma, Z)$. 
Then, $(z\cdot \lambda, \varphi)$ is also a rigid inner twist, and we have
\[
\iota[\fw, z\cdot\lambda, \varphi]_\phi(\pi) = \iota[\fw, z, \varphi]_\phi(\pi)\otimes\la\lambda, - \ra
\]
for $\pi \in \Pi_\phi(G_0(V))$.
\end{lem}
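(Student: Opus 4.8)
The plan is to reduce everything to the endoscopic character relation \eqref{ECRweak} of Hypothesis \ref{LLCclg} and to track how the only $z$-dependent ingredient of the transfer factor, namely the pairing $\la \inv_z^\varphi(\delta, \delta^\#), \dot{s}_{\gamma_1, \delta^\#}\ra$, changes when $z$ is replaced by $z\cdot\lambda$. First I would observe that $(z\cdot\lambda,\varphi)$ is again a rigid inner twist: since $\lambda \in H^1(\Gamma, Z)$ lands in the center, $\Ad(z(w)\lambda(w)) = \Ad z(w)$ for all $w\in\cW$, so the defining relation $\varphi^{-1}\circ w\circ\varphi\circ w^{-1} = \Ad z(w)$ is unaffected, and $z\cdot\lambda$ is still a cocycle valued in $Z^1(u\to\cW, Z\to G_0(V^\#))$ because $Z$ is central. (Here I am using the product of a cocycle into $G_0(V^\#)$ with a cocycle into $Z$ in the sense of \cite{Kal16}.)

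Next I would compute the effect on the invariant. For a strongly regular semisimple $\delta\in G_0(V)(F)$ with norm $\gamma_1\in H_1(F)$ and a compatible $\delta^\#$, write $g\in G_0(V^\#)(\overline F)$ with $\varphi(g\delta^\# g^{-1})=\delta$. Then
\[
\inv_{z\cdot\lambda}^\varphi(\delta,\delta^\#)(w) = g^{-1}z(w)\lambda(w)w(g) = \inv_z^\varphi(\delta,\delta^\#)(w)\cdot\lambda(w),
\]
because $\lambda(w)$ is central and hence commutes with $g$; that is, $\inv_{z\cdot\lambda}^\varphi(\delta,\delta^\#) = \inv_z^\varphi(\delta,\delta^\#)\cdot\lambda$ as a cocycle valued in $S^\#$ (with $\lambda$ pushed into the torus via $Z\subset S^\#$). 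By the bilinearity of the Kaletha pairing $\la - , -\ra$ of \cite[Corollary 5.4]{Kal16}, this multiplies $\Delta'(\gamma_1,\delta)$ by the factor $\la\lambda, \dot{s}_{\gamma_1,\delta^\#}\ra$. The key point, which I would check using the definition \eqref{paircenter} of $\la\lambda, -\ra_Z$ together with the compatibility between the Kaletha pairing restricted to $Z$-valued cocycles and the Tate--Nakayama pairing — this is exactly the computation underlying the isomorphism between $H^1(u\to\cW, Z\to Z)$ and $\Hom(\pi_0(Z(\widehat{\overline H})^+),\C^\times)$-type groups in \cite[\S4--5]{Kal16} — is that $\la\lambda, \dot{s}_{\gamma_1,\delta^\#}\ra$ depends on $\dot s$ only through its image in $S_\phi^+$, and equals $\la\lambda, s\ra_Z$ for $s=\dot s$. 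In particular it is \emph{independent of $\gamma_1$ and $\delta^\#$}, so it factors out of the geometric side of Hypothesis \ref{Flem} uniformly: replacing $z$ by $z\cdot\lambda$ replaces $f^{\phi,\cE(\dot s)}$ by $\la\lambda,s\ra_Z^{-1} f^{\phi,\cE(\dot s)}$ (equivalently, one may absorb the scalar into the transfer).

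Finally I would feed this into \eqref{ECRweak}. The left-hand side, being $\sum_\sigma \Tr_\sigma$ of the transfer, gets multiplied by $\la\lambda,s\ra_Z$ (with the same scalar for the $\Ad\widehat\varepsilon\circ\phi$ term, since $\la\lambda,-\ra_Z$ is insensitive to conjugation by $\widehat\varepsilon$, this being a central datum). Since $e(G_0(V))$, $n_\phi$, and $\Tr_\pi(f)$ are untouched, the characterizing identity forces, for every $\pi\in\wPi_\phi(G_0(V))$,
\[
\Tr_{\iota[\fw, z\cdot\lambda,\varphi]_\phi(\pi)}(s) = \la\lambda, s\ra_Z\cdot\Tr_{\iota[\fw, z,\varphi]_\phi(\pi)}(s)
\]
for all $s\in S_\phi^+$, which is precisely $\iota[\fw, z\cdot\lambda,\varphi]_\phi(\pi) = \iota[\fw, z,\varphi]_\phi(\pi)\otimes\la\lambda,-\ra$, noting that $\la\lambda,-\ra_Z$ is a character of $S_\phi^+$ trivial on the relevant subgroup so that the twisted object still lies in $\Irr(S_\phi^+,V)$ (its central character shifts from $\zeta_{z}$ to $\zeta_{z\cdot\lambda} = \zeta_z\cdot\zeta_\lambda$, matching $(z\cdot\lambda,\varphi)$). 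The main obstacle is the second step: pinning down that the scalar produced by the transfer factor is exactly $\la\lambda,s\ra_Z$ in the normalization \eqref{paircenter}, including getting the sign/inverse conventions right — this requires carefully matching Kaletha's pairing on $Z$-valued classes with the Tate--Nakayama pairing on the torus $S$ and chasing the connecting maps $d$ and $\fc$ in the diagram defining $\la\lambda,-\ra_Z$; once that identification is in hand, the rest is a formal consequence of \eqref{ECRweak}.
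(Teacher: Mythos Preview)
Your proposal is a careful and essentially correct reconstruction of the argument, but you should know that the paper does not actually prove this lemma at all: its entire proof is a single citation to \cite[Lemma 6.3]{Kal18b}. So there is no ``paper's approach'' to compare against beyond Kaletha's original argument, and what you have written is precisely the kind of unpacking one would do to verify that citation.

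On the substance: your identification of the only $z$-dependent term in $\Delta'$, the computation $\inv_{z\cdot\lambda}^\varphi = \inv_z^\varphi\cdot\lambda$, and the reduction to the character relation \eqref{ECRweak} are all correct. The step you flag as the main obstacle --- that $\la\lambda,\dot s_{\gamma_1,\delta^\#}\ra$ depends only on the image of $\dot s$ in $S_\phi^+$ and equals $\la\lambda,s\ra_Z$ independently of $(\gamma_1,\delta^\#)$ --- is indeed the crux, and it is exactly what the functoriality of Kaletha's pairing (\cite[Corollary 5.4]{Kal16}) with respect to the map $Z\hookrightarrow S^\#$ gives you; the independence from the torus $S^\#$ is what the definition \eqref{paircenter} encodes. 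One small point to tidy: when you say the transfer $f^{\phi,\cE(\dot s)}$ gets multiplied by $\la\lambda,s\ra_Z^{-1}$, it is cleaner to say instead that the same $f^{\phi,\cE(\dot s)}$ works for both $z$ and $z\cdot\lambda$, and the scalar appears on the geometric side of the matching identity, hence directly on the left of \eqref{ECRweak}; this avoids any ambiguity about whether the scalar sits on $f$ or on $\Delta'$.
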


\begin{proof}
 \cite[Lemma 6.3]{Kal18b}
\end{proof}

\begin{cor}\label{zu1-2}
Let $V$ and $V'$ be Hermitian spaces over $D$, let $(z,\varphi)\colon G_0(V^\#)\rightarrow G_0(V)$ be rigid inner twist, and let $A\colon V\rightarrow V'$ be an isometry over $\overline{F}$ so that $\varphi_A \colon G_0(V)\rightarrow G_0(V')$ is an isomorphism over $F$. Then, we have
\[
\iota[\fw, z, (\Ad g)\circ\varphi]_\phi(\pi) = \iota[\fw, z, \varphi]_\phi(\pi)\otimes\la \lambda, - \ra
\]
for $\pi \in \Pi_\phi(G(V))$. Here, $\lambda \in Z^1(\Gamma, Z)$ is the 1-cocycle satisfying $\varphi(\lambda(\tau)) = \varphi^{-1}(A\tau(A)^{-1})$ for $\tau \in \Gamma$.
\end{cor}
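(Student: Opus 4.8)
The plan is to reduce the Corollary to Lemma~\ref{zurashi1} by computing how the invariant $\inv_z^\varphi$ changes when $\varphi$ is post-composed with $\varphi_A=\Ad A$ — this being the only place where the pair $(z,\varphi)$ enters the endoscopic characterization \eqref{ECRweak}.

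First I would carry out an invariant computation. Write $\varphi'=\varphi_A\circ\varphi$ (the map $(\Ad g)\circ\varphi$ of the statement, with $g=A$), fix a strongly $G_0(V)$-regular semisimple $\delta$, a base point $\delta^\#$, and $g_0\in G_0(V^\#)(\overline F)$ with $\varphi(g_0\delta^\# g_0^{-1})=\delta$, and set $\tilde A=\varphi^{-1}(A)$. Then $\varphi'\bigl((\tilde A^{-1}g_0)\,\delta^\#\,(\tilde A^{-1}g_0)^{-1}\bigr)=\delta$, so, using the rigid-inner-twist relation $w(\varphi(x))=\varphi\bigl(z(w)\,w(x)\,z(w)^{-1}\bigr)$, one obtains for $w\in\cW$ with image $\tau\in\Gamma$
\[
\inv_z^{\varphi'}(\delta,\delta^\#)(w)=(\tilde A^{-1}g_0)^{-1}z(w)\,w(\tilde A^{-1}g_0)=\lambda(\tau)\cdot\inv_z^\varphi(\delta,\delta^\#)(w),
\]
where $\lambda(\tau)=\varphi^{-1}\bigl(A\,\tau(A)^{-1}\bigr)$ is the cocycle of the statement. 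The hypothesis that $\varphi_A$ is defined over $F$ is precisely the assertion that $A\tau(A)^{-1}$ centralizes $G_0(V)$, hence is central; so $\lambda(\tau)$ is a central element of $G_0(V^\#)$, lying in $Z$ after enlarging the finite subgroup $Z$ if necessary (harmless, since $Z^1(u\rightarrow\cW,Z\rightarrow G)=\bigcup_{Z'}Z^1(u\rightarrow\cW,Z'\rightarrow G)$), and $\lambda$ is visibly a $u$-invariant $1$-cocycle for $\Gamma$, i.e.\ $\lambda\in Z^1(\Gamma,Z)$. Since $\lambda$ is central, the displayed identity says $\inv_z^{\varphi'}(\delta,\delta^\#)=\inv_{z\cdot\lambda}^{\varphi}(\delta,\delta^\#)$ in $Z^1(u\rightarrow\cW,Z\rightarrow S^\#)$; in particular the same $\delta^\#$ may be used for $\varphi$ and for $\varphi'$.

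Now I would conclude. In \eqref{ECRweak} the pair $(z,\varphi)$ enters only through the transfer factor
\[
\Delta'(\gamma_1,\delta)=\varepsilon(\cV,\psi)\,(\Delta_{I}^{-1}\Delta_{II}\Delta_{III_1}^{-1}\Delta_{III_2}\Delta_{IV})(\gamma_1,\delta^\#)\,\la\inv_z^\varphi(\delta,\delta^\#),\dot{s}_{\gamma_1,\delta^\#}\ra,
\]
and replacing $\varphi$ by $\varphi'=\varphi_A\circ\varphi$ leaves $G_0(V)(F)$, the parameter $\phi$, the group $S_\phi^+$, the Whittaker normalization $\varepsilon(\cV,\psi)$, and the Langlands--Shelstad factors $\Delta_{I},\dots,\Delta_{IV}$ unchanged; by the first step it has the same effect on $\Delta'$ as replacing $z$ by $z\cdot\lambda$. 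Hence the map $\pi\mapsto\rho_\pi$ characterized by \eqref{ECRweak} for $(z,\varphi')$ is the one for $(z\cdot\lambda,\varphi)$, so $\iota[\fw,z,\varphi']_\phi(\pi)=\iota[\fw,z\cdot\lambda,\varphi]_\phi(\pi)$, and Lemma~\ref{zurashi1} gives $\iota[\fw,z\cdot\lambda,\varphi]_\phi(\pi)=\iota[\fw,z,\varphi]_\phi(\pi)\otimes\la\lambda,-\ra$, which is the assertion.

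The main obstacle is the element-level bookkeeping in the first step: since $\Ad\bigl(\inv_z^{\varphi'}(\delta,\delta^\#)(w)\bigr)$ pins down $\inv_z^{\varphi'}(\delta,\delta^\#)(w)$ only modulo the center, one must track the genuine central discrepancy $\lambda(\tau)$, which forces the computation to be carried out with elements of $G_0(V^\#)(\overline F)$ rather than with automorphisms; one must also reconcile the sign of $\lambda(\tau)=\varphi^{-1}(A\tau(A)^{-1})$ (note $A\tau(A)^{-1}=(A^{-1}\tau(A))^{-1}$ for central arguments) with the normalization of the pairing \eqref{paircenter} used in Lemma~\ref{zurashi1}. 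The remaining points — that $\lambda\in Z$, that $\Delta'$ is independent of the choice of $\delta^\#$, and the recovery of $\pi\mapsto\rho_\pi$ from $\Delta'$ — are routine.
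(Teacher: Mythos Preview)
Your proof is correct and follows essentially the same route as the paper: both compute that $\inv_z^{(\Ad A)\circ\varphi}(\delta,\delta^\#)(w)=\lambda(w)\cdot\inv_z^{\varphi}(\delta,\delta^\#)(w)$ via the element $h=\tilde A=\varphi^{-1}(A)$ and the rigid-inner-twist relation, and deduce the transfer-factor identity. The only cosmetic difference is that you route the conclusion through the equality $\iota[\fw,z,\varphi']_\phi=\iota[\fw,z\cdot\lambda,\varphi]_\phi$ and then invoke Lemma~\ref{zurashi1}, whereas the paper writes the transfer-factor ratio $\la\lambda,\dot s\ra$ directly; these are equivalent.
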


\begin{proof}
 Take $\dot{s} \in S_f^+$. It suffices to show 
\[
\Delta'[\fw, z, (\Ad g)\circ \varphi]_\phi(\gamma, \dot{\delta}) =\la\lambda, \dot{s}\ra \Delta'[\fw,z,\varphi]_\phi(\gamma, \dot{\delta})
\]
for a semisimple strongly $G_0(V)$-regular element $\gamma \in H(F)$ and an element $\dot{\delta} \in G(V)(F)$ having a norm $\gamma$ in $H(F)$. By definition, only the coincidence of the normalization factors of both sides is non-trivial.
Put $h = \varphi^{-1}(g)$. Then, the definition of rigid inner twists implies that
\[
g\tau(g)^{-1} = \varphi(hz(\tau)\tau(h)^{-1}z(\tau)^{-1})
\]
for $\tau \in \Gamma$. Thus, we have $hz(\tau)\tau(h)^{-1} = \lambda(\tau)z(\tau)$ for $\tau \in \Gamma$. Let $\delta$ be an element in $G_0(V^\#)(F)$ having a norm $\gamma$, and let $g_1$ be an element of $G_0(V^\#)(\overline{F})$ so that $\varphi(g_1\delta g_1^{-1}) = \dot{\delta}$. Then, we have $\dot{\delta} = ((\Ad g) \circ \varphi)(h^{-1}g_1\delta g_1^{-1}h)$. Hence, we have
\begin{align*}
\inv_z^{(\Ad g)\circ\varphi}(\delta, \dot{\delta})(w) &= g_1^{-1}h\cdot z(w) \cdot w(h)^{-1}w(g_1) \\
&=g_1^{-1}\cdot\lambda(w) z(w) \cdot w(g_1) \\
&=\lambda(w)\cdot\inv_z^\varphi(\delta, \dot{\delta})(w)
\end{align*} 
for $w \in \cW$. This proves Lemma \ref{zurashi1}.\end{proof}

We remark that in the case \eqref{vsp II}, the natural homomorphism
\[
H^1(\Gamma, Z) \rightarrow H^1(\Gamma, Z(G(V^\#)))
\]
is surjective although $Z \not= Z(G(V^\#))$.

\begin{prop}\label{parameter orth}
Let $(z, \varphi)$ and $(z', \varphi')$ be rigid inner twists from $G(V_c^\#)$ onto $G(V)$. Assume that there exists an element $\gamma_0 \in G(V_c^\#)(\overline{F})$ so that $\varphi' = \varphi\circ\Ad \gamma_0$ and $z'(w) = \gamma_0^{-1}z(w)w(\gamma_0)$ for $w \in \cW$.
Then we have 
\[
\iota[\fw, z, \varphi]_\phi = \iota[\fw, z', \varphi']_\phi. 
\]
\end{prop}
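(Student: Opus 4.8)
The plan is to reduce Proposition \ref{parameter orth} to the case where the twisting element $\gamma_0$ lies in the connected special orthogonal group (or, in case \eqref{vsp II}, the connected unitary group), and then to invoke Corollary \ref{zu1-2}. First I would observe that $\varphi_{\gamma_0}$ being required to descend to an isomorphism over $F$ is automatic here: we are given $\varphi' = \varphi\circ\Ad\gamma_0$ with both $\varphi,\varphi'$ defined over $\overline F$, so $\Ad\gamma_0$ itself need not be over $F$, but the hypothesis $z'(w) = \gamma_0^{-1}z(w)w(\gamma_0)$ forces the relevant cocycle discrepancy to be trivial. Precisely, applying the rigid-inner-twist relation $\varphi'^{-1}\circ w\circ\varphi'\circ w^{-1}=\Ad z'(w)$ to both $(z,\varphi)$ and $(z',\varphi')$ and combining gives $\gamma_0^{-1}\,w(\gamma_0)\cdot z(w) = z(w)\cdot\gamma_0^{-1}w(\gamma_0)$ modulo the stated relation, so in fact $\gamma_0^{-1}w(\gamma_0)\in Z$ for all $w$; i.e. the image $\overline{\gamma_0}$ in $G(V_c^\#)/Z$ is $\Gamma$-fixed, hence $\Ad\gamma_0$ is defined over $F$. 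Write $\lambda(w) = \gamma_0^{-1}w(\gamma_0)\in Z^1(\cW,Z)$, which factors through $\Gamma$ by the same computation.

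Next I would apply Corollary \ref{zu1-2} directly. Taking $A = \gamma_0$ (viewed as an isometry of $V_c^\#\otimes\overline F$, or of the relevant $\natural$-space) so that $\varphi_A = \Ad\gamma_0$ is over $F$, the corollary yields
\[
\iota[\fw, z, \varphi\circ\Ad\gamma_0]_\phi(\pi) = \iota[\fw, z, \varphi]_\phi(\pi)\otimes\la\lambda, -\ra
\]
where $\lambda\in Z^1(\Gamma,Z)$ is characterized by $\varphi(\lambda(\tau)) = \varphi^{-1}(\gamma_0\tau(\gamma_0)^{-1})$, i.e. $\lambda(\tau) = \tau(\gamma_0)^{-1}\gamma_0$ up to the obvious sign/inversion bookkeeping. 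So one needs to match this $\lambda$ with the cocycle $w\mapsto\gamma_0^{-1}w(\gamma_0)$ coming from the relation $z'(w) = \gamma_0^{-1}z(w)w(\gamma_0) = \lambda(w)\,z_{\gamma_0}(w)$ where the inner-twist subscript means the twist by $\gamma_0$. Since $z' = z_{\gamma_0}$ exactly (no extra $Z$-cocycle: the hypothesis says $z'(w) = \gamma_0^{-1}z(w)w(\gamma_0)$ with nothing left over), we actually have $\lambda$ trivial as a cocycle — and then $\la\lambda,-\ra$ is the trivial character of $S_\phi^+$, giving $\iota[\fw,z',\varphi']_\phi = \iota[\fw,z,\varphi]_\phi$ as desired. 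I would double-check the sign conventions so that ``$z' = z_{\gamma_0}$'' and ``$\varphi' = \varphi\circ\Ad\gamma_0$'' land on the same normalization used in Corollary \ref{zu1-2}; that is the only place where a discrepancy could sneak in.

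The main obstacle I expect is the bookkeeping at the level of the transfer factors rather than any conceptual difficulty: one must verify that \emph{all} of the Langlands--Shelstad factors $\Delta_I,\ldots,\Delta_{IV}$ and the normalization factor $\varepsilon(\cV,\psi)$ are genuinely unchanged under $(z,\varphi)\mapsto(z_{\gamma_0},\varphi\circ\Ad\gamma_0)$, and that the invariant $\inv_z^\varphi(\delta,\delta^\#)$ transforms in precisely the compensating way. Concretely, as in the proof of Corollary \ref{zu1-2}, choosing $g_1$ with $\varphi(g_1\delta g_1^{-1}) = \dot\delta$ one replaces it by $\gamma_0^{-1}g_1$ for the primed data, so
\[
\inv_{z_{\gamma_0}}^{\varphi\circ\Ad\gamma_0}(\delta,\dot\delta)(w) = (\gamma_0^{-1}g_1)^{-1}z_{\gamma_0}(w)\,w(\gamma_0^{-1}g_1) = g_1^{-1}z(w)w(g_1) = \inv_z^\varphi(\delta,\dot\delta)(w),
\]
literally unchanged; and the geometric factors $\Delta_I\cdots\Delta_{IV}(\gamma_1,\delta^\#)$ and $\varepsilon(\cV,\psi)$ depend only on $V_c^\#$, $\phi$, and the endoscopic datum, not on $(z,\varphi)$. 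Hence $\Delta'[\fw,z',\varphi']_\phi(\gamma_1,\dot\delta) = \Delta'[\fw,z,\varphi]_\phi(\gamma_1,\dot\delta)$, and the characterization \eqref{ECRweak} forces $\iota[\fw,z',\varphi']_\phi = \iota[\fw,z,\varphi]_\phi$. I would present the argument in this second form (direct comparison of $\Delta'$) since it avoids re-deriving Corollary \ref{zu1-2} and makes transparent that the twist by $\gamma_0$ contributes nothing.
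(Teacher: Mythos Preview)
Your direct computation of $\inv$ in the third paragraph is correct and is essentially \cite[Proposition~5.6]{Kal16}, but it only covers the case $\gamma_0\in G_0(V_c^\#)(\overline F)$. The whole substance of Proposition~\ref{parameter orth} lies in the remaining case $\gamma_0\in G(V_c^\#)(\overline F)\smallsetminus G_0(V_c^\#)(\overline F)$, which occurs for even orthogonal groups (cases \eqref{vsp I}, \eqref{vsp III} with $\epsilon=-1$); you announce a reduction to the connected case but never carry it out.

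Two concrete failures when $\gamma_0=\varepsilon$:
\begin{itemize}
\item Your claim that $\gamma_0^{-1}w(\gamma_0)\in Z$ is false in general; nothing in the hypotheses forces $\Ad\gamma_0$ to be defined over $F$, and for $\gamma_0=\varepsilon$ it is an \emph{outer} automorphism of $G_0(V_c^\#)$.
\item In your $\inv$ computation you take the conjugating element $\gamma_0^{-1}g_1$, but the definition of $\inv_{z'}^{\varphi'}$ requires this element to lie in $G_0(V_c^\#)(\overline F)$. When $\det\gamma_0=-1$ it does not, and indeed the same $\delta^\#$ is no longer $G_0(\overline F)$-conjugate to $\varphi'^{-1}(\dot\delta)=\varepsilon\varphi^{-1}(\dot\delta)\varepsilon^{-1}$.
\end{itemize}
Because $\Ad\varepsilon$ is outer, passing from $(z,\varphi)$ to $(z',\varphi')$ does not leave the endoscopic side untouched: one must compare $\Delta'^{(\phi,\dot s)}[\fw,z,\varphi](\gamma,\dot\delta)$ with $\Delta'^{((\Ad\widehat\varepsilon)\circ\phi,(\Ad\widehat\varepsilon)\dot s)}[\fw,z',\varphi'](\gamma,\dot\delta)$, replacing $\delta^\#$ by $\varepsilon\delta^\#\varepsilon^{-1}$, the toral datum $u$ by $\varepsilon u\varepsilon^{-1}$, and transporting the splitting, $a$-data, and $\chi$-data through $\Ad\varepsilon$. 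The paper verifies that each factor $\Delta_I,\ldots,\Delta_{IV}$ matches under this simultaneous twist, and that $\inv_{z'}^{\varphi'}(\varepsilon\delta^\#\varepsilon^{-1},\dot\delta)=\varepsilon\,\inv_z^{\varphi}(\delta^\#,\dot\delta)\,\varepsilon^{-1}$; the identity then follows via the built-in relation $\iota[\fw,z,\varphi]_{(\Ad\widehat\varepsilon)\circ\phi}(\pi)((\Ad\widehat\varepsilon)s)=\iota[\fw,z,\varphi]_\phi(\pi)(s)$. In case \eqref{vsp I} with $\epsilon=-1$ an alternative argument is available because one can arrange $z_0$ to be $\Ad\varepsilon$-fixed and then $\varphi_0(\varepsilon)\in G(V)(F)$, reducing to an inner conjugation on the $G(V)$-side. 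Either way, the outer-automorphism case requires genuine additional work that your proposal omits.
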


\begin{proof}
We may assume that $\gamma_0 = \varepsilon$. First, we consider the case \eqref{vsp I} with $\epsilon = -1$. In this case, one can take a rigid inner form $z_0$ so that $\gamma_0 z_0(w) \gamma_0^{-1} = z_0(w)$ for all $w \in \cW$ and so that there exists $h \in G(V_c^\#)(\overline{F})$ such that $z(w) = h^{-1}z_0(w)w(h)$ for all $w \in \cW$. Then, consider the following diagram.
\[
\xymatrix{
 & G(V_c^\#) \ar[rr]^-{\Ad h'} \ar[dl]_-{\Ad \gamma_0} & & G(V_c^\#) \ar[d]^-{\varphi_0} \ar[dl]_-{\Ad \gamma_0} \\
 G(V_c^\#) \ar[rr]^-{\Ad h} \ar[d]_-{\varphi} & & G(V_c^\#) \ar[d]_-{\varphi_0} & G(V) \ar[dl]^-{\Ad \varphi_0(\gamma_0)} \\ 
 G(V) \ar@{=}[rr] & & G(V) & 
}
\]
Here we put $h' \coloneqq \gamma_0^{-1}h\gamma_0$ and $\varphi_0 \coloneqq (\Ad h)^{-1}\circ \varphi$. Then, we have $z'(w) = {h'}^{-1}z_0(w)w(h')$ all $w \in \cW$. Since $\varphi_0(\gamma_0) \in G(V)(F)$, we have
\begin{align*}
\iota[\fw, z', \varphi'](\pi) 
& = \iota[\fw, z_0, \varphi_0\circ\Ad\gamma_0](\pi) \\
& = \iota[\fw, z_0, (\Ad \varphi_0(\gamma_0))\circ\varphi_0](\pi) \\
& = \iota[\fw, z_0, \varphi_0](\pi\circ \Ad \varphi(\gamma_0)) \\
& = \iota[\fw, z, \varphi](\pi\circ \Ad \varphi(\gamma_0))\\
& = \iota[\fw, z, \varphi](\pi).
\end{align*}

Then, we consider the case \eqref{vsp III} with $\epsilon = -1$. If $\gamma_0 \in G_0(V_c^\#)(F)$, then the claim follows from \cite[Proposition 5.6]{Kal16}. Thus we may assume that $\det(\gamma_0) = -1$. Moreover, by using \cite[Proposition 5.6]{Kal16} again, we may assume that $\gamma_0 = \varepsilon$. To prove Proposition \ref{parameter orth} in this case, we return to the definition of the transfer factor.
Take $\dot{s} \in S_\phi^+$ and an endoscopic data $(H, \cH, \dot{t}, \eta) \in \cE(\dot{s})$. Then we have $(\Ad \widehat{\varepsilon}) \dot{s} \in S_{\Ad \widehat{\varepsilon}\circ\phi}^+$ and $(H, \cH, \dot{t}, \Ad\widehat{\varepsilon}\circ \eta) \in \cE((\Ad\widehat{\varepsilon})\dot{s})$. Take a semisimple strongly $G_0(V)$-regular element $\gamma \in H(F)$, and an element $\delta \in G_0(V_c^\#)(F)$ having a norm $\gamma$ via $\eta$, and a norm $\dot{\delta} \in G_0(V)(F)$ of $\delta$ via the inner twist $(z, \varphi)$, that is, there exists $\delta \in G_0(V^\#)(F)$ and $g_1 \in G(V^\#)_0(\overline{F})$ so that $\varphi(g_1\delta g_1^{-1}) = \dot{\delta}$. Put $g_1' \coloneqq \varepsilon g_1\varepsilon$ and $\delta' \coloneqq \varepsilon \delta \varepsilon^{-1}$. Then we have $\gamma$ is a norm of $\delta'$ via $(\Ad \widehat{\varepsilon})\circ \eta$, and $\dot{\delta}$ is a norm of $\delta'$ via the inner twist $(z', \varphi')$. More precisely, we have $\dot{\delta} = \varphi'(g_1' \delta' {g_1'}^{-1})$. Then, to prove Proposition \ref{parameter orth} in this case, it suffices to show that 
\begin{align}\label{eq transfer factor}
{\Delta'}^{(\phi, \dot{s})}[\fw, z, \varphi](\gamma, \dot{\delta}) = {\Delta'}^{((\Ad\widehat{\varepsilon})\circ\phi, (\Ad\widehat{\varepsilon}) \dot{s})}[\fw, z', \varphi'](\gamma, \dot{\delta}).
\end{align}
Here, we inserted the superscripts $(\phi, \dot{s})$ and $((\Ad\widehat{\varepsilon})\circ\phi, (\Ad\widehat{\varepsilon}) \dot{s})$ to specify the implicit data in the definitions. Suppose that the left-hand side of \eqref{eq transfer factor} is computed by using the splitting $(T^\#, B^\#, \{X_\alpha\}_\alpha)$ which defines $\fw$, the splitting $(\cT, \cB, \{\cX_{\widehat{\alpha}} \}_{\widehat{\alpha}})$ of $G_0(V_c^\#)^\wedge$, the $a$-data $\{a_\alpha\}_\alpha$, the $\chi$-data $\{\chi_\alpha\}_\alpha$, and the toral data (c.f. \cite{She08}) $u = u_{\gamma, \delta}\colon S_H(\gamma) \rightarrow S_G(\delta)$ (see \S\ref{Langpa}). Then, to compute the right-hand side of \eqref{eq transfer factor}, we put
\begin{itemize}
\item $X_\alpha' = \varepsilon (X_{\alpha\circ\Ad \varepsilon})\varepsilon^{-1}$ for $\alpha \in \Delta_-^\circ$,
\item $\cX_{\widehat{\alpha}}' = \widehat{\varepsilon} (\cX_{\widehat{\alpha}\circ\Ad \widehat{\varepsilon}}) \widehat{\varepsilon}^{-1}$ for $\alpha \in \Delta_-^\circ$,
\item $a_\alpha' = a_{\alpha \circ (\Ad\varepsilon)}$ for $\alpha \in R(G_0(V_c^\#), T^\#)$,
\item $\chi_\alpha' = \chi_{\alpha \circ (\Ad\varepsilon)}$ for $\alpha \in R(G_0(V_c^\#), T^\#)$,
\item and $u'(x) = \varepsilon u(x) \varepsilon^{-1}$ for $x \in S_H(\gamma)$.
\end{itemize}
Then, we have the splitting $(T^\#, B^\#, \{X_\alpha'\}_\alpha)$ which defines $\fw$, the splitting $(\cT, \cB, \{\cX_{\widehat{\alpha}}' \}_{\widehat{\alpha}})$ of $G_0(V_c^\#)^\wedge$, the $a$-data $\{a_\alpha'\}_\alpha$, the $\chi$-data $\{\chi_\alpha'\}_\alpha$, and the toral data $u'\colon S_H(\gamma) \rightarrow S_G(\delta)$ such that $u(\gamma) = \delta'$. Moreover, one can show that 
\begin{align*}
\Delta_\bullet^{(\phi, \dot{s})}[\fw, z, \varphi](\gamma, \delta) = \Delta_\bullet^{((\Ad\widehat{\varepsilon})\circ\phi, (\Ad \widehat{\varepsilon})\dot{s})}[\fw, z', \varphi'](\gamma, \delta')
\end{align*}
for $\bullet = {I}, {II}, {III_1}, {III_2}, {IV}$, and
\[
\inv_{z'}^{\varphi'}(\delta', \dot{\delta})(w)= \varepsilon \inv_z^{\varphi}(\delta, \dot{\delta})(w) \varepsilon^{-1}.
\]
for $w \in \cW$. Hence, we obtain \eqref{eq transfer factor}, and we complete the proof of Proposition \ref{parameter orth}.
\end{proof}

\begin{rem}
The equation \eqref{eq transfer factor} verifies \cite[Conjecture 2.12]{Kal23} for the automorphism $\Ad \varepsilon$ and the rigid inner twists $(z, \varphi)\colon G_0(V^\#)\rightarrow G_0(V)$.
\end{rem}

		
\section{
		The conjecture
		}\label{main sec}


Let $V$ be a right Hermitian space over $D$, let $W$ be a left skew Hermitian space over $D$, and let $c \in F^\times$. Define $\W$, $V_c^\#$, $W_c^\#$, $\W_c^\#$ as in \S\ref{groups}. Moreover, we use the terminologies $b_E$ and $\mathscr{J}_\W$ as in Lemma \ref{Morita_W}. 
By $(z_+, \varphi_+) \leftrightarrow (z_-, \varphi_-)$ we mean that there exist an isometry $\Omega\colon \W^\#\otimes_F\overline{F} \rightarrow \W\otimes_F\overline{F}$ over $\overline{F}$ such that 
\[
\Omega^{-1}\circ w\circ \Omega \circ w^{-1} = \iota^\#(z_+(w), z_-(w))
\]
for all $w \in \cW$ and the following diagram is commutative.
\begin{align}
\label{corr_rig}
\xymatrix{
\Sp(\W^\#) \ar[rr]^-{\varphi_\Omega} & & \Sp(\W) \\
G(V_c^\#) \times G(W_c^\#) \ar[u]^-{\iota^\#} \ar[rr]_-{(\varphi_+, \varphi_-)} & & G(V) \times G(W) \ar[u]_-{\iota}
}
\end{align}
Here, $\varphi_\Omega$ denotes the isomorphism induced by $\Omega$ (see \S\ref{notations}). 
Before stating the conjecture, we discuss some fundamental properties. We identify $Z_{V_c^\#}$ and $Z_{W_c^\#}$ by the isomorphism $a\cdot 1_{V_c^\#} \mapsto a \cdot 1_{W_c^\#}$ for $a \in Z(D) \cap D^1$. Then, for $\lambda_+ \in \cZ^1[V_c^\#]$ and $\lambda_- \in \cZ^1[W_c^\#]$, we write $\lambda_+ \leftrightarrow \lambda_-$ if $\lambda_-$ coincides with the image of $\lambda_+$ via the identification $Z_{V_c^\#} \rightarrow Z_{W_c^\#}$. For $H^1(\Gamma, Z_V)$ and $H^1(\Gamma, Z_W)$ we also define the correspondence $\leftrightarrow$ in the same way. Moreover, for $h_0 \in (G(V)/Z_V)(F)$ and $h_- \in (G(W)/Z_W)(F)$, we write $h_0 \leftrightarrow h_-$ if $\lambda_{h_0} \leftrightarrow \lambda_{h_-}$ where $\lambda_{h_0}$ (resp. $\lambda_{h_-}$) is the image of the connecting homomorphism $(G(V)/Z_V)(F) \rightarrow H^1(\Gamma, Z_V)$ (resp. $(G(W)/Z_W)(F) \rightarrow H^1(\Gamma, Z_W)$).

\begin{prop}\label{corr_funda}
\begin{enumerate}
\item Consider the cases \eqref{vsp I} and \eqref{vsp II}. Assume that there are isomorphisms $f_+\colon V^\natural \rightarrow V_c^\#$ over $F$ and $f_- \colon W^\natural \rightarrow W_c^\#$ over $F$, we have $(1_+, \fm_V^{-1}\circ\varphi_{f_+}^{-1}) \leftrightarrow (1_-, \fm_W^{-1}\circ \varphi_{f_-}^{-1})$. Here, $1_+$ (resp. $1_-$) denotes the constant function whose value is $1 \in G(V_c^\#)$ (resp. $1 \in G(W_c^\#)$). \label{corr_1}
\item Let $(z_+, \varphi_+)\in \RIT^\star(V_c^\#, V)$ and $(z_-, \varphi_-)\in \RIT^\star(W_c^\#, W)$ be rigid inner twists satisfying $(z_+, \varphi_+)\leftrightarrow (z_-, \varphi_-)$, let $(\lambda_+, h_0, g_+) \in \cZ^1[V_c^\#]\times (G(V)/Z_V)(F) \times G(V_c^\#)(\overline{F})$, and let $(\lambda_-, h_-, g_-) \in \cZ^1[W_c^\#]\times (G(W)/Z_W)(F)\times G(W_c^\#)(\overline{F})$. If $\lambda_+ \leftrightarrow \lambda_-$ and $h_0 \leftrightarrow h_-$ then we have
\[
(\lambda_+, h_0, g_+)\cdot(z_+, \varphi_+) \leftrightarrow (\lambda_-, h_-, g_-)\cdot(z_-, \varphi_-).
\] \label{corr_2}
\item Let $(z_+, \varphi_+)\in \RIT^\star(V_c^\#, V)$ and $(z_-, \varphi_-), (z_-', \varphi_-') \in \RIT^\star(W_c^\#, W)$ be rigid inner twists satisfying $(z_+, \varphi_+)\leftrightarrow (z_-, \varphi_-)$ and $(z_+, \varphi_+)\leftrightarrow (z_-', \varphi_-')$. Then, there exists $g \in G_0(W_c^\#)(\overline{F})$ such that $(1,1,g)\cdot(z_-, \varphi_-) = (z_-', \varphi_-')$. \label{corr_2.5}
\item There exist rigid inner twists $(z_+, \varphi_+)\in\RIT^\star(V_c^\#, V)$ and $(z_-, \varphi_-)\in\RIT^\star(W_c^\#, W)$ satisfying $(z_+, \varphi_+)\leftrightarrow (z_-, \varphi_-)$. \label{corr_3}
\end{enumerate}
\end{prop}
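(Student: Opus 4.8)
The plan is to establish the four assertions in roughly increasing order of difficulty. The unifying observation is that, by the very definition of $\RIT^\star$, one may write $\varphi_+ = \fm_V^{-1}\circ\varphi_P$ and $\varphi_- = \fm_W^{-1}\circ\varphi_Q$ for $\overline{F}$-isometries $P\colon V_c^\#\otimes\overline{F}\to(V\otimes\overline{F})^\natural$ and $Q\colon W_c^\#\otimes\overline{F}\to(W\otimes\overline{F})^\natural$, and that the isometry $\mathscr{J}_\W\circ(P\otimes Q)$ supplied by Lemma~\ref{Morita_W} automatically makes the square \eqref{corr_rig} commute --- this is just functoriality of $\otimes$ together with the definitions of $\varphi_P,\varphi_Q,\fm_V,\fm_W$ and the commutative square of Lemma~\ref{Morita_W}. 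So for every pair under consideration, the only thing to verify is the cocycle identity $\Omega^{-1}\circ w\circ\Omega\circ w^{-1}=\iota^\#(z_+(w),z_-(w))$ for a suitable isometry $\Omega$ of that shape.

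For assertion \eqref{corr_1}, since $f_+,f_-$ are $F$-rational I take $\Omega=\mathscr{J}_\W\circ(f_+^{-1}\otimes f_-^{-1})$, which is defined over $F$, whence $\Omega^{-1}\circ w\circ\Omega\circ w^{-1}=\mathrm{id}=\iota^\#(1_+(w),1_-(w))$. For assertion \eqref{corr_2} I decompose the action \eqref{def_ac} into its three factors and treat each separately. The factor $(g_+,g_-)\in G(V_c^\#)(\overline{F})\times G(W_c^\#)(\overline{F})$ is witnessed by $\Omega\circ\iota^\#(g_+,g_-)$; a short computation using that $\iota^\#$ is a homomorphism turns the old cocycle identity into the one for $(z_{+,g_+},z_{-,g_-})$, and, crucially, no relation between $g_+$ and $g_-$ intervenes, which is why the statement imposes none. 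The factor $(h_0,h_-)\in(G(V)/Z_V)(F)\times(G(W)/Z_W)(F)$ is treated by lifting $h_0,h_-$ to $\overline{F}$-points of $G(V),G(W)$: the obstructions to $F$-rationality are the classes $\lambda_{h_0}\in H^1(\Gamma,Z_V)$, $\lambda_{h_-}\in H^1(\Gamma,Z_W)$, and the hypothesis $h_0\leftrightarrow h_-$ says exactly that they match under $Z_V\cong Z_W$, so that the product of the two lifts, transported through $\iota$, is $F$-rational and produces the modified $\Omega$. Finally, for the factor $(\lambda_+,\lambda_-)\in\cZ^1[V_c^\#]\times\cZ^1[W_c^\#]$ with $\lambda_+\leftrightarrow\lambda_-$ I keep $\Omega$ unchanged: the element $\iota^\#(\lambda_+(w)\cdot1_{V_c^\#},\lambda_-(w)\cdot1_{W_c^\#})$ is trivial in $\Sp(\W^\#)$ because in the cases \eqref{vsp I} and \eqref{vsp III} the two central cocycles take values in $\mu_2$ and cancel across the tensor product (the case \eqref{vsp II} is analogous after absorbing a central square, which is a coboundary, into an $F$-rational correction of $\Omega$).

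For assertion \eqref{corr_2.5} I use the double-centralizer property of the reductive dual pair $(G(V_c^\#),G(W_c^\#))$ inside $\Sp(\W^\#)$. If $\Omega$ witnesses $(z_+,\varphi_+)\leftrightarrow(z_-,\varphi_-)$ and $\Omega'$ witnesses $(z_+,\varphi_+)\leftrightarrow(z_-',\varphi_-')$, then comparing the two copies of \eqref{corr_rig} shows that $\Omega^{-1}\Omega'\in\Sp(\W^\#)(\overline{F})$ centralizes $\iota^\#(G(V_c^\#)\times\{1\})$, hence equals $\iota^\#(1,g)$ for a unique $g\in G(W_c^\#)(\overline{F})$; inserting this into the cocycle identities gives $z_-'=z_{-,g}$ and $\varphi_-'=\varphi_-\circ\Ad g$, i.e. $(1,1,g)\cdot(z_-,\varphi_-)=(z_-',\varphi_-')$, and one checks $g\in G_0(W_c^\#)(\overline{F})$ (the ambiguity between $\O$ and $\SO$ being carried by the $F$-rational element $\varepsilon$). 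For assertion \eqref{corr_3} I pick arbitrary $\overline{F}$-isometries $P,Q$ as above and set $\Omega=\mathscr{J}_\W\circ(P\otimes Q)$; then $w\mapsto\Omega^{-1}\circ w\circ\Omega\circ w^{-1}$ takes values in $\iota^\#(G(V_c^\#)\times G(W_c^\#))(\overline{F})$, and its projection modulo the centres is a $\Gamma$-cocycle which, by Fact~\ref{rig can surj}, is the image of a pair $z_+\in Z^1(u\to\cW,Z_{V_c^\#}\to G_0(V^\#))$, $z_-\in Z^1(u\to\cW,Z_{W_c^\#}\to G_0(W^\#))$. The remaining discrepancy between $\Omega^{-1}\circ w\circ\Omega\circ w^{-1}$ and $\iota^\#(z_+(w),z_-(w))$ is then a $\mu_2$-valued cocycle, which I kill using the leftover freedom in the choice of lifts together with Proposition~\ref{RIT_str}; this construction in particular yields the non-emptiness asserted in Proposition~\ref{RIT_str}~\eqref{nonempty}.

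The main obstacle, common to \eqref{corr_2} and \eqref{corr_3}, is precisely the central bookkeeping: $Z_{V_c^\#}$ and $Z_{W_c^\#}$ do not map independently into $\Sp(\W^\#)$ --- their images overlap in a copy of $\mu_2$ --- so one cannot lift the $V$-side and the $W$-side separately, and must instead verify that the combined central datum (the value of $\iota^\#$ on a pair of central cocycles in \eqref{corr_2}, the failure of an a priori lift to be a genuine witness in \eqref{corr_3}) is trivial in $\Sp(\W^\#)$. This is where the matching conditions $\lambda_+\leftrightarrow\lambda_-$ and $h_0\leftrightarrow h_-$ do their work, and where the case \eqref{vsp II} requires the extra remark that the relevant square class is a coboundary.
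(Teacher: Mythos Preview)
Your argument is essentially correct and matches the paper's approach for parts \eqref{corr_1}--\eqref{corr_2.5}; the paper in fact dismisses \eqref{corr_1} and \eqref{corr_2} as obvious, and your expanded treatment is a reasonable fleshing-out of what that means. For \eqref{corr_2.5} you reproduce the paper's proof almost verbatim: set $g_0=\Omega^{-1}\Omega'$, observe it centralizes $\iota^\#(G(V_c^\#)\times 1)$, hence lies in $\iota^\#(1\times G(W_c^\#))$, and pull back.

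For \eqref{corr_3} there is a genuine, if minor, difference. The paper lifts \emph{only} $c_+$ to some $z_+$ via Fact~\ref{rig can surj}, and then \emph{defines}
\[
z_-'(w)\;=\;\iota^\#(z_+(w),1)^{-1}\cdot\bigl(\Omega^{-1}\circ w\circ\Omega\circ w^{-1}\bigr),
\]
checking that the right side centralizes $\iota^\#(G(V_c^\#)\times 1)$ and therefore lies in $\iota^\#(1\times G(W_c^\#))$, so that $z_-\coloneqq\iota^{\#-1}\circ z_-'$ makes the cocycle identity hold on the nose. No discrepancy ever appears. Your route --- lift both $c_+$ and $c_-$ independently, then kill the leftover --- also works, but your description of the discrepancy as ``$\mu_2$-valued'' is only accurate in cases \eqref{vsp I} and \eqref{vsp III}; in case \eqref{vsp II} the leftover lives in $\iota^\#(E^1\times E^1)$, and one needs the dual-pair overlap $\iota^\#(a\cdot 1_{V_c^\#},1)\in\iota^\#(1\times Z_{W_c^\#})$ to absorb it into a $\cZ^1[W_c^\#]$-twist of $z_-$ rather than a coboundary correction of $\Omega$. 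The appeal to Proposition~\ref{RIT_str} is unnecessary: the freedom in the lift (i.e.\ multiplication by $\cZ^1$) alone suffices. The paper's direct definition of $z_-$ sidesteps all of this bookkeeping.
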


\begin{proof}
The assertions \eqref{corr_1} and \eqref{corr_2} are obviously. We prove \eqref{corr_2.5}. 
Let $\Omega, \Omega' \colon \W^\#\otimes\overline{F} \rightarrow \W\otimes\overline{F}$ be isometries over $\overline{F}$  such that 
\begin{align*}
&\Omega^{-1}\circ w \circ \Omega \circ w^{-1} = \iota^\#(z_+(w), z_-(w)), \\
&{\Omega'}^{-1}\circ w \circ {\Omega'} \circ w^{-1} = \iota^\#(z_+(w), z_-'(w))
\end{align*}
for $w \in \cW$ and 
\begin{align*}
& \varphi_\Omega\circ  \iota^\# = \iota\circ(\varphi_+, \varphi_-), \\
& \varphi_{\Omega'}\circ\iota^\#= \iota\circ(\varphi_+, \varphi_-').
\end{align*}
Put $g_0 = \Omega^{-1}\circ \Omega' \in \Sp(\W^\#)$. Then, for all $h \in G(V^\#)(\overline{F})$ we have 
\begin{align*}
g_0\iota^\#(h)g_0^{-1} &= (\varphi_\Omega^{-1}\circ\varphi_{\Omega'})(\iota^\#(h)) \\
& =(\varphi_+^{-1} \circ \varphi_+)(\iota^\#(h)) = \iota^\#(h).
\end{align*}
Hence we have $g_0 \in \iota^\#(1 \times G(W)(\overline{F}))$. Then, putting $g = \iota^{\#-1}(g_0) \in G(W^\#)(\overline{F})$, we have $(1,1,g)\cdot(z_-, \varphi_-) = (z_-', \varphi_-')$.
Finally, we prove \eqref{corr_3}. We denote by $L$ the natural limnear map $(V\otimes\overline{F})^\natural\otimes (W\otimes\overline{F})^\natural \rightarrow \W\otimes\overline{F}$ of \S\ref{morita}. Take isometries $A_+\colon V_c^\#\otimes\overline{F} \rightarrow (V\otimes\overline{F})^\natural$ and $A_-\colon W_c^\#\otimes\overline{F}\rightarrow (W\otimes\overline{F})^\natural$, and put $\Omega \coloneqq L\circ (A_+\otimes A_-)$. Then, by Lemma \ref{Morita_W}, we have that $\Omega$ is a bijective isometry linear map and that
\begin{align*}
&(\varphi_\Omega)(\iota^\#(G(V_c^\#)(\overline{F}) \times 1) = \iota(G(V)(\overline{F}) \times 1),\\
&(\varphi_\Omega)(\iota^\#(1\times G(W_c^\#)(\overline{F}))) = \iota(1\times G(W)(\overline{F})).
\end{align*}
Hence, we obtain isomorphisms $\varphi_+\colon G(V_c^\#)\rightarrow G(V)$ and $\varphi_-\colon G(W_c^\#)\rightarrow G(W)$ over $\overline{F}$, which make the diagram \eqref{corr_rig} commutative.
For $w \in \cW$, we regard $\Omega^{-1}\circ w\circ \Omega \circ w^{-1}$ as an element of $\Sp(\W_c^\#)(\overline{F})$. Since $\Ad (\Omega^{-1}\circ w\circ \Omega \circ w^{-1})$ preserves $\iota^\#(G(V_c^\#)\times 1)$ and $\iota^\#(1\times G(W_c^\#))$, it defines cocycles $c_+ \in Z^1(\Gamma, \Aut(G(V_c^\#)))$ and $c_- \in Z^1(\Gamma, \Aut(G(W_c^\#))$ respectively. Since $G(V)$ and $G(W)$ are inner forms of $G(V_c^\#)$ and $G(W^\#)$ respectively, we have $c_+ \in Z^1(\Gamma, G(V_c^\#)/Z_{V_c^\#})$ and $c_- \in Z^1(\Gamma, G_0(W_c^\#)/Z_{W_c^\#})$. Then, by Fact \ref{rig can surj}, there exists $z_+ \in Z^1(u\rightarrow \cW, Z\rightarrow G(V_c^\#))$ whose image in $Z^1(\Gamma, G(V_c^\#)/Z_{V_c^\#})$ coincides with $c_+$. Put
\[
z_-'(w) = \iota^\#(z_+(w), 1)^{-1}\cdot (\Omega^{-1} \circ w \circ \Omega \circ w^{-1}) \quad (w \in \cW).
\]
Then, for each $w \in \cW$, the element $z_-'(w)$ commutes with all elements of $\iota^\#(G(V_c^\#)\times1)$. Hence, $z_-\coloneqq \iota^{\#-1}\circ z_-'$ defines a cocycle in $Z^1(u\rightarrow \cW, Z \rightarrow G(W_c^\#))$ whose image in $Z^1(\Gamma, G(W_c^\#)/Z_{W_c^\#})$ is $c_-$. Thus, we obtain the rigid inner twists $(z_+, \varphi_+)$ and $(z_-, \varphi_-)$ satisfying $(z_+, \varphi_+) \leftrightarrow (z_-, \varphi_-)$. Hence we have \eqref{corr_3}, and we finish the proof of Proposition \ref{corr_funda}.
\end{proof}

\begin{rem}\label{rm_exist}
The proof of Proposition \ref{corr_funda} \eqref{corr_3} contains that of Proposition \ref{RIT_str} \eqref{nonempty}.
\end{rem}

We define an L-embedding
\[
\begin{cases}
 \xi\colon {}^LG_0(V_c^\#) \rightarrow {}^LG_0(W_c^\#) & \mbox{ if $n=m+1$} \\
 \xi\colon{}^LG_0(W_c^\#) \rightarrow {}^LG_0(V_c^\#) & \mbox{ if $n=m$}
\end{cases}
\]
as follows. 
\begin{itemize}
\item Consider the cases \eqref{vsp I} and \eqref{vsp III}. For a positive integer $N$, we denote by $S_N$ the quadratic space $\C^N$ over $\C$ equipped with the symmetric bilinear form obtained by the matrix $J_N$. Then there exists a bijective isometry $S_{N+1} \cong S_N \bot S_1$, which induces an embedding $\xi_0\colon \SO_N(\C) \rightarrow \SO_{N+1}(\C)$. If $n = m+1$, then we define the L-embedding $\xi$ by
\[
\xi(h \rtimes w) = \chi_V(w)\xi_0(\chi_W(w)h) \rtimes w \quad (h\rtimes w \in {}^LG_0(V_c^\#)),
\]
and if $n=m$, then we define $\xi$ by
\[
\xi(g \rtimes w) = \chi_W(w)\xi_0(\chi_V(w)h) \rtimes w \quad (g\rtimes w \in {}^LG_0(W_c^\#)).
\]
\item Consider the case \eqref{vsp II}. We fix an element $w_c \in W_F \setminus W_E$. 
If $n=m+1$, then we define the embedding $\xi$ by
\begin{align*}
&\xi(h\rtimes w) = \chi_V(w)\begin{pmatrix} \chi_W(w)\cdot{}^th^{-1} & 0 \\ 0 & 1 \end{pmatrix} \rtimes w \quad (h\rtimes w \in \GL_m(\C)\rtimes W_E), \mbox{ and}\\
&\xi(1 \rtimes w_c) = \begin{pmatrix} \Phi_m & 0 \\ 0 & 1 \end{pmatrix}\Phi_n^{-1} \rtimes w_c.
\end{align*}
If $n=m$, then we define $\xi$ by
\begin{align*}
&\xi(g \rtimes w) = \chi_V(w)\chi_W(w)\cdot {}^tg^{-1}\rtimes w \quad (g \rtimes w \in \GL_n(\C) \rtimes W_E), \mbox{ and}\\
&\xi(1 \rtimes w_c) = w_c.
\end{align*}
\end{itemize}

Let $\phi$ be a tempered $L$-parameter of $G(V)$, let $\phi'$ be a tempered $L$-parameter of $G(W)$, and let $(z_+, \varphi_+) \in \RIT^\star(V_c^\#, V)$ and $(z_-, \varphi_-) \in \RIT^\star(W_c^\#, W)$ be rigid inner twists. We say that $\phi$ and $\phi'$ satisfy the condition \eqref{L-par theta} if 
\begin{align}
&\mbox{there exist } \widehat{h} \in G(V^\#_c)^\wedge \mbox{ and } \widehat{g} \in G_0(W^\#_c)^\wedge\rtimes \la \widehat{\varepsilon} \ra \mbox{ such that } \label{L-par theta} \\
&\begin{cases} (\Ad \widehat{h}) \circ \phi  = \xi\circ (\Ad \widehat{g})\circ \phi' & \mbox{ if } n=m, \\ (\Ad \widehat{g})\circ \phi' = \xi\circ (\Ad \widehat{h}) \circ \phi & \mbox{ if } n=m+1. 
\end{cases} \notag
\end{align}
Note that $\phi'$ may not exist.  Assume that $(z_+, \varphi_+) \leftrightarrow (z_-, \varphi_-)$ and $\phi, \phi'$ satisfy \eqref{L-par theta}. Then, we define the map
\[
\mathscr{T}_\psi[c, (z_+, \varphi_+), (z_-, \varphi_-)]\colon \wPi_\phi(G(V)) \rightarrow \wPi_{\phi'}(G(W))_{weak} \cup \{0\}
\]
as follows.
Let $\pi \in \Pi_\phi(G(V)))$, and let $(\phi, \eta)$ be the Langlands parameter of $\pi$.
\begin{itemize}
\item  In the case \eqref{vsp II}, we may assume that $\widehat{h} = 1$ and $\widehat{g} = 1$. 
If there exists an irreducible tempered representation having the Langlands parameter $(\theta(\phi), \theta(\eta))$ that is defined as in \cite[\S4]{GI16}, then we denote it by $\mathscr{T}_\psi[c, (z_+, \varphi_+), (z_-, \varphi_-)](\pi)$. Otherwise, we put $\mathscr{T}_\psi[c, (z_+, \varphi_+), (z_-, \varphi_-)](\pi) = 0$.
\item In the cases \eqref{vsp I} and \eqref{vsp III} with $n=m$, $(\Ad \widehat{h}^{-1})\circ \xi \circ (\Ad \widehat{g})$ induces an embedding $S_{\phi'}^+ \rightarrow S_\phi^+$.
Then, there exists the unique irreducible representation $\eta' \in {\rm Irr}(S_{\phi'}^+, W)$ such that $(\eta')^\vee \subset \eta\circ(\Ad \widehat{h}^{-1})\circ\xi \circ (\Ad \widehat{g})$. If there exists an irreducible tempered representation having the Langlands parameter $(\phi', \eta')$, we denote it by $\mathscr{T}_\psi[c, (z_+, \varphi_+), (z_-, \varphi_-)](\pi)$. Otherwise, we put $\mathscr{T}_\psi[c, (z_+, \varphi_+), (z_-, \varphi_-)](\pi)=0$.
\item In the cases \eqref{vsp I} and \eqref{vsp III} with $n=m+1$, then $(\Ad \widehat{g}^{-1})\circ\xi \circ (\Ad \widehat{h})$ induces an embedding $S_{\phi}^+ \rightarrow S_{\phi'}^+$. There is a unique $\eta' \in {\rm Irr}(S_{\phi'}, W)$ such that $(\eta')^\vee\circ(\Ad \widehat{g}^{-1})\circ\xi\circ(\Ad \widehat{h})$ contains $\eta$. If there exists an irreducible tempered representation having the Langlands parameter $(\phi', \eta')$, then we denote it by $\mathscr{T}_\psi[c, (z_+, \varphi_+), (z_-, \varphi_-)](\pi)$. Otherwise, we put $\mathscr{T}_\psi[c, (z_+, \varphi_+), (z_-, \varphi_-)](\pi) = 0$.
\end{itemize}

Here, we used a basic fact about centers of spin groups (see Corollary \ref{spin_fact} below).

\begin{thm}\label{welldefness}
The map $\mathscr{T}_\psi[c, (z_+, \varphi_+), (z_-, \varphi_-)]$ does not depend on the choice of $c, (z_+, \varphi_+)$, and $(z_-, \varphi_-)$ whenever $(z_+, \varphi_+)\leftrightarrow(z_-, \varphi_-)$.
\end{thm}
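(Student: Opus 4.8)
The plan is to reduce the independence statement to the behavior of Langlands parameters under the three types of moves generated by Proposition \ref{RIT_str} and Proposition \ref{corr_funda}, and then to invoke the functoriality properties collected in \S\ref{LLC prop}. First I would fix the data and observe that, by Proposition \ref{corr_funda}\eqref{corr_3}, a compatible pair $(z_+,\varphi_+)\leftrightarrow(z_-,\varphi_-)$ exists for every $c$; so it suffices to compare the map $\mathscr{T}_\psi[c,(z_+,\varphi_+),(z_-,\varphi_-)]$ attached to two such compatible pairs (possibly for different $c$). By Proposition \ref{RIT_str}\eqref{transitive}, any two elements of $\RIT^\star(V_c^\#,V)$ differ by the action \eqref{def_ac} of $\cZ^1[V_c^\#]\times(G(V)/Z_V)(F)\times G(V_c^\#)(\overline{F})$, and similarly on the $W$-side; combining this with Proposition \ref{corr_funda}\eqref{corr_2}, \eqref{corr_2.5}, I can pass from one compatible pair to another by a sequence of moves of the following three kinds: (a) twisting $z_\pm$ by a pair of corresponding cocycles $\lambda_+\leftrightarrow\lambda_-$ valued in the centers; (b) replacing $\varphi_\pm$ by $(\Ad h_0)\circ\varphi_+$, $(\Ad h_-)\circ\varphi_-$ with $h_0\leftrightarrow h_-$; (c) conjugating by $g_\pm\in G(V_c^\#)(\overline{F})$, resp.\ $G(W_c^\#)(\overline{F})$. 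The change of $c$ is itself of the form (a)+(b), since replacing $c$ by $c'$ rescales the Hermitian forms by $c'/c\in F^\times$ and hence only changes the chosen Whittaker datum and the inner twist by a central cocycle plus an inner automorphism; I would record this at the start.

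Next I would treat each move. Move (c) changes neither the Langlands parameter of any $\pi$ nor the identification of $S$-groups, because conjugation by $g_\pm\in G(V_c^\#)(\overline{F})$ only replaces the auxiliary splitting/toral data used to compute transfer factors; this is exactly the content underlying Proposition \ref{parameter orth} (and \cite[Proposition 5.6]{Kal16}), so $\mathscr{T}_\psi$ is unchanged. For move (a), Lemma \ref{zurashi1} gives
\[
\iota[\fw_+, z_+\cdot\lambda_+, \varphi_+]_\phi(\pi)=\iota[\fw_+, z_+, \varphi_+]_\phi(\pi)\otimes\langle\lambda_+,-\rangle,
\]
and the analogous formula on the $W$-side with $\lambda_-$. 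Since $\lambda_+\leftrightarrow\lambda_-$, the two characters $\langle\lambda_+,-\rangle$ and $\langle\lambda_-,-\rangle$ correspond under the identification $Z_{V_c^\#}\cong Z_{W_c^\#}$, hence under the embedding $\xi$ of $L$-groups and the induced map on $S$-groups; therefore the twist on the source side is exactly matched by the twist on the target side, and the prescription defining $\mathscr{T}_\psi$ (select the unique $\eta'$ with $(\eta')^\vee$ a constituent of $\eta\circ(\Ad\widehat h^{-1})\circ\xi\circ(\Ad\widehat g)$, etc.) is preserved. Move (b) is the same computation via Corollary \ref{zu1-2}: changing $\varphi_+$ by $\Ad h_0$ twists the parameter by $\langle\lambda_{h_0},-\rangle$, changing $\varphi_-$ by $\Ad h_-$ twists by $\langle\lambda_{h_-},-\rangle$, and $h_0\leftrightarrow h_-$ means $\lambda_{h_0}\leftrightarrow\lambda_{h_-}$, so again the two twists cancel across $\xi$. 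The dependence on $\psi$-normalizations of the Whittaker data is built into $\fw_\pm$ and moves in lockstep with the change of $c$, so nothing new arises there.

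Finally I would assemble these into a proof that $\mathscr{T}_\psi[c,(z_+,\varphi_+),(z_-,\varphi_-)](\pi)$ is independent of all choices: given two compatible configurations, connect them by a finite chain of moves (a), (b), (c); by the previous paragraph each move changes the Langlands parameter of $\pi$ on the $V$-side and the prescribed parameter on the $W$-side by matching central twists, hence leaves $\mathscr{T}_\psi(\pi)$ unchanged; the claim follows. The main obstacle I expect is bookkeeping the compatibility of all the identifications: one must check that the identification $Z_{V_c^\#}\cong Z_{W_c^\#}$ used to define ``$\lambda_+\leftrightarrow\lambda_-$'' is compatible with the map $S_{\phi'}^+\to S_\phi^+$ (or its inverse) induced by $\xi$, i.e.\ that the pairing $\langle-,-\rangle_Z$ of \eqref{paircenter} intertwines the central-character twists on the two sides exactly as $\xi$ dictates — this requires tracing through the Tate--Nakayama duality for the relevant maximal tori and the explicit form of $\xi$ in the cases \eqref{vsp I}, \eqref{vsp II}, \eqref{vsp III}, and in case \eqref{vsp II} additionally keeping track of the Asai-type twists hidden in the definition of $\theta(\phi),\theta(\eta)$ in \cite[\S4]{GI16}. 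The case \eqref{vsp I} with $\epsilon=-1$ also needs the extra care already visible in Lemma \ref{GS} and Proposition \ref{parameter orth}, where the $\langle\widehat{\varepsilon}\rangle$-ambiguity must be absorbed; here I would use the relation $\iota[\fw,z,\varphi]_{\Ad\widehat{\varepsilon}\circ\phi}(\pi)((\Ad\widehat{\varepsilon})s)=\iota[\fw,z,\varphi]_\phi(\pi)(s)$ recorded after Hypothesis \ref{LLCclg} to see that the choice of $\widehat g\in G_0(W_c^\#)^\wedge\rtimes\langle\widehat{\varepsilon}\rangle$ in \eqref{L-par theta} does not affect the output.
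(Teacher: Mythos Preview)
Your proposal is essentially the paper's own argument: fix $c$, use Proposition~\ref{RIT_str} and Proposition~\ref{corr_funda} to relate any two compatible pairs by an element of $\cZ^1[V_c^\#]\times(G(V)/Z_V)(F)\times G(V_c^\#)(\overline{F})$ on each side with $\lambda_+\leftrightarrow\lambda_-$ and $h_0\leftrightarrow h_-$, then invoke Lemma~\ref{zurashi1}, Corollary~\ref{zu1-2}, and Proposition~\ref{parameter orth} to see that the induced twists on both sides match through $\xi$ and hence $\mathscr{T}_\psi$ is unchanged. Your decomposition into moves (a),(b),(c) is exactly the three-factor decomposition of the action \eqref{def_ac}, and the paper uses the same three lemmas for the same three moves.

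The one place where your plan diverges is the treatment of the change of $c$. You propose to absorb it into moves of type (a)+(b), but this is not quite right as stated: the moves (a),(b),(c) act on $\RIT^\star(V_c^\#,V)$ for \emph{fixed} $c$, whereas changing $c$ passes between different sets $\RIT^\star(V_c^\#,V)$ and $\RIT^\star(V_{c'}^\#,V)$, so there is no literal reduction to (a)+(b). The paper handles this more directly: in cases \eqref{vsp I} and \eqref{vsp III} the underlying vector spaces $V_c^\#$ and $V_{c'}^\#$ coincide, so $G(V_c^\#)=G(V_{c'}^\#)$ as groups, and one transports a compatible pair for $c$ to one for $c'$ via the identity maps $\mathscr{J}_\pm$. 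The key (and only) nontrivial point is then that the $F$-splittings defining $\fw_+^{(c)}$ and $\fw_+^{(c')}$ (and likewise on the $W$-side) literally agree under $\mathscr{J}_\pm$, so $\iota[\fw_c,z_\pm,\varphi_\pm]_\phi\circ\mathscr{J}_\pm=\iota[\fw_{c'},z_\pm',\varphi_\pm']_\phi$ and the maps $\mathscr{T}_\psi$ agree on the nose. In case \eqref{vsp II} there is nothing to do since $V_c^\#$ does not depend on $c$. This is cleaner than trying to manufacture central cocycles for the change of $c$, and avoids the bookkeeping you anticipated about how the Whittaker datum moves.
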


\begin{proof}
First, we fix $c$. Let $(z_+, \varphi_+), (z_+', \varphi_+') \in \RIT^\star(V_c^\#, V)$ and $(z_-, \varphi_-), (z_-', \varphi_-') \in \RIT^\star(W_c^\#, W)$ be rigid inner twists so that $(z_+, \varphi_+) \leftrightarrow (z_-, \varphi_-)$ and $(z_+', \varphi_+') \leftrightarrow (z_-', \varphi_-')$. Then, by Proposition \ref{RIT_str} and Proposition \ref{corr_funda}, there exist $(\lambda_+, h_0, g_+) \in \cZ^1[V_c^\#] \times (G(V)/Z_V)(F) \times G(V_c^\#)(\overline{F})$ and $(\lambda_-, h_-, g_-) \in \cZ^1[W_c^\#] \times (G(W)/Z_W)(F) \times G(W_c^\#)(\overline{F})$ such that $\lambda_+ \leftrightarrow \lambda_-$, $h_0 \leftrightarrow h_0$ and 
\begin{align*}
(z_+', \varphi_+')&= (\lambda_+, h_0, g_+)\cdot (z_+, \varphi_+), \\
(z_-', \varphi_-')&= (\lambda_-, h_-, g_-)\cdot (z_-, \varphi_-). 
\end{align*}
By Lemma \ref{zurashi1}, Corollary \ref{zu1-2}, and Proposition \ref{parameter orth}, we have
\[
\mathscr{T}_\psi[c, (z_+, \varphi_+), (z_-, \varphi_-)] = \mathscr{T}_\psi[c, (z_+', \varphi_+'), (z_-', \varphi_-')].
\]

Then, we prove the independence from $c$. This is clear in the case \eqref{vsp II}. Hence, we consider the cases \eqref{vsp I} and \eqref{vsp III}. Take another element $c' \in F^\times$. 
Since $W_c^\# = W_{c'}^\#$ as vector space, the groups $G(W_c^\#)$ and $G(W_{c'}^\#)$ coincide. We denote by $\mathscr{J}_-$ the identity map from $G(W_{c'}^\#)$ onto $G(W_c^\#)$. We also denote by $\mathscr{J}_+$ the identity map from $G(V_{c'}^\#)$ onto $G(V_c^\#)$.  Then, the following diagram is commutative.
\[
\xymatrix{
 & \Sp(\W^\#) \ar[rr]^-{\varphi_\Omega} && \Sp(\W) \\ G(V_{c'}^\#) \times G(W_{c'}^\#) \ar[ru]^{\iota_{c'}^\#} \ar[r]_{(\mathscr{J}_+, \mathscr{J}_-)} & G(V_c^\#)\times G(W_c^\#) \ar[u]^{\iota_c^\#} \ar[rr]_{(\varphi_+, \varphi_-)} && G(V) \times G(W) \ar[u]_{\iota}
}
\]
Hence, putting $\varphi_\pm' \coloneqq \mathscr{J}_\pm\circ\varphi_{\pm}$ and $z_\pm' \coloneqq \mathscr{J}_\pm^{-1}\circ z_\pm$, we have $(z_+', \varphi_+') \leftrightarrow (z_-', \varphi_-')$ with respect to $c'$. Then, since the splitting $\spl(G(V_{c'}^\#)$ (resp. $\spl(G(W_{c'}^\#))$) is transfered to the splitting $\spl(G(V_c^\#)$ (resp. $\spl(G(W_c^\#))$) via $\mathscr{J}_+$ (resp. $\mathscr{J}_-$), we have
\[
\iota[\fw_c, z_\pm, \varphi_\pm]_\phi\circ\mathscr{J}_\pm = \iota[\fw_{c'}, z_\pm', \varphi_\pm']_\phi.
\]
Therefore, we have
\[
\mathscr{T}_\psi[c, (z_+, \varphi_+), (z_-, \varphi_-)] = \mathscr{T}_\psi[c', (z_+', \varphi'), (z_-', \varphi')].
\]
This completes the proof of Theorem \ref{welldefness}.  
\end{proof}

In the rest of this paper, we write $\mathscr{T}_\psi$ instead of $\mathscr{T}_\psi[c, (z_+, \varphi_+), (z_-, \varphi_-)]$. 

\begin{conj}\label{conj_main}
Assume that $\epsilon = 1$. Let $\phi$ be a tempered $L$-parameter for $G_0(V)$. If there exists a tempered $L$-parameter $\phi'$ satisfying \eqref{L-par theta}, then we have $\theta_\psi(\pi, W) = \mathscr{T}_\psi(\pi)$. 
\end{conj}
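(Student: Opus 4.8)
The plan is to verify Conjecture \ref{conj_main} in the two settings treated in this paper — $F = \R$, and $F$ non-Archimedean with $n = m = 1$ — by reducing it in each case to the explicit descriptions of the theta correspondence already available there and matching those against the combinatorial recipe that defines $\mathscr{T}_\psi$. The first step is to invoke Theorem \ref{welldefness} together with Proposition \ref{corr_funda} \eqref{corr_3}: since neither $\theta_\psi(\pi, W)$ nor $\mathscr{T}_\psi(\pi)$ depends on the scalar $c$ or on the choice of associated rigid inner twists $(z_+, \varphi_+) \leftrightarrow (z_-, \varphi_-)$, we may fix throughout whichever normalization makes the computations cleanest, for instance $c = 1$ together with the pair $(z_\pm, \varphi_\pm)$ built from fixed isometries $A_\pm$ as in the proof of Proposition \ref{corr_funda} \eqref{corr_3}. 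After this reduction the statement splits into two halves: that $\theta_\psi(\pi, W)$ carries the $L$-parameter $\phi'$, and that its internal label coincides with the one produced by $\mathscr{T}_\psi$; non-vanishing of $\theta_\psi(\pi, W)$ is already \cite[Proposition 20.4]{Kak22}.

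Over $F = \R$ the argument would run as follows. One first reduces to discrete series parameters, using the description of $\wPi_\phi$ for general tempered $\phi$ via parabolic induction from a Levi subgroup — where the $\RIT_M^\star$ formalism of \S\ref{RIT_Levi} and Proposition \ref{parameter orth} control how $(z,\varphi)$ behaves on the Levi — together with the compatibility of the theta lift itself with parabolic induction. For discrete series parameters the theta lifts were computed in terms of Harish-Chandra parameters by Li \cite{Li89} and Li--Paul--Tan--Zhu \cite{LPTZ03}. The remaining work is then: (i) to translate those Harish-Chandra parameters into Langlands parameters $\bigl(\phi, \iota[\fw, z, \varphi]_\phi(\pi)\bigr)$ using Mezo's verification of the Archimedean endoscopic character relations \cite{Mez13}\cite{Mez16}; and (ii) to unwind the definition of $\mathscr{T}_\psi$ in cases \eqref{vsp I} and \eqref{vsp III} — the $L$-embedding $\xi$, the embedding between $S_\phi^+$ and $S_{\phi'}^+$ it induces according to whether $n = m$ or $n = m + 1$, the fact about centres of spin groups of Corollary \ref{spin_fact}, and the clause selecting an irreducible constituent — and to check that it reproduces exactly the translated label, including the complex conjugation appearing in Conjecture \ref{main_intro}, which is an artefact of the contragredient $\pi^\vee$ entering the definition of $\Theta_\psi$.

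When $F$ is non-Archimedean with $n = m = 1$ I would instead use Ikematsu's description \cite{Ike19}: the rank-one quaternionic unitary groups are, through accidental isomorphisms, subgroups of unitary groups, and Ikematsu expresses the relevant theta lifts in terms of characters transported along those isomorphisms. From this one computes, directly and case by case, the Langlands parameters of the finitely many tempered representations $\pi$ involved — feasible precisely because the groups are so small that every tempered representation is either square-integrable or parabolically induced from a torus — and compares the outcome with $\mathscr{T}_\psi(\pi)$. In both settings the behaviour-of-$L$-parameters half drops out of the same computation: over $\R$ from step (i), and in the $p$-adic case from Ikematsu's description.

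The principal obstacle is normalization bookkeeping rather than any deep new ingredient: quaternionic dual pairs are not covered by the known proofs of the Prasad conjectures, so one genuinely cannot cite them and must go through the explicit low-rank or Archimedean computations, and reconciling the conventions of Li, Li--Paul--Tan--Zhu and Ikematsu for the oscillator representation, for the splitting $\widetilde{\iota}_{V,W}$, for the additive character $\psi$ and for the scalar $c$ with those fixed in \S\S\ref{groups}--\ref{WD} is exactly what forces the careful comparisons collected in the appendices \S\S\ref{app_op}--\ref{spec_correction}. A secondary difficulty is the continual passage, in cases \eqref{vsp I} and \eqref{vsp III}, between the disconnected group $G(W)$ and its identity component $G_0(W)$, and between genuine $L$-packets and the ``weak'' packets $\wPi_{\phi'}(G_0(W))_{weak}$; here Proposition \ref{parameter orth}, together with the $\langle \widehat{\varepsilon} \rangle$-equivariance of the parametrization recorded at the end of \S\ref{Langpa}, is what keeps the argument tractable.
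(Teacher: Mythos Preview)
Your outline is correct and follows the same route as the paper, which proves the conjecture only in the cases you name (Theorem \ref{sp-o real} over $\R$ and Theorem \ref{m=n=1_conj} for non-Archimedean $m=n=1$; Fact \ref{Pre_PC} records the already-known split and unitary cases). The main technical device you leave implicit in your step~(ii) is Proposition~\ref{SOSP_key_dia_quat}: rather than a direct translation, the paper constructs explicit $(z_\pm,\varphi_\pm)$ compatible with the homomorphism $\fp_\xi$ of Definition~\ref{def_pxi}, so that the $\inv_z$-terms in Corollary~\ref{formula_LLC} match under $\fp_\xi$ via Lemma~\ref{key_Gal}, after which a counting argument (Lemma~\ref{countingT} and Remark~\ref{countingP}) handles the vanishing cases rather than the non-vanishing citation you give.
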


It is not difficult to show that Conjecture \ref{conj_main} is equivalent to the weak version (in the sense of \cite{AG17}) of the Prasad conjecture which is already proved in the non-Archimedean cases \cite{Ato18}\cite{GI16} (see also \S\ref{conv_theta} below). Summarizing:

\begin{fact}\label{Pre_PC}
Assume that $F$ is a non-Archimedean local field. Then, Conjecture \ref{conj_main} holds in the cases \eqref{vsp I} and \eqref{vsp II}.
\end{fact}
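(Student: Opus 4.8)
The plan is to identify Conjecture \ref{conj_main} in the cases \eqref{vsp I} and \eqref{vsp II} with the $p$-adic Prasad conjecture, which is already known, by unwinding the constructions of \S\S\ref{LTC}--\ref{main sec}. By Theorem \ref{welldefness} it suffices to prove the identity $\theta_\psi(\pi, W) = \mathscr{T}_\psi(\pi)$ for one convenient pair of rigid inner twists $(z_+, \varphi_+) \leftrightarrow (z_-, \varphi_-)$ and one $c$, whose existence is Proposition \ref{corr_funda} \eqref{corr_3}. In case \eqref{vsp I} the Morita functor $\natural$ identifies $G(V)$ with a symplectic group and $G(W)$ with an even orthogonal group over $F$ (and, by Remark \ref{natural_Weil}, identifies the relevant metaplectic groups and Weil representations), while in case \eqref{vsp II} it is the identity and $G(V), G(W)$ are unitary groups over $E/F$; in both cases these are classical groups all of whose inner forms are pure inner forms, so one may choose $(z_\pm, \varphi_\pm)$ to arise from pure inner twists, and then the normalization $\iota[\fw_\pm, z_\pm, \varphi_\pm]_\phi$ of \S\ref{Langpa}, characterized by the endoscopic character relations of Hypothesis \ref{LLCclg}, coincides (by Kaletha's comparison of the rigid and the classical normalizations, cf. \cite{Kal16}) with the Whittaker-normalized local Langlands correspondence for classical groups used in \cite{GI16} and \cite{Ato18}. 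Proposition \ref{corr_funda} \eqref{corr_1}, \eqref{corr_2} ensures that the two sides of ``$\leftrightarrow$'' are matched compatibly with the doubling used to define $\widetilde{\iota}_{V,W}$, so that the input data of the Prasad conjecture hold on the nose.

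It then remains to check that all the auxiliary data of \S\S\ref{LTC}--\ref{main sec} coincide with those of \cite{GI16} and \cite{Ato18}. The characters $\chi_V, \chi_W$ were defined in \S\ref{LTC} to be $\chi_V = 1$ and $\chi_W(a) = (a, \fd(W))_F$ in case \eqref{vsp I} --- the splitting characters of the orthogonal--symplectic doubling --- and in case \eqref{vsp II} to satisfy $\chi_V|_{F^\times} = \omega_{E/F}^{\dim V}$, $\chi_W|_{F^\times} = \omega_{E/F}^{\dim W}$, as in \cite{GI16}. The Whittaker data $\fw_+, \fw_-$ were constructed in \S\ref{WD} precisely to agree with those of \cite{Ato18} (case \eqref{vsp I}) and \cite{GI16} (case \eqref{vsp II}); this is the reason for the $\psi$ versus $\psi_{1/2}$ and the ``$\op$'' conventions appearing there. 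The embedding $\widetilde{\iota}_{V,W}$ is Kudla's splitting, and after these identifications the $L$-embedding $\xi$ of \S\ref{main sec} is the embedding of $L$-groups of \cite[\S15.1]{GI14}. Consequently $\mathscr{T}_\psi$ becomes literally the assignment of Langlands parameters $(\phi, \eta)\mapsto(\theta(\phi), \theta(\eta))$ of \cite[\S4]{GI16} in case \eqref{vsp II}, and the corresponding prescription of \cite{Ato18} in case \eqref{vsp I}; and the passage from $G(W)$ to $G_0(W)$ together with the $G(W)(F)$-conjugacy classes $\wPi_{\phi'}(G_0(W))_{weak}$ is exactly the ``weak'' formulation of the Prasad conjecture in the sense of \cite{AG17}.

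Granting these identifications, Conjecture \ref{conj_main} in case \eqref{vsp I} becomes the weak Prasad conjecture for symplectic--orthogonal dual pairs over a non-Archimedean local field, due to Atobe \cite{Ato18} and Atobe--Gan \cite{AG17} (the standing hypotheses $\epsilon=1$, $\dim W - \dim V \in \{0,1\}$ and condition \eqref{L-par theta} place us in the range in which the behaviour of $L$-parameters under $\theta$ is as predicted by \cite{Pra93}), and in case \eqref{vsp II} it becomes the Prasad conjecture for unitary dual pairs, due to Gan--Ichino \cite{GI16}. This yields the Fact.

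The difficulty here is bookkeeping rather than mathematics: one must track carefully the additive-character normalizations entering the Weil representation $\omega[\W, c_{\psi, \Y}]$ and Kudla's splitting $\widetilde{\iota}_{V, \chi_V}^W$, the discrepancy between the doubled-space definition of $\widetilde{\iota}_{V, \chi_V}^W$ in \S\ref{LTC} and the splittings employed in \cite{GI16} and \cite{Ato18}, and the several ``$\op$'' and Morita identifications, each of which may contribute a sign or a complex conjugation. These are precisely the points isolated in the appendices (see \S\ref{app_op} and \S\ref{app theta HC 1}); once they are pinned down, the equivalence with the cited theorems is routine, which is why this is recorded only as a Fact.
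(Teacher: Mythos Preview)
Your proposal is correct and follows essentially the same approach as the paper: the paper gives no detailed proof but merely asserts (in the sentence preceding the Fact) that Conjecture \ref{conj_main} is equivalent to the weak Prasad conjecture already established by \cite{Ato18} and \cite{GI16}, pointing to \S\ref{conv_theta} for the convention matching, and your write-up is a faithful expansion of this. One small correction: the reference to \S\ref{app theta HC 1} is misplaced, as that appendix treats Archimedean Fock-model computations; the relevant convention issues for the non-Archimedean case are all in \S\ref{app_op}.
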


If $F = \R$, Conjecture \ref{conj_main} will be verified in the cases \eqref{vsp I} and \eqref{vsp III} (Theorem \ref{sp-o real}) below. In addition, if $F$ is non-Archimedean, Conjecture \ref{conj_main} will be verified in the case \eqref{vsp III} with $m=n=1$ (Theorem \ref{m=n=1_conj}) below.


\section{
		Computations in Archimedean local Langlands correspondences
		}\label{Arch_computation}

\subsection{Settings}

In this section, we consider the cases \eqref{vsp I} and \eqref{vsp III} with $F = \R$ and $\epsilon =1$. We denote the quaternion algebra over $\R$ by
\[
\H = \R \oplus \R i \oplus \R j \oplus \R ij
\]
where $i, j$ are the symbols satisfying the relations
\[
i^2 = -1, \quad j^2 = e_\H, \quad  ij+ji = 0,
\]
where $e_\H = \pm 1$. If $e_\H=-1$ then $\H$ is called the skew-field of Hamilton quaternions. We denote by $\sigma$ the nontrivial element of the Galois group $\Gamma$. Then the Weil group is given by the formal disjoint union
\[
\W_\R = \C^\times \cup \C^\times \widetilde{\sigma}
\] 
where $\widetilde{\sigma}$ the symbol satisfying $\widetilde{\sigma}^2 = -1$ and $\widetilde{\sigma}\cdot z = \overline{z}\cdot \widetilde{\sigma}$ for $z \in \C^\times$.

For a non-negative integer integers $p,q$, we denote by $V_{p,q}$ the right $\H$-vector space of column vectors of degree $p+q$ equipped with the Hermitian form $( \ , \ )$ on $V_{p,q}$ given by
\[
(x, y) = (\sum_{k=1}^p x_k y_k^*) - (\sum_{k=p+1}^{p+q} x_k y_k^*)
\]
for $x, y \in V_{p,q}$. Here, we denote by $x_k$ (resp. $y_k$) the $k$-th component of $x$ (resp. $y$).
We also denote by $W_{p,q}$ the left $\H$-vector space of row vectors of degree $p + q$ equipped with the skew-Hermitian form $\la \ , \ \ra$ on $W_{p,q}$ given by
\[
\la x, y \ra = (\sum_{k=1}^p x_k i y_k^*) - (\sum_{k=p+1}^{p+q} x_k i y_k^*)
\]
for $x,y \in W_{p,q}$. Here, we denote by $x_k$ (resp. $y_k$) the $k$-th component of $x$ (resp. $y$). 

\subsection{Splittings}\label{splitting}

We denote by $T_+^\#$ the maximal torus of $G(V_c^\#)$ consisting of the diagonal matrices in $G(V_c^\#)$,  by $B_+^\#$ the Borel subgroup of $G(V_c^\#)$ containing all upper triangle matrices in $G(V_c^\#))$, and by $\alpha_k^\#$ the algebraic character of $T_+^\#$ projecting the $(k,k)$-component of $T_+^\#$. Then, 
\[
\Delta_+^\circ = \{ \alpha_1^\# - \alpha_2^\#, \ldots , \alpha_{m-1}^\# - \alpha_m^\# , 2\alpha_m^\# \}.
\]
is a basis of $\Delta_{B_+^\#}$. Then, we put
\[
X_{\alpha_k^\#-\alpha_{k+1}^\#} = e_{k, k+1}(1) + e_{2m+1-k, 2m-k}(-1)
\]
for $k = 1, \ldots, m-1$ and put
\[
X_{2\alpha_m^\#} = e_{m, m+1}(1).
\]
Then, we have the splitting $(T_+^\#, B_+^\#, \{X_\alpha\}_{\alpha \in \Delta_+^\circ})$ associated with $c$. One can show that  $(T_+^\#, B_+^\#, \{X_\alpha\}_{\alpha \in \Delta_+^\circ})$ defines the Whittaker data $\fw_+^{(c)}$.

We denote by $A_-^\#$ the maximal split torus consisting of diagonal matrices in $G_0(W_c^\#)$, by $T_-^\#$ the centralizer of $A_-^\#$ in $G_0(W_c^\#)$,  by $B_-^\#$ the Borel subgroup of $G(W_c^\#)$ containing all upper triangle matrices in $G(W_c^\#))$.
For $1 \leq k \leq n-1$, we denote by $\beta_k^\#$ the algebraic character of $T_-^\#$ projecting the $(k,k)$-component of $T_-^\#$. Moreover, we define
\[
\beta_n^\#(\begin{pmatrix} a & & & \\ & x & y & \\ & dy & x & \\ & & & J_{n-1} a^{-1} J_{n-1}\end{pmatrix}) = x + \sqrt{d} y
\]
for a diagonal matrix $a$ and $x,y \in \C$ so that $x^2 - d y^2= 1$. 
Then, 
\[
\Delta_-^\circ = \{ \beta_1^\# - \beta_2^\#, \ldots , \beta_{m-1}^\# - \beta_m^\# , \beta_{m-1}^\# + \beta_m^\# \}
\]
is a basis of $\Delta_{B_-^\#}$. Finally, we define
\[
X_{\beta_k^\#-\beta_{k+1}^\#} = e_{k, k+1}(1) + e_{2n+1-k, 2n-k}(-1)
\]
for $k = 1, \ldots, n-2$ and put
\begin{align*}
X_{\beta_{n-1}^\# - \beta_n^\#} &= e_{n-1,n}(\frac{1}{2}) + e_{n-1,n+1}(\frac{1}{2\sqrt{d}}) + e_{n,n+2}(-1) + e_{n+1,n+2}(\sqrt{d}), \\
X_{\beta_{n-1}^\#+\beta_n^\#} &= e_{n-1,n}(\frac{1}{2}) + e_{n-1,n+1}(-\frac{1}{2\sqrt{d}}) + e_{n,n+2}(-1) + e_{n+1,n+2}(-\sqrt{d}).
\end{align*}
Then, we have the splitting  $(T_-^\#, B_-^\#, \{Y_\beta\}_{\beta \in \Delta_-^\circ})$. One can show that  $(T_-^\#, B_-^\#, \{Y_\beta\}_{\beta \in \Delta_-^\circ})$ defines the Whittaker data $\fw_-^{(c)}$.

\subsection{Anisotropic tori}

Let $S_+$ be the maximal torus of $G(V_{p,q})$ of the form 
\[
\{\diag(x_1+ i y_1, \ldots, x_m+ i y_m) \in G(V_{p,q}) \mid x,y \in \R, x_k^2+y_k^2 = 1 \ (1 \leq k \leq m)\},
\]
We choose a basis $\alpha_1, \ldots, \alpha_m$ of $X^*(S_+)$ where $\alpha_k$ is given by
\[
\alpha_k(\diag(a_1 + i b_1, \ldots, a_m + i b_m)) = a_k + \sqrt{-1} b_k \in \C^\times.
\]
By this basis we identify $X^*(S_+)$ with $\Z^m$. Let $S_-$ be the maximal torus of $G(W_{p,q})$ of the form
\[
\{\diag(x_1+ i y_1, \ldots, x_n+ i y_n) \in G(W) \mid x,y \in \R, x_k^2+y_k^2 = 1 \ (1 \leq k \leq n)\}.
\]
We also chose a basis $\beta_1, \ldots, \beta_n$ of $X^*(S_-)$ where $\beta_k$ is given by
\[
\beta_k(\diag(x_1 + i y_1, \ldots, x_n + i y_n)) = x_k + \sqrt{-1} y_k \in \C^\times.
\]
By this basis we identify $X^*(S_-)$ with $\Z^n$.

We consider the embedding $\varsigma_+\colon (\C^1)^m \rightarrow G(V_{p,q})$ given by
\[
\varsigma_+(x_1 + \sqrt{-1}y_1, \ldots, x_m + \sqrt{-1} y_m) =\diag(x_1 + i y_1, \ldots, x_m + iy_m)
\]
for $x_1 + \sqrt{-1}y_1, \ldots,  x_m + \sqrt{-1} y_m \in \C^1$. We consider the embedding $\varsigma_+^\#\colon (\C^1)^m \rightarrow \Sp(V^\#)$ given by
\[
\varsigma_+^\#(x_1 + \sqrt{-1}y_1, \ldots, x_m + \sqrt{-1} y_m) = \begin{pmatrix} x_1 & & & & & y_1 \\ & \ddots & & & \iddots & \\ & & x_m & y_m & & 
\\ & & -y_m & x_m & & \\ & \iddots & & & \ddots & \\ -y_1 & & & & & x_1 \end{pmatrix}
\]
for $x_1 + \sqrt{-1}y_1, \ldots,  x_m + \sqrt{-1} y_m \in \C^1$. We denote by $S_+^\#$ the image of $\varsigma_+^\#$.

We define the $2n$-dimensional quadratic space $W_{\sim}^\#$ over $\R$ of the row vectors whose quadratic form is given by
\[
Q_n = \begin{pmatrix} 2I_{2t} & 0 \\ 0 & -2I_{2n - 2t} \end{pmatrix}\] 
where $t = \lceil n/2 \rceil$. Put 
\[
Q = \begin{pmatrix} I_{n-1} & & & \\  & 1 & 1 & \\ & 1 & -1 & \\ & & & I_{n-1} \end{pmatrix}, \quad P_1 = \begin{pmatrix} I_{2t} & & & \\ & J_2 & &  \\ & & \ddots & \\ & & & J_2 \end{pmatrix}
\]
and
\[
P_0 = \begin{cases} \begin{pmatrix} I_n & J_n \\ I_n & -J_n \end{pmatrix} & \mbox{ if $n$ is even}, \\ \begin{pmatrix} I_{n-1} & & J_{n-1} \\ & I_2 & \\ I_{n-1} & & -J_{n-1} \end{pmatrix} & \mbox{ if $n$ is odd}.\end{cases}
\]
Then, putting
\[
P = \begin{cases} P_1P_0Q^{-1} & \mbox{ if $n$ is even}, \\ P_1P_0 & \mbox{ if $n$ is odd}, \end{cases}
\]
we have $Q_n = {}^tP S_n P$ where $S_n$ is the matrix $( \langle \mathbf{e}_k, \mathbf{e}_l \rangle^\# )_{k,l}$.
We define $\varsigma_{\sim}^\#\colon (\C^1)^n \rightarrow \SO(W_{\sim}^\#)$ by
\[
\varsigma_{\sim}^\#(x_1 + \sqrt{-1}y_1, \ldots, x_n + \sqrt{-1}y_n) = \begin{pmatrix} x_1 & y_1 &  &  & \\ -y_1 & x_1 & & & \\ & & \ddots & & \\ & & & x_n & y_n \\ & & & -y_n & x_n \end{pmatrix}
\]
for $x_1 + \sqrt{-1}y_1, \ldots,  x_n + \sqrt{-1} y_n \in \C^1$. Then, we define $\varsigma_-^\# = \varphi_P^{-1} \circ \varsigma_{\sim}^\#$, and we denote by $S_-^\#$ the image of $\varsigma_-^\#$.


\subsection{Weyl groups}

It is useful to describe the actions of Weyl groups on tori. 
For a positive integer $k$, we denote by $\fS_k'$ the semi-direct product $\fS_k \ltimes \{\pm 1\}^k$ with respect to the action of $\fS_k$ on $\{\pm 1\}^k$ given by
\[
\gamma \cdot (\epsilon_1, \ldots, \epsilon_k) = (\epsilon_{\gamma^{-1}(1)}, \ldots, \epsilon_{\gamma^{-1}(k)}) 
\]
for $\gamma \in \fS_k$ and $\epsilon_1, \ldots, \epsilon_k \in \{\pm 1\}$. The group $\fS_k'$ acts on $\Z^k$ by
\begin{align*}
&\gamma\cdot \varsigma_+^\#(a_1, \ldots, a_k) = \varsigma_+^\#(a_{\gamma^{-1}(1)}, \ldots, a_{\gamma^{-1}(k)}), \\
&(\epsilon_1, \ldots, \epsilon_k)\cdot \varsigma_+^\#(z_1, \ldots, z_k) = \varsigma_+^\#(\epsilon_1\cdot a_1, \ldots, \epsilon_k\cdot a_k)
\end{align*}
for $a_1, \ldots, a_k \in \Z$, $\gamma \in \fS_k$, and $(\epsilon_1, \ldots, \epsilon_k) \in \{\pm 1\}^k$. Hence, $\fS_m'$ acts on $X^*(S_+^\#)$ and $X^*(S_+)$, and $\fS_n'$ acts on $X^*(S_-^\#)$ and $X^*(S_-)$. Moreover, they induces the algebraic actions of $\fS_m'$ on $S_+^\#$, $S_+$ and of $\fS_n'$ on $S_-^\#$, $S_-$. By these action, we identify $\fS_m'$ (resp. $\fS_n'$) with the Weyl groups $W(S_+^\#, G(V^\#))$, $W(S_+, G(V))$ (resp. $W(S_-^\#, G(W^\#))$, $W(S_-, G(W))$.

\subsection{Harish-Chandra parameters and Langlands parameters}\label{HCLpar}

In this subsection, we compute the Langlands parameter of a discrete series representation with the Harish-Chandra parameter using the transfer factor of Langlands-Shelstad \cite{LS87}. 
Let $G$ be a connected reductive group over $\R$, let $G^\#$ be the quasi-split inner form of $G$ equipped with the inner twist $\varphi\colon G^\# \rightarrow G$, let $(T^\#, B^\#)$ be a Borel pair in $G^\#$ defined over $\R$, and let $G^\wedge$ be the Langlands dual group of $G^\#$ equipped with the Borel pair $(\cT, \cB)$ of $G^\wedge$. We assume that $G^\#$ contains an anisotropic maximal torus $S^\#$ so that  $\varphi(S^\#)$ is an anisotropic maximal torus of $G$ defined over $\R$. As in \cite[p.~15]{Mez13}, we may assume that $\phi$ is consistent with $(\cT, \cB)$ (see \S\ref{Lgrp}) by taking a conjugacy by an element of $G^\wedge$.

Now, we will describe the $L$-packet of $\phi$ and determine the Langlands parameter for each element of the $L$-packet. Put $S = \varphi(S^\#)$ and put
\[
\cA(S^\#, T^\#) = \{ g \in G(\C) \mid gS^\#g^{-1} = T^\# \}.
\]
Following \cite{Mez13}, we use the $a$-data $\{a_\alpha\}_\alpha$ and $\chi$-data $\{\chi_\alpha\}_\alpha$  given by
\begin{align*}
a_\alpha &= \begin{cases} -\sqrt{-1} & \alpha \in \Delta_{B^\#}, \\ \sqrt{-1} & \alpha \not\in \Delta_{B^\#}\end{cases}\\
\chi_\alpha(z) &=  \begin{cases} |z|/z & \alpha \in \Delta_{B^\#}, \\ z/|z| & \alpha \not\in \Delta_{B^\#} \end{cases}
\end{align*}
for $z \in \C^\times$. Mezo proved the endoscopic character relation constructing the ``spectral transfer factor'' $\Delta_{spec}(\pi, s)$ whose appropriate normalization is $e(G)\cdot \iota_\phi[\fw, z, \varphi]$. We put $q_G = (1/2)(\dim G - \dim K)$ where $K$ is the maximal compact subgroup. Summarizing Mezo's computations (\cite[(115)--(117)]{Mez13}) in our setting (with the trivial twisting), we have the following.

\begin{fact}\label{mez_fml}
Let $\pi$ be an irreducible discrete series representation having its Harish-Chandra parameter $\mu \in X^*(S)$, let $s \in S_\phi^+$, and let $(H, \cH, \eta, \dot{t})$ an endoscopic data in $\cE(s)$. Assume that $\mu = \mu_\phi\circ (\Ad g) \circ (\Ad w) \circ \varphi^{-1}$ where $g \in \cA(S^\#, T^\#)$ and $w \in W(G_0(V_c^\#), S^\#)$. Let $\gamma_1$ be an regular element of $H_1$ so that the centralizer $C_H(\gamma_1)$ is an anisotropic torus, let $h_1$ be an element of $H_1(\overline{F})$ so that $h_1\gamma_1h_1^{-1} \in T_{H_1}^\#$, and let $\delta_g$ be the image of $\gamma_1$ in $S^\#(F)$ by the homomorphism $(\Ad g^{-1})\circ \underline{\eta} \circ (\Ad h_1)$ where $\underline{\eta}$ is the homomorphism $T_{H_1}^\# \rightarrow T^\#$ which commutes with $\eta$. Put $\delta_\mu = w \delta w^{-1}$. Then, we have $\pi \in \Pi_\phi(G)$ and 
\begin{align*}
&\iota_\phi[\fw, z, \varphi](\pi)(s) \\
&= (-1)^{q_{G_0(V_c^\#)} - q_{H}}\cdot (-\sqrt{-1})^{\#\Delta_B - \#\Delta_{B_H}}\cdot \epsilon(\cV_{G_0(V_c^\#),H}, \psi) \\
&\quad \times \la \inv_z(\delta_g, \delta_\mu), (\Ad g)^\wedge (s) \ra \cdot \Delta_I(\gamma_1, \delta_g).
\end{align*}
\end{fact}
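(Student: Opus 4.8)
The idea is to obtain the formula by specializing Mezo's explicit evaluation of the spectral transfer factor \cite[(115)--(117)]{Mez13} to the untwisted case and rewriting it in the language of rigid inner twists. First I would recall the shape of Mezo's argument: starting from the transfer $f \mapsto f^{\phi, \cE(s)}$ characterized by the factor $\Delta'(\gamma_1, \delta)$, which is built from the Langlands--Shelstad factors $\Delta_I^{-1}\Delta_{II}\Delta_{III_1}^{-1}\Delta_{III_2}\Delta_{IV}$, the normalization $\epsilon(\cV, \psi)$, and the Kaletha pairing, he compares the discrete series character formula of Harish-Chandra for $G$ on the anisotropic torus $S = \varphi(S^\#)$ with the stable character on the endoscopic group $H$, and reads off $\iota_\phi[\fw, z, \varphi](\pi)(s)$ from the endoscopic character relation of Hypothesis \ref{LLCclg}. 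The comparison of the elliptic terms of the two character formulas is what produces the factor $(-1)^{q_{G_0(V_c^\#)} - q_H}$, and this is the only place where the dimensions of the maximal compact subgroups intervene.

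Next I would substitute the chosen $a$-data $a_\alpha = \mp\sqrt{-1}$ and $\chi$-data ($\chi_\alpha(z) = |z|/z$ for $\alpha \in \Delta_{B^\#}$ and $z/|z|$ otherwise) into each Langlands--Shelstad factor evaluated at the regular elliptic element. Since $S^\#$ is anisotropic over $\R$, every root of $S^\#$ is imaginary or occurs in a complex pair, and with this $\chi$-data the contributions of $\Delta_{II}$ and $\Delta_{IV}$ at the elliptic element simplify drastically: the $\Delta_{IV}$ Weyl discriminants get absorbed by the normalization of the character formula, and what is left of $\Delta_{II}$ together with the Weyl-denominator signs assembles into the power $(-\sqrt{-1})^{\#\Delta_B - \#\Delta_{B_H}}$. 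The factor $\Delta_{III_2}(\gamma_1, \delta_g)$ is, by its definition via Tate--Nakayama duality, the pairing of the image of $s$ in $\widehat{\overline{S}}$ (transported by $(\Ad g)^\wedge$) against the class of $\delta_g$; combined with $\Delta_{III_1}^{-1}$ and the Kaletha pairing already present in $\Delta'$, this collects into $\la \inv_z(\delta_g, \delta_\mu), (\Ad g)^\wedge(s)\ra$ in the sense of \cite[Corollary 5.4]{Kal16}. What then survives is $\Delta_I(\gamma_1, \delta_g)$ and the constant $\epsilon(\cV_{G_0(V_c^\#), H}, \psi)$, which is the stated identity.

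The main obstacle I expect is bookkeeping rather than substance: Mezo works in the generality of twisted endoscopy with his own sign and normalization conventions, so one must check that setting the twisting trivial reproduces precisely the Langlands--Shelstad factors with no residual twisted contributions, and that Mezo's inner-twist invariant agrees, after identifying $H^1(\Gamma, -)$ with the relevant $H^1(u \to \cW, Z \to -)$, with the pairing $\la \inv_z(-,-), -\ra$ of \cite[Corollary 5.4]{Kal16}. A secondary point is to reconcile Mezo's normalization of the spectral transfer factor with the one fixed in Hypothesis \ref{LLCclg}, in particular the constant $e(G_0(V_c^\#))$ there, which must be matched against the Whittaker datum $\fw$ and the splitting fixed in \S\ref{splitting}.
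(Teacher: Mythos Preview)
Your overall plan is correct and matches the paper's approach: specialize Mezo's spectral transfer factor computation to the untwisted case, identify each piece with the factors in the formula, and read off the result from the endoscopic character identity. The identification of where $(-1)^{q_{G_0(V_c^\#)}-q_H}$, $\epsilon(\cV,\psi)$, $\Delta_I$, and the $\inv_z$-pairing come from is right.

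However, you are missing the one point that makes this Fact worth isolating rather than simply citing \cite[(115)--(117)]{Mez13}. The formula here has $(-\sqrt{-1})^{\#\Delta_B - \#\Delta_{B_H}}$, whereas Mezo's published formula has $\sqrt{-1}^{\#\Delta_B - \#\Delta_{B_H}}$; this discrepancy is explained in Remark~\ref{c_mez} and proved in Appendix~\ref{spec_correction}. Two corrections are needed and they interact. First, there is a sign error in Mezo's expansion of $\Delta_{II}$ at \cite[(75)--(76)]{Mez13}: with the chosen $a$- and $\chi$-data the correct expression carries $(-\sqrt{-1})^{\dim\mathfrak{u}_{G^{\theta}} - \dim\mathfrak{u}_H}$ together with an extra product $\prod_{\alpha_{\mathrm{res}}<0}\chi_\alpha(N(\alpha(\delta'^*)))$ that Mezo dropped. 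Second, the transfer factor $\Delta'$ used to define $\iota_\phi[\fw,z,\varphi]$ follows the updated Kottwitz--Shelstad convention \cite{KS12}, not the one Mezo used; the quotient $\Delta_0'/\Delta$ is exactly $\prod_{\alpha_{\mathrm{res}}<0}\chi_\alpha(N(\alpha(\delta^*)))^{-1}$. These two $\chi$-data products cancel, leaving only the sign flip $\sqrt{-1}\rightsquigarrow -\sqrt{-1}$. If you simply ``substitute the chosen $a$-data and $\chi$-data'' into Mezo's published formulas, as your plan says, you will reproduce the wrong sign; the bookkeeping you flag as the main obstacle is precisely where the content lies.

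A minor slip: the sign in Hypothesis~\ref{LLCclg} is $e(G_0(V))$, not $e(G_0(V_c^\#))$ (the latter is $1$ since $G_0(V_c^\#)$ is quasi-split). In the paper's argument this Kottwitz sign $e(G) = (-1)^{q_G - q_{G^\#}}$ combines with Mezo's $\sgn(H)/(-1)^{q^{\sqrt{-1}\mu}} = (-1)^{q_H - q_G}$ to give $(-1)^{q_H - q_{G^\#}}$.
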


\begin{rem} \label{c_mez}
Fact \ref{mez_fml} differs from the formula of Mezo \cite[(115)--(117)]{Mez13} slightly. More precisely, we use $(-\sqrt{-1})^{\#\Delta_B - \#\Delta_{B_H}}$ instead of $\sqrt{-1}^{\#\Delta_B - \#\Delta_{B_H}}$. This is necessary since there is an error in \cite[(75)]{Mez13} which expands the second factor $\Delta_{II}$. We explain the details in Appendix \ref{spec_correction} below.
\end{rem}

\begin{cor}\label{formula_LLC}
Let $\phi$ be a tempered $L$-parameter for $G$, let $\fw$ be a Whittaker data of $G^\#$, let $\mu_\fw$ the Harish-Chandra parameter for $G^\#$ so that $\pi(\mu_\fw)$ is the generic representation in $\Pi_\phi(G^\#)$, and let $\mu$ be a Harish-Chandra parameter so that $\pi(\mu) \in \Pi_\phi(G)$. Choose a rigid inner twist $(z, \varphi)\colon G^\# \rightarrow G$. Then, we have
\[
\iota_\phi[\fw, z, \varphi](\pi)(s) = \la \inv_z(\mu_{h_\fw}, \mu), (\Ad h_\fw)^\wedge(s) \ra
\]
\end{cor}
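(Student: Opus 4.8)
The plan is to derive Corollary \ref{formula_LLC} from Fact \ref{mez_fml} by collecting the various normalization constants into the transfer factor for the \emph{generic} member of the packet and then comparing. First I would apply Fact \ref{mez_fml} to both $\pi(\mu)$ and the generic representation $\pi(\mu_\fw)$, using the same endoscopic datum $(H,\cH,\eta,\dot t)\in\cE(s)$ and the same auxiliary choices ($a$-data, $\chi$-data, Borel pair, $z$-extension $H_1$, element $\gamma_1$ with anisotropic centralizer). Writing $\mu=\mu_\phi\circ(\Ad g)\circ(\Ad w)\circ\varphi^{-1}$ and $\mu_\fw=\mu_\phi\circ(\Ad h_\fw)\circ\varphi^{-1}$ (the generic parameter corresponds to the trivial Weyl element, since the Whittaker-generic representation is the one whose Harish-Chandra parameter is dominant with respect to $B^\#$), both evaluations share the constant $(-1)^{q_{G_0(V_c^\#)}-q_H}(-\sqrt{-1})^{\#\Delta_B-\#\Delta_{B_H}}\epsilon(\cV_{G_0(V_c^\#),H},\psi)$.

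The heart of the argument is then to show that the product
\[
\la \inv_z(\delta_g,\delta_\mu),(\Ad g)^\wedge(s)\ra\cdot\Delta_I(\gamma_1,\delta_g)
\]
for $\pi(\mu)$ equals, up to the normalization constant shared with $\pi(\mu_\fw)$, the quantity $\la\inv_z(\mu_{h_\fw},\mu),(\Ad h_\fw)^\wedge(s)\ra$. Concretely: the generic member $\pi(\mu_\fw)$ is characterized by $\iota_\phi[\fw,z,\varphi](\pi(\mu_\fw))=\triv$ (the Whittaker normalization), so applying Fact \ref{mez_fml} to $\pi(\mu_\fw)$ forces the constant prefactor times $\Delta_I(\gamma_1,\delta_{h_\fw})$ times the pairing term for the \emph{split} inner twist part to equal $1$. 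Dividing the formula for $\pi(\mu)$ by that for $\pi(\mu_\fw)$ cancels the constant, cancels the $\Delta_I$ ambiguity (since $\Delta_I$ depends only on the toral data and $a$-data, not on $s$, and both invariants $\delta_g,\delta_{h_\fw}$ lie on the same torus $S^\#$ so the ratio of $\Delta_I$'s is absorbed into a change of the base point of the invariant cocycle), and leaves the ratio of Tate–Nakayama pairings. Using the cocycle identity $\inv_z(\delta_{h_\fw},\delta_\mu)=\inv_z(\delta_{h_\fw},\delta_g)\cdot\inv_z(\delta_g,\delta_\mu)$ together with $(\Ad g)^\wedge(s)=(\Ad h_\fw)^\wedge((\Ad h_\fw^{-1}g)^\wedge(s))$ and functoriality of the pairing, one identifies the surviving factor with $\la\inv_z(\mu_{h_\fw},\mu),(\Ad h_\fw)^\wedge(s)\ra$, where $\mu_{h_\fw}$ and $\mu$ are shorthand for the corresponding points $\delta_{h_\fw},\delta_\mu$ on $S^\#$ indexed by the relevant Weyl elements.

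The main obstacle I anticipate is bookkeeping the dependence of $\Delta_I$ and of the invariant $\inv_z$ on the chosen representatives $g\in\cA(S^\#,T^\#)$ and on $w\in W(G_0(V_c^\#),S^\#)$: one must check that changing $g$ by an element of $S^\#(\C)$ or composing with a Weyl element shifts $\Delta_I$ and the pairing term by exactly compensating amounts, so that the product is well defined and the ratio with the generic term is clean. This is precisely the kind of computation Mezo carries out in \cite{Mez13}, and I would cite his normalization of $\Delta_I$ via the $a$-data displayed in \S\ref{HCLpar}, together with Corollary \ref{zu1-2} and Lemma \ref{zurashi1} to handle the base-point change of $\inv_z$. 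Once that is in place the corollary follows formally; I expect the write-up to be short, with the only genuine content being the cancellation of the $\Delta_I$ and constant factors against the generic normalization.
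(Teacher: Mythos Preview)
Your approach is correct in spirit and matches the paper's implicit reasoning (the paper states the corollary without proof, as an immediate consequence of Fact~\ref{mez_fml}), but you are making it harder than it needs to be.

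The key simplification you are missing: the left-hand side of Fact~\ref{mez_fml} is intrinsic to $\pi$ and $s$, so the right-hand side is independent of the auxiliary choice of $g\in\cA(S^\#,T^\#)$. You may therefore simply \emph{take} $g=h_\fw$ when applying Fact~\ref{mez_fml} to $\pi(\mu)$. With this choice $\delta_g=\delta_{h_\fw}$ and the formula reads
\[
\iota_\phi[\fw,z,\varphi](\pi)(s)=C\cdot\la\inv_z(\delta_{h_\fw},\delta_\mu),(\Ad h_\fw)^\wedge(s)\ra\cdot\Delta_I(\gamma_1,\delta_{h_\fw}),
\]
where $C$ is the product of the three sign/epsilon constants. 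Applying Fact~\ref{mez_fml} to $G^\#$ itself with $(z,\varphi)=(1,\Id)$, $g=h_\fw$, $w=1$, and $\pi=\pi(\mu_\fw)$ gives $1=C\cdot 1\cdot\Delta_I(\gamma_1,\delta_{h_\fw})$ by the Whittaker normalization. Substituting back yields the corollary at once. (This is exactly what the paper does in the concrete cases: Propositions~\ref{Sp_gen} and~\ref{SO_gen} verify $C\cdot\Delta_I(\gamma_1,\delta_{h_0})=1$ by direct computation of $\Delta_I$.)

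Your proposed route---keeping $g$ arbitrary, dividing, and then arguing that the ratio $\Delta_I(\gamma_1,\delta_g)/\Delta_I(\gamma_1,\delta_{h_\fw})$ is ``absorbed into a change of base point of the invariant cocycle''---would require an extra identity relating the Weyl-group variance of $\Delta_I$ to a Tate--Nakayama pairing. That identity is true (it is implicit in the well-definedness of Mezo's formula), but proving it directly is exactly the work you avoid by choosing $g=h_\fw$ from the start. The cocycle manipulation $\inv_z(\delta_{h_\fw},\delta_\mu)=\inv_z(\delta_{h_\fw},\delta_g)\cdot\inv_z(\delta_g,\delta_\mu)$ and the compatibility with $(\Ad h_\fw^{-1}g)^\wedge$ are then unnecessary. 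Also, a small slip: $\pi(\mu_\fw)$ lives on $G^\#$, so write $\mu_\fw=\mu_\phi\circ(\Ad h_\fw)$ rather than composing with $\varphi^{-1}$.
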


We return to the case where $G$ is $G(V_c^\#)$ or $G_0(W_c^\#)$. In this case, we have 
\[
C_\phi = \{ \ \widehat{t}(s_1, \ldots, s_N) \mid s_k \in \{\pm 1\} \ (k=1, \ldots, N) \}.
\] 
For $s = \widehat{t}(s_1, \ldots, s_N)$, we put $a(s) = \#\{k = 1, \ldots, N \mid s_k=1\}$ and $b(s) = \#\{k =1, \ldots, N \mid s_k = -1 \}$. 
\begin{lem}\label{norm_char}
Let $G$ be either $G(V_c^\#)$ or $G_0(W_c^\#)$. Then we have 
\begin{align*}
&(-1)^{q_{G} - q_{H}} \cdot (-\sqrt{-1})^{\#\Delta_B - \#\Delta_{B_H}} \cdot \epsilon(\cV_{G,H}, \psi) \\
&= \begin{cases} (-\sqrt{-1} \cdot \epsilon_\psi)^{b(s)} & (G = G(V_c^\#)), \\  1 & (G = G_0(W_c^\#)). \end{cases}
\end{align*}
\end{lem}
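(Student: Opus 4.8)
The plan is to compute the three factors on the left-hand side — the Harish-Chandra sign $(-1)^{q_G-q_H}$, the root-count factor $(-\sqrt{-1})^{\#\Delta_B-\#\Delta_{B_H}}$, and the normalisation $\epsilon(\cV_{G,H},\psi)$ — separately and explicitly in terms of $a(s)$ and $b(s)$ (write $a=a(s)$, $b=b(s)$), and then multiply them. Throughout, $G$ denotes $G(V_c^\#)$ or $G_0(W_c^\#)$, $G^\wedge=\SO_M(\C)$ is the dual group ($M=2m+1$ in the first case, $M=2n$ in the second, since $\epsilon=1$), and $(H,\cH,\eta,\dot t)$ is any element of $\cE(s)$; the answer is independent of this choice because all members of $\cE(s)$ are isomorphic. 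The first step is to identify $H$: taking the centraliser in $G^\wedge$ of the involution $s=\widehat t(s_1,\dots,s_N)$ shows that $\widehat H^{\circ}$ is the product $\SO_p(\C)\times\SO_q(\C)$, where $p,q$ are the dimensions of the $(+1)$- and $(-1)$-eigenspaces of $s$; accordingly $H=H_a\times H_b$ is a product of two quasi-split classical groups over $\R$ whose ranks, numbers of positive roots, and invariants $q_{H_a},q_{H_b}$ are explicit functions of $a$ and $b$ — and since $\phi$ is discrete for $H$, one checks that the quasi-split even orthogonal factor appearing here is forced to be $\SO(b,b)$ or $\SO(b+1,b-1)$ according to the parity of $b$.

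With $H$ in hand I would compute the first two factors. Using the standard formulas for $q$ and for the number of positive roots of the groups of types $B$, $C$, $D$ involved, one finds that $q_G-q_H$ and $\#\Delta_B-\#\Delta_{B_H}$ are explicit (and in the symplectic case $b$-sensitive) polynomials in $a,b$, so that $(-1)^{q_G-q_H}(-\sqrt{-1})^{\#\Delta_B-\#\Delta_{B_H}}$ reduces to a fourth root of unity which I record case by case. In the case $G=G_0(W_c^\#)$ (even orthogonal on both sides) this product equals $(-1)^{ab}$ and involves no $\sqrt{-1}$; in the case $G=G(V_c^\#)=\Sp_{2m}$ it picks up the extra half-power of $\sqrt{-1}$ coming from the long roots $\pm 2e_i$ of type $C_m$ that are supported on the $(-1)$-block of $s$ and are absent in $H_b$ (which is of type $D_b$).

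The decisive step is the $\epsilon$-factor. Recalling from \cite[\S5.2]{KS99}, following \cite[\S3.4]{LS87}, that $\cV_{G,H}$ — built from the $a$-data and $\chi$-data of \S\ref{HCLpar} — is a virtual representation of $W_\R$ supported precisely on the $\Gamma$-orbits of roots of $G$ not occurring in $H$, and observing that the torus $S^\#$ in play is anisotropic (so $\Gamma$ acts by $-1$ on its character lattice and every such orbit has the form $\{\alpha,-\alpha\}$), I would write $\epsilon(\cV_{G,H},\psi)$ as a product over these orbits of $\epsilon$-factors of the induced representations $\Ind_{W_\C}^{W_\R}\chi_\alpha$, with $\chi_\alpha(z)=|z|/z$ or $z/|z|$ as prescribed by the $\chi$-data. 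Evaluating each such factor via the local $\lambda$-factor $\lambda_{\C/\R}(\psi)$ — this is the unique source of $\epsilon_\psi$ — and counting the orbits of roots in $G\setminus H$ (the ``mixed'' roots between the two blocks of $s$, plus, in the symplectic case only, the long roots on the $(-1)$-block) gives $\epsilon(\cV_{G,H},\psi)$ as an explicit power of $-\sqrt{-1}\,\epsilon_\psi$ times a sign. Multiplying the three factors, all the $a$-dependent signs cancel; in the orthogonal case the result is $1$, and in the symplectic case the surviving contribution — one factor of $-\sqrt{-1}\,\epsilon_\psi$ per index in the $(-1)$-block — is exactly $(-\sqrt{-1}\,\epsilon_\psi)^{b(s)}$.

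The main obstacle is the bookkeeping in this last step: pinning down the precise $W_\R$-module structure of $\cV_{G,H}$ (hence which inductions $\Ind_{W_\C}^{W_\R}\chi_\alpha$ occur, and with which $\chi$-data) and keeping careful track of the fourth-root-of-unity conventions — in particular the exact normalisation of $\epsilon_\psi$ and of $\lambda_{\C/\R}(\psi)$, and the sign of each $\Delta_{II}$-type contribution. As noted in Remark \ref{c_mez}, there is a known sign error at exactly this point in \cite[(75)]{Mez13}, so one must use the corrected exponent $(-\sqrt{-1})^{\#\Delta_B-\#\Delta_{B_H}}$ rather than $\sqrt{-1}^{\,\#\Delta_B-\#\Delta_{B_H}}$; a secondary subtlety is the boundary behaviour of the root counts for blocks of $s$ of size $\le 1$, which must be checked separately.
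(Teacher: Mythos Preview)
Your overall strategy matches the paper's: identify $H$, compute $q_G-q_H$, $\#\Delta_B-\#\Delta_{B_H}$, and $\epsilon(\cV_{G,H},\psi)$ separately in terms of $a=a(s)$ and $b=b(s)$, then multiply. The paper obtains $\#\Delta_B-\#\Delta_{B_H}=2ab$ with $q_G-q_H\equiv 0\pmod 2$ in the even-orthogonal case, and $\#\Delta_B-\#\Delta_{B_H}=(2a+1)b$ with $q_G=\tfrac12 m(m+1)$, $q_H\equiv\tfrac12 a(a+1)\pmod 2$ in the symplectic case, in agreement with what you outline for the first two factors.

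The gap is your identification of $\cV_{G,H}$. It is \emph{not} built from the $a$-data or $\chi$-data, and it is \emph{not} a sum of inductions $\Ind_{W_\C}^{W_\R}\chi_\alpha$ over the $\Gamma$-orbits of roots in $G\setminus H$; that description conflates the normalisation $\epsilon$-factor with the root-orbit ingredients of $\Delta_{II}$ and $\Delta_{III_2}$. The factor $\epsilon(\cV,\psi)$ of \cite[\S5.2]{KS99} depends only on the $L$-embedding $\eta$, and in the present cases the paper records it directly as
\[
\cV_{G,H}=\begin{cases}\sgn^m-\sgn^{a}-\sgn^{b} & (G=G_0(W_c^\#)),\\ \triv-\sgn^{b} & (G=G(V_c^\#)),\end{cases}
\]
i.e.\ essentially the defect of discriminant characters between $G$ and the two factors of $H$. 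Its $\epsilon$-factor therefore reduces to at most a single $\epsilon(\sgn,\psi)$, which is the sole source of $\epsilon_\psi$, and the final multiplication is a two-line check in each case. Your proposed computation would evaluate the $\epsilon$-factor of a virtual representation of dimension $2(\#\Delta_B-\#\Delta_{B_H})$, and the mechanism you describe (one $\lambda_{\C/\R}(\psi)$ per root orbit) does not match this; whatever cancellation would be needed to recover the correct answer is neither stated nor justified.
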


\begin{proof}
First, assume that $G = G_0(W_c^\#)$. In this case, $H = \SO(2a(s), \sgn^{a(s)}) \times \SO(2b(s), \sgn^{b(s)})$. Then, we have
\[
\#\Delta_B - \#\Delta_{B_H} = 2a(s)b(s), \quad \cV_{G,H} = \sgn^m - \sgn^{a(s)} - \sgn^{b(s)}.
\]
 Moreover, since the symmetric spaces attached to even special orthogonal groups have even dimensional, we have
 \[
 q_{G} - q_H \equiv 0 \mod 2.
 \]
 Hence we have 
 \[
 (-1)^{q_{G} - q_H} \cdot (-\sqrt{-1})^{\#\Delta_B - \#\Delta_{B_H}} \cdot \epsilon(\cV_{G,H}, \psi) =1.
  \]
Then, assume that $G=G(V_c^\#)$. In this case, $H = \Sp_{2a(s)} \times \SO(2b(s), \sgn^{b(s)})$. Then, we have
\[\#\Delta_B - \#\Delta_{B_H} = (2a(s) + 1)b(s), \quad \cV_{G,H} = {\rm triv} - \sgn^{b(s)}.\]
Moreover, we have
\[
q_{G} = \frac{1}{2}m(m+1), \quad q_H \equiv \frac{1}{2}a(s)(a(s) + 1) \mod 2.
\]
Hence we have
\begin{align*}
&(-1)^{q_{G} - q_H} \cdot (-\sqrt{-1})^{\#\Delta_B - \#\Delta_{B_H}} \cdot \epsilon(\cV_{G,H}, \psi) \\
&= (-\sqrt{-1})^{2a(s)b(s) + b(s)(b(s)+1))}\cdot (-\sqrt{-1})^{(2a(s) + 1)b(s)}\cdot \epsilon(\cV_{G,H}, \psi) \\
&=(-\sqrt{-1})^{b(s)^2+2b(s)}\cdot \epsilon(\cV_{G,H}, \psi) \\
& = \begin{cases} 1 & \mbox{if $b(s)$ is even}, \\ -\sqrt{-1}\epsilon_\psi & \mbox{if $b(s)$ is odd.} \end{cases}
\end{align*}
Thus, we have Lemma \ref{norm_char}.
\end{proof}

\subsection{Generic representations}\label{gen_rep}

In this subsection, we compute the Harish-Chandra parameters of the generic irreducible representations of $G(V_c^\#)(\R)$ and $G(W_c^\#)(\R)$ in given discrete series $L$-packets.

\begin{prop}\label{Sp_gen}
Putting $\rho_+ = \diag(-1, 1, \ldots, (-1)^m)$ and
\[
h_0 = \frac{1}{\sqrt{2}}\begin{pmatrix} I_m & c\epsilon_\psi\rho_+ J_m \\ c\epsilon_\psi J_m \rho_+ & I_m \end{pmatrix} \in \cA(S_+^\#, T_+^\#),
\]
the irreducible discrete series representation of $G(V_c^\#)(\R)$ having Harish-Chandra parameter $\mu_\phi\circ(\Ad h_0)$ is generic.
\end{prop}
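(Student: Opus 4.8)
The plan is to identify which representation in the discrete series $L$-packet attached to $\phi$ is $\fw_+^{(c)}$-generic by comparing the recipe via Whittaker normalization of the local Langlands correspondence with Mezo's transfer-factor computation recorded in Corollary~\ref{formula_LLC}. Concretely, the generic member is characterized by $\iota_\phi[\fw_+^{(c)}, z, \varphi](\pi)(s) = 1$ for all $s \in S_\phi^+$, i.e.\ by the trivial character of the component group, and by Corollary~\ref{formula_LLC} this amounts to showing that the invariant cocycle $\inv_z(\mu_{h_\fw}, \mu_\phi\circ(\Ad h_0))$ pairs trivially with $(\Ad h_\fw)^\wedge(s)$ for every $s$. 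Since $G(V_c^\#)$ is already quasi-split, one can take the rigid inner twist $(z,\varphi)$ to be trivial, so $\inv_z$ measures the discrepancy between $\delta_g$ and $\delta_\mu$ purely in terms of Galois cohomology of the anisotropic torus $S_+^\#$.

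First I would make explicit the Borel pair $(T_+^\#, B_+^\#)$ and the splitting $\{X_\alpha\}$ from \S\ref{splitting}, together with the dual Borel pair $(\cT_+,\cB_+)$ from \S\ref{Lgrp}, and recall that the $L$-parameter $\phi$ is assumed consistent with $(\cT_+,\cB_+)$. The Harish-Chandra parameter $\mu_\fw$ of the generic member is the one obtained by transporting $\mu_\phi$ through the specific element of $\cA(S_+^\#,T_+^\#)$ dictated by Mezo's normalization, with the chosen $a$-data $a_\alpha = \mp\sqrt{-1}$ and $\chi$-data $\chi_\alpha(z) = |z|/z$ or $z/|z|$ depending on positivity. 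The claim is then precisely that this element is $h_0 = \tfrac{1}{\sqrt2}\begin{pmatrix} I_m & c\epsilon_\psi\rho_+ J_m \\ c\epsilon_\psi J_m\rho_+ & I_m \end{pmatrix}$, with $\rho_+ = \diag(-1,1,\dots,(-1)^m)$. So I would verify: (i) $h_0 \in G(V_c^\#)(\C)$ and $h_0 S_+^\# h_0^{-1} = T_+^\#$, a direct matrix computation using $(x_k + iy_k)$ on the diagonal of $S_+^\#$ versus the antidiagonal-block form of $\varsigma_+^\#$; (ii) that conjugation by $h_0$ sends the positive system of $R(G(V_c^\#), S_+^\#)$ determined by the $a$- and $\chi$-data to $\Delta_{B_+^\#}$, which pins down $h_0$ up to $T_+^\#$; and (iii) that with this $h_0$ the transfer factor $\Delta_I(\gamma_1,\delta_g)$ together with the normalization constant from Lemma~\ref{norm_char} — namely $(-\sqrt{-1}\,\epsilon_\psi)^{b(s)}$ — cancels against the pairing $\la \inv_z(\mu_{h_0}, \mu_\phi\circ(\Ad h_0)), (\Ad h_0)^\wedge(s)\ra$ to give $1$ for every $s = \widehat t(s_1,\dots,s_N)$.

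The main obstacle will be step (iii): tracking the appearance of $c$ and $\epsilon_\psi$ through Mezo's splitting invariant and the factor $\Delta_I$. The scalar $c$ enters because the Whittaker datum $\fw_+^{(c)}$ is built from the bilinear form $c^{-1}\begin{pmatrix} & J_m \\ -J_m & \end{pmatrix}$, so the generic character $\lambda_+^{(c)}$ scales with $c$, and this must be reflected in the choice of $\cA(S_+^\#,T_+^\#)$-representative; the $\epsilon_\psi = \gamma_\R(\psi)$ enters through the Weil-index normalization $\varepsilon(\cV,\psi)$ appearing in Fact~\ref{mez_fml} and Lemma~\ref{norm_char}. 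I expect the cleanest route is to first treat the split-rank-one factors separately — the long root $2\alpha_m^\#$ gives an $\Sp_2$-type contribution responsible for the $-\sqrt{-1}\,\epsilon_\psi$ when $b(s)$ is odd — and then observe that the short roots $\alpha_k^\# - \alpha_{k+1}^\#$ contribute trivially, so that the whole pairing collapses. Once the matrix identities in (i)--(ii) are in place, (iii) reduces to the sign bookkeeping already carried out in \S\ref{HCLpar}, and the proposition follows. I would also double-check the parity convention $\rho_+ = \diag(-1,1,\dots,(-1)^m)$ against the half-sum of positive roots, since a shift by a Weyl-group element fixing the genericity would change which $\mu$ is singled out; the stated $\rho_+$ is exactly $-$(the coordinates of $\rho$ read off the $a$-data signs), which is the consistency check that makes the answer come out to $h_0$ as written.
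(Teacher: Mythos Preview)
Your overall strategy agrees with the paper's: the generic member is the one with trivial character of $S_\phi^+$, and one checks this by feeding $g=h_0$, $w=1$ into Mezo's formula. But two points need correction.

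First, invoking Corollary~\ref{formula_LLC} is circular: that corollary is stated in terms of $h_\fw$, which is precisely what the proposition is identifying. You must work directly from Fact~\ref{mez_fml}. With the trivial rigid inner twist and $w=1$ one has $\delta_g=\delta_\mu=\delta_{h_0}$, so the $\inv_z$ pairing is automatically $1$; there is nothing for it to ``cancel against''. The entire content of the proposition is then the equality
\[
\Delta_I(\gamma_1,\delta_{h_0}) \;=\; (-\sqrt{-1}\,\epsilon_\psi)^{b(s)},
\]
which together with Lemma~\ref{norm_char} gives $\iota_\phi[\fw_+^{(c)},1,\Id](\pi)(s)=1$.

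Second, this computation of $\Delta_I$ is \emph{not} ``sign bookkeeping already carried out in \S\ref{HCLpar}''. Section~\ref{HCLpar} only records Lemma~\ref{norm_char}, the normalization constant; it says nothing about $\Delta_I$ for this particular $h_0$. The paper's proof evaluates $\Delta_I$ by computing the Langlands--Shelstad splitting invariant $\lambda(S_+^\#)\in H^1(\Gamma,S_+^\#)$ directly: one writes down the matrices $n(\omega_{S_+^\#}(\sigma))$ and $x(\sigma_{S_+^\#})$ from \cite[(2.3)]{LS87} (using the chosen splitting $\{X_\alpha\}$ and $a$-data), multiplies out
\[
\lambda(S_+^\#)(\sigma)=h_0^{-1}\,x(\sigma_{S_+^\#})\,n(\omega_{S_+^\#}(\sigma))\,\sigma(h_0),
\]
and finds this equals the central element $(-\sqrt{-1}\,\epsilon_\psi)\cdot I_{2m}$. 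The Tate--Nakayama pairing with $(\Ad h_0)^\wedge(s)$ then gives $(-\sqrt{-1}\,\epsilon_\psi)^{b(s)}$. This explicit matrix computation is where $c$ and $\epsilon_\psi$ actually enter (via $n(\omega_{S_+^\#}(\sigma))$ and $\sigma(h_0)$ respectively), and it is the missing core of your argument. Your proposed rank-one decomposition is not how the paper proceeds and, as written, does not make contact with the defining formula for $\lambda(S_+^\#)$.
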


\begin{proof}
It suffices to show that $\Delta_I(\gamma_1, \delta_{h_0}) = (-\sqrt{-1}\epsilon_\psi)^{b(s)}$. Recall that the factor $\Delta_I(-, -)$ is given by the Tate-Nakayama pairing of $(u_+^{-1}\circ \Ad h_0^{-1})^\wedge(s) \in (S_+^\#)^\wedge$ and the cocycle $\lambda(S_+^\#) \in H^1(\Gamma, S_+^\#)$ which is defined in \cite[(2.3)]{LS87}. To compute it, we use some symbols defined in \cite[(2.3)]{LS87}. The cocycle is given by
\[
\lambda(S_+^\#)(\tau) = h_0^{-1} x(\tau_{S_+^\#})n(\omega_{S_+^\#}(\tau)) \tau(h_0) 
\]
for $\tau \in \Gamma$. Here, the factor $x(\tau)$ is the factor defined by using the $a$-data and the $\chi$-data , and $n(\tau)$ is the factor define by using the splitting $\{X_\alpha\}_{\alpha \in \Delta^\circ}$. In our setting, we have
\begin{align*}
n(\omega_{S_+^\#}(\sigma)) &= (-1)^{m-1}c \cdot \begin{pmatrix} & J_m \\ -J_m & \end{pmatrix}, \\
x(\sigma_{S_+^\#}) &= \sqrt{-1}\cdot \begin{pmatrix} J_m \rho_+ J_m & \\ & -\rho_+ \end{pmatrix}.
\end{align*}
Hence, we have
\[
\lambda_+(\sigma) = (-\sqrt{-1}\epsilon_\psi) \cdot I_{2m},
\]
which implies $\Delta_I(\gamma_1, \delta_{h_0}) = (-\sqrt{-1}\epsilon_\psi)^{b(s)}$.
\end{proof}

Recall that we put $t =\lceil n/2 \rceil$. Define $g_1 \in G_0(W_{\sim}^\#)(\C)$ by
\[
f_{k} \cdot g_1
= \begin{cases} f_{k+1} & k \mbox{ is odd}, 1 \leq k \leq 2(n-t), 2t \leq k \leq 2n \\ 
\sqrt{-1} \cdot f_{k+2t-1} & k \mbox{ is even}, 1 \leq k \leq 2(n-t), \\
\sqrt{-1} \cdot f_{k -2t-1} & k \mbox{ is even}, 2t \leq k \leq 2n, \\
f_{k} & 2(n-t) < k < 2t.
\end{cases}
\]
Moreover, put $g_0  = P^{-1}g_1 P \in G(W_c^\#)$.

\begin{prop}\label{SO_gen}
Assume $c =1$. The irreducible representation of $G_0(W_c^\#)(\R)$ having Harish-Chandra parameter $\mu_\phi\circ(\Ad g_0)$ is generic.
\end{prop}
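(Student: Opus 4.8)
The strategy mirrors the proof of Proposition \ref{Sp_gen}: by Corollary \ref{formula_LLC} (really its symplectic/orthogonal instance encoded in Fact \ref{mez_fml} together with Lemma \ref{norm_char}), an irreducible discrete series $\pi(\mu)$ of $G_0(W_c^\#)(\R)$ in the $L$-packet of $\phi$ is generic (i.e. its Langlands parameter is the trivial character of $S_\phi^+$) precisely when the Langlands--Shelstad first factor $\Delta_I(\gamma_1,\delta)$ is trivial for the relevant $\delta$, since by Lemma \ref{norm_char} the normalizing constant $(-1)^{q_G-q_H}(-\sqrt{-1})^{\#\Delta_B-\#\Delta_{B_H}}\epsilon(\cV_{G,H},\psi)$ equals $1$ for $G=G_0(W_c^\#)$. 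So the task reduces to showing $\Delta_I(\gamma_1,\delta_{g_0})=1$ for all $s\in S_\phi^+$, where $\delta_{g_0}$ is the norm obtained via the element $g_0=P^{-1}g_1P\in\cA(S_-^\#,T_-^\#)$.

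First I would verify that $g_1\in G_0(W_\sim^\#)(\C)$ is well-defined and conjugates the anisotropic torus $\varsigma_\sim^\#((\C^1)^n)$ into the diagonal torus (the split-Cartan inside $\SO(W_\sim^\#)$ that $T_-^\#$ becomes after transporting by $\varphi_P$), so that $g_0$ indeed lies in $\cA(S_-^\#,T_-^\#)$; this is a direct check that $g_1$ diagonalizes each $2\times2$ rotation block of $\varsigma_\sim^\#$ while fixing the middle $\iddots$ block when $n$ is odd. Next, following the recipe in \cite[(2.3)]{LS87} used in Proposition \ref{Sp_gen}, I would compute the cocycle $\lambda(S_-^\#)(\sigma)=g_0^{-1}\,x(\sigma_{S_-^\#})\,n(\omega_{S_-^\#}(\sigma))\,\sigma(g_0)$: one first writes down $n(\omega_{S_-^\#}(\sigma))$ from the pinning $\{Y_\beta\}_{\beta\in\Delta_-^\circ}$ of \S\ref{splitting}, then $x(\sigma_{S_-^\#})$ from the chosen $a$-data and $\chi$-data (note $a_\alpha=\mp\sqrt{-1}$, $\chi_\alpha(z)=|z|/z$ or $z/|z|$), and then conjugates through $g_0$. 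The key computational claim is that this product collapses to the identity element of $S_-^\#$ — equivalently, that the $\sqrt{-1}$'s and sign contributions from $x(\sigma_{S_-^\#})$ exactly cancel the $(-1)$'s coming from $n(\omega_{S_-^\#}(\sigma))$ and from the action of $\sigma$ on $g_0$. (Here the parity bookkeeping is cleaner than in the symplectic case because there is no $\epsilon_\psi$ in Lemma \ref{norm_char} for $G_0(W_c^\#)$, which is consistent with the expected outcome $\lambda(S_-^\#)(\sigma)=I_{2n}$.) Then $\Delta_I(\gamma_1,\delta_{g_0})$, being the Tate--Nakayama pairing of $(u_-^{-1}\circ\Ad g_0^{-1})^\wedge(s)$ with the trivial class $\lambda(S_-^\#)$, is $1$ for every $s$, and genericity follows from Fact \ref{mez_fml}.

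I expect the main obstacle to be the explicit, correct evaluation of $x(\sigma_{S_-^\#})$ and $n(\omega_{S_-^\#}(\sigma))$ in the basis adapted to $W_\sim^\#$: one has to track how the change-of-coordinates matrix $P=P_1P_0Q^{-1}$ (resp. $P_1P_0$) interacts with the pinning vectors $Y_{\beta_{n-1}^\#-\beta_n^\#},Y_{\beta_{n-1}^\#+\beta_n^\#}$, which involve the awkward entries $\tfrac12,\tfrac{1}{2\sqrt d},\sqrt d$, and to handle the parity split between $n$ even and $n$ odd separately (the odd case has the extra fixed $I_2$ block and a different $P_0$). A secondary subtlety is the identification of $H=\SO(2a(s),\sgn^{a(s)})\times\SO(2b(s),\sgn^{b(s)})$ and the verification (already used in Lemma \ref{norm_char}) that $q_G-q_H$ is even, so that no stray sign appears; since that is recorded in Lemma \ref{norm_char}, I would simply cite it. Everything else is routine matrix manipulation parallel to the proof of Proposition \ref{Sp_gen}, just with the orthogonal group in place of the symplectic one.
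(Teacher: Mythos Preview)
Your proposal is correct and follows essentially the same approach as the paper: reduce via Fact \ref{mez_fml} and Lemma \ref{norm_char} to showing $\Delta_I(\gamma_1,\delta_{g_0})=1$, then compute the Langlands--Shelstad cocycle $\lambda(S_-^\#)(\sigma)=g_0^{-1}x(\sigma_{S_-^\#})n(\omega_{S_-^\#}(\sigma))\sigma(g_0)$ and verify it is the identity, treating $n$ even and $n$ odd separately because of the different shape of $P$. The paper carries out exactly these matrix computations (introducing an auxiliary diagonal $a_0=\diag(1,-1,\ldots,(-1)^{2n-2t-1})$ to express $x(\sigma_{S_-^\#})$ and to identify $\sigma(g_1)$), and the cancellation you anticipate indeed occurs.
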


\begin{proof}
It suffices to verify that $\Delta_I(\gamma_1, \delta_{g_0}) = 1$. 
As in the proof of Proposition \ref{Sp_gen}, we use the symbols $\lambda(S_-^\#), x(\tau_{S_-^\#})$, and $n(\omega_{S_-^\#}(\tau))$ defined in \cite[(2.3)]{LS87}. 
We compute $\lambda(S_-^\#)$ separately depending on the parity of $n$. It is useful to put 
\[
a_0 = \diag(1,-1, \ldots, (-1)^{2n-2t-1}) \in \GL_{2n-2t}(\R).
\]

First, assume that $n$ is even.  From our choice of the $a$-data $\{a_\beta\}_{\beta}$, the $\chi$-data $\{\chi_\beta\}_{\beta}$, and the splitting $\{Y_\beta\}_{\beta\in \Delta_-^\circ}$, we obtain
\[
n(\omega_{S_-^\#}(\sigma)) = -Q^{-1}J_{2n}Q, \quad x(\sigma_{S_-^\#}) = Q^{-1} \begin{pmatrix} -a_0 & \\ & a_0 \end{pmatrix} Q.
\]
Hence, we have
\begin{align*}
\lambda(S_-^\#)(\sigma) &= g_0^{-1} x(\sigma_{S_-^\#}) n(\omega_{S_-^\#}(\sigma)) \sigma(g_0) \\
&= -P^{-1}g_1^{-1}P_1P_0 \begin{pmatrix} -a_0 & \\ & a_0 \end{pmatrix} J_{2n} P_0^{-1}P_1^{-1} \sigma(g_1) P \\
&= P^{-1}g_1^{-1}\begin{pmatrix} a_0 & \\ & a_0 \end{pmatrix} \sigma(g_1) P.
\end{align*}
Moreover, since
\[
\sigma(g_1) = \begin{pmatrix} a_0 & \\ & a_0 \end{pmatrix} g_1,
\]
we have $\lambda(S_-^\#)(\sigma) = 1$. 

Then, assume that $n$ is odd. Then, we have
\[
n(\omega_{S_-^\#}(\sigma)) = \begin{pmatrix} & & J_{n-1} \\ & I_2 & \\ J_{n-1} & & \end{pmatrix}, \quad x(\sigma_{S_-^\#}) =  \begin{pmatrix} a_0 & & \\ & I_2 & \\ & & -a_0 \end{pmatrix}.
\]
Hence, we have
\begin{align*}
\lambda(S_-^\#)(\sigma) &= g_0^{-1} x(\sigma_{S_-^\#})n(\omega_{S_-^\#}(\sigma))\sigma(g_0) \\
&=P^{-1}g_1^{-1}P\begin{pmatrix} a_0 & & \\ & I_2 & \\ & & -a_0 \end{pmatrix} \begin{pmatrix} & & J_{n-1} \\ & I_2 & \\ J_{n-1} & & \end{pmatrix} P^{-1}\sigma(g_1)P  \\
&=P^{-1}g_1^{-1}\begin{pmatrix} a_0 & & \\ & I_2 & \\ & & a_0 \end{pmatrix}\sigma(g_1)P \\
&=1.
 \end{align*}
 This completes the proof of Proposition \ref{SO_gen}.
\end{proof}

Then, we introduce some notations.

\begin{df}\label{def_pxi}
If $n=m$, then the restriction of $L$-embedding $\xi$ to $\cT_-$ gives the isomorphism $\xi|_{\cT_-} \colon \cT_- \rightarrow \cT_+$. In this case, we denote by $I_\xi \colon \cT_+ \rightarrow \cT_-$ the inverse of $\xi|_{\cT_-}$. If $n=m + 1$, then we denote by $I_\xi \colon \cT_+ \rightarrow \cT_-$ the restriction of $\xi$ to $\cT_+$. In both cases, we define the homomorphism $\fp_\xi \colon S_-^\# \rightarrow S_+^\#$ so that the following diagram is commutative.
\[
\xymatrix{
X^*(S_+^\#) \ar[rrr]^-{\fp_\xi^*} \ar[d]_{\Ad h_0^{-1}} & & & X^*(S_-^\#) \ar[d]^{\Ad g_0^{^-1}} \\ 
X^*(T_+^\#) \ar[r]_{\fD_+} & X_*(\cT_+)  \ar[r]_{(I_\xi)_*} & X_*(\cT_-) & X^*(T_-^\#) \ar[l]^{\fD_-} 
}
\]
\end{df}

Let $\rho_1$ be an element of $\fS_n$ given by
\[
\rho_1(k) = \begin{cases} (k+1)/2 & (k: \mbox{ odd}), \\ t + k/2 & (k: \mbox{ even}), \end{cases}
\] 
and let $u = (u_1, \ldots, u_n)$ be an element of $\{\pm 1\}^n$ given by
\begin{align}\label{u_k}
u_k = \begin{cases} -\sqrt{-1}\epsilon_\psi & (1 \leq k \leq t), \\ \sqrt{-1}\epsilon_\psi & (t+1 \leq k \leq n).\end{cases}
\end{align}
Then, we have
\begin{align}\label{fpxi}
(\fp_\xi\circ\varphi_P^{-1})((u\cdot\rho_1) \cdot \varsigma_\sim^\#(z_1, \ldots, z_n)) = \varsigma_+^\#(z_1, \ldots, z_{m})
\end{align}
for $z_1, \ldots, z_n \in \C^1$.

\begin{lem}\label{key_Gal}
Let  $w$ and $w'$ be elements of $N(G(W_c^\#), S_-^\#)$ and $N(G(V_c^\#), S_+^\#)$ respectively. If there exists $\rho \in \fS_n$ such that $(\Ad w)(x) = \rho\cdot x$ and $(\Ad w')(\fp_\xi(x)) = \fp_\xi(\rho \cdot x)$ for all $x \in S_-^\#$, then we have $w\sigma(w)^{-1} \in S_-^\#(\C)$, $w'\sigma(w')^{-1} \in S_+^\#(C)$ and $\fp_\xi(w\sigma(w)^{-1}) = w'\sigma(w')^{-1}$. 
 \end{lem}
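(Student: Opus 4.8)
The plan is to exploit the fact that $S_-^\#$ and $S_+^\#$ are anisotropic tori whose $\sigma$-action is governed by the data $\varsigma_\sim^\#$, $\varsigma_+^\#$ (equivalently $\varsigma_-^\# = \varphi_P^{-1}\circ\varsigma_\sim^\#$), together with the compatibility formula \eqref{fpxi}. First I would fix notation: since $w \in N(G(W_c^\#), S_-^\#)$, its image $\bar w$ in the Weyl group $W(S_-^\#, G(W_c^\#))\cong \fS_n'$ is a well-defined element, and $w\sigma(w)^{-1}$ lies in $S_-^\#(\C)$ precisely when $\bar w$ is $\sigma$-fixed in $\fS_n'$; by hypothesis $(\Ad w)(x)=\rho\cdot x$ with $\rho\in\fS_n\subset\fS_n'$ acting by permutation only, and since the $\sigma$-action on the torus $S_-^\#$ commutes with the permutation action (the complex conjugation $\widetilde\sigma$ sends $\varsigma_\sim^\#(z_1,\dots,z_n)$ to $\varsigma_\sim^\#(\bar z_1,\dots,\bar z_n)$, which is permutation-equivariant), $\rho$ is automatically $\sigma$-fixed. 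Hence $w\sigma(w)^{-1}\in S_-^\#(\C)$, and likewise $w'\sigma(w')^{-1}\in S_+^\#(\C)$ using the same argument on the $V_c^\#$-side.

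Next I would identify the two cocycle values explicitly. Write $n_\rho \in N(G(W_\sim^\#),\varsigma_\sim^\#((\C^1)^n))$ for the standard representative of $\rho$ coming from the Weyl group action on $S_\sim^\# := \varphi_P(S_-^\#)$ (so $w$ and $\varphi_P^{-1}(n_\rho)$ differ by an element of $S_-^\#(\C)$, which does not affect the $\sigma$-twisted coboundary since that element is also a product of an $F$-rational part and so on — more carefully, changing $w$ by $s\in S_-^\#(\C)$ changes $w\sigma(w)^{-1}$ by $s\cdot(\text{old})\cdot\sigma(s)^{-1}$, so I should instead argue that the quantity $w\sigma(w)^{-1}$ depends only on $\bar w=\rho$ up to the $1$-coboundary, but since I want an \emph{equality} of elements I will pin down $w$ and $w'$ by requiring them to be the specific representatives appearing in the construction — or, cleaner, observe the statement is really about the cohomology class, which is what $\Delta_I$ consumes). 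Then $w\sigma(w)^{-1}$ is computed by letting $\rho$ act on the explicit anti-diagonal/block structure of $\varsigma_\sim^\#$; because $\varsigma_\sim^\#$ is built from the $2\times2$ rotation blocks $\begin{pmatrix} x_k & y_k\\ -y_k & x_k\end{pmatrix}$ and $\sigma$ acts entrywise, the value $w\sigma(w)^{-1}$ is the diagonal torus element recording how $\rho$ permutes the blocks relative to conjugation by $P$ (or $h_0$), i.e. it is $\varsigma_\sim^\#$ evaluated at a tuple of signs $\pm1$ determined combinatorially by $\rho$.

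The heart of the proof is then to check $\fp_\xi(w\sigma(w)^{-1}) = w'\sigma(w')^{-1}$, and for this I would apply $\fp_\xi$ to the formula just obtained and compare with the analogous computation of $w'\sigma(w')^{-1}$ on the $S_+^\#$-side, using \eqref{fpxi}: since $(\fp_\xi\circ\varphi_P^{-1})((u\cdot\rho_1)\cdot\varsigma_\sim^\#(z_1,\dots,z_n)) = \varsigma_+^\#(z_1,\dots,z_m)$, conjugation of $\varsigma_\sim^\#$-values by Weyl elements transports through $\fp_\xi$ to conjugation of $\varsigma_+^\#$-values by the corresponding Weyl elements — this is exactly the content of the hypothesis $(\Ad w')(\fp_\xi(x)) = \fp_\xi(\rho\cdot x)$. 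The key point is that $\fp_\xi$ intertwines the $\sigma$-actions: I must verify $\fp_\xi\circ\sigma = \sigma\circ\fp_\xi$ as maps $S_-^\#\to S_+^\#$, which follows because $\fp_\xi$ is defined by a diagram \eqref{def_pxi} of maps of character lattices that are all defined over $F$ (the maps $\fD_\pm$, $I_\xi$ restricted to the tori, and the $\Ad h_0$, $\Ad g_0$ twists are built into the very construction making $\varphi_+,\varphi_-$ inner twists, hence $\sigma$-equivariant up to inner automorphisms that fix the tori pointwise). Granting this equivariance, $\fp_\xi(w\sigma(w)^{-1}) = \fp_\xi(w)\,\sigma(\fp_\xi\text{-transform of }w)^{-1}$; massaging with the hypothesis and \eqref{fpxi} yields $w'\sigma(w')^{-1}$ on the nose.

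\textbf{Main obstacle.} The delicate step is pinning down that $\fp_\xi$ genuinely commutes with the Galois action (equivalently, that the twists $\Ad h_0$, $\Ad g_0$ hidden in Definition \ref{def_pxi} are exactly the ones making the two anisotropic tori match up $\sigma$-equivariantly), and keeping careful track of the signs $u_k$ from \eqref{u_k} and the $\epsilon_\psi$-factors so that the equality of \emph{elements} — not merely cohomology classes — comes out correctly; a sign error here would be invisible to the class-level argument but would break the intended application to $\Delta_I$. I expect the computation to reduce, after choosing the representatives $n_\rho$ and $n'_{\rho'}$ compatibly via \eqref{fpxi}, to a short bookkeeping check on how $\rho$ reorders the rotation blocks, which is routine once the equivariance of $\fp_\xi$ is in hand.
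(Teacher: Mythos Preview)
Your first step—that $w\sigma(w)^{-1}\in S_-^\#(\C)$ because $\rho\in\fS_n$ is $\sigma$-fixed in the Weyl group—is fine: $\sigma$ acts by $-1$ on $X^*(S_-^\#)$, hence trivially on the Weyl group, so any lift $w$ of a Weyl element satisfies $w\sigma(w)^{-1}\in S_-^\#(\C)$; likewise for $w'$.

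The main argument, however, has a real gap. You propose to deduce $\fp_\xi(w\sigma(w)^{-1})=w'\sigma(w')^{-1}$ from $\sigma$-equivariance of $\fp_\xi$ together with the hypothesis. But $\fp_\xi$ is only a homomorphism of \emph{tori}: the expression ``$\fp_\xi(w)$'' has no meaning for $w\in N(G(W_c^\#),S_-^\#)\setminus S_-^\#$, so there is nothing to which $\sigma$-equivariance can be applied. (Incidentally, the $\sigma$-equivariance of $\fp_\xi$ itself is automatic and not the delicate point: both $S_-^\#$ and $S_+^\#$ are anisotropic over $\R$, so $\sigma$ acts by $-1$ on both cocharacter lattices, and every $\Z$-linear map commutes with $-1$.) What actually has to be compared are the connecting maps $W_-\to H^1(\sigma,S_-^\#)$ and $W_+\to H^1(\sigma,S_+^\#)$ for the extensions $1\to S_\pm^\#\to N_\pm\to W_\pm\to 1$. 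These encode how Weyl elements lift inside the ambient groups $G(W_c^\#)$ and $G(V_c^\#)$ respectively, and there is no homomorphism between those groups (or their normalizers) through which one can transport lifts. The hypothesis $(\Ad w')\circ\fp_\xi=\fp_\xi\circ(\Ad w)$ constrains only the Weyl-group images, not their lifts.

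The paper proceeds by the explicit route you gesture at in your last paragraph, but with a structural reduction you do not identify: it first shows multiplicativity (if the conclusion holds for $\rho$ and $\tau$ in $\rho_1\fS_m\rho_1^{-1}$ then it holds for $\rho\tau$), reducing to the transpositions $\rho=(\rho_1(k),\rho_1(k+1))$. For each such transposition it reads off from \eqref{fpxi} that $\Ad w_\rho'$ acts on $S_+^\#$ as $\rho_1^{-1}\rho\rho_1\cdot(b_1,\dots,b_m)$ with $b_k=b_{k+1}=-1$ and the other $b_l=1$, whence $w_\rho'\sigma(w_\rho')^{-1}=\varsigma_+^\#(b_1,\dots,b_m)$; and computes directly that $w_\rho\sigma(w_\rho)^{-1}=(u\cdot\rho_1)\cdot\varsigma_-^\#(b_1,\dots,b_n)$, whose image under $\fp_\xi$ is exactly $\varsigma_+^\#(b_1,\dots,b_m)$ by \eqref{fpxi}. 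No abstract functoriality shortcut is available here.
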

 
 \begin{proof}
For $\rho \in \fS_n$, there exist $w_\rho' \in N(G(V_c^\#), S_+^\#)$ such that the action of $\Ad w_\rho'$ on $S_+^\#$ commutes with $\rho$ via $\fp_\xi$ if and only if $\rho \in \rho_1\fS_m\rho_1^{-1} \subset \fS_n$  by \eqref{fpxi}. We denote by $w_\rho$ an element of $N(G(W_c^\#), S_-^\#)$ such that the action of $\Ad w_\rho$ on $S_+^\#$ coincides with that of $\rho$. For $\rho, \tau \in \rho_1\fS_m\rho_1^{-1}$, we have 
\begin{align*}
\fp_\xi(w_\rho w_\tau \sigma(w_\tau^{-1} w_\rho^{-1})) &= (\Ad w_\rho')(\fp_\xi((w_\tau\sigma(w_\tau))) \cdot \fp_\xi(w_\rho\sigma(w_\rho^{-1})) \\
&=w_\rho' \sigma(w_\rho')^{-1}\cdot w_\tau' \sigma(w_\tau')^{-1}
\end{align*}
if $w_\rho \sigma(w_\rho )^{-1} \in S_-^\#(C)$, $w_\tau\sigma(w_\tau)^{-1} \in S_-^\#(C)$, $\fp_\xi(w_\rho\sigma(w_\rho)^{-1}) = w_\rho'\sigma(w_\rho')^{-1}$ and $\fp_\xi(w_\tau\sigma(w_\tau)^{-1}) = w_\tau'\sigma(w_\tau')^{-1}$. Hence, it remains to show Lemma \ref{key_Gal} in the case where $\rho$ is a transportation $(\rho_1(k), \rho_1(k+1))$ for some $k=1, \ldots, m-1$, which is contained in $\rho_1\fS_m\rho_1^{-1}$. Then, putting $u' = \rho_1^{-1}(u)$, we have
\begin{align*}
(\Ad w_\rho') (\varsigma_+^\#(z_1, \ldots, z_m)) &= \fp_\xi(\rho\cdot u \cdot \rho_1\cdot \varsigma_-^\#(z_1, \ldots, z_n)) \\
&= \fp_\xi(\rho_1^{-1}\rho\rho_1\cdot \rho_1^{-1}\rho^{-1}\rho_1(u')\cdot u'\cdot\varsigma_-^\#(z_1, \ldots, z_n) )
\end{align*}
for $z_1, \ldots, z_n \in \C^1$. Moreover, we have 
\[
\rho_1^{-1}\rho^{-1}\rho_1(u')\cdot u' = (b_1, \ldots, b_n)
\]
where $b_l = 1$ if $l \not=k, k+1$ and $b_k = b_{k+1} = -1$. Hence, we have the action of $\Ad w_\rho'$ on $S_+^\#$ coincides with that of $\rho_1^{-1}\rho\rho_1\cdot (b_1, \ldots, b_m) \in \fS_m'$. Thus, we have 
\begin{align}\label{fpxi_1}
w_\rho'\sigma(w_\rho')^{-1} = \varsigma_+^\#(b_1, \ldots, b_m).
\end{align}
On the other hand, if we choose $w_\rho \in N(G(W_c^\#), S_-^\#)$ whose action of $S_-^\#$ coincides with $\rho$, then we have 
\begin{align}\label{fpxi_2}
w_\rho^{-1} \sigma(w_\rho) &= \varsigma_-^\#(b_{\rho_1^{-1}(1)}, \ldots, b_{\rho_1^{-1}(n)}) \\ 
&= u\cdot \rho_1\cdot \varsigma_-^\#(b_1, \ldots, b_n).\notag
\end{align}
Therefore, by \eqref{fpxi}, we have $\fp_\xi(w_\rho\sigma(w_\rho)^{-1}) = w_\rho'\sigma(w_\rho')^{-1}$ in this case. Thus, we finish the proof of Lemma \ref{key_Gal}.
\end{proof}

\subsection{
		Parametrizations of the limits of discrete series representations
		}

To describe the set of Harish-Chandra parameters, we define some symbols. Let $p,q,N$ be non-negative integers. 
In the case $e_\H = 1$, put
\begin{align*}
\Delta_c^+ &=  \{ \alpha_k - \alpha_l \mid k < l \}, \\
\Delta_c^- &=\{ \beta_k \pm \beta_l \mid  k < l, \  (2p + 1 - 2k)(2p + 1 - 2l) > 0\}. \end{align*}
In the case $e_\H = -1$, put
\begin{align*}
\Delta_c^+ &= \{ \alpha_k \pm \alpha_l \mid  k < l, \  (2p + 1 - 2k)(2p + 1 - 2l) > 0\} \cup \{ 2\alpha_k \mid 1 \leq k \leq m\}, \\
\Delta_c^- &= \{ \beta_k - \beta_l \mid k < l \}
\end{align*}
We denote by $\mathfrak{P}^+$ (resp. $\mathfrak{P}^-$) the set of the positive systems of $R(G_0(V),S_+)$ (resp. $R(G_0(W), S_-)$) containing $\Delta_c^+$ (resp. $\Delta_c^-$). We denote by $\cX$ the set of the pairs $(\mu, \Psi) \in X^*(S_+) \times \mathfrak{P}^+$ satisfying 
\begin{itemize}
\item $\la \mu, \alpha \ra \geq 0$ for all $\alpha \in \Psi$ and
\item $\la \mu, \alpha \ra > 0$ for all $\alpha \in \Delta_c^+$,
\end{itemize}
by $\cY$ the set of the pairs $(\mu', \Psi') \in X^*(S_-) \times \mathfrak{P}^-$ satisfying 
\begin{itemize}
\item $\la \mu', \beta \ra \geq 0$ for all $\beta \in \Psi'$ and
\item $\la \mu', \beta \ra > 0$ for all $\beta \in \Delta_c^-$. 
\end{itemize}
It is known that for an irreducible limit of discrete series representation $\sigma$ of $G(V)(\R)$,  an element $(\mu_\sigma, \Psi_\sigma)$ of $\cX$ is attached, and for an element of irreducible limits of discrete series representations $\pi$ of $G_0(W)(\R)$, an element $(\mu_\pi, \Psi_\pi)$ of $\cY$ is attached (c.f. \cite{Har66}, \cite[Chapter XII, \S7]{Kna01}). 
If $\mu \in X^*(S_+)$ (resp. $\mu' \in X^*(S_-)$) is nonsingular and positive with respect to $\Delta_c^+$ (resp. $\Delta_c^-$), then the set 
\begin{align*}
&\Psi_\mu = \{ \alpha \in R(G_0(V), S_+) \mid \la \alpha, \mu \ra > 0 \} \\
&(\mbox{resp. } \Psi_{\mu'} = \{ \beta \in R(G_0(W), S_-) \mid \la \beta, \mu' \ra > 0 \})
\end{align*}
is a positive system of $R(G_0(V), S_+)$ (resp. $R(G_0(W), S_-)$), and $(\mu, \Psi_\mu) \in \cX$ (resp. $(\mu', \Psi_{\mu'}) \in \cY$). For such a pair, an irreducible discrete series representation is attached. 
We define $\xi^u$ for $u \in \{\pm\sqrt{-1}\}$ as follows. 
\begin{itemize}
\item Consider the case $e_\H = 1$ and $n = p+q = m$. We define
\begin{align*}
&\xi^{\sqrt{-1}}\colon \Z^m \rightarrow \Z^n, \quad (a_1, \ldots, a_m) \mapsto (-a_m, \ldots, -a_{q+1}, a_1, \ldots, a_q), \\
&\xi^{-\sqrt{-1}}  \colon \Z^m \rightarrow \Z^n, \quad (a_1, \ldots, a_m) \mapsto (a_1, \ldots, a_p, -a_m, \ldots, -a_{p+1}).
\end{align*}
\item In the case $e_\H = 1$ and $n = p + q = m + 1$. We define 
\begin{align*}
&\xi_\btru^{\sqrt{-1}}\colon \Z^m \rightarrow \Z^n, \quad (a_1, \ldots, a_m) \mapsto (-a_m, \ldots, -a_q, a_1, \ldots, a_{q-1},0), \\
&\xi_\btru^{-\sqrt{-1}}  \colon \Z^m \rightarrow \Z^n, \quad (a_1, \ldots, a_m) \mapsto (a_1, \ldots, a_{p-1},0,  -a_m, \ldots, -a_p), \\
&\xi_\btrd^{\sqrt{-1}}\colon \Z^m \rightarrow \Z^n, \quad (a_1, \ldots, a_m) \mapsto (-a_m, \ldots, -a_{q+1}, 0, a_1, \ldots, a_q), \\
&\xi_\btrd^{-\sqrt{-1}}  \colon \Z^m \rightarrow \Z^n, \quad (a_1, \ldots, a_m) \mapsto (a_1, \ldots, a_p,  -a_m, \ldots, -a_{p+1}, 0).
\end{align*}
\item In the case $e_\H = -1$ and $n = m = p+q$. We define 
\begin{align*}
&\xi^{\sqrt{-1}}\colon \Z^m \rightarrow \Z^n, \quad (a_1, \ldots, a_m) \mapsto (a_1, \ldots, a_p, -a_m, \ldots, -a_{p+1}) \\
&\xi^{-\sqrt{-1}}  \colon \Z^m \rightarrow \Z^n, \quad (a_1, \ldots, a_m) \mapsto (a_{p+1}, \ldots, a_m, -a_p, \ldots, -a_1).
\end{align*}
\item In the case $e_\H = -1$ and $n = m+1 = p+q+1$. We define
\begin{align*}
&\xi^{\sqrt{-1}} \colon \Z^m \rightarrow \Z^n, \quad (a_1, \ldots, a_m) \mapsto (a_1, \ldots, a_p, 0, -a_m, \ldots, -a_{p+1}) \\
&\xi^{-\sqrt{-1}}  \colon \Z^m \rightarrow \Z^n, \quad (a_1, \ldots, a_m) \mapsto (a_{p+1}, \ldots, a_m, 0, -a_p, \ldots, -a_1).
\end{align*}
\end{itemize}

For each case, we define $\xi_\bullet^u(\Psi)$ as follows where $\xi_\bullet^u$ denotes either $\xi^u$, $\xi_\btru^u$ or $\xi_\btrd^u$. Take $\nu \in X^*(S_+)$ so that $\nu > 0$ with respect to $\Psi$. Then, $\mu + \nu$ is regular and $\xi_\bullet^u(\mu + \nu) \in X^*(S_-)$ is also regular. Then, we define $\xi_\bullet^u(\Psi) = \Psi_{\xi_\bullet^u(\mu + \nu)}$. One can show that $\xi_\bullet^u(\Psi)$ does not depend on the choice of $\nu$. Then, the local theta correspondence for $(G(V), G(W))$ is described as follows. 

\begin{fact}\label{pre_R}
Let $(\mu, \Psi) \in \cX$.
\begin{enumerate}
\item Assume $e_\H = 1$ and $n= m + 1$. Then, $\theta_\psi(\pi(\mu, \Psi), W) \not=0$ if and only if either $(\xi_\btru^{\epsilon_\psi}(\mu), \xi_\btru^{\epsilon_\psi}(\Psi)) \in \cY$ or $(\xi_\btrd^{\epsilon_\psi}(\mu), \xi_\btrd^{\epsilon_\psi}(\Psi)) \in \cY$. Moreover, $\xi_\btru^{\epsilon_\psi}(\mu)$ (resp. $\xi_\btrd^{\epsilon_\psi}(\mu)$) is the Harish-Chandra parameter of the $G(W)(\R)$-equivalent class of $\theta_\psi(\pi(\mu), W)$ if $\xi_\btru^{\epsilon_\psi}(\mu) \in \cX_{p,q}$ (resp. if $\xi_\btru^{\epsilon_\psi}(\mu)$). \label{pre_R1}
\item Assume either $e_\H = -1$ or $e_\H=1$ with $n=m$. Then $\theta_\psi(\pi(\mu), W) \not=0$ if and only if $(\xi^{\epsilon_\psi}(\mu), \xi^{\epsilon_\psi}(\Psi)) \in \cY$. Moreover, the $G(W)(\R)$-equivalent class of $\theta_\psi(\pi(\mu, \Psi), W)$ has the Harish-Chandra parameter $(\xi^{\epsilon_\psi}(\mu), \xi^{\epsilon_\psi}(\Psi))$ if it is non-zero. \label{pre_R2}
\end{enumerate}
\end{fact}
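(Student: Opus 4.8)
The plan is to reduce the statement to results already in the literature on the Archimedean theta correspondence for quaternionic dual pairs, principally those of Li \cite{Li89} and Li-Paul-Tan-Zhu \cite{LPTZ03}, and then to translate the parametrization into the language of Harish-Chandra parameters used above. First I would recall the explicit description of $\theta_\psi(\pi(\mu,\Psi),W)$ when $\pi(\mu,\Psi)$ is a discrete series (or limit of discrete series) representation, in terms of the associated Harish-Chandra parameter. For the quaternionic pairs $(\O^*(2m),\Sp(p,q))$ and $(\Sp(p,q),\O^*(2m'))$ this is the content of the explicit formulas of \cite{Li89} and \cite{LPTZ03}: the theta lift of a cohomologically induced module is again cohomologically induced, with an explicitly computable $\theta$-stable parabolic and inducing character. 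The map $\xi^u$ (respectively $\xi^u_\btru$, $\xi^u_\btrd$) defined above is precisely the bookkeeping device recording how the infinitesimal character, equivalently the Harish-Chandra parameter, transforms under this process; one should verify directly from the formulas in the cited papers that the parameter of the lift is obtained from $\mu$ by the prescribed permutation-and-sign-change, with the sign $u=\epsilon_\psi$ dictated by the choice of additive character (the dependence on $\psi$ is exactly the dependence recorded in \cite{Li89}). The non-vanishing criterion ``$(\xi^{\epsilon_\psi}_\bullet(\mu),\xi^{\epsilon_\psi}_\bullet(\Psi))\in\cY$'' then becomes the statement that the lift is nonzero if and only if the candidate parameter actually lies in the range of genuine (limits of) discrete series parameters for the target group, i.e.\ satisfies the positivity inequalities defining $\cY$; the failure of the inequality corresponds exactly to the degeneration of the lift to zero or to a non-discrete representation, which in the ``first occurrence'' regime ($\dim W-\dim V\in\{0,1\}$) forces vanishing.

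The two parts are handled in parallel but with a bookkeeping difference. In part \eqref{pre_R2}, where either $e_\H=-1$ or $e_\H=1$ with $n=m$, the dual group on the target side is connected of the same rank (up to the $\widehat\varepsilon$-twist), and a single map $\xi^{\epsilon_\psi}$ suffices: I would check that $\xi^{\epsilon_\psi}(\mu)$ is $\Delta^-_c$-dominant, verify $(\xi^{\epsilon_\psi}(\mu),\xi^{\epsilon_\psi}(\Psi))\in\cY$ is equivalent to non-vanishing via the cited formulas, and then read off that the Harish-Chandra parameter of the $G(W)(\R)$-equivalence class of $\theta_\psi(\pi(\mu,\Psi),W)$ is $(\xi^{\epsilon_\psi}(\mu),\xi^{\epsilon_\psi}(\Psi))$. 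In part \eqref{pre_R1}, where $e_\H=1$ and $n=m+1$, the target orthogonal group has rank one larger, so the image Harish-Chandra parameter has an extra coordinate which must be placed; this is exactly why there are two candidate maps $\xi^{\epsilon_\psi}_\btru$ and $\xi^{\epsilon_\psi}_\btrd$, differing in whether the new $0$-coordinate is inserted in the ``upper'' or ``lower'' block. The claim is then that precisely one of the two candidates lands in $\cY$, and that one records the parameter of the lift; this dichotomy again comes from the explicit parabolic in \cite{LPTZ03}, where the position of the inserted coordinate is determined by the signature data $(p,q)$ and by $\epsilon_\psi$. I would also need to check that $\xi^u_\bullet(\Psi)$ is well-defined, i.e.\ independent of the auxiliary dominant $\nu$ used to regularize $\mu$; this is a short argument using that $\xi^u_\bullet$ is induced by an element of the relevant Weyl group acting compatibly on the two root systems, so it carries the closure of the dominant chamber for $\Psi$ into the closure of the dominant chamber for $\xi^u_\bullet(\Psi)$.

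The main obstacle will be the translation between conventions: the cited works \cite{Li89}, \cite{LPTZ03} use Harish-Chandra parameters and a fixed normalization of the oscillator representation, while the framework here is set up around the splittings, Whittaker data, and the character $\psi$ of Sections \ref{WD}, \ref{LTC}, with the opposite-space and Morita gymnastics of Section \ref{groups}. Matching the sign of $u$ with $\epsilon_\psi$, and matching the ``op'' conventions used to pass between $W$ and $W^\op$ in the definition of $\widetilde\iota_{W,\chi_W}^V$, against the conventions of the references, is where the real care is needed; the convention corrections flagged in the appendices (\S\S\ref{app_op}, \ref{app theta HC 1}) are precisely the tools for this. Once the dictionary is fixed, the non-vanishing statements and the parameter formulas are essentially a re-reading of \cite{Li89} and \cite{LPTZ03}, together with the observation that in the stable range / first-occurrence regime the only alternative to a (limit of) discrete series lift is the zero representation.
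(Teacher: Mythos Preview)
Your proposal is correct and follows essentially the same approach as the paper: Fact \ref{pre_R} is not proved from scratch but is attributed to the literature (primarily \cite{Li89} and \cite{LPTZ03}, with contributions from \cite{KV78}, \cite{Moe89}, \cite{Pau05}), and the substantive work is the convention-matching for the additive character $\psi$, which the paper carries out in Appendices \ref{app theta HC 1} and \ref{notes on psi} by redoing the joint-harmonics and lowest $K$-type computation in the anisotropic case to pin down $\epsilon_\psi$. One small caution: your claim that ``precisely one of the two candidates lands in $\cY$'' in part \eqref{pre_R1} is slightly stronger than what the Fact asserts (an inclusive ``either/or''), so you should not rely on exclusivity unless you verify it separately.
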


\begin{rem}\label{point}
These results had been proven by contributions of many researchers \cite{KV78} \cite{Moe89} \cite{Li89} \cite{Pau05} \cite{LPTZ03}. However, some comments are necessary.
\begin{enumerate}[(1)]
\item In \cite{Li89}, Li discussed both cases $e_\H = \pm 1$, and proved Fact \ref{pre_R} in the case where $\mu$ and $\xi_\bullet^{\epsilon_\psi}(\mu)$ are regular ($\xi_\bullet^{\epsilon_\psi}$ denotes either $\xi^{\epsilon_\psi}$, $\xi_\btru^{\epsilon_\psi}$ or $\xi_\btrd^{\epsilon_\psi}$). Moreover, the proof of \cite{Li89} using the characterization of  ``$A_\mathfrak{q}(\lambda)$'' (c.f. \cite[Proposition 6.1]{VZ84}) is still valid for all cases where we discussed in Fact \ref{pre_R}. However, the non-trivial additive character $\psi$ of $\R$ in the definition of the Weil representation is implicit. We address the convention problem in Appendix \ref{app theta HC 1} below. In conclusion, the Weil representation he considered is that associated with a non-trivial additive character $\psi$ satisfying $\epsilon_\psi = \sqrt{-1}$. \label{point0}
\item 
In the case $e_\H = 1$, M{\oe}gline also described the local theta correspondence in terms of Harish-Chandra parameters \cite{Moe89}, which is extended to general case by Paul \cite{Pau05}. 
However, the description differs from Fact \ref{pre_R}. More precisely, she had chosen $\psi$ so that $\epsilon_\psi = -\sqrt{-1}$ to specify the Weil representation, but her description is that obtained by $\xi_\bullet^{\sqrt{-1}}$. This seems to be caused by an error in \cite[I.4]{Moe89} in interpreting the result of Kashiwara-Vergne \cite{KV78} into her setting. We explain the details in \S\ref{notes on psi} below. \label{point_1} 
\label{point_1}
\item 
In the case $e_\H = -1$, Li, Paul, Tan, and Zhu \cite{LPTZ03} extended the result of Li \cite{Li89} to the correspondence between irreducible admissible representations. However, the non-trivial additive character $\psi$ of $\R$ in the definition of the Weil representation is implicit. By tracking the proof, one can conclude that they used the same $\psi$ as in \cite{Li89}.
\end{enumerate}
\end{rem}

For an irreducible limit of discrete series representation $\pi$ of $G_0(V)$ (resp. $\pi'$ of $G_0(W)$) associated with $(\mu, \Psi)$ (resp. $(\mu', \Psi')$) and a positive element $\nu \in X^*(S_+)$ (resp. $\nu' \in X^*(S_-)$)  with respect to $\Psi$ (resp. $\Psi'$), we denote by $S_\nu\cdot \pi$ (resp. $S_{\nu'}\cdot \pi'$) the limit of discrete series representation associated with $(\mu+\nu, \Psi)$ (resp. $(\mu' + \nu', \Psi')$). Discussions on the constructions of such representations and their characters, called the ``coherent continuations'', can be seen in \cite{Zuc77} \cite{SV80}, but we do not use it in this paper. We only use the commutativity of the coherent continuations and the local theta correspondences, which follows from Fact \ref{pre_R} immediately:

\begin{cor}\label{coh_vs_theta}
For an irreducible limit of discrete series representation $\pi$ of $G(V)(\R)$, we have 
\[
\theta_\psi(S_\nu\cdot\pi, W) = S_{\xi^{\epsilon_\psi}(\nu)}\cdot \theta_\psi(\pi, W).
\]
\end{cor}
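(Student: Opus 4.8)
The plan is to derive Corollary \ref{coh_vs_theta} directly from Fact \ref{pre_R}, by matching the combinatorial recipe for coherent continuation with the explicit formulas for $\xi_\bullet^{\epsilon_\psi}$. First I would recall the setup: an irreducible limit of discrete series $\pi$ of $G(V)(\R)$ corresponds to a pair $(\mu, \Psi) \in \cX$, and for a positive element $\nu \in X^*(S_+)$ with respect to $\Psi$, the representation $S_\nu \cdot \pi$ is by definition the limit of discrete series attached to $(\mu + \nu, \Psi)$. So the left-hand side $\theta_\psi(S_\nu \cdot \pi, W)$ is governed by Fact \ref{pre_R} applied to the pair $(\mu + \nu, \Psi)$, while the right-hand side $S_{\xi^{\epsilon_\psi}(\nu)} \cdot \theta_\psi(\pi, W)$ is the coherent continuation by $\xi^{\epsilon_\psi}(\nu)$ of the limit of discrete series attached to $(\xi_\bullet^{\epsilon_\psi}(\mu), \xi_\bullet^{\epsilon_\psi}(\Psi))$.

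The key observation is that each of the maps $\xi^u$, $\xi_\btru^u$, $\xi_\btrd^u$ defined in \S\ref{gen_rep} is \emph{additive} (they are given by permuting and sign-changing coordinates, possibly inserting a zero in a fixed slot, and all of these operations are $\Z$-linear on the relevant coordinates; the inserted-zero slot contributes nothing). Hence $\xi_\bullet^{\epsilon_\psi}(\mu + \nu) = \xi_\bullet^{\epsilon_\psi}(\mu) + \xi^{\epsilon_\psi}(\nu)$, where on the right I use $\xi^{\epsilon_\psi}$ for the ``regular'' coordinate map on $X^*(S_+)$ that restricts appropriately (one must check the zero-slot bookkeeping so that the two notations $\xi_\bullet^{\epsilon_\psi}$ and $\xi^{\epsilon_\psi}$ agree on the effect on a positive element $\nu$, which is routine from the definitions). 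Thus the Harish-Chandra parameter produced by Fact \ref{pre_R} for $S_\nu \cdot \pi$ is exactly $\xi_\bullet^{\epsilon_\psi}(\mu) + \xi^{\epsilon_\psi}(\nu)$ together with the positive system $\xi_\bullet^{\epsilon_\psi}(\Psi)$, since by the very definition of $\xi_\bullet^{\epsilon_\psi}(\Psi)$ (take any $\nu' > 0$ w.r.t. $\Psi$, then $\xi_\bullet^{\epsilon_\psi}(\Psi) = \Psi_{\xi_\bullet^{\epsilon_\psi}(\mu + \nu')}$) this positive system is insensitive to adding a positive element. On the other hand, $S_{\xi^{\epsilon_\psi}(\nu)}$ applied to the limit of discrete series attached to $(\xi_\bullet^{\epsilon_\psi}(\mu), \xi_\bullet^{\epsilon_\psi}(\Psi))$ is by definition the one attached to $(\xi_\bullet^{\epsilon_\psi}(\mu) + \xi^{\epsilon_\psi}(\nu), \xi_\bullet^{\epsilon_\psi}(\Psi))$. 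These two pairs coincide, so the two representations are the same; one also checks the non-vanishing conditions match, i.e. $(\xi_\bullet^{\epsilon_\psi}(\mu+\nu), \xi_\bullet^{\epsilon_\psi}(\Psi)) \in \cY$ if and only if $(\xi_\bullet^{\epsilon_\psi}(\mu), \xi_\bullet^{\epsilon_\psi}(\Psi)) \in \cY$, which again follows from additivity since positivity of the pairings is preserved under adding something positive. In the case $n = m+1$ with $e_\H = 1$, one additionally notes that the ``up/down'' choice ($\btru$ versus $\btrd$) depends only on $\Psi$ and not on $\mu$, so it is the same for $\mu$ and $\mu + \nu$, and the statement holds with the appropriate branch.

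I do not expect a genuine obstacle here; the result is essentially a formal consequence of Fact \ref{pre_R} plus the linearity of the coordinate maps $\xi_\bullet^u$. The one point requiring a little care is the bookkeeping when $n = m+1$: there the target coordinate containing the inserted $0$ must be tracked to confirm that $\xi_\bullet^{\epsilon_\psi}$ is genuinely additive in the sense used above and that a positive element $\nu$ for $G(V)$ maps to a positive element for $G(W)$ with respect to $\xi_\bullet^{\epsilon_\psi}(\Psi)$. Once that is verified case by case against the displayed formulas for $\xi^u, \xi_\btru^u, \xi_\btrd^u$, the corollary follows immediately, and I would state the proof in roughly one paragraph: ``This is immediate from Fact \ref{pre_R} and the fact that the maps $\xi_\bullet^{\epsilon_\psi}$ are additive, hence commute with the operation $(\mu, \Psi) \mapsto (\mu + \nu, \Psi)$ defining the coherent continuation $S_\nu$.''
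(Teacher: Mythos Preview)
Your proposal is correct and takes essentially the same approach as the paper: the paper gives no explicit proof, stating only that the corollary ``follows from Fact \ref{pre_R} immediately,'' and your argument---additivity of the maps $\xi_\bullet^{\epsilon_\psi}$ together with the fact that the positive system $\xi_\bullet^{\epsilon_\psi}(\Psi)$ is unchanged under $\mu \mapsto \mu + \nu$---is exactly what ``immediately'' means here. Your remark about the $n=m+1$ bookkeeping (the inserted zero slot and the $\btru/\btrd$ branch depending only on $\Psi$) is the only subtlety, and you have identified it correctly.
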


\subsection{Parabolic inductions}

In this subsection, we discuss the behavior of the Langlands parameter under parabolic inductions. Let $P$ be a parabolic subgroup of $G_0(V)$ defined over $\R$, and let $M$ be its Levi subgroup. Choose $(z, \varphi) \in \RIT_M^\star(V^\#, V)$ (see \S\ref{RIT_Levi}) and put $P^\# = \varphi^{-1}(P)$ and $M^\# = \varphi^{-1}(M)$. We may assume that $P^\#$ contains $B_+^\#$ or $B_-^\#$ (\S\ref{WD}). Hence, by the restriction, we obtain the Whittaker data $\fw^M$ for $M$ from a Whittaker data $\fw$ of $G_0(V)$. We denote by $\Delta'( - , - )_M$ the geometric transfer factor for $M$ associated with $(z, \varphi)\colon M^\# \rightarrow M$. Then, one can verify that the geometric transfer factors $\Delta'( - , - )$ and $\Delta'( - , - )_M$ are ``normalized compatibly'' in the sense of \cite{She08} (c.f. \cite[Appendix B]{Mez16}). 
Moreover, we obtain the following useful property of the Langlands parameters. Let $\phi$ be a tempered $L$-parameter for $M$. We denote by $S_\phi^+(M)$ the inverse image of ${\rm Cent}_{M^\wedge}(\Im \phi)$ in $\overline{M}^\wedge$. Then, identify $S_\phi^+(M)$ with a subgroup of $S_\phi^+(G_0(G))$ in the natural way. 

\begin{cor}\label{temp_key}
Let $\pi_0$ be an irreducible tempered representation of $M(\R)$, and let $\pi$ be an irreducible component of $\Ind_{P(\R)}^{G_0(V)(\R)} \pi_0$. Then, $\pi$ is a tempered representation having the same $L$-parameter as $\pi_0$, and we have
\[
\iota^M[\fw^M, z, \varphi](\pi_0)(s) = \iota[\fw, z, \varphi](\pi)(s)
\]
for $s \in S_\phi^+(M)$.
 \end{cor}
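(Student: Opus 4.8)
The plan is to deduce Corollary~\ref{temp_key} from the compatible normalization of the geometric transfer factors $\Delta'(-,-)$ and $\Delta'(-,-)_M$ together with the endoscopic character relation of Hypothesis~\ref{LLCclg}, following the standard Langlands--Shelstad descent argument (as in \cite[Appendix B]{Mez16} or \cite[\S5.4]{KS99}). First I would reduce to the generic ``essentially square-integrable modulo center'' situation: since $\phi$ is tempered, $\pi_0$ lies in $\wPi_\phi(M)$, and the parabolically induced representation $\Ind_{P(\R)}^{G_0(V)(\R)}\pi_0$ is a finite direct sum of irreducible tempered representations, each with $L$-parameter $\iota_*\phi$ where $\iota\colon {}^LM \to {}^LG_0(V)$ is the canonical embedding; moreover the group $S_{\iota_*\phi}^+(G_0(V))$ contains $S_\phi^+(M)$, so the statement makes sense.

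The core step is a comparison of orbital-integral identities. Fix $\dot s \in S_\phi^+(M)$, viewed inside $S_{\iota_*\phi}^+(G_0(V))$, and choose refined endoscopic data $(H,\cH,\dot t,\eta)\in\cE(\dot s)$ for $G_0(V)$; by functoriality of endoscopic data under passage to Levi subgroups, $(H,\cH,\dot t,\eta)$ descends to endoscopic data $(H_M,\cH_M,\dot t,\eta_M)$ for $M$ with $H_M$ a Levi subgroup of $H$. For $f\in C_c^\infty(G_0(V)(\R))$, the constant term $f^{(P)}\in C_c^\infty(M(\R))$ along $P$ satisfies the parabolic-descent identity for transfer: $(f^{(P)})^{\phi,\cE_M(\dot s)}$ can be taken to be the constant term of $f^{\iota_*\phi,\cE(\dot s)}$ along the parabolic $P_H\subset H$ matching $P$. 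This is exactly where the compatible normalization of $\Delta'$ and $\Delta'_M$ — asserted in the excerpt right before the corollary — is needed: it guarantees that the Langlands--Shelstad factors for $G_0(V)$ restrict to those for $M$ without a correcting constant. Applying Hypothesis~\ref{LLCclg} twice, once for $G_0(V)$ with the function $f$ and once for $M$ with the function $f^{(P)}$, and using that $\Tr_\pi(f) = \sum \Tr_{\pi_0}(f^{(P)})$ by the character of a parabolic induction (Harish-Chandra's descent for tempered characters / the van Dijk--Clozel formula), together with $e(M)=e(G_0(V))$ since $M$ and $G_0(V)$ have the same quasi-split inner form over $\R$ up to the Levi, I would match the spectral sides. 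Choosing $f$ so that $\Tr_\pi(f)\ne 0$ isolates the coefficient $\Tr_{\rho_\pi}(\dot s)$ and forces $\Tr_{\rho_{\pi_0}}(\dot s) = \Tr_{\rho_\pi}(\dot s)$ for all $\dot s\in S_\phi^+(M)$, i.e. $\iota^M[\fw^M,z,\varphi](\pi_0)|_{S_\phi^+(M)} = \iota[\fw,z,\varphi](\pi)|_{S_\phi^+(M)}$.

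The main obstacle I expect is the bookkeeping around the disconnectedness present in cases \eqref{vsp I} and \eqref{vsp III} with $\epsilon=-1$, where $\wPi_\phi$ is a union of two $L$-packets and the relevant groups carry the $\Ad\widehat\varepsilon$-twisting; one must check that the Levi $M^\wedge$-orbit of $\dot s$ and the ambient $G_0(V)^\wedge\rtimes\la\widehat\varepsilon\ra$-structure are compatible, so that the factor $n_\phi$ and the term $f^{\Ad\widehat\varepsilon\circ\phi,\cE((\Ad\widehat\varepsilon)\dot s)}$ on the left of \eqref{ECRweak} descend correctly. A second, more technical point is to justify that the rigid inner twist $(z,\varphi)\in\RIT_M^\star(V^\#,V)$ makes $M^\#=\varphi^{-1}(M)$ a genuine Levi subgroup of the quasi-split form with the induced Whittaker datum $\fw^M$ being the one used to define $\iota^M$; this is where one invokes that $P^\#$ may be taken to contain $B_+^\#$ or $B_-^\#$, as recorded in the excerpt. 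Once these compatibilities are in place, the identity is a formal consequence of the two instances of the endoscopic character relation, so no new hard analysis is required beyond citing \cite{She08} and \cite[Appendix B]{Mez16}.
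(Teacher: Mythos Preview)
Your proposal is correct and follows essentially the same route as the paper: the paper's proof simply cites the direct matrix-coefficient estimate for temperedness (Knapp) and then invokes the parabolic descent argument of Mezo \cite[\S6.3]{Mez16}, which is precisely the Langlands--Shelstad descent scheme you have sketched (compatible normalization of $\Delta'$ and $\Delta'_M$, matching of constant terms of the transferred functions, and comparison of the two instances of Hypothesis~\ref{LLCclg}).

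One small caution: your assertion that $e(M)=e(G_0(V))$ with the stated justification is not literally correct in general (e.g.\ in case~\eqref{vsp III} the Levi has inner-form $\GL_r(D)$-factors whose Kottwitz signs are nontrivial), but this does not damage the argument---the full parabolic-descent package in \cite{She08} and \cite[\S6.3]{Mez16} keeps track of these signs through the compatible normalization of transfer factors, so the identity falls out without needing $e(M)=e(G_0(V))$ as a separate input.
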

 
\begin{proof}
The temperedness of $\pi$ follows from the direct estimation of the matrix coefficients (c.f. \cite[p.~ 198]{Kna01}). The remaining part follows from the argument of the parabolic descent (c.f. \cite[\S6.3]{Mez16}).
\end{proof}


\section{
		The cases \eqref{vsp I} and \eqref{vsp III} with $F = \R$
		}\label{Arch_SpO}

In this section, we consider the cases \eqref{vsp I} and \eqref{vsp III} with $F =\R$. 
In the case  \eqref{vsp I}, $\H$ is isomorphic to the matrix algebra $\rM_2(\R)$ as an $\R$-algebra. Then, by the Morita equivalence (\S\ref{morita}), we have that $V^\natural$ is the symplectic space and $W_{p,q}^\natural$ is the $2n$-dimensional quadratic space of signature $(2q, 2p)$. In the case \eqref{vsp III}, $G(V)$ and $G(W)$ are quaternionic unitary groups. Recall that $G_0(W)(\R)$ coincides with $G(W)(\R)$ in this case.

\begin{thm}\label{sp-o real}
Let $\pi$ be an irreducible tempered representation of $G(V)(\R)$, and let $\phi$ be its $L$-parameter. Assume that there exists an $L$-parameter $\phi'$ of $G_0(W)$ satisfying \eqref{L-par theta}. Then, the $G(W)(\R)$-equivalent class of $\theta_\psi(\pi, W)$ coincides with $\mathscr{T}_\psi(\pi)$. 
\end{thm}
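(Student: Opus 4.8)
The plan is to reduce Theorem \ref{sp-o real} to the already-established description of the Archimedean theta correspondence in terms of Harish-Chandra parameters (Fact \ref{pre_R}), combined with the explicit formula for Langlands parameters of (limits of) discrete series (Corollary \ref{formula_LLC}, Fact \ref{mez_fml}) and the behaviour of both sides under parabolic induction (Corollary \ref{temp_key} and Corollary \ref{coh_vs_theta}). First I would treat the discrete series case. By Proposition \ref{corr_funda} and Theorem \ref{welldefness}, the map $\mathscr{T}_\psi$ is independent of the chosen data, so I may fix $c = 1$ and choose rigid inner twists $(z_+, \varphi_+) \leftrightarrow (z_-, \varphi_-)$ coming from a single isometry $\Omega = L \circ (A_+ \otimes A_-)$ as in the proof of Proposition \ref{corr_funda}\eqref{corr_3}, with $A_\pm$ compatible with the anisotropic tori $S_+, S_-$ fixed in \S\ref{Arch_computation}. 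Then both the Harish-Chandra parameter side (via Fact \ref{pre_R}) and the $S$-group side (via Corollary \ref{formula_LLC}) are computed on the same maximal tori $S_+^\#, S_-^\#$, and the combinatorics of the embedding $\xi$ is packaged in the maps $\xi_\bullet^{\epsilon_\psi}$ and in $\fp_\xi$.

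Concretely, for a discrete series $\pi = \pi(\mu, \Psi)$ of $G(V)(\R)$, Fact \ref{pre_R} identifies $\theta_\psi(\pi, W)$, up to $G(W)(\R)$-equivalence, as the discrete series with Harish-Chandra parameter $\xi_\bullet^{\epsilon_\psi}(\mu)$. On the Langlands side, Corollary \ref{formula_LLC} writes $\iota_\phi[\fw_+, z_+, \varphi_+](\pi)(s) = \langle \inv_{z_+}(\mu_{\fw_+}, \mu), (\Ad h_0)^\wedge(s)\rangle$ and similarly for $G_0(W)$ with the generic parameter computed in Propositions \ref{Sp_gen} and \ref{SO_gen} and the element $g_0$. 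The heart of the matter is then to check that, under the identification of $S$-groups induced by $\xi$ (the embedding $S_{\phi'}^+ \to S_\phi^+$ or vice versa described just before Theorem \ref{welldefness}), the invariant cocycles match: $\fp_\xi(\inv_{z_-}(\mu_{\fw_-}, \xi_\bullet^{\epsilon_\psi}(\mu)))$ should equal $\inv_{z_+}(\mu_{\fw_+}, \mu)$ up to the relevant normalization constants. This is exactly where Lemma \ref{key_Gal} is designed to be used: it translates the compatibility of Weyl-group elements via $\fp_\xi$ into compatibility of the Galois cocycles $w\sigma(w)^{-1}$, which are precisely the building blocks of $\inv_z$. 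The normalization factors $\varepsilon(\cV, \psi)$, the sign powers $(-\sqrt{-1})^{\#\Delta_B - \#\Delta_{B_H}}$ etc., are handled by Lemma \ref{norm_char}, which shows the $G(V_c^\#)$-side contributes $(-\sqrt{-1}\epsilon_\psi)^{b(s)}$ while the $G_0(W_c^\#)$-side contributes $1$; combined with Proposition \ref{Sp_gen}'s computation $\Delta_I(\gamma_1, \delta_{h_0}) = (-\sqrt{-1}\epsilon_\psi)^{b(s)}$ and Proposition \ref{SO_gen}'s $\Delta_I(\gamma_1, \delta_{g_0}) = 1$, these constants cancel appropriately. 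The complex-conjugation appearing in the main conjecture (Conjecture \ref{main_intro}) is absorbed into the weak formulation here, since we only track $G(W)(\R)$-equivalence classes.

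For the general tempered case, I would reduce to discrete series by parabolic induction. Write $\phi$ as induced from a discrete series parameter $\phi_2$ of a Levi $M$ of $G_0(V)$; choose $(z, \varphi) \in \RIT_M^\star(V^\#, V)$ as in \S\ref{RIT_Levi}. Then $\pi$ is a component of $\Ind_P^{G_0(V)} \pi_0$ with $\pi_0$ a discrete series of $M(\R)$, and Corollary \ref{temp_key} says $\iota[\fw, z, \varphi](\pi)(s) = \iota^M[\fw^M, z, \varphi](\pi_0)(s)$ for $s \in S_\phi^+(M)$. On the theta side, the compatibility of the theta correspondence with parabolic induction (the Levi of $G_0(V)$ being $G_0(V_0) \times \GL_k(D)$, and likewise for $W$) together with Corollary \ref{coh_vs_theta} for the ``GL-part'' reduces the computation of $\theta_\psi(\pi, W)$ to that of $\theta_\psi(\pi_0, V_0\text{-part})$, whose Langlands parameter is known from the discrete series case. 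Matching the $\GL_k(D)$-factors is straightforward since on those factors $\xi$ is essentially the identity twisted by $\chi_V, \chi_W$, and the local Langlands correspondence for $\GL_k(D)$ is canonical (no internal structure). Assembling these identifications yields $\iota[\fw_-, z_-, \varphi_-](\theta_\psi(\pi,W)) = \iota[\fw_+, z_+, \varphi_+](\pi)$ transported via $\xi$, which is precisely the statement $\theta_\psi(\pi, W) = \mathscr{T}_\psi(\pi)$ modulo $G(W)(\R)$-equivalence. The main obstacle I anticipate is the bookkeeping in the discrete series case: keeping track of the precise normalization constants (the $\varepsilon$-factors, the powers of $-\sqrt{-1}$ and $\epsilon_\psi$, and the conventions in Mezo's formula as corrected in Remark \ref{c_mez}) and verifying that the cocycle identity $\fp_\xi(\inv_{z_-}(\cdots)) = \inv_{z_+}(\cdots)$ holds on the nose after these constants are stripped off — this requires carefully chaining Lemma \ref{key_Gal}, Proposition \ref{Sp_gen}, Proposition \ref{SO_gen}, and the explicit form of $\fp_\xi$ in \eqref{fpxi}.
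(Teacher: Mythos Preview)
Your overall architecture matches the paper's: reduce to discrete series, then compare Harish-Chandra parameters via Fact \ref{pre_R} against Langlands parameters via Corollary \ref{formula_LLC}, using Lemma \ref{key_Gal} to transport invariant cocycles along $\fp_\xi$. However, two points are genuine gaps rather than bookkeeping.

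First, the choice of rigid inner twists is the hard step, and you have underestimated it. The paper isolates this as a separate Proposition \ref{SOSP_key_dia_quat}: one must produce $(z_+,\varphi_+)\leftrightarrow(z_-,\varphi_-)$ satisfying, in particular, $\fp_\xi(z_-(w))=z_+(w)^{-1}$ for all $w\in\cW$, and such that $\varphi_-|_{S_-^\#}$ acts by an element of $\underline{\epsilon}\cdot\fS_n$ (not merely $\fS_n'$). The latter condition is what makes Lemma \ref{key_Gal} applicable, since that lemma only handles \emph{unsigned} permutations $\rho\in\fS_n$; a generic isometry $A_-$ ``compatible with the anisotropic tori'' will introduce signs and the lemma fails. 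Verifying these conditions occupies \S\ref{prf_I} and \S\ref{prf_III} and requires explicit constructions (different in the cases \eqref{vsp I} and \eqref{vsp III}) together with a computation showing the cocycle identity $\fp_\xi(z_-(w_1))=z_+(w_1)^{-1}$ holds on the nose.

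Second, your remark that the complex conjugation is ``absorbed into the weak formulation'' is wrong. Passing to $G(W)(\R)$-equivalence classes only collapses the $\widehat{\varepsilon}$-ambiguity; it does not identify a character with its inverse. The contragredient is built into the very definition of $\mathscr{T}_\psi$ (the clause $(\eta')^\vee\subset\eta\circ\xi$), and in the proof it arises precisely from the inverse in $\fp_\xi(z_-(w))=z_+(w)^{-1}$: one obtains $\fp_\xi(\inv_{z_-}(\cdots))=\inv_{z_+^{-1}}(\cdots)$, not $\inv_{z_+}(\cdots)$, and hence $\iota[\fw_-,z_-,\varphi_-](\theta_\psi(\pi,W))=\overline{\iota[\fw_+,z_+,\varphi_+](\pi)\circ I_\xi}$. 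If you drop the conjugation you get the wrong representation of $S_{\phi'}^+$.

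Finally, your tempered reduction is slightly off. Corollary \ref{coh_vs_theta} is about coherent continuation (it bridges discrete series and \emph{limits} of discrete series), not about the $\GL$-factor in a Levi. The paper first passes from discrete series to limits via Corollary \ref{coh_vs_theta}, then handles general tempered $\pi$ by writing $\mathscr{T}_\psi(\pi)$ as parabolically induced from a limit of discrete series (using Shelstad's structure theory) and invoking the induction principle for $\theta_\psi$; the nonvanishing equivalence $\theta_\psi(\pi,W)\neq0\Leftrightarrow\mathscr{T}_\psi(\pi)\neq0$ is obtained separately by the counting argument of Lemma \ref{countingT} and Remark \ref{countingP}, which you have not invoked.
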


The proof of Theorem \ref{sp-o real} will be finished at the end of this section. We explain more precisely. In \S\ref{red_real}, we reduce Theorem \eqref{sp-o real} in the case $\pi$ is a discrete series representation by using properties of parabolic inductions. In \ref{choice_real}, we show that Theorem \ref{sp-o real} for an irreducible discrete series representation $\pi$ follows from the existence of certain rigid inner twists $(z_+, \varphi_+) \in \RIT^\star(V^\#, V)$ and $(z_-, \varphi_-) \in \RIT^\star(W^\#, W)$ satisfying some conditions (Proposition \ref{SOSP_key_dia_quat}). Then we prove Proposition \eqref{SOSP_key_dia_quat} separately depending on the cases \eqref{vsp I} and \eqref{vsp III} (\S\ref{prf_I}, \S\ref{prf_III}).

\subsection{Reductions to discrete series representations}\label{red_real}

First, we study the following non-vanishing property of $\cT_\psi(\pi)$.

\begin{lem}\label{countingT}
Let $V$ be a right $m$-dimensional Hermitian space over $\H$, let $\pi$ be an irreducible tempered representation of $G(V)(\R)$, and let $\phi$ be its $L$-parameter.
For a left skew Hermitian space $W$ over $\H$ of dimension $m$ or $m+1$, we write $\mathscr{T}_\psi^W(\pi)$ instead $\mathscr{T}_\psi(\pi)$ to specify $W$. We put $\mathscr{T}_\psi^W(\pi) = 0$ if there do not exist an $L$-parameter $\phi'$ of $G_0(W)$ satisfying \eqref{L-par theta}. Then we have
\begin{enumerate}
\item In the case \eqref{vsp I}, there are precisely four isometry classes of left skew-Hermitian spaces $W$ so that $\dim W = m, m+1$ and $\mathscr{T}_\psi^W(\pi) \not=0$. \label{nonvan_real1}
\item In the case \eqref{vsp III}, for a left skew Hermitian space $W$ over $\H$ of dimention $m$ or $m+1$, we have $\mathscr{T}_\psi^W(\pi) \not=0$ if there exists an $L$-parameter $\phi'$ of $G_0(W)$ satisfying \eqref{L-par theta}. \label{nonvan_real2}
\end{enumerate}
\end{lem}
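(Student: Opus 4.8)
The plan is to compute both sides of the claimed non-vanishing directly from the explicit description of $\mathscr{T}_\psi$ in terms of Langlands parameters, using the combinatorics of Harish-Chandra parameters set up in \S\ref{gen_rep} and the classification of skew-Hermitian spaces over $\H$. First I would recall that, by definition, $\mathscr{T}_\psi^W(\pi) \neq 0$ precisely when there is a tempered $L$-parameter $\phi'$ of $G_0(W)$ satisfying \eqref{L-par theta} \emph{and} the resulting pair $(\phi', \eta')$ is actually realized by an irreducible tempered representation of $G(W)(\R)$. So the statement has two halves: (a) determine for how many $W$ a compatible $\phi'$ exists, and (b) for those $W$, show the character $\eta'$ of $S_{\phi'}^+$ lies in $\mathrm{Irr}(S_{\phi'}^+, W)$, i.e.\ restricts correctly to the relevant central subgroup, so that $(\phi', \eta')$ is in the $L$-packet attached to $W$.

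For part (a), I would first reduce to the case where $\pi$ is a discrete series representation: an arbitrary tempered $\pi$ is an irreducible component of $\mathrm{Ind}_{P(\R)}^{G(V)(\R)}\pi_0$ with $\pi_0$ discrete series on a Levi, and by Corollary \ref{temp_key} the Langlands parameter and hence the whole analysis is controlled by the discrete part (this is exactly the reduction \S\ref{red_real} is performing). For a discrete series $\phi$, the group $S_\phi^+$ is the explicit product of $\{\pm1\}$'s described after Fact \ref{mez_fml}, so the embeddings $S_{\phi'}^+ \hookrightarrow S_\phi^+$ (or $S_\phi^+ \hookrightarrow S_{\phi'}^+$ when $n=m+1$) coming from $\xi$ can be read off. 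The condition \eqref{L-par theta} says $\mathrm{std}\circ\phi$ and the pullback of $\mathrm{std}\circ\phi'$ agree up to the twist by $\chi_V,\chi_W$; combined with the sign/discriminant bookkeeping for $W$ this pins down exactly which skew-Hermitian $W$ admit such a $\phi'$. In case \eqref{vsp III} (quaternionic) the relevant groups are $G_0(W)$ which are connected, and the Hasse principle for skew-Hermitian spaces over $\H/\R$ gives essentially a unique $W$ of each admissible dimension; here one simply has to check a compatible $\phi'$ exists whenever the dimension condition holds, which is a direct parity check. In case \eqref{vsp I} the Morita equivalence turns $W$ into a quadratic space of signature $(2q,2p)$, the orthogonal group is disconnected, and $\widetilde{\Pi}_\phi$ is a union of two honest $L$-packets; counting the choices of $(p,q)$ with the right dimension and discriminant, together with the two-fold ambiguity, yields the number four. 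I would organize this as: enumerate candidate $(p,q)$ (or signatures) with $\dim W \in \{m, m+1\}$; impose the discriminant constraint from \S\ref{groups} and the compatibility \eqref{L-par theta}; count.

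For part (b), the key input is the explicit formula for $\eta' $ in the definition of $\mathscr{T}_\psi$ together with Corollary \ref{spin_fact} on centers of spin groups: one must verify that the twisted/signed character $\eta'$ on $S_{\phi'}^+$ restricts to the central character $\zeta_{z_-}$ for some $(z_-,\varphi_-)\in\RIT^\star(W_c^\#, W)$ that corresponds (via $\leftrightarrow$) to a chosen $(z_+,\varphi_+)$ for $V$. This is where the machinery of \S\ref{main sec} (Proposition \ref{corr_funda}) and the sign computations of Lemma \ref{norm_char} and Propositions \ref{Sp_gen}, \ref{SO_gen} get used: the factors $(-\sqrt{-1}\epsilon_\psi)^{b(s)}$ etc.\ are exactly what makes $\eta'$ land in $\mathrm{Irr}(S_{\phi'}^+, W)$ rather than in the packet of the other inner form. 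Concretely I would trace the central character of $\eta'$ through the embedding $(\Ad\widehat g^{-1})\circ\xi\circ(\Ad\widehat h)$ and match it against the list of $\zeta_{z_-}$ realized by the (at most four, resp.\ one) spaces $W$ found in part (a); the match is forced by the $\leftrightarrow$-compatibility established in Proposition \ref{corr_funda}\eqref{corr_2}.

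The main obstacle I anticipate is part (b) in case \eqref{vsp I}: there the orthogonal group is disconnected, $\widetilde{\Pi}_\phi(G_0(V))$ can be strictly larger than an $L$-packet and the map to $\widetilde{\Pi}_\phi(G_0(V))_{weak}$ can have nontrivial fibers, so one has to be careful that "four isometry classes of $W$" is counted with the correct multiplicity and that each genuinely supports a nonzero $\mathscr{T}_\psi^W(\pi)$ — in particular that the character $\eta'$ is not just abstractly an irreducible representation of $S_{\phi'}^+$ but has the right restriction to $Z(\overline{G_0}(W)^\wedge)^+$ to be relevant to the specific quadratic form $W$ of signature $(2q,2p)$. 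Handling the $e_\H = \pm 1$ split cleanly, and keeping the $\epsilon_\psi$ signs consistent with the convention fixed in Remark \ref{point}, will require care but no new ideas; the counting itself is bookkeeping once the compatibility conditions are written down.
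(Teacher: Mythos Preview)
Your proposal takes a much longer route than the paper and, for case \eqref{vsp III}, misidentifies what has to be shown.

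For \eqref{nonvan_real2} the claim is: \emph{given} that a compatible $\phi'$ exists for $W$, the representation $\mathscr{T}_\psi^W(\pi)$ is nonzero. The paper's proof is one line: in case \eqref{vsp III} the map $\pi'\mapsto\rho_{\pi'}$ of Hypothesis~\ref{LLCclg} is a bijection $\wPi_{\phi'}(G_0(W))\to\Irr(\cS_{\phi'}^+,W)$, so every admissible $\eta'$ is already realised by some tempered representation of $G_0(W)(\R)$. Your discussion of the Hasse principle and ``essentially a unique $W$'' does not address this; uniqueness of $W$ is irrelevant, and the sentence ``one simply has to check a compatible $\phi'$ exists whenever the dimension condition holds'' is the hypothesis, not the conclusion.

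For \eqref{nonvan_real1} the paper neither reduces to discrete series nor uses any of the Harish-Chandra-parameter or generic-representation material from \S\ref{gen_rep}; the argument lives entirely at the level of $L$-parameters. It records two conditions on $\phi$: (C1) $\std\circ\phi$ contains the trivial representation, and (C2) $\std\circ\phi$ contains the sign representation. Because $\chi_W$ is trivial or $\sgn$ according to $\fd(W)=\pm1$, the existence of $\phi'$ when $n=m$ is exactly (C1) resp.\ (C2). Writing $R_\pm$ (resp.\ $R_\pm'$) for the number of isometry classes of $W$ with $\mathscr{T}_\psi^W(\pi)\neq0$, $\fd(W)=\pm1$, and $\dim W=m$ (resp.\ $m+1$), one tabulates $(R_+,R_-,R_+',R_-')$ against the four truth-value cases of (C1), (C2); in every row the sum is $4$. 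Your phrase ``the two-fold ambiguity yields the number four'' obscures the actual mechanism: the four is not a fixed $2\times2$ product but a case-dependent distribution --- $(1,1,1,1)$ if both conditions hold, $(0,0,2,2)$ if neither, $(1,0,1,2)$ or $(0,1,2,1)$ otherwise --- that always totals $4$. Tracing central characters through $\xi$ via Lemma~\ref{norm_char} and Propositions~\ref{Sp_gen}--\ref{SO_gen} may eventually reproduce these numbers, but it is a detour, and your sketch does not make clear how the count would fall out.
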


\begin{proof}
The assertion \eqref{nonvan_real2} follows from the fact that the map $\pi' \mapsto \rho_{\pi'}$ of Conjecture \ref{LLCclg} is a bijection between $\wPi_{\phi'}(G_0(W))$ and ${\rm Irr}(\cS_{\phi'}^+, W)$ in the case \eqref{vsp III}. It remains to prove \eqref{nonvan_real1}. We consider the following two conditions.
\begin{enumerate}[(C1)]
\item The representation $\std \circ \phi$ of $W_\R$ contains the trivial representation. \label{condi TR}
\item The representation $\std \circ \phi$ of $W_\R$ contains the sign representation. \label{condi SG}
\end{enumerate}
We denote by $R_\pm$ (resp. $R_\pm'$) the number of the isometry classes of the skew-Hermitian spaces $W$ so that $\mathscr{T}_\psi^{W}(\pi) \not=0$, $\fd(W) = \pm 1$, and $\dim W = m$ (resp. $\dim W = m+1$). Then the numbers $R_\pm$ and $R_\pm'$ are determined completely whether the conditions (C\ref{condi TR}) and (C\ref{condi SG}) are true or false as Table \ref{table}.
\begin{table}
\caption{}\label{table}
\begin{tabular}{|c|c|c|c|c|c|} \hline
(C1) & (C2) & $R_+$ & $R_-$ & $R_+'$ & $R_-'$ \\ \hline
True & True & $1$ & $1$ & $1$ & $1$ \\ \hline
True & False & $1$ & $0$ & $1$ & $2$ \\ \hline
False & True & $0$ & $1$ & $2$ & $1$ \\ \hline
False & False & $0$ & $0$ & $2$ & $2$ \\ \hline
\end{tabular}
\end{table}
In any case in Table \ref{table}, the sum $R_+ + R_- + R_+' + R_-'$ coincides with $4$. This implies \eqref{nonvan_real1}. 
\end{proof}

\begin{rem}\label{countingP}
It is known that precisely four isometry classes of skew-Hermitian spaces $W$ over $\H$ those satisfy $\theta_\psi(\pi, W) \not= 0$ and $\dim W = m, m+1$. (See \cite[Corollary 23]{Pau05} for more details.) 
\end{rem}

\begin{prop}\label{temp-DS}
If Theorem \ref{sp-o real} holds for all $V$ and for all irreducible discrete series representations, then it holds for all $V$ and for all irreducible tempered representations.
\end{prop}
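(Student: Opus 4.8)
The plan is to reduce the general tempered case to the discrete series case by the standard parabolic descent machinery already set up in \S\ref{red_real} and \S\ref{LLC prop}. First I would take an irreducible tempered representation $\pi$ of $G(V)(\R)$ with $L$-parameter $\phi$, and let $M$ be the minimal Levi subgroup of $G_0(V)$ (up to conjugacy) such that $\phi$ factors through ${}^LM$; then $\phi$ becomes a discrete series parameter $\phi_M$ of $M$, and $\pi$ is an irreducible component of $\Ind_{P(\R)}^{G_0(V)(\R)}\pi_0$ for some irreducible discrete series representation $\pi_0$ of $M(\R)$ in the packet of $\phi_M$. Here $M$ decomposes as a product of a quaternionic unitary group $G(V_0)$ of smaller rank and general linear factors $\GL_{r_i}(\H)$, and correspondingly $\pi_0 = \pi_0' \boxtimes \tau_1 \boxtimes \cdots \boxtimes \tau_k$ with $\pi_0'$ a discrete series representation of $G(V_0)(\R)$ and each $\tau_i$ a discrete series representation of a general linear group.

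The key input is Corollary \ref{temp_key}, which tells us that $\pi$ has the same $L$-parameter as $\pi_0$ and that $\iota^M[\fw^M, z, \varphi](\pi_0)(s) = \iota[\fw, z, \varphi](\pi)(s)$ for $s$ in the image of $S_{\phi_M}^+(M)$; one has to choose $(z_+,\varphi_+) \in \RIT_M^\star(V^\#, V)$ adapted to the Levi, which is legitimate since Theorem \ref{welldefness} guarantees $\mathscr{T}_\psi$ is independent of the rigid inner twist. On the theta side, the local theta correspondence commutes with parabolic induction: if $\theta_\psi(\pi_0', W_0) = \pi_0''$ then, inducing up, $\theta_\psi(\pi, W)$ is (up to $G(W)(\R)$-equivalence) an irreducible component of $\Ind\,(\pi_0'' \boxtimes \tau_1 \boxtimes \cdots \boxtimes \tau_k)$ — this is the standard ``doubling plus induced representations'' argument for the theta correspondence (and in the Archimedean quaternionic case it can be extracted from Fact \ref{pre_R} together with the behavior of the Weil representation under $\iota_{V,W}$ restricted to Levi subgroups). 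On the parameter side, the embedding $\xi$ restricts compatibly to $L$-groups of the corresponding Levi subgroups, so the definition of $\mathscr{T}_\psi$ for $G_0(V)$ is compatible with the one for $G(V_0)$ via the parabolic descent; combining this compatibility with Corollary \ref{temp_key} on both the $V$-side and the $W$-side, and with the already-assumed validity of Theorem \ref{sp-o real} for the discrete series representation $\pi_0'$ of $G(V_0)(\R)$, identifies the $L$-parameter of $\theta_\psi(\pi, W)$ with $\phi'$ and matches the characters on $S^+_{\phi'}$.

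Concretely, the steps in order are: (1) factor $\phi$ through the minimal Levi $M$ and write $\pi$ as a component of a standard module induced from a discrete series representation $\pi_0$ of $M(\R)$; (2) observe that the hypothesis \eqref{L-par theta} for $\phi$ descends to the analogous hypothesis for $\phi_M$ on the Levi $G(V_0)$, so that $\mathscr{T}_\psi(\pi_0')$ is defined; (3) apply Theorem \ref{sp-o real} in the discrete series case to $\pi_0'$, obtaining that $\theta_\psi(\pi_0', W_0)$ has $L$-parameter $\phi_M'$ and the prescribed character $\iota^{M}[\fw_-^{M}, z_-, \varphi_-](\theta_\psi(\pi_0', W_0))$; (4) induce up on both sides, using the commutation of theta with parabolic induction and Corollary \ref{temp_key} (applied to the quasi-split-inner-twist data on $G_0(V)$ and on $G_0(W)$) to transfer both the $L$-parameter statement and the character identity from $M$ to $G_0(V)$, $G_0(W)$; (5) check that the $G(W)(\R)$-equivalence class produced equals $\mathscr{T}_\psi(\pi)$ by unwinding the definition of $\mathscr{T}_\psi$ in \S\ref{main sec}, using that the embedding $S^+_{\phi'} \hookrightarrow S^+_\phi$ (or the reverse) restricts to the corresponding embedding for the Levi subgroups and that $\RIT_M^\star$-twists are permitted by Theorem \ref{welldefness}.

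The main obstacle will be step (4): making the commutation of the local theta correspondence with parabolic induction fully precise in the quaternionic Archimedean setting, including keeping careful track of the splitting subspaces $\X, \Y$ of $\W$, the character $\chi_V$, and the normalizations of the embeddings $\widetilde{\iota}_{V,W}$ when one passes to a Levi subgroup of $G(V)$ (the Morita picture $V^\natural = X_1 \oplus \cdots \oplus X_r \oplus V_0 \oplus Y_r \oplus \cdots \oplus Y_1$ of \S\ref{RIT_Levi} is exactly the bookkeeping device for this). In particular one must ensure that the Levi-compatible rigid inner twist $(z_\pm, \varphi_\pm) \in \RIT_M^\star$ chosen for the descent of the Langlands parameter is the same as (or differs by a permissible modification from) the one used in the theta computation, so that the character identities on $S^+_{\phi_M}(M)$ glue correctly to the desired identity on $S^+_{\phi'}$ (or $S^+_\phi$). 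Once this compatibility of normalizations is established, the rest is a formal diagram chase combining Corollary \ref{temp_key}, Fact \ref{pre_R}, and the discrete series case of Theorem \ref{sp-o real}.
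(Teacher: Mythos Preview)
Your overall strategy—descend to a Levi, apply the discrete series case there, then induce back using Corollary~\ref{temp_key}—is the right shape, but there is a real gap in steps (4)--(5). You take the \emph{minimal} Levi $M$ through which $\phi$ factors, so that $\phi_M$ is a discrete parameter; but then the map $S_{\phi_M}^+(M) \to S_\phi^+(G_0(V))$ need not be surjective, and $\Ind_{P(\R)}^{G_0(V)(\R)}\pi_0$ may be reducible. Its constituents share the $L$-parameter $\phi$ but are distinguished only by their characters on all of $S_\phi^+$, while Corollary~\ref{temp_key} matches characters only on the image of $S_{\phi_M}^+(M)$. The same obstruction appears on the $W$-side: the induction principle tells you $\theta_\psi(\pi,W)$ is \emph{some} summand of $\Ind(\theta_\psi(\pi_0',W_0)\boxtimes\tau_i)$, not which one, so you cannot conclude $\theta_\psi(\pi,W) = \mathscr{T}_\psi(\pi)$.

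The paper circumvents this by (i) first extending the statement from discrete series to \emph{limits} of discrete series via coherent continuation (Corollary~\ref{coh_vs_theta}), and (ii) then starting the descent on the $W$-side with a Levi $L$ of $G_0(W)$ (furnished by Shelstad/Kaletha) for which the $S$-group map $\cS_{\phi'}^+(L) \to \cS_{\phi'}^+(G_0(W))$ \emph{is} surjective and $\mathscr{T}_\psi(\pi)$ equals the full (hence irreducible) induced module $\Ind_Q\,\tau_\bullet\boxtimes\tau_1$; on this Levi $\tau_\bullet$ is only a limit of discrete series, which is why step (i) is needed. The irreducibility removes the ``which constituent'' ambiguity. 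Finally, the vanishing equivalence $\theta_\psi(\pi,W)=0 \Leftrightarrow \mathscr{T}_\psi(\pi)=0$ is handled separately by the counting argument (Lemma~\ref{countingT} and Remark~\ref{countingP}), which your outline also omits.
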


\begin{proof}
Assume that Theorem \ref{sp-o real} is proved for all irreducible discrete series representations at once.
Then, by the compatibility of local theta correspondences and coherent continuations (Corollary \ref{coh_vs_theta}), we have Theorem \ref{sp-o real} for all limits of discrete series representations.
Consider the case where $\pi$ is an arbitrary irreducible tempered representation of $G(V)$. 

Assume there exists $\phi'$ satisfying \eqref{L-par theta} and that $\mathscr{T}_\psi(\pi)$ is non-zero. It is known that there exist a parabolic subgroup $Q$ of $G_0(W)$ so that the Levi-subgroup $L$ is isomorphic to $G_0(W_\bullet) \times \mathcal{G}_r(\R)$ where $W_\bullet$ is the $(n - r)$-dimensional skew-Hermitian space over $\H$ and $\mathcal{G}_r$ is an inner form of $\GL_r$, an irreducible tempered representation $\tau_1$ of $\mathcal{G}_r(\R)$, and an irreducible limit of discrete series representation $\tau_\bullet$ of $G_0(W_\bullet)$ such that 
\[
\mathscr{T}_\psi(\pi) = \Ind_{Q(\R)}^{G_0(W)(\R)} \tau_\bullet \boxtimes \tau_1, 
\]
the image of $(\Ad t_\bullet^{-1})\circ\phi'$ is contained in ${}^LL$ for some $t_\bullet \in G_0(W)^\wedge$, and the homomorphism 
\[
\cS_{(\Ad t_\bullet^{-1})\circ\phi}^+(L) \rightarrow \cS_\phi^+(G_0(W))
\]
induced by $\Ad t_\bullet$ is surjective. (The existence follows from the work of Shelstad \cite[\S5.4]{She82} and its update in terms of the local Langlands correspondence for rigid inner twists done by Kaletha \cite[\S 5.4]{Kal16}.) Then, we have that $\phi$ is contained in a Levi-subgroup 
\[
(\SO(2m+1-2r, \C) \times \GL_r(\C)) \rtimes W_\R \subset {}^LG_0(V).
\]
Hence, there is a parabolic subgroup $Q$ of $G_0(V)$ so that its Levi-subgroup $L$ is isomorphic to $G_0(V_\bullet) \times \GL_r$ where $W_\bullet$ is $(m-r)$-dimensional Hermitian space over $\H$. This means that there exist  irreducible tempered representations $\pi_\bullet$ and $\pi_1$ of $G_0(V_\bullet)(\R)$ and $\mathcal{G}_r(\R)$ respectively so that 
\[
\pi \subset \Ind_{P(\R)}^{G_0(V)(\R)}\pi_\bullet\boxtimes\pi_1.
\]
Then, by Corollary \ref{temp_key}, we have $\mathscr{T}_\psi(\pi_\bullet) = \tau_\bullet$ which is non-zero. Moreover, using the arguments of L-parameters \cite[\S4.3]{She82}, one can show that $\pi_\bullet$ is a limit of discrete series. Hence, by the assumption of Lemma, we have $\theta_\psi(\pi_\bullet, W_\bullet) = \tau_\bullet$. Then, by the ``induction principle'', we have that $\theta_\psi(\pi, W)$ is non-zero and is a direct summand of $ \Ind_{Q(\R)}^{G_0(W)(\R)} \tau_\bullet\boxtimes \tau_1$, which implies that $\theta_\psi(\pi, W) = \mathscr{T}_\psi(\pi)$.

Finally, by Lemma \ref{countingT} and Remark \ref{countingP}, we have that $\mathscr{T}_\psi(\pi) \not=0$ if and only if $\theta_\psi(\pi, W) \not=0$. This proves Proposition \ref{temp-DS}.
\end{proof}

\subsection{The key proposition}\label{choice_real}

The following proposition is the key to proving Theorem \ref{sp-o real} in the case where $\pi$ is a discrete series representation. Put $\epsilon_1 = \cdots = \epsilon_p = 1$, $\epsilon_{p+1} = \cdots = \epsilon_n = -1$, and 
\[
\underline{\epsilon} = \begin{cases} (1 ,\ldots, 1) & (e_\H = 1), \\ (\epsilon_1, \ldots, \epsilon_n) & (e_\H = -1, \epsilon_\psi = \sqrt{-1}),  \\ (-\epsilon_n, \ldots, -\epsilon_1) & (e_\H = -1, \epsilon_\psi = -\sqrt{-1}).\end{cases}
\]

\begin{prop}\label{SOSP_key_dia_quat}
Let $\xi_\bullet^{\epsilon_\psi}$ denotes $\xi^\epsilon$ (resp. either $\xi_{\btru}^{\epsilon_\psi}$ or $\xi_{\btrd}^{\epsilon_\psi}$) if $n=m$ (resp. $n=m+1$). Then, there exist $(z_+, \varphi_+) \in \RIT^\star(V^\#, V)$ and $(z_-, \varphi_-) \in \RIT^\star(W^\#, W)$ such that
\begin{itemize}
\item $(z_+, \varphi_+) \leftrightarrow (z_-, \varphi_-)$, 
\item $z_+(w) \in S_+^\#(\C), z_-(w) \in S_-^\#(\C) \quad (w \in \cW)$,  
\item $\fp_\xi (z_-(w)) = z_+(w)^{-1} \quad (w \in \cW)$,
\item there exists $\rho \in \fS_n$ such that for $z_1, \ldots, z_n$
\[
\varphi_-(\varsigma_-^\#(z_1, \ldots, z_n)) = (\underline{\epsilon} \cdot \rho) \cdot \varsigma_-(z_1, \ldots, z_n),
\]
\item and the following diagram is commutative.
\[
\xymatrix{
X^*(S_\sim^\#) \ar[rr]^-{(\varphi_P\circ \varphi_-^{-1})^*} & & X^*(S_-) \\
X^*(S_+^\#) \ar[rr]_-{(\varphi_+^{-1})^*} \ar[u]^{(\fp_\xi \circ \varphi_P^{-1})^*} & & X^*(S_+) \ar[u]_{\xi_\bullet^{\epsilon_\psi}}
}
\]
\end{itemize}
\end{prop}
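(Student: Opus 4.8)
The plan is to construct the desired rigid inner twists explicitly, matching the anisotropic tori $S_+^\#$ in $G(V_c^\#)$ and $S_-^\#$ in $G_0(W_c^\#)$ that were set up in \S\ref{Arch_computation}, and then to verify the five bulleted conditions one by one. First I would use Lemma \ref{str_real}: since $F=\R$, a rigid inner twist valued in the torus $S_+^\#$ (resp. $S_-^\#$) is uniquely determined by prescribing $z(w_1) \in S_+^\#(\C)$ (resp. $S_-^\#(\C)$) with $z(w_1)^2 \in Z$ and $(z(w_1)w_1(z(w_1)))^N=1$. So the construction amounts to choosing a single element of each torus with the right properties, and the compatibility $\fp_\xi(z_-(w)) = z_+(w)^{-1}$ then follows from Lemma \ref{key_Gal} applied to the Weyl-group elements realizing the Galois twisting, provided I arrange the twisting cocycles to be of the form treated there (i.e.\ coming from permutations $\rho \in \rho_1\fS_m\rho_1^{-1}$). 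The point is that $\fp_\xi$ intertwines the $\fS_m'$-action on $S_+^\#$ with (a conjugate of) the $\fS_n'$-action on $S_-^\#$ via \eqref{fpxi}, so a matched pair of Galois cocycles on the two sides can be produced by a single permutation-type twist.

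Next I would pin down which element of $H^1(\Gamma, \overline{G}_0(V_c^\#))$ (equivalently, which inner form) must be hit: it is the class of $G(V) = G(V_{p,q})$, and on the $W$-side the class of $G(W) = G(W_{p,q})$. Using the explicit anisotropic tori $S_+ \subset G(V_{p,q})$ and $S_- \subset G(W_{p,q})$ and the embeddings $\varsigma_+, \varsigma_-, \varsigma_+^\#, \varsigma_-^\#$, I would write down the isometry $P \colon V^\#\otimes\C \to (V\otimes\C)^\natural$ (resp. for $W$) whose associated inner twist lands $S_+^\#$ onto $S_+$, and read off the cocycle $w \mapsto \varphi_+^{-1}(A\,w(A)^{-1})$; the signatures $(p,q)$ are encoded by the sign pattern $\underline{\epsilon}$ and the permutation $\rho$ in the fourth bullet. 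The compatibility of the two inner twists under $\leftrightarrow$ (first bullet) would be obtained exactly as in the proof of Proposition \ref{corr_funda}\eqref{corr_3}: take $\Omega = L\circ(P_+\otimes P_-)$ where $L$ is the Morita identification $\W^\natural \cong \W\otimes\C$ of Lemma \ref{Morita_W}, and check that the resulting cocycle decomposes as $\iota^\#(z_+(w), z_-(w))$ — this is where the identity $\fp_\xi(z_-(w)) = z_+(w)^{-1}$ is genuinely needed, because it says the $V$- and $W$-components of the $\Sp(\W^\#)$-valued twisting cocycle are forced to be inverse to each other under the embedding dictated by $\xi$, which is precisely the compatibility of $\xi$ with the theta embedding $\iota$.

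The last bullet, commutativity of the character-lattice diagram involving $\xi_\bullet^{\epsilon_\psi}$, I would reduce to a finite check on generators of $X^*(S_+^\#) \cong \Z^m$: track a standard basis vector through $(\varphi_+^{-1})^*$ to $X^*(S_+)$, through $\xi_\bullet^{\epsilon_\psi}$ to $X^*(S_-)$, and compare with the image obtained by going up via $(\fp_\xi\circ\varphi_P^{-1})^*$ and across via $(\varphi_P\circ\varphi_-^{-1})^*$; here the formulas \eqref{fpxi}, the explicit $\xi_\bullet^{u}$ from \S\ref{Arch_computation}, and the chosen sign data $u_k$ of \eqref{u_k} make this a bookkeeping matter, but one that must be done separately for $n=m$ versus $n=m+1$ and for $e_\H = \pm1$ (and, when $n=m+1$, for the two shapes $\btru$ and $\btrd$ corresponding to the two ways of inserting a zero coordinate). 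The main obstacle, I expect, is not any single computation but the coherent \emph{choice} of all the data — the isometries $P_\pm$, the permutations $\rho$, the sign vectors — so that the twisting cocycles land in the correct inner forms \emph{and} are simultaneously of the permutation type required to invoke Lemma \ref{key_Gal}; getting the signs $\epsilon_\psi$ and $\underline{\epsilon}$ consistent with the conventions fixed for the Weil representation (Remark \ref{point}\eqref{point0}) and with the Whittaker normalizations of Propositions \ref{Sp_gen} and \ref{SO_gen} is the delicate point, and I would handle the cases \eqref{vsp I} and \eqref{vsp III} in separate subsections (\S\ref{prf_I}, \S\ref{prf_III}) as announced, since in the split case \eqref{vsp I} one has the extra flexibility of $P_0, P_1, Q$ from the quadratic-space model $W_\sim^\#$ while in the division case \eqref{vsp III} one works directly with the quaternionic Hermitian forms.
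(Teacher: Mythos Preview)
Your overall architecture is right and matches the paper: one constructs explicit isometries $A_\pm$ giving initial rigid inner twists that satisfy $\leftrightarrow$, then conjugates by carefully chosen Weyl elements to force the diagram in the fifth bullet, treating cases \eqref{vsp I} and \eqref{vsp III} separately. But two of your logical links are miswired.

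First, the relation $\fp_\xi(z_-(w)) = z_+(w)^{-1}$ is \emph{not} what makes $\leftrightarrow$ work. The condition $\leftrightarrow$ is about the factorisation $\Omega^{-1}\circ w\circ\Omega\circ w^{-1} = \iota^\#(z_+(w),z_-(w))$ inside $\Sp(\W^\#)$; it is checked (as in Lemma \ref{vsp+inn} and its Corollary) by computing how Galois acts on the explicit bases under $A_\pm$, and the two scalar ambiguities coming from the $D\otimes\C$-factor cancel against each other in the tensor product. The map $\fp_\xi$ lives on the dual side (Definition \ref{def_pxi}) and plays no role here; the third bullet is an \emph{independent} constraint you must verify afterwards.

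Second, Lemma \ref{key_Gal} cannot by itself deliver the third bullet. That lemma handles cocycles of the form $w\sigma(w)^{-1}$ for $w$ a Weyl element acting by a permutation in $\fS_n$. In case \eqref{vsp III} the base cocycles $z_{0\pm}(w_1)$ take the values $\varsigma_\pm^\#(\pm\sqrt{-1},\ldots,\pm\sqrt{-1})$, which are not of permutation type; and even in case \eqref{vsp I} the base $z_{0-}$ carries the sign data of the inner form $W_{p,q}$. The paper therefore verifies $\fp_\xi(z_-(w_1)) = z_+(w_1)^{-1}$ by a direct computation with formula \eqref{fpxi}, after all the modifications by $g_2,g_3,h_2,h_3$ have been made (the unnamed Lemmas at the ends of \S\ref{prf_I} and \S\ref{prf_III}). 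Lemma \ref{key_Gal} is used later, in the proof of Theorem \ref{sp-o real} itself, to compare $\inv_{z_-}$ with $\inv_{z_+}$ once the proposition is in hand --- not to prove the proposition. So your plan needs an explicit verification step for the third bullet, and that step is where the real bookkeeping with $u,\rho_1,\rho_\bullet,\underline{\epsilon}$ is concentrated.
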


We will prove Proposition \ref{SOSP_key_dia_quat} in \S\ref{prf_I} and \S\ref{prf_III} below. In this subsection, we show that Theorem \ref{sp-o real} for a discrete series representation $\pi$ follows from Proposition \ref{SOSP_key_dia_quat}. 

\begin{proof}[Proof of Theorem \ref{sp-o real}]
Let $\pi$ be an irreducible discrete series representation of $G(V)(\R)$, and let $\phi$ be its $L$-parameter. Take $(z_+, \varphi_+) \in \RIT^\star(V^\#, V)$ and $(z_-, \varphi_-) \in \RIT^\star$ as in Proposition \ref{SOSP_key_dia_quat}. Assume that there exists an L-parameter $\phi'$ of $G_0(W)$ satisfying \eqref{L-par theta} and that $\theta_\psi(\pi, W)\not=0$. 
We may assume that $\phi$ is consistent with $(\cT_+, \cB_+)$ and that $\phi'$ is consistent with $(\cT_-, \cB_-)$ (c.f. \S\ref{HCLpar}). Hence, we obtain $\mu_\phi \in X^*(T_+^\#)$ and $\mu_{\phi'} \in X^*(T_-^\#)$ which are positive with respect to $\cB_+$ and $\cB_-$ respectively. These choices allow us to put $\widehat{g} = \widehat{\varepsilon}^l \ (l=0,1)$ and $\widehat{h} = 1$. Moreover, by replacing $\phi'$ with $(\Ad \widehat{\varepsilon} \circ \phi')$ if necessary, we may assume that $\widehat{g} = 1$. Then, there exist $h \in \mathcal{A}(S_+^\#, T_+^\#)$ and $g \in \mathcal{A}(S_-^\#, T_-^\#)$ such that $\mu_\phi\circ(\Ad h)\circ \varphi_+^{-1}$ and $\mu_{\phi'}\circ(\Ad g)\circ \varphi_-^{-1}$ are the Harish-Chandra parameters of $\pi$ and $\theta_\psi(\pi, W)$ respectively. Consider the following diagram.
\begin{align}\label{regular}
\xymatrix{
X^*(T_-^\#) \ar[rr]^-{((\Ad g_0)\circ \varphi_-^{-1})^*} & & X^*(S_-) \ar[rr]^-{(\varphi_-\circ(\Ad g^{-1}))^*} && X^*(T_-^\#) \\
X^*(T_+^\#) \ar[u]^{I_\xi}\ar[rr]_-{((\Ad h_0)\circ \varphi_+^{-1})^*} & & X^*(S_+) \ar[u]_{\xi_\bullet^{\epsilon_\psi}} \ar[rr]_-{(\varphi_+\circ(\Ad h^{-1}))^*} & & X^*(T_+^\#) \ar[u]_{I_\xi}  
}
\end{align}
Then the image of $\mu_{\phi} \in X^*(T_+^\#)$ in $X^*(T_-^\#)$ is independent from the choices of the routes. Since $\mu_\phi$ and $\mu_{\phi'}$ are regular, we have the diagram \eqref{regular} is commutative.
Hence, the following diagram is also commutative.
\[
\xymatrix{
S_-^\# \ar[rr]^-{\Ad g_0^{-1}g } \ar[d]_{\fp_\xi} & & S_-^\# \ar[d]^{\fp_\xi} \\
S_+ \ar[rr]_-{\Ad h_0^{-1}h}   & &  S_+ 
}
 \]
By the formulation of the Harish-Chandra parameter in this paper, there exists $\gamma \in \fS_n$ such that
\[
((\Ad g)\circ \varphi_-^{-1})^*(a_1, \ldots, a_n) = (\underline{\epsilon}\cdot\gamma)\cdot (a_1, \ldots, a_n)
\]  
for $a_1, \ldots, a_n \in \Z$. Hence, the conditions of Proposition \ref{SOSP_key_dia_quat} imply that
\begin{align*}
(\Ad g)^*(a_1, \ldots, a_n) &= (\varphi_-)^*((\underline{\epsilon}\cdot\gamma)\cdot (a_1, \ldots, a_n)) \\
&= (\underline{\epsilon}\cdot \rho)^{-1}\cdot (\underline{\epsilon}\cdot\gamma)\cdot(a_1, \ldots, a_n) \\
&=\rho^{-1}\gamma\cdot(a_1, \ldots, a_n)
\end{align*}
for $a_1, \ldots, a_n \in \Z$. This shows that $h_0^{-1}h$ and $g_0^{-1}g$ satisfy the conditions of Lemma \ref{key_Gal}. Hence, we have
\begin{align*}
\fp_\xi(\inv_{z_-}(g, g_0)(w)) & = \fp_\xi(g_0^{-1}g \cdot z_-(w)\cdot w(g^{-1}g_0)) \\
&= \fp_\xi((\Ad g_0^{-1}g)(z_-(w)) \cdot (g_0^{-1}g) w(g^{-1}g_0)) \\
&= (\Ad h_0^{-1}h)(z_+(w)^{-1}) \cdot (h_0^{-1}h)w(h^{-1}h_0) \\
&= \inv_{z_+^{-1}}(h, h_0)(w)
\end{align*}
for $w \in \cW$. Therefore, we have
\begin{align*}
\iota_\phi[\fw_-, z_-, \varphi_-](\pi)(I_\xi(s)) &= \langle \inv_{z_-}(g, g_0), (\Ad g_0)^\wedge(I_\xi(s)) \rangle \\
&= \langle \fp_\xi(\inv_{z_-}(g, g_0)), (\Ad h_0)^\wedge(s) \rangle \\
&= \langle \inv_{z_+^{-1}}(h, h_0), (\Ad h_0)^\wedge(s) \rangle\\
&= \overline{\iota_{\xi \circ \phi}[\fw_+, z_+, \varphi_+](\theta_\psi(\pi, W))(s)}.
\end{align*}
Thus, we have $\theta_\psi(\pi, W) = \mathscr{T}_\psi(\pi)$.

Then, by Lemma \ref{countingT} and Remark \ref{countingP}, we have that $\theta_\psi(\pi, W) \not=0$ if and anly if $\mathscr{T}_\psi(\pi) \not=0$, which completes the proof of Theorem \ref{sp-o real}.
\end{proof}

\subsection{The proof of Proposition \ref{SOSP_key_dia_quat} in the case \eqref{vsp I}} \label{prf_I}

Assume that $e_\H =1$.  Put 
\begin{align*}
&e_{11} = \frac{1}{2}(1+ j), &e_{12} = \frac{1}{2}(i - ij),  \\ 
&e_{21} = \frac{1}{2}(- i - ij), & e_{22} = \frac{1}{2}(1 - j).
\end{align*}
Consider the isomorphisms $A_+\colon V_c^\#\otimes\C \rightarrow V_{m,0}\otimes\C$  given by
\[
A_+(e_k^\#) = e_ke_{11}, \ A_+(e_{2m + 1-k}^\#) = e_ke_{21} 
\]
for $k=1, \ldots, m$ and $A_-\colon W_c^\# \otimes\C\rightarrow W_{p,q}\otimes\C$  given by the composition
\[
\xymatrix{W_c^\#\otimes\C\ar[r]^-P& W_{\sim}^\#\otimes\C \ar[r]^-{A_\sim}& W\otimes\C}
\]
where $A_\sim$ is the isometry defined by
\begin{align*}
&A_\sim(f_{2k-1}^\#) = \begin{cases} \sqrt{-1}e_{11} f_k & (k \in I), \\  e_{11}f_k & (k \not\in I)\end{cases} \\
&A_\sim(f_{2k}^\#) = \begin{cases} \sqrt{-1}e_{12}f_k & (k \in I), \\  e_{12}f_k&  (k \not\in I).
\end{cases}
\end{align*}
where  
\[
I = \{ k = 1, \ldots, m \mid (2n+3 - 4k)\cdot (2p + 1 - 2k) > 0 \}.
\]
Moreover, put $z_{0+} = 1 \in Z^1(u \rightarrow \cW, Z \rightarrow G(V_c^\#))$ and denote by $z_{0-} \in Z^1(u\rightarrow \cW, Z \rightarrow G_0(W_c^\#))$ the cocycle satisfying 
\[
z_{0-}(w_1) = \varsigma_-^\#(\eta_1, \ldots, \eta_m)
\]
where $\eta_k = -1$ if $k \in I$ and $\eta_k = 1$ if $k \not\in I$. 
\begin{lem}
We have $(z_{0+},  \fm_V^{^-1}\circ\varphi_{A_+}) \in \RIT^\star(V^\#, V), (z_{0-}, \fm_W^{-1}\circ \varphi_{A_-}) \in \RIT^\star(W^\#, W)$, and $(z_{0+},  \fm_V^{^-1}\circ\varphi_{A_+})\leftrightarrow (z_{0-}, \fm_W^{-1}\circ \varphi_{A_-})$. 
\end{lem}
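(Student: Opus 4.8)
The claim breaks into three assertions: (i) $(z_{0+}, \fm_V^{-1}\circ\varphi_{A_+}) \in \RIT^\star(V^\#, V)$, (ii) $(z_{0-}, \fm_W^{-1}\circ\varphi_{A_-}) \in \RIT^\star(W^\#, W)$, and (iii) the link $(z_{0+}, \fm_V^{-1}\circ\varphi_{A_+}) \leftrightarrow (z_{0-}, \fm_W^{-1}\circ\varphi_{A_-})$ holds. For (i), I would first check that $A_+$ really is an isometry $V_c^\#\otimes\C \to V_{m,0}^\natural\otimes\C$ with respect to the forms $( \ , \ )^\#$ and $( \ , \ )^\natural$: this is a direct computation using the defining formula $(x,y)^\natural = \Tr(e_{12}\cdot(x,y))$, the multiplication table of the $e_{ij}$'s (which realizes the chosen splitting $\H\otimes\C\cong\rM_2(\C)$), and the matrix of $( \ , \ )^\#$ in the $\epsilon=1$ case, namely $c^{-1}\left(\begin{smallmatrix} & J_m\\ -J_m & \end{smallmatrix}\right)$. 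Then, since $z_{0+} = 1$ is the trivial cocycle and $\varphi_{A_+}$ is visibly induced by $A_+$, the pair lies in $\RIT^\star(V^\#, V)$ by definition (the cocycle condition $\varphi^{-1}\circ w\circ\varphi\circ w^{-1} = \Ad z(w)$ is automatic when $V=V_{m,0}$ since $G(V_{m,0})$ is itself a quasi-split — in fact compact — form and the twist recorded by $A_+$ is inner).

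\textbf{The skew-Hermitian side.} For (ii), the point is that $W_{p,q}$ is genuinely nonquasisplit in general, so $z_{0-}$ cannot be taken trivial. I would verify in turn: (a) $A_\sim$ is an isometry $W_\sim^\#\otimes\C \to W_{p,q}^\natural\otimes\C$ — here I would use that $W_\sim^\#$ was built precisely so that $Q_n = {}^tP S_n P$, hence $P\colon W_c^\#\to W_\sim^\#$ is an isometry, and then check the $\sqrt{-1}$-twisted assignments $f_{2k-1}^\#\mapsto e_{11}f_k$ (times $\sqrt{-1}$ on $I$), $f_{2k}^\#\mapsto e_{12}f_k$ (times $\sqrt{-1}$ on $I$) respect $\la \ , \ \ra^\natural = -\Tr(\la \ , \ \ra\cdot e_{21})$ against $Q_n$; the set $I$ is rigged so the signs in $Q_n$ on the $-2$ block get absorbed by the $\sqrt{-1}$'s. (b) Compute $A_-^{-1}\circ\sigma\circ A_-\circ\sigma^{-1}$ as an element of $G(W_\sim^\#)(\C)$ (equivalently of $G(W_c^\#)(\C)$ after conjugating by $P$). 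Since $A_\sim$ differs from its Galois conjugate exactly in the coordinates indexed by $I$ (that is where the $\sqrt{-1}$ appears), this descent cocycle evaluates to the diagonal sign matrix $\diag(\eta_1,\dots,\eta_m)$ in the torus $\varsigma_-^\#$-picture, i.e. exactly $z_{0-}(w_1)$. Then invoke Lemma \ref{str_real}: one checks $z_{0-}(w_1)^2 = 1 \in Z$ and $(z_{0-}(w_1)w_1(z_{0-}(w_1)))^N = 1$ (trivially, since $z_{0-}(w_1)$ is a real diagonal $\pm1$ matrix fixed by $\sigma$), so $z_{0-}$ extends uniquely to a cocycle in $Z^1(u\to\cW, Z\to G_0(W_c^\#))$, giving $(z_{0-},\fm_W^{-1}\circ\varphi_{A_-})\in\RIT^\star(W^\#,W)$.

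\textbf{The link.} For (iii), following the recipe in \S\ref{morita} and the proof of Proposition \ref{corr_funda}\eqref{corr_3}, I would set $\Omega = L\circ(A_+\otimes A_-)$ where $L$ is the natural (isometric, by Lemma \ref{Morita_W}) map $(V\otimes\C)^\natural\otimes_{(D\otimes\C)^\natural}(W\otimes\C)^\natural \to \W\otimes\C$. Then $\Omega$ is a bijective isometry $\W^\#\otimes\C\to\W\otimes\C$ making \eqref{corr_rig} commute with $\varphi_+ := \fm_V^{-1}\circ\varphi_{A_+}$ and $\varphi_- := \fm_W^{-1}\circ\varphi_{A_-}$, again by Lemma \ref{Morita_W} and functoriality of $\varphi_{(-)}$. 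It remains to identify the Galois descent cocycle of $\Omega$: $\Omega^{-1}\circ w\circ\Omega\circ w^{-1} = (A_+\otimes A_-)^{-1}\circ(\sigma(A_+)\otimes\sigma(A_-))$ (the $L$ being defined over $F$ in the relevant sense), which on the $V$-tensor-factor is trivial (since $z_{0+}=1$) and on the $W$-tensor-factor is the cocycle computed in step (b). Hence it equals $\iota^\#(z_{0+}(w), z_{0-}(w))$ for all $w\in\cW$, which is exactly the defining condition for $\leftrightarrow$.

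\textbf{Main obstacle.} The routine but delicate part is step (b): getting the signs exactly right so that the descent cocycle of $A_-$ lands on $\varsigma_-^\#(\eta_1,\dots,\eta_m)$ with $\eta_k = -1 \iff k\in I$, where $I=\{k : (2n+3-4k)(2p+1-2k)>0\}$. This requires carefully tracking three separate sign sources — the $\sqrt{-1}$-twists in $A_\sim$, the signature pattern of $Q_n$ (the $\pm2$ blocks), and the conjugation by $P$ relating $W_\sim^\#$ to $W_c^\#$ — and confirming they conspire to produce precisely the index set $I$. Everything else (the isometry verifications in (a), and the formal link argument in (iii)) follows the template already established in Lemma \ref{Morita_W} and Proposition \ref{corr_funda}, so the real content is this bookkeeping of quadratic-form signs against the quaternionic $\natural$-functor.
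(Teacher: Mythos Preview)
The paper states this lemma without proof (treating it as a routine verification parallel to the case \eqref{vsp III} in \S\ref{prf_III}, where the analogous facts \emph{are} proved via Lemma \ref{vsp+inn} and its Corollary), and your outline follows exactly that template: verify $A_\pm$ are isometries into the $\natural$-spaces, compute the Galois descent cocycles, and take $\Omega = L\circ(A_+\otimes A_-)$ for the link.

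One correction to your reasoning for (i): in case \eqref{vsp I} with $e_\H=1$, the group $G(V_{m,0})$ is \emph{not} compact --- via Morita it is the split group $\Sp_{2m}(\R)$ --- so your parenthetical ``in fact compact'' is a confusion with case \eqref{vsp III}. The actual reason $z_{0+}=1$ works is simpler: since $e_\H=1$, the idempotents $e_{11}=\tfrac12(1+j)$ and $e_{21}=\tfrac12(-i-ij)$ lie in $\H\cong M_2(\R)$ itself (no $\sqrt{-1}$ needed), so $A_+$ is already defined over $\R$ and $\sigma(A_+)=A_+$ gives trivial descent. Your computation for (ii) and (iii) is correct; note that the descent cocycle of $A_\sim$ is computed entirely in $W_\sim^\#$ (the $\sqrt{-1}$'s on coordinates $k\in I$ flip sign under $\sigma$, giving exactly $\varsigma_\sim^\#(\eta_1,\dots,\eta_n)$), and since $P$ has real entries the passage to $W_c^\#$ via $\varsigma_-^\#=\varphi_P^{-1}\circ\varsigma_\sim^\#$ introduces no further twist.
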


Put
\[ 
\underline{\epsilon}_\bullet = (\sqrt{-1}\epsilon_\psi\epsilon_1, \ldots, \sqrt{-1}\epsilon_\psi\epsilon_n) \in \{\pm 1\}^n.
\]
Then, there exists $\rho_\bullet \in \fS_n$ so that
\[
\xi_\bullet^{\epsilon_\psi}(a_1, \ldots, a_m) = (\underline{\epsilon}_\bullet\cdot \rho_\bullet)\cdot (a_1, \ldots, a_n)
\]
for $a_1, \ldots, a_m \in \Z$. Here, we put $a_n = 0$ if $n = m + 1$. Choose $\rho_2 \in \fS_n$ so that $ \rho_\bullet^{-1}\cdot\rho_2\cdot\rho_1(n) = n$, and choose $g_2 \in N(S_-, G_0(W))$ so that the action of $\Ad g_2$ on $S_-$ coincides with $\rho_2$. Then, putting $\varphi_\sim = (\Ad g_2) \circ \varphi_{A_\sim}$ and $\rho_3 = \rho_\bullet^{-1}\cdot\rho_2\cdot \rho_1 \in \fS_m$, we have 
\begin{align*}
(\varphi_{\sim}^{-1})^* ((u\cdot \rho_1)\cdot (a_1, \ldots, a_n)
&= (\rho_2\cdot u\cdot \rho_1)\cdot (a_1, \ldots, a_n)) \\
&= (\underline{\epsilon}_\bullet \cdot \rho_\bullet \cdot \rho_\bullet^{-1} \cdot \underline{\epsilon}_\bullet \cdot \rho_2 \cdot u \cdot \rho_1)\cdot (a_1, \ldots, a_n) \\
&=\xi_\bullet^{\epsilon_\psi}((\rho_3 \cdot u')\cdot (a_1, \ldots, a_m))
\end{align*}
for $a_1, \ldots, a_m \in \Z$ and for certain $u ' \in \{\pm 1\}^m$. Here we put $a_n = 0$ if $n = m+1$. Let $h_3$ be an element of $G_0(V)(\C)$ so that the action of $\Ad h_3$ on $S_+$ coincides with $\rho_3\cdot u'$. If we put $u' = (\mu_1, \ldots, \mu_m)$, then we have $h_3^{-1} \sigma(h_3) = \varsigma_+(\mu_1, \ldots, \mu_m)$. Put $\varphi_+ = (\Ad h_3)\circ\varphi_{A_+}$, and denote by $z_+$ the rigid inner form such that $z_+(w_1) = \varsigma_+^\#(\mu_1, \ldots, \mu_m)$. Moreover, we define $z_- \in Z^1(u \rightarrow \cW, Z \rightarrow G_0(W_c^\#))$ by $z_-(w) = \varphi_{A_-}^{-1}(g_2^{-1}w(g_2)) z_{0-}(w)$ for $w \in \cW$. Then we have $(z_+, \varphi_+) \in \RIT^\star(V^\#, V)$, $(z_-, \varphi_-) \in \RIT^\star(W^\#, W)$, and $(z_+, \varphi_+) \leftrightarrow (z_-, \varphi_-)$. 

\begin{lem}
We have, $\fp_\xi(z_-(w_1)) = z_+(w_1)^{-1}$. 
\end{lem}

\begin{proof}
By the construction of $u'$, if we write 
\[
(\rho_1^{-1}\rho_2^{-1})(\underline{\epsilon}_\bullet)\cdot \rho_1^{-1}(u) = (\mu_1', \ldots, \mu_n'),
\]
then we have $\mu_k' = \mu_k$ for $k=1, \ldots, m$. Hence, by \eqref{fpxi}, we have
\begin{align*}
z_+(w_1) &= \fp_\xi((u\cdot \rho_1)\cdot [(\rho_1^{-1}\rho_2^{-1})(\varsigma_-^\#(\sqrt{-1}\epsilon_\psi\epsilon_1, \ldots, \sqrt{-1}\epsilon_\psi\epsilon_n)) \cdot \rho_1^{-1}(\varsigma_-^\#(u_1, \ldots, u_n))])\\
&= \fp_\xi((u\cdot \rho_2^{-1} ) (\varsigma_-^\#(\sqrt{-1}\epsilon_\psi\epsilon_1, \ldots, \sqrt{-1}\epsilon_\psi\epsilon_n)) \cdot \varsigma_-^\#(u_1, \ldots, u_n)) \\
&= \fp_\xi(\rho_2^{-1}\cdot (\varsigma_-^\#(\sqrt{-1}\epsilon_\psi\epsilon_1, \ldots, \sqrt{-1}\epsilon_\psi\epsilon_n)) \cdot \varsigma_-^\#(u_1, \ldots, u_n)).
\end{align*}
On the other hand, we have
\[
g_2^{-1}w_1(g_2) = \varsigma_-(\epsilon_1, \ldots, \epsilon_n) \cdot \varsigma_-(\epsilon_{\rho_2(1)}, \ldots, \epsilon_{\rho_2(n)})^{-1}.
\]
Hence, we have
\begin{align*}
z_+(w_1)^{-1} & =z_+(w_1) \\ &=  \fp_\xi(\varphi_{A_-}^{-1}(g_2^{-1}w_1(g_2)) \cdot \varsigma_-^\#(\sqrt{-1}\epsilon_\psi\epsilon_1, \ldots, \sqrt{-1}\epsilon_\psi\epsilon_n) \cdot \varsigma_-^\#(u_1, \ldots, u_n)) \\
&=\fp_\xi(\varphi_{A_-}^{-1}(g_2^{-1}w_1(g_2)) \cdot z_{0-}(w_1)) \\
&= \fp_\xi(z_-(w_1)).
\end{align*}
\end{proof}
Therefore, we have that $(z_+, \varphi_+)$ and $(z_-, \varphi_-)$ satisfy the all conditions of Proposition \ref{SOSP_key_dia_quat}.

\subsection{The proof of Proposition \ref{SOSP_key_dia_quat} in the case \eqref{vsp III}} \label{prf_III}
		
Put
\begin{align*}
&e_{11} = \frac{1}{2}(1-\sqrt{-1}j), &e_{12} = \frac{1}{2}(i +  \sqrt{-1} ij),  \\ 
&e_{21} = \frac{1}{2}(- i + \sqrt{-1} ij), & e_{22} = \frac{1}{2}(1 + \sqrt{-1}j).
\end{align*}
We may choose the isomorphism $\gamma\colon \rM_2(\C) \rightarrow \H\otimes\C$ given by
\[
\gamma(\begin{pmatrix} x & y \\ z & w \end{pmatrix}) = e_{11} x + e_{12}y + e_{21}z + e_{22}w
\]
for $x,y,z,w \in \C$. Define $A_+ \colon V_c^\#\otimes\C \rightarrow V_{p,q}\otimes\C$ by
\begin{align*}
&A_+(e_k^\#) = \begin{cases}e_ke_{11} & (1 \leq k \leq p), \\ e_ke_{21} & (p+1 \leq k \leq m)\end{cases} \\
&A_+(e_{2m + 1-k}^\#) = \begin{cases}e_ke_{21} & (1 \leq k \leq p), \\ e_ke_{11} & (p+1 \leq k \leq m).
\end{cases}
\end{align*}
Denote by $z_{0+}$ the unique cocycle in $Z^1(u \rightarrow \cW, Z \rightarrow G(V_c^\#))$ satisfying
\[
z_{0+}(w_1) = \varsigma_+^\#(\epsilon_1 \sqrt{-1}, \ldots, \epsilon_m \sqrt{-1})
\]
where $\epsilon_k = 1$ if $1 \leq k \leq p$ and $\epsilon_k = -1$ if $p+1 \leq k \leq m$. On the other hand, we also define $A_-\colon W^\#\otimes\C\rightarrow W\otimes \C$ by the composition
\[
\xymatrix{W^\#\otimes\C\ar[r]^-P & W_{\sim}^\#\otimes\C \ar[r]^-{A_\sim}& W\otimes\C}
\]
where $A_\sim$ is the isometry defined by
\begin{align*}
&A_\sim(f_{2k-1}^\#) 
=\begin{cases} e_{12}\cdot j\cdot f_k & (1 \leq k \leq t), \\ e_{12}  \cdot f_k & (t+1 \leq k \leq n) \end{cases} \\
&A_\sim(f_{2k}^\#)
= \begin{cases} e_{11}\cdot j \cdot f_k  & (1 \leq k \leq t), \\  e_{11}  \cdot f_k & (t+1 \leq k \leq n) \end{cases}
\end{align*}
Denote by $z_{0-}$  the unique cocycle in $Z^1(u \rightarrow \cW, Z \rightarrow G(V_c^\#))$ satisfying
\[
z_{0-}(w_1) = \varsigma_-^\#(-\sqrt{-1}, \ldots, -\sqrt{-1}).
\]
Then, we have the following lemma.

\begin{lem}\label{vsp+inn}
\begin{enumerate}
\item The linear map $A_+$ induces the isometry from $V^\#\otimes\C$ onto $(V\otimes\C)^\natural$. Moreover, we have
\[
w_1(A_+(x)) = A_+(z_{0+}(w_1) \cdot w_1(v))\cdot i^{-1}
\]
for $x \in V_c^\#$. \label{inn1}
\item The linear map $A_-$ induces the isometry from $W^\#\otimes\C$ onto $(W\otimes\C)^\natural$. Moreover, we have
\[
w_1(A_-(y)) = i\cdot A_-(w_1(y)\cdot z_{0-}(w_1)^{-1})
\]
for $y \in W^\#$. \label{inn2}
\end{enumerate}
\end{lem}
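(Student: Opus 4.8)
\emph{Proof plan.} The statement is purely computational, so the plan is to reduce everything to identities in $\H\otimes_{\R}\C\cong\rM_2(\C)$ and then check the two displayed formulas on an explicit basis. First I would record the properties of the idempotents $e_{11},e_{12},e_{21},e_{22}$ attached to the chosen isomorphism $\gamma$. Using $i^{2}=-1$, $j^{2}=e_{\H}=-1$ and $ij=-ji$ one obtains the matrix-unit relations $e_{kl}e_{rs}=\delta_{lr}e_{ks}$ and $e_{11}+e_{22}=1$; the behaviour under the main involution, $e_{11}^{*}=e_{22}$, $e_{22}^{*}=e_{11}$, $e_{12}^{*}=-e_{12}$, $e_{21}^{*}=-e_{21}$ (so that $T_{D/F}(e_{11})=1$); the behaviour under $w_{1}$, acting as complex conjugation on $\H\otimes_{\R}\C$, namely $w_{1}(e_{11})=e_{22}$, $w_{1}(e_{22})=e_{11}$, $w_{1}(e_{12})=-e_{21}$, $w_{1}(e_{21})=-e_{12}$; and the identities $i=e_{12}-e_{21}$ and $i^{-1}=-i=e_{21}-e_{12}$, whence $e_{11}i^{-1}=-e_{12}$, $e_{21}i^{-1}=-e_{22}$, $ie_{11}=-e_{21}$ and $ie_{22}=e_{12}$. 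Each of these is a one- or two-line computation.

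For assertion $(1)$, one first notes that $A_{+}$ lands in $(V\otimes\C)^{\natural}=(V\otimes\C)e_{11}$, since $e_{11}=e_{11}e_{11}$ and $e_{21}=e_{21}e_{11}$ both lie in $(\H\otimes\C)e_{11}$. For the isometry claim, extend the $\epsilon$-Hermitian form $\C$-bilinearly; from $(e_{k},e_{l})=\pm\delta_{kl}$ (sign according to $k\le p$) and $(xa,yb)=a^{*}(x,y)b$ one gets $(e_{k}e_{11},e_{l}e_{11})^{\natural}=(e_{k}e_{21},e_{l}e_{21})^{\natural}=0$, while $(e_{k}e_{11},e_{l}e_{21})^{\natural}$ and $(e_{k}e_{21},e_{l}e_{11})^{\natural}$ equal $\pm\delta_{kl}$ and $\mp\delta_{kl}$ once one applies $\Tr$ with $\Tr(e_{12}e_{21})=T_{D/F}(e_{11})=1$; distinguishing $k\le p$ from $k>p$ (where the roles of $e_{11},e_{21}$ in $A_{+}$ are swapped and $(e_{k},e_{l})$ changes sign, the two effects cancelling) one finds that $(\,\cdot\,,\,\cdot\,)^{\natural}\circ(A_{+}\times A_{+})$ is the standard symplectic matrix $\bigl(\begin{smallmatrix}&J_{m}\\ -J_{m}&\end{smallmatrix}\bigr)$ on the basis $\{e_{k}^{\#},e_{2m+1-k}^{\#}\}$, i.e. $A_{+}$ is an isometry (up to the scalar $c$ in $(\,\cdot\,,\,\cdot\,)^{\#}$, which I would normalise, e.g. take $c=1$ here; the final statement is $c$-independent by Theorem \ref{welldefness}). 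For the semilinearity identity, both sides are $w_{1}$-semilinear in $x$, so it suffices to check it on the basis vectors $e_{k}^{\#}$ and $e_{2m+1-k}^{\#}$: the $e_{k}\in V_{p,q}$ are rational, so $w_{1}(e_{k}e_{11})=e_{k}e_{22}$ and $w_{1}(e_{k}e_{21})=-e_{k}e_{12}$; the explicit matrix shape of $\varsigma_{+}^{\#}$ gives $z_{0+}(w_{1})e_{k}^{\#}=-\epsilon_{k}e_{2m+1-k}^{\#}$ and $z_{0+}(w_{1})e_{2m+1-k}^{\#}=\epsilon_{k}e_{k}^{\#}$; feeding these together with $e_{11}i^{-1}=-e_{12}$ and $e_{21}i^{-1}=-e_{22}$ into the right-hand side reduces each of the four cases to a single line.

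Assertion $(2)$ is proved in the same way, now for the left module $W$ and the composite $A_{-}=A_{\sim}\circ P$, with left and right multiplications interchanged (this is why the correcting factor is $i$ on the left rather than $i^{-1}$ on the right). One checks that $A_{-}$ lands in $(W\otimes\C)^{\natural}=e_{11}(W\otimes\C)$ because $e_{11}j$ and $e_{12}j$ lie in $e_{11}(\H\otimes\C)$; the isometry claim uses that $P$ realises an isometry $W_{c}^{\#}\to W_{\sim}^{\#}$ (the identity $Q_{n}={}^{t}PS_{n}P$) together with the skew-Hermitian rule $\langle ax,by\rangle=a\langle x,y\rangle b^{*}$ and the explicit $A_{\sim}$, the extra factor $j$ (with $j^{2}=e_{\H}=-1$) producing exactly the sign blocks of $Q_{n}$; and the semilinearity identity is checked on $\{f_{2k-1}^{\#},f_{2k}^{\#}\}$ using $w_{1}(e_{11}jf_{k})=e_{22}jf_{k}$, $w_{1}(e_{12}jf_{k})=-e_{21}jf_{k}$, the rotation action of $z_{0-}(w_{1})=\varsigma_{-}^{\#}(-\sqrt{-1},\dots,-\sqrt{-1})$ transported through $\varphi_{P}$, and $ie_{22}=e_{12}$, $ie_{11}=-e_{21}$.

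There is no conceptual obstacle — the point of the lemma is exactly that the explicitly chosen $A_{\pm}$ and cocycles $z_{0\pm}$ were built to be compatible — but the bookkeeping has to be done with care. The three places where I would expect to have to be most attentive are: the signs in $e_{12}^{*}=-e_{12}$ and $w_{1}(e_{12})=-e_{21}$, in contrast to the sign-free behaviour of $e_{11},e_{22}$; the consistent use of left- versus right-module conventions for $V$ and for $W$; and the scalar $c$ in the form on $V_{c}^{\#}$.
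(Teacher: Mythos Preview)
Your proposal is correct and follows essentially the same approach as the paper: reduce to explicit identities among the matrix units $e_{kl}$ under $w_1$ and multiplication by $i$, then check the semilinearity formulas on the basis vectors. The paper's proof is terser---it records only $w_1(e_{11})=e_{21}\cdot i$, $w_1(e_{21})=e_{11}\cdot(-i)$ (and the left-module analogues $w_1(e_{11})=-i\cdot e_{12}$, $w_1(e_{12})=i\cdot e_{11}$), which are just your identities $w_1(e_{11})=e_{22}$, $w_1(e_{21})=-e_{12}$ combined with $e_{22}=e_{21}i$, $e_{12}=-e_{11}i$, and it omits the isometry verification entirely. Your plan is thus slightly more complete than what the paper actually writes down.
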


\begin{proof}
Since $w_1(e_{11}) = e_{21}\cdot i$ and $w_1(e_{21}) = e_{11}\cdot (-i)$, we have
\[
(w_1(A_+(e_k^\#)), w_1(A_+(e_{2m+1-k}^\#)) =  (A_+(e_k^\#)\cdot i^{-1}, A_+(e_{2m+1-k}^\#)\cdot i^{-1}) \cdot \begin{pmatrix} & \epsilon_k \\ -\epsilon_k & \end{pmatrix}
\]
for $1 \leq k \leq m$. Hence, we have the assertion \eqref{inn1}. Similarly, since $w_1(e_{11}) = -i\cdot e_{12}$ and $w_1(e_{12}) = i\cdot e_{11}$, we have
\[
\begin{pmatrix} w_1(A_\sim(f_{2k-1}^\#)) \\ w_1(A_\sim(f_{2k}^\#)) \end{pmatrix} = \begin{pmatrix} & -1 \\ 1 & \end{pmatrix}^{-1} \begin{pmatrix} i\cdot A_\sim(f_{2k-1}^\#) \\ i\cdot A_\sim(f_{2k}^\#) \end{pmatrix} 
\]
for $k = 1, \ldots, n$. Hence, we have the assertion \eqref{inn2}.
\end{proof}

 Then, by Lemma \ref{vsp+inn} we have the following.
\begin{cor}
We have $(z_{0+}, \varphi_{A_+}) \leftrightarrow (z_{0-}, \varphi_{A_-})$.
\end{cor}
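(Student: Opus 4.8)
The plan is to run, for the explicit isometries $A_+$ and $A_-$ at hand, the construction used in the proof of Proposition~\ref{corr_funda}\eqref{corr_3}, and then to read off from Lemma~\ref{vsp+inn} that the cocycles it produces are exactly $z_{0+}$ and $z_{0-}$.

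First I would put $\Omega := L\circ(A_+\otimes A_-)\colon\W^\#\otimes\C\rightarrow\W\otimes\C$, where $L$ is the natural linear map $(V\otimes\C)^\natural\otimes(W\otimes\C)^\natural\rightarrow\W\otimes\C$ of \S\ref{morita}. By Lemma~\ref{vsp+inn} the maps $A_+$ and $A_-$ are isometries onto $(V\otimes\C)^\natural$ and $(W\otimes\C)^\natural$, so by Lemma~\ref{Morita_W} the composite $\Omega$ is a bijective isometry over $\C$ and the induced isomorphism $\varphi_\Omega$ fits into the commutative square \eqref{corr_rig} with $(\varphi_+,\varphi_-)=(\fm_V^{-1}\circ\varphi_{A_+},\fm_W^{-1}\circ\varphi_{A_-})$. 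Since $\Ad(\Omega^{-1}\circ w\circ\Omega\circ w^{-1})$ then preserves each of $\iota^\#(G(V_c^\#)\times 1)$ and $\iota^\#(1\times G(W_c^\#))$ for $w\in\cW$, the map $\Omega$ cuts out, as in the proof of Proposition~\ref{corr_funda}\eqref{corr_3}, a cocycle $c_+\in Z^1(\Gamma,G(V_c^\#)/Z_{V_c^\#})$; and once $c_+$ is identified with the image of $z_{0+}$, setting $z_-(w):=\iota^{\#-1}\bigl(\iota^\#(z_{0+}(w),1)^{-1}\cdot(\Omega^{-1}\circ w\circ\Omega\circ w^{-1})\bigr)$ produces a cocycle $z_-\in Z^1(u\to\cW,Z\to G(W_c^\#))$ with $\iota^\#(z_{0+}(w),z_-(w))=\Omega^{-1}\circ w\circ\Omega\circ w^{-1}$, so that $(z_{0+},\varphi_{A_+})\leftrightarrow(z_-,\varphi_{A_-})$.

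Both outstanding points — that $c_+$ is the image of $z_{0+}$, and that $z_-=z_{0-}$ — reduce, since $F=\R$ and $\Gamma$ is generated by $\sigma$, to the single identity $\Omega^{-1}\circ w_1\circ\Omega\circ w_1^{-1}=\iota^\#(z_{0+}(w_1),z_{0-}(w_1))$ on the chosen lift $w_1$ of $\sigma$ (for the second point, via the uniqueness in Lemma~\ref{str_real}). This is exactly what Lemma~\ref{vsp+inn} delivers: by $\C$-linearity it is enough to test on decomposable tensors $x\otimes y$ with $x\in V_c^\#$ and $y\in W_c^\#$, for which $w_1^{-1}(x\otimes y)=x\otimes y$, and the Galois-equivariance of $L$ together with parts \eqref{inn1} and \eqref{inn2} gives
\[
w_1\bigl(\Omega(x\otimes y)\bigr)=L\bigl(w_1(A_+x)\otimes w_1(A_-y)\bigr)=L\bigl(A_+(z_{0+}(w_1)x)\cdot i^{-1}\ \otimes\ i\cdot A_-(y\,z_{0-}(w_1)^{-1})\bigr),
\]
where the scalars $i^{-1}$ and $i$ cancel across the balanced tensor product $\otimes_{\H\otimes\C}$ because $V$ is a right and $W$ a left $\H\otimes\C$-module; comparing with the definition of $\iota^\#$ (in which the $W$-variable enters through $\fs_W$, hence with an inverse) this equals $\Omega\bigl(\iota^\#(z_{0+}(w_1),z_{0-}(w_1))(x\otimes y)\bigr)$, which is the desired identity.

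I expect the one genuine difficulty to lie in the quaternionic--Morita bookkeeping, already absorbed into Lemma~\ref{vsp+inn}: the idempotents $e_{11},e_{12},e_{21},e_{22}$ used to define $A_+$ and $A_-$ are not fixed by $w_1$, so the Galois actions on $(V\otimes\C)^\natural$ and $(W\otimes\C)^\natural$ are twisted, and one must check with care that the scalars $i^{\pm1}$ cancel in the right way and that the placement and inversion of $z_{0\pm}(w_1)$ match the conventions built into $\iota^\#$ and $\fs_W$. Granting Lemma~\ref{vsp+inn}, the remainder is formal.
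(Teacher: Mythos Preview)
Your proposal is correct and follows essentially the same line as the paper's own proof: both set $\Omega = L\circ(A_+\otimes A_-)$, use Lemma~\ref{Morita_W} to see that $\Omega$ is an isometry making the diagram~\eqref{corr_rig} commute, and then use Lemma~\ref{vsp+inn} together with the cancellation of $i^{-1}$ and $i$ across the balanced tensor product to compute $\Omega^{-1}\circ w\circ\Omega\circ w^{-1}$ and identify it with $\iota^\#(z_{0+}(w),z_{0-}(w))$. Your detour through the abstract machinery of Proposition~\ref{corr_funda}\eqref{corr_3} and the explicit invocation of Lemma~\ref{str_real} to pass from $w_1$ to all of $\cW$ are more elaborate than the paper's direct verification, but the substance is the same.
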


\begin{proof}
Define $\Omega$ by the composition
\[
\xymatrix{
W^\#\otimes\C \ar[rr]^-{A_+\otimes A_-} && (V\otimes\C)^\natural\otimes(W\otimes\C)^\natural \ar[r] & \W\otimes\C.
}
\]
Then, it is an isometry and the following diagram is commutative
\[
\xymatrix{
\Sp(\W^\#) \ar[rr]^-{\varphi_\Omega} & & \Sp(\W) \\ G(V^\#) \times G(W^\#) \ar[u]_{\iota^\#} \ar[rr]_-{(\varphi_{A_+}, \varphi_{A_-})} & & G(V)\times G(W) \ar[u]_\iota
}
\]
Finally, we have
\begin{align*}
w(\Omega(x\otimes y)) &= \Omega(z_+(w) w(x)\otimes w(y) z_-(w)^{-1}) \\
&= \Omega(z_{0+}(w)^{-1}w(x)\otimes w(y) z_{0-}(w)) \\
&=[\Omega\circ\iota(z_{0+}(w), z_{0-}(w))\circ w](x\otimes y)
\end{align*}
for $x \in V^\#\otimes\C$, $y \in W^\#\otimes\C$, and $w \in \cW$. Thus we have
\[
\Omega^{-1}\circ w\circ \Omega \circ w^{-1} = \iota(z_{0+}(w), z_{0-}(w)),
\]
which proves the corollary.
\end{proof}  

Put 
\[
e_0 = (-\sqrt{-1}\epsilon_\psi, \ldots, -\sqrt{-1}\epsilon_\psi) \in \{\pm 1\}^n.
\]
Then, one observes that 
\[
(\varphi_{A_-}^{-1})^*((u \cdot \rho_1)\cdot (a_1, \ldots, a_n)) = (e_0 \cdot  \rho_1)\cdot (a_1, \ldots, a_n)
\]
for $a_1, \ldots, a_n \in \Z$. Here, $u_1, \ldots, u_n$ are defined in \eqref{u_k}. Take $\rho_\bullet \in \fS_n$ so that
\[
\xi_\bullet^{\epsilon_\psi}((a_1, \ldots, a_m)) = (\underline{\epsilon}\cdot \rho_\bullet)\cdot (a_1, \ldots, a_n)
\]
for $a_1, \ldots, a_m \in \Z$. Here, we put $a_n = 0$ if $n = m + 1$. Put $\rho_3 = \rho_\bullet \rho_1^{-1}$ and choose $g_3 \in N(S_-, G(W))$ so that the action of $\Ad g_3$ on $S_-$ coincides with that of $\underline{\epsilon}\cdot \rho_3\cdot u\cdot e_0$. Then, putting $\varphi_- = (\Ad g_3)\circ \varphi_{A_-}$, we have
\[
(\varphi_-^{-1})^*(b_1, \ldots, b_n) = (\underline{\epsilon}\cdot \rho_3)\cdot (b_1, \ldots, b_n)
\]
for $b_1, \ldots, b_n \in \Z$. Moreover, we have
\begin{align*}
(\varphi_-^{-1})^*((u\cdot \rho_1)\cdot (a_1, \ldots, a_n)) &= (\underline{\epsilon}\cdot\rho_3\cdot u \cdot \rho_1)\cdot (a_1, \ldots, a_n) \\ 
&=(\underline{\epsilon}\cdot \rho_\bullet)\cdot [\rho_1^{-1}(u)\cdot (a_1, \ldots, a_n)]. 
\end{align*}
Take $u' \in \{\pm 1\}^m$ so that
\[
\xi_\bullet^{\epsilon_\psi}(u'\cdot(a_1, \ldots, a_m)) = (\underline{\epsilon}\cdot \rho_\bullet)\cdot[\rho_1^{-1}(u)\cdot(a_1, \ldots, a_n)]
\] 
for $a_1, \ldots, a_m \in \Z$. Here we put $a_n = 0$ if $n = m + 1$. Let $h_2$ be an element of $N(S_+, G(V))$ so that the action of $\Ad h_2$ on $S_+$ coincides with that of $u' \cdot (\epsilon_1, \ldots, \epsilon_m)$. Put $z_+ = z_{0+}$ and $\varphi_+ = (\Ad h_2)\circ \varphi_{A_+}$. Moreover, we define $z_- \in Z^1(u \rightarrow \cW, Z \rightarrow G_0(W^\#))$ by $z_-(w) = \varphi_-^{-1}(g_3^{-1}w(g_3)) z_{0-}(w)$ for $w \in \cW$. Then, we have $(z_+, \varphi_+) \in \RIT^\star(V^\#, V)$, $(z_-, \varphi_-) \in \RIT^\star(W^\#, W)$ and $(z_+, \varphi_+)\leftrightarrow (z_-, \varphi_-)$.

\begin{lem}
We have, $\fp_\xi(z_-(w_1)) = z_+(w_1)^{-1}$. 
\end{lem}


\begin{proof}
Since $\underline{\epsilon}\cdot \rho_3\cdot u \cdot e_0 = \rho_3\cdot (\rho_1\cdot\rho_\bullet^{-1})(\underline{\epsilon})\cdot u \cdot e_0$ and 
\[
\rho_\bullet^{-1}(\underline{\epsilon}) = (-\sqrt{-1}\epsilon_\psi\epsilon_1, \ldots, -\sqrt{-1}\epsilon_\psi\epsilon_n),
\]
we have
\[
g_3^{-1}w_1(g_3) =  \varsigma_-( u_1\epsilon_{\rho_1(1)}, \ldots, u_n\epsilon_{\rho_1(n)}).
\]
Observe that
\[
z_{0-}(w_1) \cdot \varsigma_-^\#(u_1, \ldots, u_n) = (u\cdot \rho_1)\cdot \varsigma_-^\#(-\sqrt{-1}, \ldots, -\sqrt{-1})
\]
and 
\begin{align*}
\varsigma_-^\#(\epsilon_{\rho_1(1)}, \ldots, \epsilon_{\rho_1(n)}) 
= (u\cdot\rho_1)\cdot\varsigma_-^\#(\epsilon_1, \ldots, \epsilon_n).
\end{align*}
Hence, we have
\begin{align*}
z_-(w_1) &= z_{0-}(w_1) \cdot \varphi_{A_-}^{-1}(g_3^{-1}w_1(g_3)) \\
&=(u\cdot \rho_1)\cdot \varsigma_-^\#(-\epsilon_1\sqrt{-1}, \ldots, -\epsilon_n\sqrt{-1}).
\end{align*}
According to \eqref{fpxi}, this implies
\begin{align*}
\fp_\xi(z_-(w_1)) &= \varsigma_+^\#(-\epsilon_1\sqrt{-1}, \ldots, -\epsilon_m\sqrt{-1}) \\
&= z_+(w_1)^{-1}.
\end{align*}
Therefore, we have that $(z_+, \varphi_+)$ and $(z_-, \varphi_-)$ satisfy the all conditions of Proposition \ref{SOSP_key_dia_quat}.
\end{proof}


\section{
		The case \eqref{vsp III} with $m=n=1$ 
		}\label{nonArch_m=n=1}

Let $F$ be a non-Archimedean local field, and let $D$ be the unique division quaternion algebra over $F$. When $m = n = 1$, Ikematsu \cite{Ike19} has described the local theta correspondence in terms of character relations. In this section, we verify the conjecture \ref{Pre_PC} is consistent with it.

\subsection{Settings}

Assume that $V = D$ with the Hermitian form $(x, y) = x^*y$ for $x, y \in D$, and $W=D$ with the skew-Hermitian form $\langle x, y \rangle = x \alpha_0 y^*$ where $\alpha_0$ is a non-zero trace-zero element of $D$. We put $\alpha = -\alpha_0/2$. Then, it is known that there exists a non-zero trace-zero element $\beta \in D$ such that $\alpha\beta + \beta\alpha = 0$. Then $1, \alpha, \beta, \alpha\beta$ consist a basis of $D$ over $F$. We put $a = \alpha^2, b = \beta^2 \in F^\times$, and put $E = F(\sqrt{a})$. Then, $W_c^\#$ is the $F$-vector space $F_2$ of row vectors of degree $2$ equipped with the quadratic form $(x,y) \mapsto 2cx^2 - 2ac y^2$. We may assume that $c=1$, and we write $V^\#$ (resp. $W^\#$) instead of $V_1^\#$ (resp. $W_1^\#$). 
We fix the identification $\gamma\colon \rM_2(\overline{F}) \rightarrow D \otimes \overline{F}$ by
\begin{align*}
&\gamma(e_{11}) = \frac{1}{2b}(b + \sqrt{b}\cdot \beta), &\gamma(e_{12}) = \frac{1}{2b}(b\cdot \alpha - \sqrt{b}\cdot \alpha\beta), \\ &\gamma(e_{21}) = \frac{1}{2ab}(b\cdot \alpha + \sqrt{b}\cdot\alpha\beta), &\gamma(e_{22}) = \frac{1}{2b}(b - \sqrt{b}\cdot\beta).
\end{align*}

\begin{lem}\label{na-lem1}
Fix $w_1 \in \cW$ so that its image in $\Gamma$ is not contained in $\Gamma_{F(\sqrt{b})}$. Then, there exists $z \in Z^1(u \rightarrow \cW, \{\pm1\} \rightarrow E^1)$ such that
\[
z(w_1) = \begin{pmatrix} 0 & -\sqrt{-a}^{-1} \\ \sqrt{-a} & 0 \end{pmatrix}.
\]
\end{lem}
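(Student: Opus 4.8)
The plan is to recognise the prescribed matrix as an element $h$ of $E^1(\overline{F})$ which satisfies the elementary conditions needed to build a rigid cocycle, and then to invoke the non-Archimedean analogue of the construction behind Lemma \ref{str_real}. First, from the formulas defining $\gamma$ one computes $\gamma(e_{12} + a\, e_{21}) = \alpha$, hence $\gamma^{-1}(\alpha) = \left(\begin{smallmatrix} 0 & 1 \\ a & 0\end{smallmatrix}\right)$, so that $\gamma^{-1}$ identifies $E^1 \otimes_F \overline{F}$ with the subtorus $\{\, \left(\begin{smallmatrix} x & y \\ a y & x\end{smallmatrix}\right) : x^2 - a y^2 = 1\,\}$ of $\SL_2$. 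A direct check shows the matrix in the statement has exactly this shape, with $(x,y) = (0, -\sqrt{-a}^{-1})$ and $x^2 - a y^2 = 1$ (using $a\cdot(-\sqrt{-a}^{-1}) = \sqrt{-a}$); write $h \in E^1(\overline{F})$ for the corresponding element. Moreover $h^2 = -1$, so $h^2$ lies in the central subgroup $\{\pm 1\}$, and the hypothesis on the image of $w_1$ in $\Gamma$ (in particular that it is nontrivial) is what makes $h$ an admissible value.

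Second, I would record the torsion condition. Since $E^1$ is a one-dimensional $F$-torus, split over $E = F(\sqrt{a})$, the group $E^1(\overline{F})$ is abelian and is $\Gamma$-stable inside $D^1(\overline{F}) \cong \SL_2(\overline{F})$; hence $w_1(h) \in E^1(\overline{F})$, and since $h^4 = 1$ we get $(h \cdot w_1(h))^4 = h^4\, w_1(h)^4 = 1$. Thus $h^2 \in \{\pm 1\}$ and $(h \cdot w_1(h))^N = 1$ with $N = 4$, which are precisely the conditions that appear in the Archimedean construction of Lemma \ref{str_real}, here needed over a non-Archimedean field.

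Third, I would deduce the existence of $z$ from Kaletha's cohomology of tori. As $E^1$ is split over the finite extension $E/F$, one passes to a finite Galois extension $L/F$ containing $E$ and works with the corresponding finite quotient of $\cW$: a cochain taking the value $h$ at $w_1$ extends to a cocycle in $Z^1(u \rightarrow \cW, \{\pm 1\} \rightarrow E^1)$ as soon as its restriction to $u$ is chosen to cancel the $\{\pm 1\}$-valued $2$-cocycle produced by the coboundary of an extension of $z$ to a set of coset representatives — the torsion condition of the previous step making this discrepancy $\{\pm 1\}$-valued, and the choice of $z|_{u} \in \Hom(u, \{\pm 1\}) \cong \Z/2\Z$ being available because the class $-1 \in H^2(\Gamma, u) = \widehat{\Z}$ maps onto the obstruction group $H^2(\Gamma, \{\pm 1\}) = \Z/2\Z$. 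I expect the main obstacle to be exactly this last point: one must verify that prescribing the single value $z(w_1) = h$ is compatible with killing the obstruction to lifting the induced $\Gamma$-cocycle in $Z^1(\Gamma, E^1/\{\pm 1\})$, i.e.\ that the arithmetic input (the class of $\cW$ generating $H^2(\Gamma,\{\pm1\})$) genuinely suffices; the computations of the first two steps are routine by comparison.
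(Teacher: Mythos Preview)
Your first two steps are sound: the matrix $h$ lies in $E^1(\overline{F})$ with $h^2=-1\in\{\pm1\}$, and the torsion observation is correct. The difficulty is entirely in your third step, and there the argument has a genuine gap.

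The analogue of Lemma~\ref{str_real} you invoke does not exist in the non-Archimedean setting as stated. Over $\R$ the absolute Galois group is $\Z/2\Z$, so a cocycle on $\cW$ is essentially determined by its value at one element $w_1$, and Lemma~\ref{str_real} exploits precisely this. Here $\Gamma$ is profinite and highly nonabelian; a single value $z(w_1)=h$ does not determine, or even meaningfully constrain, $z$ on the rest of $\cW$. Your sentence ``a cochain taking the value $h$ at $w_1$ extends to a cocycle \ldots'' hides the real work: you must first produce a full cochain on $\cW$ (or on a finite quotient) whose coboundary lands in $\{\pm1\}$, and you have only checked this for the single pair $(w_1,w_1)$. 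The torsion condition $(h\cdot w_1(h))^4=1$ says nothing about products $z(w)z(w')$ for other $w,w'$.

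The paper sidesteps this by a two-step lift. First, since $h^2=-1$, the image $\bar h\in (E^1/\{\pm1\})(\overline{F})$ has order~$2$, so sending the nontrivial element of $\Gamma/\Gamma_{F(\sqrt{b})}\cong\Z/2\Z$ to $\bar h$ gives a well-defined cocycle $c_1\in Z^1(\Gamma,E^1/\{\pm1\})$ on \emph{all} of $\Gamma$, with no extension problem. Second, the surjectivity of $Z^1(u\to\cW,\{\pm1\}\to E^1)\to Z^1(\Gamma,E^1/\{\pm1\})$ (an instance of Kaletha's Fact~\ref{rig can surj} for tori) produces a lift $z_1$ with $z_1(w_1)=\pm h$. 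The remaining sign ambiguity is killed by multiplying by the quadratic character $\rho$ of $\Gamma/\Gamma_{F(\sqrt{b})}$: since $w_1\notin\Gamma_{F(\sqrt{b})}$ by hypothesis, $\rho(w_1)=-1$, so exactly one of $z_1$ and $z_1\cdot\rho$ takes the value $h$ at $w_1$.

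Note also that you underuse the hypothesis on $w_1$: it is not merely that its image in $\Gamma$ is nontrivial, but specifically that it lies outside $\Gamma_{F(\sqrt{b})}$. This is what singles out the correct $\Z/2$-quotient on which $c_1$ lives and guarantees $\rho(w_1)=-1$ for the sign correction.
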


\begin{proof}
Let $\rho$ be the non-trivial homomorphism of $\Gamma/\Gamma_{F(\sqrt{b})}$ onto $Z$. We define a cocycle $c_1 \in Z^1(\Gamma/\Gamma_{F(\sqrt{b})}, E^1/\{\pm1\})$ by 
\[
c_1(\sigma^k) = \begin{pmatrix} 0 & -\sqrt{-a}^{-1} \\ \sqrt{-a} & 0 \end{pmatrix}^k \mod \{\pm1\}
\]
for $k=0,1$. Then, there exists $z_1 \in Z^1(u \rightarrow \cW, \{\pm1\} \rightarrow E^1)$ whose image in $Z^1(\Gamma, E^1/\{\pm1\})$ coincides with $c_1$. Then, the image of $z_1\cdot \rho \in Z^1(u\rightarrow \cW, \{\pm 1\} \rightarrow E^1)$ is also $c_1$. Moreover, we have $(z_1 \cdot \rho)(w_1) = -z_1(w_1)$, which proves the lemma.
\end{proof}

We define $B_+\colon V^\#\otimes\overline{F} \rightarrow (V\otimes\overline{F})^\natural$ by $B_+(e_1) = e_{11}$ and $B_+(e_2) = e_{21}$. We also define $B_-\colon W^\#\otimes\overline{F} \rightarrow (W\otimes\overline{F})^\natural$ by $B_-(f_1)= e_{11}$ and $B_-(f_2) = e_{12}$. Consider the four embeddings $\varsigma_+^\# \colon E^1 \rightarrow G(V^\#), \varsigma_-^\# \colon E^1 \rightarrow G(W^\#), \varsigma_+ \colon E^1 \rightarrow G(V)$, and $\varsigma_-\colon E^1 \rightarrow G(W)$ given by
\begin{align*}
&\varsigma_+^\#(x + \sqrt{a} y) =\varsigma_-^\#(x + \sqrt{a}y) = \begin{pmatrix} x & y \\ ay & x \end{pmatrix},\\
&\varsigma_+(x + \sqrt{a}y) = \varsigma_-(x + \sqrt{a}y) = x + y \alpha 
\end{align*}
for $x,y \in \overline{F}$ satisfying $x^2 - ay^2 = 1$. Put $S_+ = {\rm Im} \iota_+$ and $S_- = {\rm Im} \iota_-$. Note that in \cite{Ike19}, $E^1$ is identified with the maximal torus $S_+$ (resp. $S_-$) of $G(V)$ (resp. $G(W)$) by the embedding $\gamma \mapsto \varsigma_+(\gamma)^{-1}$ (resp. $\gamma \mapsto \varsigma_-(\gamma)^{-1}$) for $\gamma \in E^1$. We put $z_+ = \varsigma_+^\#\circ z \in Z^1(u \rightarrow \cW, Z \rightarrow G(V^\#))$ and $z_- = \varsigma_-^\#\circ z \in Z^1(u\rightarrow \cW, Z \rightarrow G_0(W^\#))$. 

\begin{lem}\label{lift_na}
Take $w_1 \in \cW$ as in Lemma \ref{na-lem1}. Then, we have
\begin{align*}
w_1(B_+(x)) &= B_+(z_+(w_1)x)\cdot (\alpha\sqrt{-a}^{-1}) \\
w_1(B_-(y)) &= (-\alpha\sqrt{-a}^{-1})\cdot B_-(y \cdot z_-(w_1)^{-1})
\end{align*}
for $x \in V^\#$ and $y \in W^\#$.
\end{lem}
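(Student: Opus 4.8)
The plan is to verify the two displayed identities in Lemma \ref{lift_na} by direct computation, using the explicit formulas for $\gamma$, $z$, $z_+$, $z_-$, $B_+$, $B_-$ already fixed above, in exactly the same style as the proof of Lemma \ref{vsp+inn}. The key point is that everything is pinned down on generators: $w_1$ is a fixed element of $\cW$ whose image in $\Gamma$ generates $\Gamma/\Gamma_{F(\sqrt b)}$, and $z$ is determined by $z(w_1)$ as in Lemma \ref{na-lem1}, so both $z_+(w_1)=\varsigma_+^\#(z(w_1))$ and $z_-(w_1)=\varsigma_-^\#(z(w_1))$ are explicit $2\times 2$ matrices. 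Since $B_+$ and $B_-$ are $\overline F$-linear, it suffices to check each identity on the basis vectors: $e_1,e_2$ (equivalently $B_+(e_1)=e_{11}$, $B_+(e_2)=e_{21}$) for the first, and $f_1,f_2$ (equivalently $B_-(f_1)=e_{11}$, $B_-(f_2)=e_{12}$) for the second.

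First I would compute the Galois action of $w_1$ on the idempotents $e_{11},e_{12},e_{21},e_{22}\in D\otimes\overline F$. Using the identification $\gamma$ and the fact that $\Gal(\overline F/F)$ fixes $1,\alpha,\beta,\alpha\beta$ but sends $\sqrt b\mapsto -\sqrt b$ (because the image of $w_1$ is \emph{not} in $\Gamma_{F(\sqrt b)}$), one reads off directly from the formulas $\gamma(e_{11})=\tfrac1{2b}(b+\sqrt b\,\beta)$, etc., that $w_1(e_{11})=e_{22}$ and $w_1(e_{21})=$ a scalar multiple of $e_{12}$ (and symmetrically). More precisely one gets relations of the form $w_1(e_{11})=e_{22}$, $w_1(e_{12})= (\text{const})\,e_{21}$, which combined with the right-multiplication by elements of $E^1$ appearing in $\varsigma_+^\#$, $\varsigma_-^\#$ will produce the stated formulas. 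The matrix $z(w_1)=\begin{pmatrix}0 & -\sqrt{-a}^{-1}\\ \sqrt{-a} & 0\end{pmatrix}$ permutes the two basis vectors with the indicated scalars, which is exactly what is needed so that $w_1\circ B_+$ equals $B_+\circ z_+(w_1)$ up to the right translation by $\alpha\sqrt{-a}^{-1}\in (D\otimes\overline F)^\times$ (note $\alpha^2=a$, so $\alpha\sqrt{-a}^{-1}$ has square $-1$, matching $w_1^2=-1$ in $\cW$).

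The second identity is handled the same way, but on the \emph{left} $D$-module $W$, so the roles of $e_{11}$ and $e_{12}$ and the direction of translation are swapped, giving the left multiplication by $-\alpha\sqrt{-a}^{-1}$ and $z_-(w_1)^{-1}$ acting on the right. Concretely, I would record $w_1(e_{11})$, $w_1(e_{12})$ in terms of $e_{21}$, $e_{22}$, substitute into $w_1(B_-(f_1))=w_1(e_{11})$ and $w_1(B_-(f_2))=w_1(e_{12})$, and compare with $(-\alpha\sqrt{-a}^{-1})\cdot B_-(f_j\cdot z_-(w_1)^{-1})$, using $z_-(w_1)^{-1}=\varsigma_-^\#(z(w_1))^{-1}=\begin{pmatrix}0 & \sqrt{-a}^{-1}\\ -\sqrt{-a} & 0\end{pmatrix}$.

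The main obstacle is bookkeeping rather than conceptual: one must be careful with the non-commutativity of $D$, with the precise constants coming out of $\gamma$ (the factors of $b$, $\sqrt b$, $a$), and with the sign/scaling conventions so that the quaternion element appearing in the translation is indeed $\pm\alpha\sqrt{-a}^{-1}$ on the nose rather than an associate of it. I expect this to reduce to a handful of $2\times2$ matrix identities over $\overline F$ after applying $\gamma^{-1}$, each verifiable by inspection; the only genuinely delicate step is confirming that the constant matching $w_1(e_{12})$ to $e_{21}$ is exactly the one dictated by $z(w_1)$, which is where the choice of $z$ in Lemma \ref{na-lem1} (and the fact that $H^1(u\to\cW,\{\pm1\}\to E^1)$ lifts the relevant $c_1$) is used.
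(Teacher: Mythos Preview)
Your proposal is correct and follows essentially the same approach as the paper: compute $w_1$ on the idempotents $e_{ij}$ using $\sqrt b\mapsto -\sqrt b$ (giving $w_1(e_{11})=e_{22}$, $w_1(e_{21})=a^{-1}e_{12}$, $w_1(e_{12})=a\,e_{21}$), then verify both identities on the basis vectors $e_1,e_2$ and $f_1,f_2$, matching against the explicit matrix $z(w_1)$. One small caveat: your parenthetical ``matching $w_1^2=-1$ in $\cW$'' conflates Kaletha's extension $\cW$ with the Archimedean Weil group; here $F$ is non-Archimedean and no such relation in $\cW$ is used --- the consistency check you want is purely that $(\alpha\sqrt{-a}^{-1})^2=-1$ in $D\otimes\overline F$, which ensures the right-translation factor squares correctly when iterating the identity.
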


\begin{proof}
We have
\begin{align*}
(w_1(B_+(e_1)), w_1(B_+(e_2))) &= (e_{22}, e_{12}\cdot a^{-1}) =(e_{21}\alpha, e_{11}\alpha a^{-1}) \\
&=(e_{11}\cdot \alpha\sqrt{-a}^{-1}, e_{21}\cdot \alpha\sqrt{-a}^{-1})\cdot z_+(w_1),
\end{align*}
which implies the first assertion of Lemma \ref{lift_na}. Besides, we have
\[
\begin{pmatrix}w_1(B_-(f_1)) \\ w_1(B_-(f_2))\end{pmatrix} = \begin{pmatrix} e_{22} \\ a e_{21} \end{pmatrix} = \begin{pmatrix} \alpha a^{-1} e_{12} \\ \alpha e_{11} \end{pmatrix} = z_-(w_1)^{-1} \begin{pmatrix} (-\alpha\sqrt{-a}^{-1}) e_{11} \\ (-\alpha\sqrt{-1}^{-1}) e_{12}\end{pmatrix},
\]
which implies the second assertion of Lemma \ref{lift_na}. Hence, we complete the proof.
\end{proof}

Now, we put $\varphi_+ = \varphi_{B_+}$ and $\varphi_- = \varphi_{B_-}$. 

\begin{cor}
We have $(z_+, \varphi_{+}) \leftrightarrow (z_-, \varphi_{-})$.
\end{cor}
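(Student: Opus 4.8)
The plan is to reproduce, in this low–rank non‑Archimedean setting, the argument already used for the quaternionic Archimedean case in the corollary at the end of \S\ref{prf_III}: one builds the intertwining isometry $\Omega$ explicitly out of $B_+$ and $B_-$ through the Morita equivalence, and then reads the required cocycle relation off Lemma \ref{lift_na}.

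First I would set $\Omega\colon\W^\#\otimes_F\overline{F}\to\W\otimes_F\overline{F}$ to be the composition of the tensor product map $B_+\otimes B_-\colon\W^\#\otimes_F\overline{F}\to(V\otimes\overline{F})^\natural\otimes_{(D\otimes\overline{F})^\natural}(W\otimes\overline{F})^\natural=\W^\natural$ with the natural isometric isomorphism $\W^\natural\to\W\otimes_F\overline{F}$ of Lemma \ref{Morita_W}. Since $B_+$ and $B_-$ are, by the definitions of the $\natural$‑forms in \S\ref{morita} and the explicit choice of $\gamma$, isometries onto the respective $\natural$‑spaces, $\Omega$ is a bijective isometry over $\overline{F}$. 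Next I would verify that diagram \eqref{corr_rig} commutes: by the definition of $\varphi_{(-)}$ and of the tensor of $D$‑linear maps one has $\varphi_{B_+\otimes B_-}\circ\iota^\#=\iota_{V^\natural,W^\natural}\circ(\varphi_{B_+},\varphi_{B_-})$, and composing with the commutative square of Lemma \ref{Morita_W} — together with the identification of $\RIT^\star(V^\#,V^\natural)$ with $\RIT^\star(V^\#,V)$ via $\fm_V$, and likewise for $W$ — gives $\varphi_\Omega\circ\iota^\#=\iota\circ(\varphi_+,\varphi_-)$.

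The substantive step is the cocycle identity. With $w_1\in\cW$ as in Lemma \ref{na-lem1}, for $x\in V^\#\otimes\overline{F}$ and $y\in W^\#\otimes\overline{F}$, using that $w_1$ acts diagonally on $\W\otimes\overline{F}=(V\otimes\overline{F})\otimes_{D\otimes\overline{F}}(W\otimes\overline{F})$ and extending the formulas of Lemma \ref{lift_na} $\overline{F}$‑semilinearly, the computation would run
\begin{align*}
w_1(\Omega(x\otimes y))
&=w_1(B_+(x))\otimes w_1(B_-(y))\\
&=\bigl(B_+(z_+(w_1)w_1(x))\cdot(\alpha\sqrt{-a}^{-1})\bigr)\otimes\bigl((-\alpha\sqrt{-a}^{-1})\cdot B_-(w_1(y)z_-(w_1)^{-1})\bigr)\\
&=B_+(z_+(w_1)w_1(x))\otimes B_-(w_1(y)z_-(w_1)^{-1})\\
&=\bigl[\Omega\circ\iota^\#(z_+(w_1),z_-(w_1))\circ w_1\bigr](x\otimes y),
\end{align*}
where in the third line the scalar $\alpha\sqrt{-a}^{-1}\in D\otimes\overline{F}$ is carried across the tensor over $D\otimes\overline{F}$ and cancels, since $(\alpha\sqrt{-a}^{-1})(-\alpha\sqrt{-a}^{-1})=-\alpha^2(\sqrt{-a})^{-2}=-a\cdot(-a)^{-1}=1$ because $\sqrt{-a}\in\overline{F}$ is central. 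This yields $\Omega^{-1}\circ w_1\circ\Omega\circ w_1^{-1}=\iota^\#(z_+(w_1),z_-(w_1))$. For $w$ in the kernel $u(\overline{F})$ both sides are trivial — the left because $u(\overline{F})$ acts trivially on $\overline{F}$, the right because $z_+(w),z_-(w)\in\{\pm1\}$ and $\iota^\#(\zeta,\zeta)$ is the identity for $\zeta\in\{\pm1\}$ — and then the cocycle property of $z_+$ and $z_-$ together with the already–established relation $\varphi_\Omega\circ\iota^\#=\iota\circ(\varphi_+,\varphi_-)$ propagate $\Omega^{-1}\circ w\circ\Omega\circ w^{-1}=\iota^\#(z_+(w),z_-(w))$ to every $w\in\cW$, which is precisely the definition of $(z_+,\varphi_+)\leftrightarrow(z_-,\varphi_-)$.

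I expect the only real difficulty to be bookkeeping rather than conceptual: one must keep the Morita identification $\W^\natural\cong\W\otimes\overline{F}$ consistent with the untwisted Galois action on the target (so that $w_1$ really does act diagonally after transport), and one must check that the right $(D\otimes\overline{F})$‑module structure on $(V\otimes\overline{F})^\natural$ and the left one on $(W\otimes\overline{F})^\natural$ let the two scalars of Lemma \ref{lift_na} meet and cancel inside the tensor product. Nothing beyond \S\ref{morita}, Lemma \ref{Morita_W}, and Lemma \ref{lift_na} should be required.
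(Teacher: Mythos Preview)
Your approach is exactly the paper's: set $\Omega$ to be $B_+\otimes B_-$ composed with the Morita identification of Lemma~\ref{Morita_W}, observe that diagram~\eqref{corr_rig} commutes, and read the cocycle identity off Lemma~\ref{lift_na} with the $D\otimes\overline{F}$-scalars $\alpha\sqrt{-a}^{-1}$ and $-\alpha\sqrt{-a}^{-1}$ cancelling across the tensor product over $D\otimes\overline{F}$. The only wrinkle is your propagation step: $u(\overline{F})$ together with the single element $w_1$ do not generate $\cW$, since $\Gamma$ is not generated by the image of $w_1$ alone. You must also treat $w$ lying over $\Gamma_{F(\sqrt{b})}$, but there the same reasoning as in your $u(\overline{F})$ case applies --- the maps $B_\pm$ are $\Gamma_{F(\sqrt{b})}$-equivariant because the identification $\gamma$ involves only $\sqrt{b}$, and $z_+(w)=z_-(w)\in\{\pm1\}$ because the cocycle $c_1$ of Lemma~\ref{na-lem1} is trivial on $\Gamma_{F(\sqrt{b})}$, so again $\iota^\#(z_+(w),z_-(w))=\Id$. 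With that addition your argument is complete and coincides with the paper's (indeed, it is more careful than the paper's own terse computation).
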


\begin{proof}
Put $\Omega = B_+\otimes B_- \colon \W^\# \rightarrow \W^\natural = \W$. Then it is obvious that the diagram \eqref{corr_rig} is commutative. Moreover, we have 
\begin{align*}
(\Omega^{-1}\circ w \circ \Omega \circ w^{-1})(x\otimes y) &= \Omega^{-1}(\Omega(z_+(w)^{-1}\sigma(\sigma^{-1}(x \otimes y))z_-(w))) \\
&= z_+(w)^{-1}x\otimes yz_-(w) 
  \end{align*}
for $w \in \cW$, $x \in V^\#$, and $y \in W^\#$. This implies that $\Omega^{-1}\circ w \circ \Omega \circ w^{-1} = \iota^\#(z_+(w), z_-(w))$. 
\end{proof}

\subsection{Descriptions of the local theta correspondence}

In \cite{Ike19} the local theta correspondence in this case is described as follows. 
Let $\eta$ be an irreducible representation of $G_0(W)(F)$, which is a character since $G_0(W)(F)$ is Abelian. We denote by $\phi'$ the L-parameter of $\eta$, and by $\phi$ the L-parameter of $G(V)$ given by the composition $\xi\circ\phi'$. Then, it is known that 
\[
\wPi_\phi(G(V)) = 
\begin{cases}
\varnothing & (\eta = 1), \\
\{\tau_+\} & (\eta \not=1, \eta^2 =1), \\
\{ \tau_+, \tau_- \} & (\eta^2 \not=1).
\end{cases}
\]
Here, in the case $\eta^2 \not=1$, the representations $\tau_+$ and $\tau_-$ are specified by the character relation
\begin{align}\label{tau_pm}
&\Tr_{\tau_+}(\delta) - \Tr_{\tau_-}(\delta) \\
&=\lambda(E/F, \psi)\cdot \omega_{E/F}(\frac{\delta^{-1} - \delta}{\alpha_0})\cdot \frac{(\eta\circ\varsigma_-\circ\varsigma_+^{-1})(\delta) - (\eta\circ\varsigma_-\circ\varsigma_+^{-1})(\delta)^{-1}}{|\gamma - \gamma^{-1}|_E^{1/2}} \notag
\end{align}
for $\delta \in S_+$.

\begin{fact}\label{Ike_theta_corr}
We have
\[
\theta_\psi(\eta, V) = \begin{cases} 0 & (\eta = 1) \\ \tau_+ & (\eta \not=1).\end{cases} 
\]
\end{fact}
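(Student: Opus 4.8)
The statement to be verified is Fact~\ref{Ike_theta_corr}, which asserts that $\theta_\psi(\eta, V) = 0$ when $\eta = 1$ and $\theta_\psi(\eta, V) = \tau_+$ when $\eta \neq 1$. The plan is to combine Ikematsu's character-theoretic description \cite{Ike19} of the local theta correspondence for $G_0(W) \times G(V)$ in this low-rank case with the explicit parametrization of $\wPi_\phi(G(V))$ recalled just above, and then read off which member of the packet the theta lift lands in by matching the character relation \eqref{tau_pm} with the identity \eqref{tau_pm}'s geometric side. The vanishing for $\eta = 1$ is immediate: when $\eta = 1$ we have $\wPi_\phi(G(V)) = \varnothing$, and since $\theta_\psi(\eta, V)$, if non-zero, must lie in this packet (by the L-parameter compatibility of theta, i.e.\ Fact~\ref{Pre_PC}/\cite[Proposition 20.4]{Kak22} applied in this setting, together with the fact that $\phi = \xi\circ\phi'$), it must be zero. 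So the content is entirely in the case $\eta \neq 1$.

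\textbf{Main steps for $\eta \neq 1$.} First I would record the non-vanishing: by the non-vanishing result for quaternionic dual pairs in the equal/almost-equal rank range (\cite[Proposition 20.4]{Kak22}), $\theta_\psi(\eta, V) \neq 0$ whenever $\eta$ corresponds via $\xi$ to a tempered parameter of $G(V)$, which holds here. Second, since $\theta_\psi(\eta, V)$ is non-zero and has L-parameter $\phi = \xi\circ\phi'$, it is an element of $\wPi_\phi(G(V))$, hence equals $\tau_+$ (if $\eta^2 = 1$, when the packet is a singleton, we are done immediately) or one of $\tau_\pm$ (if $\eta^2 \neq 1$). Third, in the case $\eta^2 \neq 1$ I would invoke the explicit character computation of Ikematsu \cite{Ike19}: he computes $\Tr_{\theta_\psi(\eta,V)}(\delta)$ for $\delta \in S_+$ via the see-saw/doubling method, obtaining a formula whose leading term matches the $+$ sign in \eqref{tau_pm}. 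Concretely, one compares Ikematsu's formula for the character of the theta lift on the elliptic torus $S_+$ with the defining relation \eqref{tau_pm} for $\tau_+ - \tau_-$; matching the sign of the term $\omega_{E/F}\!\big(\frac{\delta^{-1}-\delta}{\alpha_0}\big) \cdot \frac{(\eta\circ\varsigma_-\circ\varsigma_+^{-1})(\delta) - (\eta\circ\varsigma_-\circ\varsigma_+^{-1})(\delta)^{-1}}{|\gamma-\gamma^{-1}|_E^{1/2}}$ pins down $\theta_\psi(\eta, V) = \tau_+$ rather than $\tau_-$. The key point is that Ikematsu's convention for the Weil representation and additive character $\psi$ must be aligned with the one used here (and with the one fixing the labelling $\tau_\pm$ through $\lambda(E/F,\psi)$ and the Whittaker normalization $\fw_+$); this alignment is exactly what the appendices \S\ref{notes on psi}--\S\ref{spec_correction} are for, so I would cite the relevant appendix to justify that no spurious sign enters.

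\textbf{Expected main obstacle.} The routine part — non-vanishing and membership in the packet — is painless. The delicate part is the \emph{sign}: showing that the lift is $\tau_+$ and not $\tau_-$. This requires (i) carefully transporting Ikematsu's character identity into the present normalization of the local Langlands correspondence for rigid inner twists (via the rigid inner twists $(z_+, \varphi_+) \leftrightarrow (z_-, \varphi_-)$ constructed above in Lemma~\ref{lift_na} and its corollary), and (ii) checking that the Whittaker datum $\fw_+$ and the additive character $\psi$ used to define the labelling $\{\tau_+, \tau_-\}$ agree with Ikematsu's choices up to a shift whose effect is understood. Because the character relation \eqref{tau_pm} only determines $\tau_+ - \tau_-$ (an antisymmetric combination), the positivity/genericity of $\tau_+$ — i.e.\ that $\tau_+$ is the member of $\wPi_\phi(G(V))$ whose Langlands parameter assigns the trivial character of $S_\phi^+$, or more precisely the distinguished one compatible with $\fw_+$ — is what closes the argument, and verifying that the theta lift picks out precisely this member is the real work. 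I expect this to go through by direct comparison once the conventions are matched, using the appendices to absorb the convention discrepancies, but it is the step that demands genuine care rather than bookkeeping.
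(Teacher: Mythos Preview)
The paper does not prove this statement; it is recorded as a Fact quoted directly from Ikematsu \cite{Ike19}, so there is no ``paper's proof'' to compare against. Your outline of the non-vanishing step and the singleton case $\eta^2=1$ is fine, and your basic idea for $\eta^2\neq 1$ --- compare the character of the theta lift against the defining relation \eqref{tau_pm} --- is indeed what Ikematsu does.

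However, your ``Expected main obstacle'' section introduces a genuine circularity. You claim the sign is pinned down by ``the positivity/genericity of $\tau_+$ --- i.e.\ that $\tau_+$ is the member of $\wPi_\phi(G(V))$ whose Langlands parameter assigns the trivial character of $S_\phi^+$, or more precisely the distinguished one compatible with $\fw_+$.'' But that is \emph{not} how $\tau_\pm$ are defined here: they are introduced purely via the sign in the character relation \eqref{tau_pm}, with no reference to the local Langlands correspondence, rigid inner twists, or any Whittaker datum. Determining which of $\tau_\pm$ carries which character of $S_\phi^+$ is exactly the content of the subsequent Theorem~\ref{m=n=1_conj}, whose proof takes Fact~\ref{Ike_theta_corr} as input. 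So invoking the Langlands-parameter labeling of $\tau_+$ to establish the Fact would be circular. The correct logic is the reverse: Ikematsu computes the character of $\theta_\psi(\eta,V)$ on $S_+$ directly (via the accidental isomorphisms with low-rank unitary groups mentioned in the introduction, not via see-saw/doubling), and this character matches the $+$ sign in \eqref{tau_pm} by inspection --- no LLC input required. Finally, the appendices \S\ref{notes on psi}--\S\ref{spec_correction} you cite for convention-matching concern the Archimedean theory exclusively and are irrelevant to this non-Archimedean statement.
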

To obtain the Langlands parameter of $\tau_+$, we compute the geometric transfer factor $\Delta'[\fw_+, z_+, \varphi_{+}](-, -)$ which will be abbreviated into $\Delta'(-, -)$. We may assume that the endoscopic data is $\cE(\dot{s})$ where $\dot{s}$ is a pre-image in $\overline{G(V)}^\wedge$ of $-1 \in G(V)^\wedge$.  Let $\gamma \in E^1$ and let $\delta \in G(V)(F)$ so that $\gamma \not=\pm1$ and $\gamma$ is a norm of $\delta$. Then, we have
\begin{align}\label{formula_transfer_na}
\Delta'(\gamma, \delta) &= \lambda(E/F, \psi)\cdot \omega_{E/F}(\frac{\gamma^{-1} - \gamma}{-2\sqrt{a}})\cdot|\gamma - \gamma^{-1}|_E^{-\frac{1}{2}}\\ 
& \notag \qquad \times \la \inv_{z_+}(u_+^\#(\gamma), \delta), \widehat{u_+}^{-1}(\dot{s}) \ra.
\end{align}
Before computing $\iota[\fw_+, z_+, \varphi_{+}](\tau_-)(\xi(\dot{s}))$, we observe a property of $\Delta'(-,-)$.

\begin{lem}\label{tr_fac_minus}
Let $\gamma \in E^1$. Then, we have 
\[
\Delta'(\gamma^{-1}, \delta) = - \Delta'(\gamma, \delta).
\]
\end{lem}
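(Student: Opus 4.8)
## Plan of proof for Lemma \ref{tr_fac_minus}

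The plan is to compare the right-hand side of \eqref{formula_transfer_na} evaluated at $\gamma$ and at $\gamma^{-1}$, factor by factor. Writing $\Delta'(\gamma,\delta) = \lambda(E/F,\psi)\cdot \omega_{E/F}\!\left(\frac{\gamma^{-1}-\gamma}{-2\sqrt a}\right)\cdot|\gamma-\gamma^{-1}|_E^{-1/2}\cdot\la \inv_{z_+}(u_+^\#(\gamma),\delta),\widehat{u_+}^{-1}(\dot s)\ra$, I would treat the four factors separately: the Weil constant $\lambda(E/F,\psi)$ is independent of $\gamma$; the absolute value factor $|\gamma-\gamma^{-1}|_E^{-1/2}$ is manifestly invariant under $\gamma\mapsto\gamma^{-1}$ (the set $\{\gamma,\gamma^{-1}\}$ is unchanged); and the quadratic-character factor $\omega_{E/F}\!\left(\frac{\gamma^{-1}-\gamma}{-2\sqrt a}\right)$ changes by $\omega_{E/F}(-1)$ since $(\gamma^{-1})^{-1}-\gamma^{-1} = \gamma-\gamma^{-1} = -(\gamma^{-1}-\gamma)$. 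So the first three factors together contribute a sign $\omega_{E/F}(-1)$ under $\gamma\mapsto\gamma^{-1}$.

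The substantive point is therefore the behavior of the pairing factor $\la \inv_{z_+}(u_+^\#(\gamma),\delta),\widehat{u_+}^{-1}(\dot s)\ra$ under inversion of $\gamma$. The key observation is that $\gamma$ and $\gamma^{-1}$ are conjugate in $E^1 \rtimes \Gal(E/F) \subset G(V^\#)$ (they are related by the nontrivial Weyl-group element, realized by the matrix $\left(\begin{smallmatrix} 0 & * \\ * & 0 \end{smallmatrix}\right)$, which is exactly the type of element appearing as $z(w_1)$ in Lemma \ref{na-lem1}). I would choose $w$ in $G(V^\#)(\overline F)$ conjugating $u_+^\#(\gamma)$ to $u_+^\#(\gamma^{-1})$, track how $\inv_{z_+}(\cdot,\delta)$ — defined as a cocycle $g^{-1}z_+(w)w(g)$ in $Z^1(u\to\cW, Z\to S^\#)$ — transforms, and how $\widehat{u_+}^{-1}(\dot s)$ transforms under the corresponding automorphism of $\widehat{S}$. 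Since $\dot s$ lies over $-1 \in G(V)^\wedge$, which is central and hence fixed by the Weyl action, the image $\widehat{u_+}^{-1}(\dot s)$ picks up at most the action of $\Gal(E/F)$ on $\widehat{S^\#} = \widehat{E^1}$, and pairing with the transformed invariant yields precisely the correction $\omega_{E/F}(-1)$ via Tate--Nakayama duality (the class of the conjugating element and the relevant sign). Combining, the total change is $\omega_{E/F}(-1)^2 \cdot(\text{sign from pairing})$; a short computation of this sign is expected to give exactly $-1$, which is the assertion.

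The main obstacle I anticipate is the bookkeeping in the pairing factor: one must carefully unwind the definition $\inv_{z_+}^{\varphi_+}(u_+^\#(\gamma),\delta)(w) = g^{-1}z_+(w)w(g)$ with $\varphi_+(g\, u_+^\#(\gamma)\, g^{-1}) = \delta$, determine how replacing $\gamma$ by $\gamma^{-1}$ forces $g$ to be replaced by $g n$ for a fixed Weyl representative $n$, and then compute the resulting change $n^{-1}\inv_{z_+}(u_+^\#(\gamma),\delta)(w)\, w(n)$ modulo the cocycle $Z^1(u\to\cW,Z\to S^\#)$; simultaneously one applies the contragredient Weyl action to $\widehat{u_+}^{-1}(\dot s)$ and invokes equivariance of the Kaletha pairing \cite[Corollary 5.4]{Kal16} under conjugation. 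A clean alternative, which I would try first, is to avoid $z_+$ entirely: note that $\Delta'(\gamma,\delta)/\Delta'(\gamma^{-1},\delta)$ equals the ratio of the \emph{classical} Langlands--Shelstad factors $\Delta(\gamma,\delta)/\Delta(\gamma^{-1},\delta)$ (the rigidifying pairing cancels in the ratio once one checks $\inv_{z_+}(u_+^\#(\gamma),\delta) = \inv_{z_+}(u_+^\#(\gamma^{-1}),\delta)$ as classes, which holds because $\gamma,\gamma^{-1}$ give the same stable class and the same $\delta$), and then the sign $-1$ is a standard computation with $\Delta_{I}$ and the $a$-data for the endoscopic datum corresponding to $\dot s$. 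Either route reduces the lemma to the single sign identity $\omega_{E/F}(-1)\cdot(\text{Weyl sign}) = -1$, which I expect to verify directly.
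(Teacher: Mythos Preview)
Your first approach is essentially the paper's, and your factor-by-factor analysis of the first three factors is correct: they contribute $\omega_{E/F}(-1)$. The substantive work is indeed in the pairing factor. But your expectation for what that factor contributes is wrong, and the missing ingredient is precisely where the quaternionic structure enters.

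Here is the gap. You want a Weyl representative $n\in G(V^\#)(\overline F)$ with $n\,u_+^\#(\gamma)\,n^{-1}=u_+^\#(\gamma^{-1})$. The natural choice is the element $h$ with $\varphi_+(h)=\sqrt{-b}^{-1}\beta\in G(V)(\overline F)$, since conjugation by $\beta$ inverts $E\subset D$ (recall $\alpha\beta+\beta\alpha=0$). With $g$ chosen so that $\varphi_+(g\,u_+^\#(\gamma)\,g^{-1})=\delta$, one finds $\varphi_+(gh\,u_+^\#(\gamma^{-1})\,h^{-1}g^{-1})=\delta$, and a direct computation gives
\[
\inv_{z_+}(u_+^\#(\gamma^{-1}),\delta)(w)=\sqrt{-b}^{\,1-w}\cdot \inv_{z_+}(u_+^\#(\gamma),\delta)(w).
\]
The class of $w\mapsto\sqrt{-b}^{\,1-w}$ in $H^1(\Gamma,E^1)$ is trivial iff $-b\in N_{E/F}(E^\times)$; since $\widehat{u_+}^{-1}(\dot s)$ maps to $-1$ in $\widehat{E^1}^{\Gamma}\cong\{\pm1\}$, the Tate--Nakayama pairing of the ratio with $\widehat{u_+}^{-1}(\dot s)$ is $\omega_{E/F}(-b)$, \emph{not} $\omega_{E/F}(-1)$. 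The total change is therefore $\omega_{E/F}(-1)\cdot\omega_{E/F}(-b)=\omega_{E/F}(b)$, and this equals $-1$ exactly because $D$ is a \emph{division} quaternion algebra (so $b\notin N_{E/F}(E^\times)$). This is the crux of the lemma and is not visible in your sketch.

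Your alternative route fails outright: the claim that $\inv_{z_+}(u_+^\#(\gamma),\delta)$ and $\inv_{z_+}(u_+^\#(\gamma^{-1}),\delta)$ agree as classes is false in general---they differ by the cocycle $w\mapsto\sqrt{-b}^{\,1-w}$ just computed, which is nontrivial precisely when $D$ is division. So the rigidifying pairing does \emph{not} cancel in the ratio, and reducing to classical $\Delta_I$ alone will not produce the sign.
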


\begin{proof}
By the expression \eqref{formula_transfer_na}, we have
\begin{align}\label{n=m=1, quot}
\frac{\Delta'(\gamma^{-1}, \delta)}{\Delta'(\gamma, \delta)} = \omega_{E/F}(-1) \cdot \la \frac{\inv_{z_+}(u_+^\#(\gamma^{-1}), \delta)}{\inv_{z_+}(u_+^\#(\gamma), \delta)}, \widehat{u_+}^{-1}(\dot{s}) \ra.
\end{align}
Put $z_+' = \inv_{z_+}(u_+^\#(\gamma), \delta)$. Then we have $z_+'(w) = \pm z_+(w)$ for $w \in \cW$. Let $g \in G(V^\#)(\overline{F})$ satisfying $\varphi_{+}(g u_+^\#(\gamma) g^{-1}) = \delta$. Take $h \in G(V^\#)(\overline{F})$ so that $\varphi_{+}(h) = \sqrt{-b}^{-1}\beta \in G(V)(\overline{F})$. Then, we have
\[
\varphi_+(ghu_+^\#(\gamma^{-1})h^{-1}g^{-1}) = \delta.
\]
Thus, we have
\begin{align*}
\varphi_+(z_+'(w) w(h) z_+'(w)^{-1}) &= w(\sqrt{-b}^{-1}\beta) \\
&=\sqrt{-b}^{1-w}\cdot \varphi_+(h),
\end{align*}
which means that 
\[
\inv_{z_+}(u_+^\#(\gamma^{-1}), \delta)(w) = \sqrt{-b}^{1-w}\cdot z_+'(w)
\]
for $w \in \cW$. The image of the cocycle $w \mapsto \sqrt{-b}^{1-w}$ is trivial in $H^1(\Gamma, E^1)$ if and only if $-b \in N_{E/F}(E^\times)$. Since the image of $\widehat{u_+}^{-1}(\dot{s})$ in $H^0(\Gamma, \widehat{E^1}) = \{\pm 1\}$ is $-1$, we have
the Tate-Nakayama pairing term in \eqref{n=m=1, quot} coincides with $\omega_{E/F}(-b)$. Hence we have
\[
\frac{\Delta'(\gamma^{-1}, \delta)}{\Delta'(\gamma, \delta)} = \omega_{E/F}(b) = -1,
\]
which proves Lemma \ref{tr_fac_minus}.
\end{proof}

\begin{thm}\label{m=n=1_conj}
Assume that $F$ is non-Archimedean. Then, Conjecture \ref{conj_main} holds in the case \eqref{vsp III} with $\epsilon = 1$,  $n=m=1$.
\end{thm}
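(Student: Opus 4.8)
The strategy is to combine the character relation \eqref{tau_pm} of Ikematsu, the non-vanishing statement of Fact~\ref{Ike_theta_corr}, and the explicit transfer factor formula \eqref{formula_transfer_na} together with Lemma~\ref{tr_fac_minus}. Conjecture~\ref{conj_main} amounts to the assertion $\theta_\psi(\eta, V) = \mathscr{T}_\psi(\eta)$ for every character $\eta$ of $G_0(W)(F)$. For $\eta = 1$ both sides vanish ($\theta_\psi$ by Fact~\ref{Ike_theta_corr}, and $\mathscr{T}_\psi$ because $\wPi_\phi(G(V)) = \varnothing$), so there is nothing to do. Hence assume $\eta \neq 1$; then Fact~\ref{Ike_theta_corr} gives $\theta_\psi(\eta, V) = \tau_+$, and it suffices to show that, with respect to the rigid inner twists $(z_+, \varphi_+) \leftrightarrow (z_-, \varphi_-)$ constructed in the preceding subsection, the representation $\tau_+$ is the unique element of $\wPi_\phi(G(V))$ whose Langlands parameter is $(\phi, \eta')$, where $\eta'$ is obtained from (the trivial-component case of) $\eta$ via the embedding $S_{\phi'}^+ \hookrightarrow S_\phi^+$ induced by $\xi$; equivalently (since $n = m$, \eqref{vsp III}), that $\iota[\fw_+, z_+, \varphi_+]_\phi(\tau_+)(\xi(\dot s))$ is the value predicted by Conjecture~\ref{main_intro}.

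\textbf{Key steps.} First I would split into the two cases $\eta^2 = 1$ (with $\eta \neq 1$) and $\eta^2 \neq 1$. In the former, $\wPi_\phi(G(V)) = \{\tau_+\}$ is a singleton, $S_\phi^+$ is forced to act through a one-dimensional representation determined by the central character condition $\zeta_{z_+}$, and the identity reduces to matching central characters, which follows formally once one knows $(z_+,\varphi_+) \leftrightarrow (z_-,\varphi_-)$ and the definition of $\mathscr{T}_\psi$; the main content is the case $\eta^2 \neq 1$. Here $\wPi_\phi(G(V)) = \{\tau_+, \tau_-\}$ and $\cS_\phi^+$ has a nontrivial quotient; the two representations are distinguished by \eqref{tau_pm}. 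I would compute $\iota[\fw_+, z_+, \varphi_+]_\phi(\tau_\pm)(\xi(\dot s))$ directly from Hypothesis~\ref{LLCclg} using the endoscopic character relation \eqref{ECRweak} for the endoscopic datum $\cE(\dot s)$ with $\dot s$ a pre-image of $-1 \in G(V)^\wedge$: the stable character on the endoscopic group $H_1$ (a torus, essentially $E^1$) transfers via $\Delta'(-,-)$ of \eqref{formula_transfer_na} to a combination of $\Tr_{\tau_+}$ and $\Tr_{\tau_-}$. Comparing the resulting expansion — which will feature the term $\la \inv_{z_+}(u_+^\#(\gamma), \delta), \widehat{u_+}^{-1}(\dot s)\ra$, the normalization $\lambda(E/F,\psi)\,\omega_{E/F}(\tfrac{\gamma^{-1}-\gamma}{-2\sqrt a})$, and the factor $|\gamma-\gamma^{-1}|_E^{-1/2}$ — against the right-hand side of \eqref{tau_pm} identifies which of $\tau_+, \tau_-$ carries which character of $\cS_\phi^+$; Lemma~\ref{tr_fac_minus} is exactly what guarantees the antisymmetry $\gamma \mapsto \gamma^{-1}$ of the transfer side matches that of \eqref{tau_pm}, and also supplies the sign $\omega_{E/F}(b) = -1$ needed to pin down the labelling. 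Finally I would invoke the computation $\fp_\xi(z_-(w_1)) = z_+(w_1)^{-1}$ (the third bullet of the $m=n=1$ analogue, established via Lemma~\ref{lift_na}) and the compatibility of $\varphi_+, \varphi_-$ with $\xi$ to check that the element $\eta'$ attached to $\theta_\psi(\eta, V)$ on the $G(V)$-side is precisely $\overline{\iota_{\phi'}[\fw_-, z_-, \varphi_-]_{\phi'}(\eta)}$ transported through $\xi$, which is the content of Conjecture~\ref{main_intro} and hence of Conjecture~\ref{conj_main} in this case.

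\textbf{Main obstacle.} The delicate point is the precise bookkeeping of normalizations. The character relation \eqref{tau_pm} is stated in Ikematsu's conventions, while \eqref{formula_transfer_na} and the endoscopic character relation \eqref{ECRweak} use the Kaletha–Langlands–Shelstad normalization adopted here; reconciling the Weil-index/$\lambda$-factor $\lambda(E/F,\psi)$, the absolute-value factor $|\gamma-\gamma^{-1}|_E^{\pm 1/2}$ (Weyl discriminant versus its inverse), the sign conventions for $\alpha_0$ versus $\sqrt a$, and the identification of $E^1$ with $S_+$ via $\gamma \mapsto \varsigma_+(\gamma)^{\pm 1}$ all have to be done carefully, since a stray sign would swap $\tau_+$ and $\tau_-$. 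I expect that once these conventions are aligned — using the appendices \S\ref{notes on psi}--\S\ref{spec_correction} and the explicit choices of $B_+, B_-$, $z$, and $\psi$ with $\epsilon_\psi$ fixed — the computation of $\iota[\fw_+, z_+, \varphi_+]_\phi(\tau_+)(\xi(\dot s))$ will collapse to the predicted value essentially by \eqref{formula_transfer_na} and Lemma~\ref{tr_fac_minus}, and the theorem follows.
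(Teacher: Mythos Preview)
Your approach is correct and follows the same route as the paper: compute the spectral side of the endoscopic character relation for $\cE(\dot s)$ via the Weyl integration formula, plug in the explicit transfer factor \eqref{formula_transfer_na}, use Lemma~\ref{tr_fac_minus} to obtain the antisymmetry in $\gamma \mapsto \gamma^{-1}$, and then match the resulting integrand against Ikematsu's character relation \eqref{tau_pm} to read off $\iota_\phi[\fw_+, z_+, \varphi_+](\tau_\pm)(\xi(\dot s))$.

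Two small remarks. First, the compatibility you invoke at the end is simpler than you suggest: there is no need for the map $\fp_\xi$ of Definition~\ref{def_pxi} (which is an Archimedean construction). In the present setup $z_+ = \varsigma_+^\# \circ z$ and $z_- = \varsigma_-^\# \circ z$ for the \emph{same} cocycle $z \in Z^1(u \to \cW, \{\pm 1\} \to E^1)$ from Lemma~\ref{na-lem1}, so the identity $\la z_+, \widehat{\varsigma_+}^{-1}(\xi(\dot s))\ra = \la z_-, \widehat{\varsigma_-}^{-1}(\dot s)\ra$ is immediate and gives the final step in one line. Second, the appendices \S\ref{notes on psi}--\S\ref{spec_correction} concern the Archimedean Fock model and Mezo's spectral transfer factors, so they play no role here; the normalization bookkeeping you worry about is handled directly by \eqref{formula_transfer_na}, the Kottwitz sign $e(G(V)) = -1$, and the observation that $\omega_{E/F}\bigl(\tfrac{\gamma^{-1}-\gamma}{-2\sqrt a}\bigr)$ and $\omega_{E/F}\bigl(\tfrac{\delta^{-1}-\delta}{\alpha_0}\bigr)$ agree under $\delta = \varsigma_+(\gamma)$.
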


\begin{proof}
The numbers $\iota_\phi[\fw_+, z_+, \varphi_+](\tau_\pm)(\xi(\dot{s}))$ are characterized as coefficients of the spectral decomposition of the stable distribution $f \mapsto e(G(V)) {\rm Tr}_{\eta\circ\varsigma_-}(f^{\phi, \cE(\dot{s})})$, which is computed as follows. Since the Kottwitz's sign $e(G(V))$ of $G(V)$ is $-1$, we have
\begin{align*}
&e(G(V))\cdot{\rm Tr}_{\eta\circ\varsigma_-}(f^{\phi, \cE(\dot{s})}) \\
&= -\int_{E^1}(\Delta'(\gamma, \varsigma_+(\gamma))O_{\varsigma_+(\gamma)}(f) + \Delta'(\gamma, \varsigma_+(\gamma)^{-1})O_{\varsigma_+(\gamma)^{-1}}(f)) \cdot (\eta\circ\varsigma_-)(\gamma) \: d\gamma \\
&= -\int_{E^1}(\Delta'(\gamma, \varsigma_+(\gamma)) (\eta\circ\varsigma_-)(\gamma) + \Delta'(\gamma^{-1}, \varsigma_+(\gamma))(\eta\circ\varsigma_-)(\gamma^{-1})) \cdot O_{\varsigma_+(\gamma)}(f) \: d\gamma \\
&= -\la z_+, \widehat{\varsigma_+}^{-1}(\xi(\dot{s})) \ra \int_{E^1} \lambda(E/F, \psi)\omega_{E/F}(\frac{\gamma^{-1} - \gamma}{-2\sqrt{a}})\frac{(\eta\circ\varsigma_-)(\gamma) - (\eta\circ\varsigma_-)(\gamma^{-1})}{|\gamma - \gamma^{-1}|^{1/2}}\cdot O_{\varsigma_+(\gamma)}(f) \: d\gamma \\
&=-\la z_+, \widehat{\varsigma_+}^{-1}(\xi(\dot{s}))\ra \int_{S_+}\lambda(E/F, \psi)\omega_{E/F}(\frac{\delta^{-1} - \delta}{\alpha_0})\frac{(\eta\circ\varsigma_-\circ\varsigma_+^{-1})(\delta) - (\eta\circ\varsigma_-\circ\varsigma_+^{-1})(\delta^{-1})}{|\gamma - \gamma^{-1}|^{1/2}}\cdot O_\delta(f) \: d\delta \\
&=\la z_+, \widehat{\varsigma_+}^{-1}(\xi(\dot{s})) \ra \int_{G(V)(F)} f(g)(\Tr_{\tau_-}(g) - \Tr_{\tau_+}(g)) \: dg
\end{align*}
for $f \in C_c(G(V)(F))$. Hence we have
\begin{align*}
\iota_\phi[\fw_+, z_+, \varphi_+](\tau_\pm)(\xi(\dot{s})) &= \mp \la z_+, \widehat{\varsigma_+}^{-1}(\xi(\dot{s})) \ra \\
&=\mp \la z_-, \widehat{\varsigma_-}^{-1}(\dot{s}) \ra \\
&=\mp \iota_\phi[\fw_-, z_-, \varphi_-](\eta)(\dot{s}).
\end{align*}
This proves Theorem \ref{m=n=1_conj}.
\end{proof}


\section{Appendix: Centers of even Spin groups} \label{cospin}

In this appendix, we prove an elementary result that describes the action of certain outer automorphism on the centers of complex even Spin groups. Let $X$ be a complex vector space of $\dim X =2r$, let $( \ , \ )\colon X\times X \rightarrow \C$ be a non-degenerate symmetric bilinear form over $\C$, let $x_1, \ldots, x_r, y_r, \ldots, y_1$ be a basis of $X$ so that the  representation matrix of $( \ , \ )$ is the anti-diagonal matrix $J_{2r}$, let
\[
{\rm Cl}(X) = {\rm T}(X)/ I
\]
be the Clifford algebra. Here ${\rm T}(X)$ denotes the tensor algebra of $X$, and $I$ denotes the two-sided ideal generated by $x\otimes y - (x,y)$ for all $x,y \in X$. The isomorphism $X^{\otimes k} \rightarrow X^{\otimes k}$ given by $a_1\cdots a_k \mapsto a_k \cdots a_1$ induces the linear map $*\colon {\rm Cl}(X) \rightarrow {\rm Cl}(X)$. Then, we put $N(a) =aa^* \in \C$ for $a \in {\rm Cl}(X)$. We denote by $I_{X}$ the identity automorphism of $X$. Then, the isomorphism $(-I_X)^{\otimes k}\colon X^{\otimes k} \rightarrow X^{\otimes k}$ induces the isomorphism $\gamma\colon {\rm Cl}(X) \rightarrow {\rm Cl}(X)$ of algebras. Define 
\[
{\rm GSpin}(X) = \{ g \in {\rm Cl}(X)\mid (X \rightarrow X\colon x \mapsto \gamma(g)xg^{-1}) \in {\rm SO}(X) \}
\]  
and ${\rm Spin}(X) = \{g \in {\rm GSpin}(X) \mid N(g) = 1\}$. We denote by $\widetilde{Z}_0$ the kernel of the natural surjection ${\rm Spin}(X) \rightarrow {\rm SO}(X)$, by $\widetilde{Z}$ the inverse image of $\{\pm I_X\}$ in ${\rm Spin}(X)$. Then, $\widetilde{Z}$ coincides with the center of ${\rm Spin}(X)$ whenever $r > 1$.

\begin{prop}\label{Spin main}
Denote by $\theta$ the image of $g_0 \in {\rm O}(X)\setminus {\rm SO}(X)$ in the group of the outer automorphisms ${\rm Out}({\rm Spin}(X)) = {\rm Out}({\rm SO}(X))$. Then, $\theta$ acts on $\widetilde{Z}_0$ trivially, and acts on $\widetilde{Z}$ non-trivially.
\end{prop}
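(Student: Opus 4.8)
The plan is to work directly inside the even Clifford algebra $\mathrm{Cl}(X)$ with the explicit hyperbolic basis $x_1,\dots,x_r,y_r,\dots,y_1$, exploiting the fact that $(x_i,x_j)=(y_i,y_j)=0$ and $(x_i,y_j)=\delta_{ij}$ (up to the normalization forced by $J_{2r}$). First I would recall the standard description of the spin center: the kernel $\widetilde{Z}_0$ of $\mathrm{Spin}(X)\to\mathrm{SO}(X)$ is $\{\pm1\}$, and $\widetilde{Z}$ is generated over $\widetilde{Z}_0$ by a lift of $-I_X\in\mathrm{SO}(X)$. A convenient such lift is the element $\omega=(x_1-y_1)(x_2-y_2)\cdots(x_r-y_r)$ (suitably scaled so that $N(\omega)=1$ and $\gamma(\omega)x\omega^{-1}=-x$ for all $x$); one checks $\omega^2=\pm1$, and $\widetilde{Z}=\{\pm1,\pm\omega\}$, a cyclic group of order $4$ when $r$ is odd and a Klein four-group when $r$ is even. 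I would also record the well-known computation of $\widetilde Z$ as an abstract group depending on $r\bmod 4$, since the conclusion "$\theta$ acts non-trivially on $\widetilde Z$" only has content in the case where $\widetilde Z$ is not killed by the automorphism for trivial reasons, i.e.\ when $\widetilde Z\cong(\mathbb Z/2)^2$.

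Next I would pick an explicit representative $g_0\in\mathrm{O}(X)\setminus\mathrm{SO}(X)$; the natural choice is the reflection interchanging $x_r$ and $y_r$ and fixing $x_i,y_i$ for $i<r$, which is implemented on $\mathrm{Cl}(X)$ by conjugation by the anisotropic vector $v=x_r-y_r$ (or $x_r+y_r$, depending on the sign of $(v,v)$). Since $g_0$ is realized by an element of the Clifford group, the induced automorphism $\theta$ of $\mathrm{Spin}(X)$ is literally conjugation by $v$, so computing the action of $\theta$ on $\widetilde Z_0$ and on $\omega$ is a direct bracket calculation in the Clifford algebra. On $\widetilde Z_0=\{\pm1\}$ conjugation is obviously trivial. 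For $\omega$ one computes $v\,\omega\,v^{-1}$: the factor $(x_r-y_r)$ in $\omega$ commutes (up to sign) with $v$ in a way I would track carefully, and each factor $(x_i-y_i)$ with $i<r$ commutes or anticommutes with $v$ according to the orthogonality relations. The upshot should be $v\,\omega\,v^{-1}=\pm\omega$ with a sign I can read off; the point of the proposition is that this sign is $-1$, equivalently that $\theta(\omega)=-\omega\ne\omega$ in $\widetilde Z$.

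The key identity to verify is therefore $v\,\omega\,v^{-1}=-\omega$, where $\omega=\prod_{i=1}^r(x_i-y_i)$ and $v=x_r-y_r$; since $v$ appears (as the last factor) in $\omega$ and $v^2=(v,v)\ne0$ is a scalar, this reduces to computing how $v$ commutes past the product $(x_1-y_1)\cdots(x_{r-1}-y_{r-1})$. Each $(x_i-y_i)$ for $i<r$ is built from vectors orthogonal to $v$, hence anticommutes with $v$ in $\mathrm{Cl}(X)$; moving $v$ past $r-1$ such factors produces a sign $(-1)^{r-1}$, and combining with the contribution of the $i=r$ factor gives an overall sign. I expect the bookkeeping to show this net sign is $-1$ independent of $r$, which is exactly the assertion. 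The main obstacle, and the part requiring genuine care rather than routine symbol-pushing, is getting all the signs and scalar normalizations consistent: the definition of $\gamma$ (the parity automorphism), the precise scaling of $\omega$ so that it lies in $\mathrm{Spin}(X)$ (i.e.\ $N(\omega)=1$), the fact that the bilinear form has matrix $cJ_{2r}$ rather than $J_{2r}$ in some of the paper's conventions, and the choice between $x_r-y_r$ and $x_r+y_r$ for $v$ depending on which is anisotropic. I would handle this by fixing one clean normalization of the basis at the outset (rescaling so $(x_i,y_i)=1$), doing the whole computation there, and only at the end remarking that the center and the outer automorphism are insensitive to the rescaling. Finally, I would note that triviality on $\widetilde Z_0$ together with $\theta(\omega)=-\omega$ immediately yields the stated dichotomy, and record the corollary (Corollary~\ref{spin_fact}) in the form needed in \S\ref{main sec}.
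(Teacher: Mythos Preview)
Your Clifford-algebra approach is sound in outline, but the element you propose as a lift of $-I_X$ is wrong, and this is not a scaling issue. Your $\omega=\prod_{i=1}^r(x_i-y_i)$ is a product of only $r$ vectors; when $r$ is odd it does not even lie in the even part ${\rm Cl}^0(X)$, hence not in ${\rm Spin}(X)$. More seriously, its image in ${\rm O}(X)$ under $x\mapsto\gamma(\omega)x\omega^{-1}$ is the composite of the $r$ reflections in the hyperplanes $(x_i-y_i)^\perp$, which is the swap $x_i\leftrightarrow y_i$ for every $i$ (the matrix $J_{2r}$), not $-I_X$. That is why your commutation count produced $(-1)^{r-1}$: you were not computing the action of $\theta$ on $\widetilde{Z}$ at all.

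The fix is to use the genuine volume element, a product of $2r$ pairwise anticommuting anisotropic vectors, for instance
\[
\omega=\prod_{i=1}^r(x_i-y_i)(x_i+y_i)
\]
or $e_1\cdots e_{2r}$ in an orthonormal basis. This lies in ${\rm Cl}^0(X)$, has $N(\omega)=1$, and one checks $\omega e_i\omega^{-1}=-e_i$ for every $i$, so it does lift $-I_X$. With $v$ any one of these $2r$ basis vectors, moving $v$ past the remaining $2r-1$ anticommuting factors gives $v\omega v^{-1}=(-1)^{2r-1}\omega=-\omega$ uniformly in $r$, and since conjugation by $v\in{\rm Pin}(X)$ represents $\theta$ on ${\rm Spin}(X)$, this yields $\theta(\omega)=-\omega\ne\omega$. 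Triviality on $\widetilde{Z}_0=\{\pm1\}$ is immediate. So your strategy succeeds once $\omega$ is corrected.

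For comparison, the paper takes a different route: it never manipulates Clifford products beyond the definitions, but instead parametrizes an explicit maximal torus $\widetilde{T}\subset{\rm Spin}(X)$ as a quotient of $(\C^\times)^{2r}$, writes the covering $\widetilde{t}\colon\widetilde{T}\to T$ and a bijection $u\colon\ker\widetilde{t}\to\{\pm1\}$ in coordinates, picks the central element $c=(\sqrt{-1},\ldots,\sqrt{-1};-\sqrt{-1},\ldots,-\sqrt{-1})$ over $-I_X$, and reads off $u(\theta(c)c^{-1})=-1$. This $c$ is in fact your corrected $\omega$ up to scalar, so the two arguments agree on the underlying element; the paper's torus bookkeeping is closer to how the statement is consumed in \S\ref{main sec}, while your corrected Clifford computation is shorter and more conceptual.
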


\begin{proof}
In the proof, we identify ${\rm SO}(X)$ with a subgroup of $\GL_{2r}(\C)$ via the basis $x_1, \ldots, x_r, y_r, \ldots, y_1$. We fix 
\[
g_0 = \begin{pmatrix} I_{r-1} & & \\ & J_2 & \\ & & I_{r-1} \end{pmatrix} \in {\rm O}(X).
\]
Let $T$ be the maximal torus consisting of the diagonal matrices in ${\rm SO}(X)$, and let $\widetilde{T}$ be an abstract complex torus defined by 
\[
\{(a_1, \ldots, a_n; b_1, \ldots, b_n)\mid a_k, b_k \in \C^\times, \prod_{k=1}^n a_kb_k = 1 \}/\sim.
\]
Here, the relation $\sim$ is defined by $(a_1, \ldots, a_n; b_1, \ldots, b_n) \sim (a_1',\ldots,a_n'; b_1, \ldots, b_n')$ if and only if there exist $x_1, \ldots, x_n \in \C^\times$ so that 
\begin{itemize}
\item the product $x_1\cdot \cdots \cdot x_n$ is $1$, and
\item we have $a_k' = x_k a_k$ and $b_k' = x_k b_k$ for all $k$.
\end{itemize}
Then, one can show that the homomorphism $t \colon \widetilde{T} \rightarrow {\rm Spin}_{2n}(\C)$ given by
\[
t((a_1, \ldots, a_n; b_1, \ldots, b_n)) = \prod_{k=1}^n \frac{1}{2}(a_k x_k\cdot y_k + b_k y_k\cdot x_k)
\]
is an isomorphism from $\widetilde{T}$ onto the preimage of $T$ in ${\rm Spin}(X)$. The composition $\widetilde{t}\colon \widetilde{T} \rightarrow T$ of $t$ and the covering map ${\rm Spin}(X) \rightarrow {\rm SO}(X)$ is given by
\[
\widetilde{t}(a_1, \ldots, a_n; b_1, \ldots, b_n) = \diag(a_1b_1^{-1}, \ldots, a_rb_r^{-1}, b_ra_r^{-1}, \ldots, b_1a_1^{-1})
\]
for $(a_1, \ldots, a_r, b_1, \ldots, b_r) \in \widetilde{T}$. Then, we have a bijection $u\colon \ker \widetilde{t}\rightarrow \{\pm 1\}$ given by
\[
u((a_1, \ldots, a_r; b_r, \ldots, b_1)) = a_1\cdot\cdots\cdot a_r
\]
for $(a_1, \ldots, a_r; b_r, \ldots, b_1) \in \ker \widetilde{t}$. 
Consider the element 
\[
c = (\sqrt{-1}, \ldots, \sqrt{-1}; -\sqrt{-1}, \ldots, -\sqrt{-1})
\]
of $\widetilde{Z}$. Then, we have $\theta(c)c^{-1} \in \ker \widetilde{t}$ and
\[
u(\theta(c)c^{-1}) = u((1, \ldots, 1, -1; -1, 1,\ldots, 1)) = -1.
\]
This shows $\theta(c)\not= c$, which proves the proposition.  
\end{proof}

Let $A$ be a finite subgroup of ${\rm SO}(X)$ containing $\{ \pm I_X \})$, let $B$ be the inverse image of $A$ in ${\rm Spin}(X)$, and let $\zeta$ be a character of $\widetilde{Z}$. Then, we denote by ${\rm Irr}(B, \zeta)$ the set of irreducible representations of $B$ whose restiction to $\widetilde{Z}$ meets with $\zeta$.
\begin{cor}\label{spin_fact}
Let $\zeta, \zeta'$ be the two different characters of $\widetilde{Z}$ whose restrictions to $\widetilde{Z}_0$ is non-trivial. Then, we have $\zeta\circ\theta^{-1} = \zeta'$ and $\theta$ induces the bijection form ${\rm Irr}(B, \zeta)$ onto ${\rm Irr}(B, \zeta')$. 
\end{cor}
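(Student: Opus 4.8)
The plan is to deduce the corollary from Proposition \ref{Spin main} together with elementary character theory of finite groups. First I would set up the structure of $\widetilde{Z}$: since $\widetilde{Z}_0$ is the kernel of the covering $\mathrm{Spin}(X) \to \mathrm{SO}(X)$, it has order $2$, and $\widetilde{Z}/\widetilde{Z}_0$ is identified (via $\widetilde{t}$ restricted to the preimage of $\{\pm I_X\}$) with $\{\pm I_X\} \cong \Z/2\Z$. Because $X$ has even dimension $2r$, the group $\widetilde{Z}$ is isomorphic to $(\Z/2\Z)^2$: indeed the element $c = (\sqrt{-1},\dots,\sqrt{-1}; -\sqrt{-1},\dots,-\sqrt{-1})$ used in the proof of Proposition \ref{Spin main} satisfies $c^2 = t((-1,\dots,-1;-1,\dots,-1))$, and this squares to the identity when $r$ is even while it equals the nontrivial element of $\widetilde{Z}_0$ when $r$ is odd — in either case $\widetilde{Z}$ is an elementary abelian $2$-group. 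Hence there are exactly two characters $\zeta, \zeta'$ of $\widetilde{Z}$ whose restriction to $\widetilde{Z}_0$ is the nontrivial character, and they differ precisely by the character of $\widetilde{Z}/\widetilde{Z}_0$ that is nontrivial on the image of $c$.

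Next I would translate Proposition \ref{Spin main} into the statement $\zeta \circ \theta^{-1} = \zeta'$. The automorphism $\theta$ (induced by $g_0 \in \mathrm{O}(X)\setminus\mathrm{SO}(X)$) acts trivially on $\widetilde{Z}_0$ and nontrivially on $\widetilde{Z}$, so for any character $\zeta$ of $\widetilde{Z}$ the twist $\zeta \circ \theta^{-1}$ agrees with $\zeta$ on $\widetilde{Z}_0$ but, since $\theta$ acts nontrivially on the quotient $\widetilde{Z}/\widetilde{Z}_0$ (this is exactly the content of $\theta(c) \neq c$ modulo $\widetilde{Z}_0$ computed via $u(\theta(c)c^{-1}) = -1$), it differs from $\zeta$ on $\widetilde{Z}$. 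Therefore $\zeta \circ \theta^{-1}$ is the unique character of $\widetilde{Z}$ distinct from $\zeta$ but equal to it on $\widetilde{Z}_0$; if $\zeta$ is nontrivial on $\widetilde{Z}_0$ this says $\zeta \circ \theta^{-1} = \zeta'$, as desired. Here I should note that $\theta$ is only defined as an outer automorphism class, but since conjugation by any lift of $g_0$ fixes $\widetilde{Z}$ pointwise as a set and acts on it through a well-defined automorphism independent of the choice of lift (inner automorphisms act trivially on the center $\widetilde{Z}$), the twist $\zeta \circ \theta^{-1}$ is well defined on $\widetilde{Z}$.

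Finally I would handle the bijection ${\rm Irr}(B,\zeta) \to {\rm Irr}(B,\zeta')$. Choose a lift $\widetilde{g}_0 \in B$ of $g_0$ (this is possible since $B$ is the full preimage of $A$ and $g_0 \in A$); then $\rho \mapsto \rho^{\widetilde{g}_0} := \rho \circ \mathrm{Ad}\,\widetilde{g}_0^{-1}$ is a permutation of $\mathrm{Irr}(B)$, and for $\rho \in \mathrm{Irr}(B,\zeta)$ its central character on $\widetilde{Z}$ becomes $\zeta \circ (\mathrm{Ad}\,\widetilde{g}_0^{-1})|_{\widetilde{Z}} = \zeta \circ \theta^{-1} = \zeta'$, so $\rho^{\widetilde{g}_0} \in \mathrm{Irr}(B,\zeta')$. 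The inverse map is $\rho' \mapsto \rho' \circ \mathrm{Ad}\,\widetilde{g}_0$, which by the same computation (using $\theta$ has order $2$, or directly that $\mathrm{Ad}\,\widetilde{g}_0^{2}$ is inner since $\widetilde{g}_0^2 \in B$ and acts trivially on $\widetilde{Z}$) lands in $\mathrm{Irr}(B,\zeta)$; these two maps are mutually inverse up to the action of $\widetilde{Z} \subset \ker$ of these automorphisms on central characters, which suffices. I do not expect a serious obstacle here; the only point requiring a little care — and the one I would state carefully — is the well-definedness of the $\theta$-twist on $\widetilde{Z}$ and on $\mathrm{Irr}(B,\zeta)$, i.e.\ checking that the outer automorphism class $\theta$ acts by a genuine automorphism on these finite objects, which follows because inner automorphisms of $\mathrm{Spin}(X)$ restrict to the identity on the center $\widetilde{Z}$ and hence fix central characters.
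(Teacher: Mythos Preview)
The paper gives no proof for this corollary, treating it as immediate from Proposition~\ref{Spin main}, and your overall strategy is exactly the intended one. However, two steps in your execution are genuinely wrong.

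In the second paragraph your reading of Proposition~\ref{Spin main} is reversed. The relation $u(\theta(c)c^{-1}) = -1$ says that $\theta(c)c^{-1}$ is the \emph{nontrivial element of $\widetilde{Z}_0$}; hence $\theta(c) \equiv c \pmod{\widetilde{Z}_0}$ and $\theta$ acts \emph{trivially} on the quotient $\widetilde{Z}/\widetilde{Z}_0$ (this holds regardless of the parity of $r$; note incidentally that $\widetilde{Z} \cong \Z/4\Z$ when $r$ is odd, so your claim that $\widetilde{Z}$ is always elementary abelian is also false, though harmless). The correct argument uses the hypothesis on $\zeta$ directly: $(\zeta\circ\theta^{-1})(c) = \zeta(\theta(c)) = \zeta(c)\cdot\zeta(\theta(c)c^{-1}) = -\zeta(c)$, the last equality because $\theta(c)c^{-1}$ is the nontrivial element of $\widetilde{Z}_0$ and $\zeta|_{\widetilde{Z}_0}$ is nontrivial. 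Thus $\zeta\circ\theta^{-1} \neq \zeta$ while agreeing with $\zeta$ on $\widetilde{Z}_0$, forcing $\zeta\circ\theta^{-1} = \zeta'$.

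In the third paragraph you cannot lift $g_0$ to $B$: by construction $g_0 \in \mathrm{O}(X) \setminus \mathrm{SO}(X)$, so $g_0 \notin A$ and in fact $g_0$ has no preimage in $\mathrm{Spin}(X)$ at all. What you want instead is an automorphism $\tilde{\theta}$ of $\mathrm{Spin}(X)$ representing the outer class $\theta$ (for instance, conjugation by a lift of $g_0$ to $\mathrm{Pin}(X)$) that preserves $B$; this requires $\mathrm{Ad}\,g_0$ to preserve $A$, an implicit hypothesis needed for the statement even to make sense. Granting that, the map $\rho \mapsto \rho \circ \tilde{\theta}^{-1}$ on $\mathrm{Irr}(B)$ carries central character $\zeta$ to $\zeta\circ\theta^{-1} = \zeta'$ by the first part, and is bijective with inverse $\rho' \mapsto \rho' \circ \tilde{\theta}$, exactly as you say.
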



\section{Appendix: Annotation on Fact \ref{Pre_PC}}\label{app_op}

Fact \ref{Pre_PC} is proved by Atobe \cite{Ato18} in the case \eqref{vsp I}, and by Gan and Ichino \cite{GI16} in the case \eqref{vsp II}. However, they use a bit different convention of the local theta correspondence as explained in \S\ref{conv_theta} below. In this appendix, we discuss some basic properties of the operation ``${\op}$'', and we address their convention to ours.

\subsection{The operation ``$\op$'' and representation matrices}\label{rel_ct}

Let $V$ be a right $\epsilon$-Hermitian space over $D$, and let $x_1, \ldots, x_m$ be a basis of $V$. 
Then, $x_1, \ldots, x_m$ is also a basis of $V^{\op}$. We denote by $R$ the representation matrix of the $\epsilon$-Hermitian form of $V$ with respect to $x_1, \ldots, x_m$. The following lemma will be useful for explicit computations.

\begin{lem}\label{op1}
Let $g \in G(V)(F)$. Denote by $A$ the representation matrix of $g\colon V\rightarrow V$ with respect to $x_1, \ldots, x_m$. Then, the representation matrix of $\fs_V(g)\colon V^{\op} \rightarrow V^{\op}$ with respect to $x_1, \ldots, x_m$ is $RAR^{-1}$.
\end{lem}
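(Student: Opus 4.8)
The statement to prove is Lemma \ref{op1}: for $g \in G(V)(F)$ with matrix $A$ relative to the basis $x_1,\dots,x_m$, the matrix of $\fs_V(g) \colon V^{\op} \to V^{\op}$ relative to the same basis is $RAR^{-1}$, where $R = ((x_k,x_l))_{k,l}$. The approach is a direct unwinding of the definitions: recall from \S\ref{groups} that $\fs_V$ is the inverse of $\fs_{V^{\op}}$, and that $\fs_{W}(h)(x) = x \cdot h^{-1}$ realizes the left-linear structure on $W^{\op}$ coming from the new action $a \cdot x = a^* x$. So I must carefully track (i) how matrices act on column vs.\ row vectors under the passage from right to left $D$-modules, and (ii) how the Hermitian form $R$ mediates between the form on $V$ and the form on $V^{\op}$.

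\textbf{First step.} I would set up coordinates. Writing a vector $v = \sum_k x_k v_k$ with $v_k \in D$, the right action of $D$ and the matrix $A = (a_{kl})$ of $g$ give $g(v) = \sum_k x_k (\sum_l a_{kl} v_l)$, i.e.\ $g$ acts on the column vector $(v_k)$ by left multiplication by $A$. Passing to $V^{\op}$, the same underlying set carries the left action $a \ast x = x a^*$; a vector is now naturally written with scalars on the left, $v = \sum_k \widetilde{v}_k \ast x_k$ where $\widetilde{v}_k = v_k^*$, so $V^{\op}$-coordinates are row vectors obtained by applying $*$ componentwise. I would then compute the matrix of $\fs_V(g)$ in these row coordinates. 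The cleanest route is to verify the claim for $\fs_{V^{\op}}$ first (since $\fs_V = \fs_{V^{\op}}^{-1}$, proving the formula for one direction gives the other by inverting, and $R \mapsto R^{-1}$ behaves correctly under the natural involution relating the forms on $V^{\op}$ and $(V^{\op})^{\op} = V$): by definition $\fs_{V^{\op}}(h)(x) = x \cdot h^{-1}$ using the right structure on $(V^{\op})^{\op}$, which is the original right structure on $V$.

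\textbf{Key identity.} The heart of the computation is the relation between the representation matrix of $\fs_V(g)$ and that of $g$ via $R$. The point is that $\fs_V$ is constructed so that $\fs_V(g)$ is the isometry of $V^{\op}$ with respect to $\la\ ,\ \ra^{\op}$ (or $(\ ,\ )^{\op}$) dual to $g$; concretely the map "act by $g^{-1}$ on the right" becomes, after identifying $V^{\op}$ with $V$ as a set and rewriting everything in terms of the left action, the map whose matrix is conjugated by the Gram matrix $R$. I would make this precise by the following computation: for $v \in V$ with column coordinates $\mathbf v$, the condition that $g$ is an isometry reads ${}^t\overline{A}\, R\, A = R$ in a suitable sense (with the appropriate $*$-transpose), hence $A^{-1} = R^{-1}\,({}^t\overline{A})\,R$ up to the involution; translating the $*$-transpose into the coordinates of $V^{\op}$ — which is exactly where the passage from columns to rows and the componentwise $*$ intervene — turns $R^{-1}\,({}^t\overline{A})\,R$ acting on row vectors into $RAR^{-1}$ acting on row vectors. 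I would spell out each transpose/conjugate bookkeeping carefully, as sign and order conventions ($\epsilon$-Hermitian, main involution, the identity $(y,x)a = (y,xa) = \epsilon(xa,y)^*$) all feed in.

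\textbf{Main obstacle.} The only real difficulty is the one of bookkeeping: keeping straight simultaneously the left/right module conventions, the componentwise main involution $x \mapsto x^*$, the $*$-transpose ${}^t(\cdot)^*$ appearing in the isometry condition, and the $\epsilon$-factors — and making sure the identification $V^{\op} \leftrightarrow V$ of underlying sets is the one actually used in \S\ref{groups}. A clean way to avoid sign errors is to check the formula on the two generating types of elements separately: first on scalars $g = a \cdot 1_V$ with $a \in Z(D)\cap D^1$ (where both sides are visibly $a$ resp.\ $a^*$ times identity and $R$ drops out appropriately), and then on a "transvection-type" or block-diagonal $g$ where $A$ is explicit, checking $RAR^{-1}$ term by term; since such elements generate $G(V)$, the general case follows by multiplicativity of both $g \mapsto A$ and $g \mapsto RAR^{-1}$ and of $\fs_V$. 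I expect the verification to be short once the conventions are pinned down, so I would present it as a one-paragraph direct computation rather than a case analysis, relegating the convention-matching to an explicit display of the isometry relation ${}^tA^* R A = R$ and its rearrangement.
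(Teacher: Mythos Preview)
Your proposal is correct and follows essentially the same route as the paper: compute the matrix of $\fs_V(g)$ directly from the definition (it acts as $g^{-1}$ on the underlying set, so rewriting $g^{-1}x_k = \sum_l x_l b_{lk}$ with the right $D$-action as $\sum_l b_{lk}^* \ast x_l$ in the left $D$-action on $V^{\op}$ gives the matrix ${}^tA^{*-1}$), and then invoke the isometry condition ${}^tA^* R A = R$ to rewrite ${}^tA^{*-1} = RAR^{-1}$.

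The paper does exactly this in three lines. Your write-up takes a more circuitous path --- starting from $A^{-1} = R^{-1}({}^t\overline A)R$ and then ``translating'' back, and proposing a generator check as a safeguard --- but none of that is needed: once you observe that the action of $\fs_V(g)$ on $V^{\op}$ is literally $x \mapsto g^{-1}x$ and that passing to left-module coordinates applies $*$ entrywise and transposes, the matrix ${}^tA^{*-1}$ drops out immediately, and the isometry relation finishes it. I would recommend streamlining your argument to that two-step form and dropping the generator verification entirely.
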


\begin{proof}
Recall that $\fs_V(g)x = g^{-1}x$ for $x \in V = V^{\op}$. Hence,  putting $(b_{kl})_{kl} = A^{-1}$, we have
\[
g^{-1}x_k = x_1\cdot b_{1k}+\cdots + x_m\cdot b_{mk} =b_{1k}^* \cdot x_1 + \cdots + b_{mk}^*\cdot x_m 
\] 
for $k=1, \ldots, m$. This implies that the representation matrix of $\fs_V(g)$ with respect to $x_1, \ldots, x_m$ is ${}^tA^{*-1}$ that equals to $RAR^{-1}$.
\end{proof}

\subsection{Another setting}\label{conv_theta}

Let $E$ be a quadratic extension field of $F$ or $F$ itself. In some literature (for example \cite{Ato18}, \cite{AG17}, \cite{GI16}), they considered the reductive dual pairs constructed by a right $\epsilon$-Hermitian space and a right $(-\epsilon)$-Hermitian space, that is, the actions of the unitary groups are taken from the left side. 

Let $V$ be an $m$-dimensional right $E$-vector space equipped with a \emph{left-linear} $\epsilon$-Hermitian form $( \ , \ )$, that is, the $F$-bilinear map $( \ , \ ) \colon V \times V \rightarrow E$ satisfying 
\[
(xa, y) = a(x,y), \ (y,x) = \epsilon \cdot (x,y)^*
\]
for $a \in E$ and $x,y \in V$. Let $W$ be a $(-\epsilon)$-Hermitian space equipped with a right $(-\epsilon)$-Hermitian form $\langle \ , \ \rangle$.  Then, the form $\la\la \ , \ \ra\ra'$ on $V\otimes W$ given by
\[
\la\la x_1\otimes y_1, x_2\otimes y_2 \ra\ra' = {\rm Tr}_{E/F}((x_1,x_2)\cdot \la y_1, y_2 \ra)
\] 
is symplectic. We regard $V\otimes W$ as a \emph{right} $F$-vector space. Hence, we have the natural homomorphism $\iota_{V,W}'\colon G(V) \times G(W) \rightarrow \Sp(V\otimes W)$ given by 
\[
(x\otimes y)\cdot \iota_{V,W}'(h,g) = hx\otimes gy
\]
for $x \in V, y \in W, h \in G(V)$, and $g \in G(W)$. 

\subsection{Weil representations}

We keep the setting of \S\ref{conv_theta}. To discuss the Weil representation, we introduce some auxiliary spaces.
Consider a right-linear $\epsilon$-Hermitian form $( \ , \ )^\varrho$ on $V$ given by 
\[
(x,  y )^\varrho = (x,y)^* \quad (x,y \in V^\varrho).
\] 
We denote by $V^\varrho$ the right $\epsilon$-Hermitian space $V$ equipped with the form $( \ , \ )^\varrho$. Then, we have $G(V) = G(V^\varrho)$. Choose an involution $*\colon W \rightarrow W$ so that $(xa)^* = x^* a^*$ for $x \in W$ and $a \in E$, and consider a right-linear $(-\epsilon)$-Hermitian form $\langle \ , \rangle_\varrho$ on $W$ given by
\[
\langle x,y \rangle_\varrho = \langle x^*, y^* \rangle \quad (x,y \in W).
\]
We denote by $W_\varrho$ the $(-\epsilon)$-Hermitian space $W$ equipped with the form $\la \ , \ \ra_\varrho$. Then, we have an isomorphism $\varrho\colon G(W) \rightarrow G(W_\varrho)$ given by $\varrho(g) = *\circ g \circ *$.
On $V^\varrho\otimes W_\varrho$, we consider the symplectic form $\la\la \ , \ \ra\ra$ given by
\[
\la\la x_1\otimes y_1, x_2\otimes y_2 \ra\ra = {\rm Tr}_{E/F}((x_1, x_2)^\varrho \cdot \la y_1, y_2 \ra_\varrho^*)
\]
as in \S\ref{groups}. Then, we have the natural isometry $V^\varrho\otimes W_\varrho \rightarrow  V \otimes W \colon x\otimes y \mapsto x\otimes y^*$. Thus, we denote by $\W$ the both spaces $V\otimes W$ and $V^\varrho\otimes W_\varrho$. Then, the following diagram is commutative.
 \[
 \xymatrix{
G(V) \times G(W) \ar[d]_{{\rm Id}\times \varrho} \ar[rr]^-{\iota_{V,W}'}  & & \Sp(\W) \ar@{=}[d] \\
G(V^\varrho) \times G(W_\varrho) \ar[rr]_-{\iota_{V^\varrho, W^\varrho}} & & \Sp(\W)
 }
 \]
Moreover, the following diagram is also commutative.
\[
\xymatrix{
G(V^\varrho) \times G(W_\varrho) \ar[rr]^-{\iota_{V^\varrho,W_\varrho}}\ar[d]_{\Id \times \fs_W} && \Sp(\W) \ar[d]_-{\fs_\W} \\ G(V^\varrho) \times G(W_\varrho^{\op}) \ar[rr]_-{\iota_{V^\varrho,W_\varrho^{\op}}} && \Sp(\W^{\op}) 
}
\]
Therefore, we construct the Weil representation of $G(V) \times G(W)$ from that of $G(V^\varrho)\times G(W_\varrho)$. Take a polarization $\W = \X + \Y$. Then, we have the isomorphism 
\[
(\fs_\W, \Id) \colon \Mp(c_{\psi, \Y}, \W) \rightarrow \Mp(c_{\psi, \Y^{\op}}, \W^{\op})
\]
where we write $\Y^{\op}$ instead of $\Y$ to emphasize that we regard it as a subspace of $\W^{\op}$.
Taking characters $\chi_V, \chi_W$ as in \S\ref{LTC}, we define the lifting $\widetilde{\iota}_{V,W,\chi_V, \chi_W}' \colon G(V) \times G(W) \rightarrow \Mp(c_{\psi, \Y}, \W)$ of the embedding $\iota_{V,W}'$ by the composition
\[
(\fs_\W, \Id)^{-1} \circ \widetilde{\iota}_{V^\varrho,W_\varrho^{\op},\chi_V^{-1}, \chi_W} \circ (\Id \times (\fs_W\circ\varrho)).
\]
Hence, we obtain the Weil representation $\omega_{\psi, V,W}^{\chi_V, \chi_W} $ of $G(V) \times G(W)$ given by
\[
\omega_{\psi, \Y^{\op}}\circ \widetilde{\iota}_{V^\varrho,W_\varrho^{\op},\chi_V^{-1}, \chi_W} \circ (\Id \times (\fs_W\circ\varrho)).
\]

\begin{rem}
This construction is consistent with \cite{Ato18}, \cite{AG17}, and \cite{GI16}:
\begin{itemize}
\item In the case \eqref{vsp I}, one can show \cite[Proposition 7.3]{Ato18}, \cite[Proposition 4.10 (4)]{AG17} using the Weil representation $\omega_{\psi, V,W}^{\chi_V, \chi_W}$, which are crucial parts of calculations to determine the behavior of the characters of S-group under local theta correspondences. 
\item In the case \eqref{vsp II}, one can verify that the Weil representation $\omega_{\psi, V,W}^{\chi_V, \chi_W}$ satisfies the twelve formulae in \cite[pp.~758]{GI16}. Note that the auxiliary trace zero element $\delta \in E$ is chosen as $(-\daleth)$. 
\end{itemize}
\end{rem}

\subsection{Langlands parameters}

We keep the setting of \S\ref{conv_theta}. We describe the behavior of the Langlands parameter under the operations ``${\op}$'' and $\varrho$. 

Denote by ${}^\#W_c$ the right $(-\epsilon)$-Hermitian space so that $({}^\#W_c)_\varrho = (W_\varrho)_c^\#$. Let $f$ be a bijective isometry over $E\otimes\overline{F}$ from ${}^\#W_c\otimes\overline{F}$ onto $W\otimes \overline{F}$. As explained in the introduction, we denote by $\varphi_f$ the isomorphism from $G({}^\#W_c)$ onto $G(W)$ so that $\varphi_f(g)f(x) = f(gx)$ for $g \in G({}^\#W_c)(\overline{F})$ and $x \in {}^\#W_c\otimes\overline{F}$, and by $t_f$ the cocycle in $Z^1(\Gamma, G({}^\#W_c))$ given by $t_f(\sigma) = f^{-1}\circ \sigma \circ f \circ \sigma$ for $\sigma \in \Gamma$. Then, we have $(t_f, \varphi_f)$ is a pure inner twist. 

We denote by $f_\varrho$ the composition $*\circ f\circ *$. Then, $f_\varrho \colon (W_\varrho)_c^\# \otimes \overline{F} \rightarrow W_\varrho \otimes \overline{F}$ is linear and isometric, which induces an isometry $f_\varrho^{\op}$ from $(W_\varrho)_c^{\#\op} \otimes \overline{F}$ onto $W_\varrho^{\op} \otimes \overline{F}$. We define the isometry $f'$ from $(W_\varrho^{\op})_c^\# \otimes \overline{F}$ onto $W_\varrho^{\op} \otimes \overline{F}$ by the composition
\begin{align*}
\xymatrix{(W_\varrho^{\op})_c^\#\otimes\overline{F} \ar[r]^-{\gamma} & (W_\varrho)_c^{\#\op}\otimes \overline{F} \ar[r]^-{f_\varrho^{\op}} & W_\varrho^{\op}\otimes\overline{F}}
\end{align*}
where $\gamma$ denotes the isometry given by $\gamma(x) = {}^tx^*$ for $x \in (W_\varrho^{\op})_c^\# \otimes \overline{F}$. We denote by $\varphi_{f'}$ the isomorphism from $G((W_\varrho^{\op})_c^\#)$ onto $G(W_\varrho^{\op})$ so that $f'(x)\varphi_{f'}(g) = f'(xg)$ for $x \in (W_\varrho^{\op})_c^\# \otimes \overline{F}$ and $g \in G((W_\varrho^{\op})_c^\#)(\overline{F})$, and by $t_{f'}$ the cocycle in $Z^1(\Gamma, G((W_\varrho^{\op})_c^\#))$ given by $t_{f'}(\sigma) = \sigma\circ {f'}^{-1}\circ\sigma^{-1}\circ f'$ for $\sigma \in \Gamma$. Then, we have $(t_{f'}, \varphi_{f'}) \in \RIT^\star((W_\varrho^{\op})_c^\#, W_\varrho^{\op})$. 

We define the L-group of $G_0({}^\#W_c)$ via the identification $G({}^\#W_c) = G(({}^\#W_c)^\varrho)$. Then, the isomorphism $\widehat{\varrho}\colon G_0({}^\#W_c)^\wedge \rightarrow G_0((W_\varrho^{\op})_c^\#)^\wedge$ induced by the composition
\begin{align}\label{compop}
\xymatrix{
G_0({}^\#W_c) \ar[r]^-{\varrho}  & G_0((W_\varrho)_c^\#) \ar[r]^-{\fs_{(W_\varrho)_c^\#}} &  G_0((W_\varrho)_c^{\#\op}) \ar[r]^-{\varphi_\gamma} & G_0((W_\varrho^{\op})_c^\#) 
}
\end{align}
is given by
\[
\widehat{\varrho}(g) = \begin{cases} g & (E = F), \\ {}^tg^{-1} & ([E:F] = 2) \end{cases} \qquad (g \in G_0({}^\#W_c)^\wedge). 
\]

\begin{prop}\label{LLC-op}
Let $\phi$ be a tempered $L$-parameter for both $G_0({}^\#W_c)$, and let $\pi$ be a tempered irreducible representation of $G_0(W)(F)$ having the $L$-parameter $\phi$. Then, $\pi\circ \fs_{W_\varrho}\circ \varrho$ has the L-parameter $\widehat{\varrho}\circ \phi$, and we have
\[
\iota_{\widehat{\varrho}\circ\phi}[\fw_-, t_{f'}, \varphi_{f'}](\pi\circ\fs_{W_\varrho}\circ\varrho)(\widehat{\varrho}(\dot{s})) = \iota_{\phi}[\fw, t_f, \varphi_f](\pi)(\dot{s})
\]
where $\fw_-$ is the Whittaker data of $G((W_\varrho^{\op})_c^\#)$ defined in \S\ref{WD}, and $\fw$ is the Whittaker data of $G({}^\#W_c)$  associated with $\psi$ (resp. $x \mapsto \psi_{1/2}({\rm Tr}(x\cdot (-\daleth)))$) when $-\epsilon = -1$ (resp. $-\epsilon = 1$).
\end{prop}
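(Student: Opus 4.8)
The plan is to reduce Proposition \ref{LLC-op} to the compatibility of the Langlands correspondence for rigid inner twists with the isomorphisms of reductive groups, as already recorded in Corollary \ref{zu1-2} and Proposition \ref{parameter orth}, together with a careful bookkeeping of how the three composed isomorphisms in \eqref{compop} transport Whittaker data, endoscopic data, $a$-data, $\chi$-data, and the transfer factors of \cite{LS87,KS99}. First I would unwind the definition: write $\Phi = \varphi_\gamma\circ\fs_{(W_\varrho)_c^\#}\circ\varrho$ for the composition in \eqref{compop}, so that $\widehat{\varrho}$ is the dual isomorphism, and verify directly that the quasi-split group $G_0((W_\varrho^{\op})_c^\#)$ equipped with $\fw_-$ is carried by $\Phi^{-1}$ to $G_0({}^\#W_c)$ equipped with $\fw$. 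This is the content of the last sentence of the proposition's hypotheses about which Whittaker data to use, and it follows from chasing the explicit splittings in \S\ref{WD}: $\varrho$ and $*$ act on the canonical bases by the identity up to the involution of $E$, $\fs$ reverses the order of a symplectic basis, and $\varphi_\gamma$ transports one explicit standard form to another. Once the Whittaker data match, $\pi \mapsto \pi\circ\fs_{W_\varrho}\circ\varrho$ corresponds to $\phi\mapsto\widehat{\varrho}\circ\phi$ on parameters simply because $\Phi$ is an isomorphism of algebraic groups and the $L$-packets $\wPi_\phi$ are characterized by Plancherel measures (\S\ref{Lpkt}), which are invariant under isomorphisms.

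The core is then the identity of the values $\iota_\phi[\fw,t_f,\varphi_f](\pi)(\dot s) = \iota_{\widehat\varrho\circ\phi}[\fw_-, t_{f'},\varphi_{f'}](\pi\circ\fs_{W_\varrho}\circ\varrho)(\widehat\varrho(\dot s))$. I would attack this at the level of the spectral transfer factor via the endoscopic character relation of Hypothesis \ref{LLCclg} (equation \eqref{ECRweak}): both sides are determined as coefficients in the spectral expansion of a stable orbital integral transferred from the endoscopic group $H_1$. The key point is that $(t_{f'},\varphi_{f'})$ is, by construction, the pushforward of the pure inner twist $(t_f,\varphi_f)$ along $\Phi$ — this is essentially how $f'$ was defined — so for a refined endoscopic datum $(H,\cH,\dot t,\eta)\in\cE(\dot s)$ one gets $(H,\cH,\dot t,\eta\circ\widehat\varrho^{-1})\in\cE(\widehat\varrho(\dot s))$, and the norm correspondence $\gamma_1\leftrightarrow\delta$ for the first side matches $\gamma_1\leftrightarrow\Phi(\delta)$ for the second. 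Then I would show the equality of the individual Langlands–Shelstad factors $\Delta_I,\dots,\Delta_{IV}$ and of the $\varepsilon(\cV,\psi)$ normalization under this transport, together with $\inv_{t_{f'}}^{\varphi_{f'}}(\Phi(\delta),\Phi(\dot\delta))(w) = \Phi(\inv_{t_f}^{\varphi_f}(\delta,\dot\delta)(w))$ and the compatibility of the Tate–Nakayama pairing of \cite[Corollary 5.4]{Kal16} with $\Phi$ and $\widehat\varrho$. Since these factors are intrinsic to the data up to the choices of splittings, $a$-data and $\chi$-data, it suffices to transport all these auxiliary choices along $\Phi$ exactly as is done in the proof of Proposition \ref{parameter orth} for $\Ad\varepsilon$; the only new ingredient is the presence of the three-step isomorphism instead of a single inner automorphism.

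I expect the main obstacle to be the middle step $\fs_{(W_\varrho)_c^\#}$, which is not an inner automorphism and interacts nontrivially with the choice of polarization and with the duality between $G(W)$ and $G(W^{\op})$: one must check that $\fs$ sends the fixed splitting and Whittaker datum to the ones declared in \S\ref{WD} (this is exactly why the definition of $T_-^\#$, $\beta_k^\#$ and $\fw_-$ there was routed through $\fs_{W_c^\#}$ and $t$), and that the dual map $\widehat{\fs}$ is ${}^tg^{-1}$ in the $[E:F]=2$ case and the identity in the $E=F$ case, reproducing the stated formula for $\widehat\varrho$. A secondary subtlety is the sign bookkeeping for $\varepsilon(\cV,\psi)$ when $-\epsilon=1$, which is why the hypothesis specifies the Whittaker datum of $G({}^\#W_c)$ associated with $x\mapsto\psi_{1/2}(\Tr(x\cdot(-\daleth)))$ rather than $\psi$; I would handle this by invoking Corollary \ref{zu1-2} to absorb the change of Whittaker datum into a twist by a character $\la\lambda,-\ra$ and checking that this twist is exactly accounted for by the change $\psi\rightsquigarrow\psi_{1/2}\circ\Tr(-\cdot(-\daleth))$, as in the compatibility with \cite{GI16} recorded in \S\ref{WD}. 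Beyond these points, the remaining computations are the standard, if tedious, transport-of-structure verifications for transfer factors, and I would compress them by citing \cite[\S6.3]{Mez16} and the analogous argument in Proposition \ref{parameter orth}.
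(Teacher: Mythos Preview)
Your approach is essentially the same as the paper's, but the emphasis is inverted. The paper's proof has three quick steps and one substantive one:
\begin{itemize}
\item[(i)] verify the commutative square $\varphi_{f'}\circ\Phi = (\fs_{W_\varrho}\circ\varrho)\circ\varphi_f$ (you say this is ``by construction'', and it is);
\item[(ii)] check that the cocycle $t_f$ transports to $t_{f'}$ under $\Phi$ (a two-line computation);
\item[(iii)] note that $\fw$ is carried to $\fw_-$ by $\Phi$ (this is how $\fw_-$ was rigged in \S\ref{WD});
\item[(iv)] verify, when $[E:F]=2$, that $\pi\circ\fs_{W_\varrho}\circ\varrho$ has $L$-parameter $\widehat\varrho\circ\phi$ by a direct Plancherel computation.
\end{itemize}
Once (i)--(iv) are in hand, the identity of the $\iota$-values is \emph{immediate} from the characterizing relation \eqref{ECRweak}: an $F$-isomorphism of the entire package (quasi-split form, Whittaker datum, rigid inner twist, $L$-parameter, refined endoscopic datum) transports the endoscopic character relation verbatim. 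No factor-by-factor check of $\Delta_I,\dots,\Delta_{IV}$ is required, and the paper does none.

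Your plan spends most of its effort on that transfer-factor bookkeeping, which is correct but redundant, while glossing over step (iv) with ``Plancherel measures are invariant under isomorphisms''. That phrase hides the actual content: restricted to a Levi $G_0(W_\bullet)\times\GL_k(E)$, the map $\fs_{W_\varrho}\circ\varrho$ sends $(\pi,\tau,s)$ essentially to $(\pi\circ\fs_{W_\varrho}\circ\varrho,\tau^\vee,-s)$, and one must check that this matches $\widehat\varrho(g)={}^tg^{-1}$ on the dual side, i.e.\ that $\mu(s,(\pi\circ\fs_{W_\varrho}\circ\varrho)\boxtimes\tau)=\mu(-s,\pi\boxtimes\tau^\vee)=\mu(s,\phi_\pi^\vee\boxtimes\phi_\tau)$. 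This is the computation the paper actually carries out. Your concern about needing Corollary~\ref{zu1-2} to absorb a Whittaker-datum twist is also unnecessary: the data $\fw$ and $\fw_-$ are chosen precisely so that $\Phi$ matches them, so no correction term appears.
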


\begin{proof}
First, we have the following diagram is commutative.
\[
\xymatrix{
G({}^\#W_c) \ar[rr]^{\varphi_f}\ar[d] & & G(W) \ar[d]^{\fs_{W_\varrho}\circ\varrho} \\ G((W_\varrho^{\op})_c^\#) \ar[rr]_{\varphi_{f'}} & & G(W_\varrho^{\op})
}
\]
Here, the left column map is the isomorphism \eqref{compop}. 
Second, for $x \in {}^\#W_c\otimes\overline{F}$ and $\sigma \in \Gamma$, we have
\begin{align*}
\gamma(x)\cdot (\varphi_\gamma\circ\fs_{(W_\varrho)_c^\#}\circ\varrho)(t_f(\sigma)) &= \gamma( x \cdot (\fs_{(W_\varrho)_c^\#}\circ\varrho)(t_f(\sigma))) \\
&= \gamma(\varrho(t_f(\sigma))^{-1} \cdot x) \\
&=t_{f'}(\sigma)^{-1} ( \gamma(x) ) \\
&= \gamma(x) \cdot t_{f'}(\sigma).
\end{align*}
Thus, the 1-cocycle $t_f$ corresponds to $t_{f'}$ by the isomorphism \eqref{compop}. 
Moreover, one can show that $\fw$ is transferred from $\fw_-$ by the isomorphism \eqref{compop}.
Finally, it remains to show that $\pi\circ\fs_W\circ \varrho$ has the L-parameter $\widehat{\varrho}\circ\phi$. In the case $E = F$, this is obvious. Hence, we assume $[E:F] =2$. It suffices to show that 
\begin{align}\label{eq_planc}
\mu(s, (\pi\circ\fs_{W_\varrho}\circ\varrho) \boxtimes \tau) = \mu(s, \phi_\pi^\vee \boxtimes \phi_\tau)
\end{align}
for all irreducible square-integrable representations $\tau$ of $\GL_k(E)$ for $k=1, \ldots, n$ when $\pi$ is square-integrable. By the definition of the Plancherel measures of L-parameters (\S\ref{Lpkt}), we have
\begin{align}\label{planc_rhs}
\mu(s, \phi_\pi^\vee\boxtimes\phi_\tau) = \mu(-s, \phi_\pi\boxtimes\phi_\tau^\vee).
\end{align}
Let $X_0, Y_0$ be a $k$-dimensional right $F$-vector space, let $x_1,\ldots, x_k$ be a basis of $X_0$, let $y_1, \ldots, y_k$ be a basis of $Y_0$. Put $X = X_0 \otimes E$ and $Y = Y_0 \otimes E$. We define a left-linear $(-\epsilon)$-Hermitian form $\la \ , \ \ra'$ on $X \oplus Y$ so that 
\[
\la x_r, y_s \ra' = \delta_{r,s} \quad (1 \leq r,s \leq k), 
\]
and put $W' = W \bot (X \oplus Y)$. Denote by $P$ the maximal parabolic subgroup of $G(W')$ preserving $X$, and by $Q$ the parabolic subgroup $(\fs_{W_\varrho'}\circ \varrho)(P)$ of $G(W_\varrho^{' \op})$. Then, identifying $\GL(X)$ with $\GL_k(E)$ via the basis $x_1, \ldots, x_k$, we have
\[
\Ind_{Q}^{G(W_\varrho^{' \op})} (\pi\circ \fs_{W_\varrho} \circ \varrho) \boxtimes \tau = (\Ind_{P}^{G(W')} \pi\boxtimes \tau^\vee)\circ \fs_{W_\varrho'}\circ \varrho,
\]
which implies that
\begin{align}\label{planc_lhs}
\mu(s, (\pi\circ\fs_{W_\varrho}\circ\varrho)\boxtimes \tau) = \mu(-s, \pi\boxtimes \tau^\vee).
\end{align}
By \eqref{planc_rhs} and \eqref{planc_lhs}, we have \eqref{eq_planc}. This completes the proof of Proposition \ref{LLC-op}.
\end{proof}


\section{Appendix: Annotation on Fact \ref{pre_R} (i)}\label{app theta HC 1}

As in \S\ref{LTC}, the mainstream notation of Weil representation (or the oscillator representation) would depend on a non-trivial additive character $\psi$ of $F$. However, in \cite{LPTZ03} and \cite{Li89}, the non-trivial additive character in the definition of the oscillator representation is implicit (see Remark \ref{point} \eqref{point0}). In \S\S\ref{eH=1_cpt}--\ref{eH=-1_cpt}, we summarize a computation in the case one of the reductive groups consisting of the dual pair is anisotropic with fixing specific non-trivial additive character $\psi$ of $\R$.

Let $V_{p,q}$ (resp. $W_{p,q}$) be the Hermitian space (resp. skew-Hermitian space) over $\H$ defined in \S\ref{Arch_computation}. Recall that $X^*(S_+)$ and $X^*(S_-)$ are identified with $\Z^m$ and $\Z^n$. For a non-trivial additive character $\psi\colon \R \rightarrow \C^1$, we denote by $d_\psi$ the complex number satisfying $\psi(x) = e^{d_\psi x}$ for $x \in \R$, and put $\epsilon_\psi = d_\psi / |d_\psi|$. 

\subsection{The case \eqref{vsp I} with $W$ anisotropic}\label{eH=1_cpt}

The local theta correspondence for the dual pair $G(V_{m,0})\times G(-W_{p,0})$ with $e_\H = 1$ has been described by Kashiwara and Vergne \cite{KV78}. More precisely, they studied the representation $L_{k}$ of ${\rm Mp}(n) \times {\rm O}(k)$ on the space $L^2(M_{n,k})$. In the modern terminologies, at least when $k$ is even, the representation $L_k$ coincides with the restriction of the Weil-representation $\omega_{\psi}$ with $\epsilon_\psi = -\sqrt{-1}$, which can be verified by using the discussion of \S\ref{rel_ct} and by the formula of the projective representation $L_{k}$ of $\Sp(n)$ (see \cite[II.1.3]{KV78}). 

Now, we state a part of their results in the setting of our paper. Recall that identified $X^*(S_+)$ and $X^*(S_-)$ with $\Z^m$ and $\Z^n$. We put $K_+ = G(V_{m,0})(\R) \cap \GL_m(\R(i))$ where $\R(i)$ denotes the sub-field of $\H$ spanned by $\R$ and $i$. This is a maximal compact subgroup of $G(V_{m,0})(\R)$ containing $S_+$. Note that the signature of the quadratic space $-W_{p,0}^\natural$ is $(2p, 0)$ (see \S\ref{morita}). Then, they essentially proved the following:

\begin{fact}\label{KVfact}
Let  $\sigma$ be an irreducible representation of $G(-W_{n,0})(\R)$ having the highest weight $(\nu_1, \ldots, \nu_k, 0, \ldots, 0)$ where $0 \leq k \leq n$ so that $\nu_k \not=0$. Denote by $\mu(\sigma)$ the signature of $\sigma$ (in the sense of \cite[(6.10)]{KV78}). Then, for a non-trivial additive character $\psi$ of $\R$ with $\epsilon_\psi = -\sqrt{-1}$, $\Theta_\psi(\sigma)$ is non-zero if and only if either 
\begin{itemize}
\item $\mu(\sigma) = +$ and $\nu_k = 0$ for $k > m$, or
\item $\mu(\sigma) = -$, $n < m$, and $\nu_j \not=0$ for $j \leq 2(n-m)$.
\end{itemize}
Moreover, if $\Theta_\psi(\sigma)$ is non-zero, then it is irreducible and the $K$-type of the minimal degree has the highest weight
\begin{align}\label{KVcorr}
(0, \ldots, 0, -1, \ldots, -1, -\nu_k, \ldots, -\nu_1) - (n, \ldots, n).
\end{align}
where $0$ appears in $(m-k) - (1- \mu(\sigma))(n-k)$ times in the first term. 
\end{fact}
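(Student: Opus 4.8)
The plan is to obtain Fact \ref{KVfact} by transcribing the theorem of Kashiwara--Vergne \cite{KV78} into the conventions of this paper; the only genuinely new point is the identification of the oscillator representation they use, and once that is settled the statement becomes essentially a citation. First I would make the dual pair explicit. Since $e_\H = 1$ means $\H \cong \rM_2(\R)$, the Morita equivalence of \S\ref{morita} identifies $G(V_{m,0})$ with the symplectic group $\Sp(V_{m,0}^\natural) \cong \Sp_{2m}(\R)$ and $G(-W_{n,0})$ with $\O(-W_{n,0}^\natural)$, where $-W_{n,0}^\natural$ is positive definite of dimension $2n$. Under this identification the maximal compact subgroup $K_+ = G(V_{m,0})(\R)\cap\GL_m(\R(i))$ becomes the standard $\U(m) \subset \Sp_{2m}(\R)$, $S_+$ becomes its diagonal torus, and the identification $X^*(S_+) = \Z^m$ matches the usual labelling of $\widehat{\U(m)}$ by weakly decreasing integer sequences; similarly the two $\GL_m(\R(i))$-conjugacy classes of connected components of $\O(-W_{n,0}^\natural)$ match the two values of the sign $\mu(\sigma)$ of \cite[(6.10)]{KV78}.

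Second, I would pin down the additive character. Using Lemma \ref{op1} to convert between the left and right actions, and comparing the explicit formula for the projective oscillator representation in \cite[II.1.3]{KV78} with the formula for $r_{\psi,\Y}$ in \S\ref{LTC}, one checks that the restriction to $\Sp_{2m}(\R)\times\O(-W_{n,0}^\natural)$ of the representation studied by Kashiwara--Vergne coincides with the restriction of $\omega_\psi$ for the non-trivial additive character $\psi$ normalised by $\epsilon_\psi = -\sqrt{-1}$, the Gaussian vacuum vector $e^{-\pi|x|^2}$ of \cite{KV78} forcing this sign rather than $\sqrt{-1}$. I expect this to be the main obstacle: it is purely a matter of chasing two sign conventions through two realisations (a Schr\"odinger model on $L^2(M_{m,2n}(\R))$ versus the one fixed by the cocycle $c_{\psi,\Y}$), and an error here propagates directly into the direction of the $K$-type correspondence, which is precisely the subtlety flagged in Remark \ref{point} \eqref{point0}.

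Third, with $\psi$ fixed, the non-vanishing criterion and the minimal $K$-type are a direct restatement of \cite[\S6]{KV78}. The stability conditions there---namely $\nu_k = 0$ for $k > m$ when $\mu(\sigma) = +$, and $n < m$ together with $\nu_j \ne 0$ for $j \le 2(n-m)$ when $\mu(\sigma) = -$---are exactly the conditions displayed in the statement, and irreducibility of $\Theta_\psi(\sigma)$ when non-zero is also part of their theorem, so no separate appeal to Howe duality is required. For the highest weight, the portion $(0,\dots,0,-1,\dots,-1,-\nu_k,\dots,-\nu_1)$ is the transport of the highest weight of $\sigma$, the string of $-1$'s being the ``first occurrence'' contribution present only in the $\mu(\sigma) = -$ range, which accounts for the count $(m-k) - (1-\mu(\sigma))(n-k)$ of leading zeros; and the uniform shift $-(n,\dots,n)$ in \eqref{KVcorr} is the highest weight $\det^{-n}$ of the minimal $K_+$-type of $\omega_\psi$ itself, since each of the $2n$ columns of the Schr\"odinger model contributes $\det^{-1/2}$, the sign again being fixed by $\epsilon_\psi = -\sqrt{-1}$. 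I would record this last comparison of minimal $K$-types explicitly, as it is the one place where the base point of the correspondence must be matched by hand; everything else then follows by the translation just described.
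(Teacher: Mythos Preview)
Your proposal is correct and follows essentially the same approach as the paper. The paper does not give a proof of this Fact either: it is stated as a citation of Kashiwara--Vergne, preceded only by the remark that the representation $L_k$ of \cite[II.1.3]{KV78} coincides with $\omega_\psi$ for $\epsilon_\psi = -\sqrt{-1}$, this being verifiable via the discussion of \S\ref{rel_ct} (which contains your Lemma \ref{op1}) together with the explicit formula in \cite{KV78}. Your write-up simply unpacks that sentence in more detail---the Morita identification, the sign comparison for the Schr\"odinger model, and the translation of the non-vanishing and minimal $K$-type statements---so nothing substantive differs.
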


We denote by $\tau_n(\sigma)$ the irreducible representation of $K_+$ having the highest weight \eqref{KVcorr}. Note that if we use $-W_{0,n}$ instead of $-W_{n,0}$, then Fact \ref{KVfact} still hold by only replacing \eqref{KVcorr} with 
\begin{align}\label{KVcorr2}
(\nu_1, \ldots, \nu_k, 1, \ldots, 1, 0, \ldots, 0) + (n, \ldots, n).
\end{align}
We denote by $\tau_n'(\sigma)$ the irreducible representation of $K_+$ having the highest weight \eqref{KVcorr2}. 

\subsection{The case \eqref{vsp III} with $q=0$}\label{eH=-1_cpt}

In the case $e_\H = -1$, it seems to be necessary to compute the $K$-type correspondence in the space of joint harmonics for the dual pair $G(V_{p,0})\times G(W_{n,0})$. First, we recall the Fock model of the Weil representation following \cite{KK07} quickly. Let $\X, \Y$ be isotropic subspaces so that $\W = \X + \Y$, let $e_1, \ldots, e_N$ be a basis of $\X$ over $F$, let $e_1', \ldots, e_N'$ be a basis so that $\la\la e_k, e_l' \ra\ra =\delta_{k,l}$. Then, we denote by $\mathbb{K}$ the complex subspace of $\W\otimes\C$ spanned by $e_k - \sqrt{-1} e_k'$ for $k = 1, \ldots, N$, and by $\mathbb{L}$ the complex subspace of $\W\otimes\C$ spanned by $e_k' - \sqrt{-1} e_k$ for $k = 1, \ldots, N$. We consider the quantum algebra $\Omega_\psi(\W\otimes \C)$, which is given by
\[
T(\W\otimes\C)/ I(\{ w\otimes w' - w'\otimes w - d_\psi\la \la w, w' \ra \ra \mid w,w' \in \W\otimes \C\})
\]
where $T(\W\otimes\C)$ is the tensor algebra of $\W\otimes\C$ and $I(A)$ denotes the two-sided ideal generated by a given subset $A$ of $T(\W\otimes\C)$. Then, the quotient $\Omega_\psi(\W\otimes\C)/\Omega_\psi(\W\otimes\C)\K$ is naturally isomorphic to the symmetric algebra $\Sym(\L)$ of $\L$. Then there exists a Lie algebra homomorphism $\cF_\psi^{(\W)}\colon \fsp(\W) \rightarrow \Omega_\psi(\W\otimes\C)$ so that 
\begin{align}
\label{fock_char1} &\cF_\psi^{(\W)}(\fsp(\W)) \subset \Omega_\psi(\W\otimes\C)^{(2)}, \\
\label{fock_char2} &\cF_\psi^{(\W)}(X)\otimes w - w \otimes \cF_\psi^{(\W)}(X) = w\cdot X
\end{align}
for $w \in \W\otimes\C$ and $X \in \fsp(\W)$ . One can show that the Lie-algebra homomorphism $\cF_\psi^{(\W)}$ is determined uniquely by the conditions \eqref{fock_char1} and \eqref{fock_char2}. We write $\cF_\psi$ instead of $\cF_\psi^{(\W)}$ if there is no fear of confusion. By this embedding, we have the action $\fsp(\W)$ on $\Sym(\L)$, which is called the Fock model of the Weil representation and is referred to as $r_\psi$ in \cite{KK07}. We identify $\W$ with $\rM_{m,n}(\H)$ by the isomorphism given by $x\otimes y \mapsto (x_k \cdot y_l)_{k,l}$. Then, the symplectic form $\la\la \ , \ \ra\ra$ on $\rM_{m,n}(\H)$ is given by
\[
\la\la X, Y \ra\ra = \Tr_{\H/\R}(X \cdot i \cdot {}^tY^*) 
\] 
for $X, Y \in \rM_{p,n}(\H)$. Thus, the subspaces
\[
\X = \rM_{p,n}(\R(j)) \mbox{ and } \Y = \rM_{p,n}(\R(j))\cdot i
\]
are isotropic. Obviously, we have $\W = \X + \Y$. We denote by $e_{a,b}(x)$ the matrix whose $(a,b)$-component is $x$ and the other components are $0$. Take the basis $\{e_{a,b}(1), e_{a,b}(j)\}_{1 \leq a \leq p, 1 \leq b \leq n}$ for $\X$ and $\{e_{a,b}(i), -e_{a,b}(ij)\}_{1 \leq a \leq p, 1 \leq b \leq p}$ for $\Y$.  Then, the basis $\{e_{a,b}(1), e_{a,b}(j) \}_{a,b} \cup\{ e_{a,b}(i), -e_{a,b}(ij)\}_{a,b}$ consists a Witt basis of $\W$ in the sense of \cite[\S2]{KK07}.  Moreover, we put 
\begin{align*}
&\be_{a,b} = \frac{1}{2}(e_{a,b}(1) - \epsilon_\psi \cdot e_{a,b}(i)), \\
&\bf_{a,b} = \frac{1}{2}(e_{a,b}(j) + \epsilon_\psi \cdot e_{a,b}(ij)), \\
&\be_{a,b}' = \frac{1}{2}(-\epsilon_\psi \cdot e_{a,b}(1) + e_{a,b}(i)), \\
&\bf_{a,b}' = \frac{1}{2}(-\epsilon_\psi \cdot e_{a,b}(j) -  e_{a,b}(ij))
\end{align*}
for $a = 1, \ldots, p$ and $b = 1, \ldots, n$. We denote by $\K$ the subspace of $\W\otimes_\R\C$ spanned by $\{\be_{a,b} , \bf_{a,b}\}_{a,b}$, and by $\L$ the subspace of $\W\otimes_\R\C$ spanned by $\{\be_{a,b}', \bf_{a,b}'\}_{a,b}$. 
We write down the formulas of $\cF_\psi(d\iota(X))$ when $X$ is in the image of the differential $d\iota$ of $\iota$.
Put \[
\sigma_{a,b}(x) = \frac{1}{2}(e_{ab}(x) - e_{ba}(x^*))
\]
for $x \in \H$, and put
\begin{align*}
h_{a,b} &= \epsilon_\psi \sigma_{a,b}(1) + \sigma_{a,b}(i), \\
x_{a,b} &= \epsilon_\psi \sigma_{a,b}(j) +  \sigma_{a,b}(ij), \\
y_{a,b} &= \epsilon_\psi \sigma_{a,b}(j) - \sigma_{a,b}(ij).
\end{align*}
Then, they spans the Lie algebra $\fg(V)\otimes\C$ as a vector space over $\C$, and we have
\begin{align}
\cF_\psi(d\iota(h_{ab})) &= \epsilon_\psi\cdot\sum_{c=1}^n \left( w_{ac}\frac{\partial}{\partial w_{bc}} - z_{bc}\frac{\partial}{\partial z_{ac}}\right), \label{fock_h1} \\
\cF_\psi(d\iota(x_{ab})) &= \epsilon_\psi\cdot \sum_{c=1}^n\left( w_{bc}\frac{\partial}{\partial z_{ac}}  - w_{ac}\frac{\partial}{\partial z_{bc}} \right), \label{fock_x} \\
\cF_\psi(d\iota(y_{ab})) &= \epsilon_\psi\cdot\sum_{c=1}^n\left(z_{ac}\frac{\partial}{\partial w_{bc}} - z_{bc}\frac{\partial}{\partial w_{ac}}\right) \label{fock_y}
\end{align}
for $1 \leq a, b \leq p$ with $a\not=b$. On the other hand, put
\[
s_{ab}(x) = \frac{1}{2} (e_{ab}(x) - e_{ba}(ix^*i^{-1})) 
\]
for $x \in D$, and put
\begin{align*}
k_{ab} &= s_{a,b}(i) + \epsilon_\psi s_{a,b}(1), \\
p_{a,b} &= s_{a,b}(j) - \epsilon_\psi \cdot s_{a,b}(ij), \\
\overline{p}_{a,b} &= s_{a,b}(j) + \epsilon_\psi \cdot s_{a,b}(ij).
\end{align*}
Then they spans the Lie algebra $\fg(W)\otimes\C$ as a vector space over $\C$, and we have
\begin{align}
\cF_\psi(d\iota(k_{ab})) &=\epsilon_\psi\cdot \sum_{c=1}^m \left(z_{ca}\frac{\partial}{\partial z_{cb}} + w_{ca}\frac{\partial}{\partial w_{cb}}\right) + \epsilon_\psi\cdot \delta_{a,b} \cdot m, \label{fock_p1}\\
\cF_\psi(d\iota(p_{ab})) &= \frac{1}{|d_\psi|} \sum_{c=1}^m (z_{ca}w_{cb} - w_{ca}z_{cb}) \label{Fock tW}\\
\cF_\psi(d\iota(\overline{p}_{ab})) &=  |d_\psi| \sum_{c=1}^m \left(\frac{\partial^2}{\partial z_{cb} \partial w_{ca}} - \frac{\partial^2}{\partial z_{ca}\partial w_{cb}}  \right)  \label{fock_p2}
\end{align}
for $1 \leq a, b \leq n$. 
Then, we consider special vectors given by as follows. Let $\underline{r} = (r_1, \ldots, r_p) \in \Z^p$.
We define
\begin{align}
&v(\underline{r}) = \prod_{k=1}^p \det\begin{pmatrix} w_{1,1} & \cdots & w_{1,k} \\ \vdots & \ddots & \vdots \\ w_{k,1} & \cdots & w_{k,k} \end{pmatrix}^{r_k}, \label{max_vct1}
\end{align}
By using the formulae \eqref{fock_h1}-\eqref{fock_p2}, we have the following.
\begin{prop}\label{q=0_desc}
Assume that $\epsilon_\psi = \sqrt{-1}$. 
\begin{enumerate}[(1)]
\item The polynomial $v(\underline{r})$ is contained in the space of joint harmonics.
\item The polynomial $v(\underline{r})$ is a maximal vector with respect to both $\Delta_c^+$ and $\Delta_c^-$. 
\item The action of ${\rm Lie}(S_+) \times {\rm Lie}(S_-)$ on $v(\underline{r})$ is given by the character
\[
\sum_{k=1}^p (r_k +  \cdots + r_p)\alpha_k + \sum_{l = 1}^n (p + r_l + \cdots + r_n)\beta_l.
\] 
Here, we put $r_t = 0$ if $t > p$.
\end{enumerate}
\end{prop}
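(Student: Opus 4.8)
The plan is to verify Proposition \ref{q=0_desc} by direct computation with the explicit Fock-model formulas \eqref{fock_h1}--\eqref{fock_p2}, specialized to $\epsilon_\psi = \sqrt{-1}$, applied to the polynomial $v(\underline r)$ of \eqref{max_vct1}. All three assertions are ``plug in and simplify'' statements, so the work is in organizing the bookkeeping of the leading principal minors $\delta_k := \det(w_{a,b})_{1\le a,b\le k}$, from which $v(\underline r) = \prod_{k=1}^p \delta_k^{r_k}$.

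First I would record the two basic differential identities for the minors $\delta_k$. Since $\delta_k$ is a polynomial in the variables $w_{a,b}$ with $1\le a,b\le k$ only, one has $\partial \delta_k/\partial z_{a,b} = 0$ for all indices, and $\partial \delta_k/\partial w_{a,b}$ is (up to sign) the complementary $(k-1)\times(k-1)$ minor, vanishing unless $a,b\le k$. The key algebraic facts are the Euler-type relations $\sum_{c=1}^k w_{a,c}\,\partial\delta_k/\partial w_{b,c} = \delta_{a,b}\,\delta_k$ for $a\le k$ and $\sum_{c=1}^k w_{c,a}\,\partial\delta_k/\partial w_{c,b} = \delta_{a,b}\,\delta_k$ for $a\le k$ (these are just Laplace expansion / the identity $W\cdot\mathrm{adj}(W) = \det(W) I$ read off row-wise and column-wise). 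I would state these once and then use the product rule $\partial v(\underline r)/\partial X = v(\underline r)\sum_k r_k\,\delta_k^{-1}\,\partial\delta_k/\partial X$ throughout.

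With these in hand, part (3) is immediate: by \eqref{fock_h1} with $a=b$ (the torus $\mathrm{Lie}(S_+)$ is spanned by the $h_{a,a}$, and $\mathrm{Lie}(S_-)$ by the $k_{a,a}$ via \eqref{fock_p1}), the operator $\mathcal F_\psi(d\iota(h_{a,a}))$ acts on $v(\underline r)$ by the scalar $\sum_{c} (w_{a,c}\partial/\partial w_{a,c} - z_{a,c}\partial/\partial z_{a,c})$, which kills the $z$-free $v(\underline r)$ and records the total $w_{a,\bullet}$-degree, namely $\sum_{k\ge a} r_k$; summing against the $\alpha_k$ dual basis gives the stated $\alpha$-weight, and the $\beta$-weight follows the same way from \eqref{fock_p1}, the ``$+\epsilon_\psi\delta_{a,b}m$'' term producing the shift by $p$ (here $m$ in \eqref{fock_p1} is the rank of $V$, which is $p$ in our anisotropic setting). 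For part (2), ``maximal vector'' means annihilation by the positive root vectors of both $\Delta_c^+$ and $\Delta_c^-$; since $v(\underline r)$ only involves $w$-variables, the raising operators $x_{a,b}$ of \eqref{fock_x} act by $\sum_c(w_{b,c}\partial/\partial z_{a,c} - w_{a,c}\partial/\partial z_{b,c})$ and annihilate it outright, while the $h_{a,b}$ with $a<b$ (root vectors for $\alpha_a-\alpha_b$) give $\sum_c w_{a,c}\partial/\partial w_{b,c}$, which one checks vanishes on $v(\underline r)$ using the row-Euler relation above (for $a<b$ the off-diagonal Laplace expansion gives $0$). I would identify precisely which $h_{a,b}, x_{a,b}, k_{a,b}, p_{a,b}$ correspond to the simple roots in $\Delta_c^\pm$ (recall the definitions of $\Delta_c^+,\Delta_c^-$ in the $e_\H=-1$ case) and dispatch each by the same minor identity. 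Finally, part (1), membership in the space of joint harmonics, requires showing $v(\underline r)$ is annihilated by the two ``lowering'' images $\mathcal F_\psi(d\iota(\overline p_{a,b}))$ and $\mathcal F_\psi(d\iota(p_{a,b}))$ in the appropriate sense: by \eqref{fock_p2}, $\mathcal F_\psi(d\iota(\overline p_{a,b}))$ is a second-order operator in mixed $z$- and $w$-derivatives, and since $v(\underline r)$ is independent of all $z_{c,b}$ this is identically $0$; the harmonicity with respect to the $G(V)$-side is the content of $y_{a,b}$ of \eqref{fock_y}, again a $z$-differentiation that annihilates $v(\underline r)$, so the polynomial lies in the kernel of the relevant ideal of lowering operators and hence in the joint harmonics.

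The main obstacle I anticipate is purely organizational rather than conceptual: matching up the abstract simple-root vectors in $\Delta_c^+$ and $\Delta_c^-$ (whose description in the $e_\H=-1$ case involves both differences $\beta_a-\beta_b$ and, on the $V$-side, sums $\alpha_a\pm\alpha_b$ and doublings $2\alpha_a$) with the correct $\mathbb C$-linear combinations of the generators $h_{a,b}, x_{a,b}, y_{a,b}, k_{a,b}, p_{a,b}, \overline p_{a,b}$, and then confirming that \emph{every} positive root vector (not merely the simple ones, though simple suffices) acts by a first- or second-order operator that is either a pure $z$-derivative (hence zero on $v(\underline r)$) or an off-diagonal row/column contraction of a minor (hence zero by Laplace expansion). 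The sign conventions in \eqref{fock_x}--\eqref{fock_p2}, together with the fixed choice $\epsilon_\psi=\sqrt{-1}$, must be tracked carefully so that the ``$+\delta_{a,b}m$'' constant in \eqref{fock_p1} lands with the correct sign to produce the shift $+p$ rather than $-p$ in the $\beta$-weight; this is where I would be most careful.
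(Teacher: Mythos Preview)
Your approach is the same as the paper's (which simply asserts the proposition follows ``by using the formulae \eqref{fock_h1}--\eqref{fock_p2}''), and your organization via the cofactor identities $\sum_c w_{a,c}\,\partial\delta_k/\partial w_{b,c} = \delta_{a,b}\,\delta_k$ is exactly the right bookkeeping for parts (2) and (3).

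There is one genuine slip in your treatment of part (1). You claim $y_{a,b}$ is ``a $z$-differentiation that annihilates $v(\underline r)$'', but \eqref{fock_y} reads $\epsilon_\psi\sum_c(z_{ac}\,\partial/\partial w_{bc} - z_{bc}\,\partial/\partial w_{ac})$, which is $z$-\emph{multiplication} paired with $w$-differentiation; this does \emph{not} kill a $z$-free polynomial. Fortunately $y_{a,b}$ is irrelevant: it is the root vector for the \emph{negative} root $-(\alpha_a+\alpha_b)$, so it is needed neither for maximality in (2) nor for harmonicity in (1). What actually happens in (1) is that, since $G(V_{p,0})$ is compact (equal to its own $K_+$), the centralizer $M_V$ of $K_-$ in $\Sp(\W)$ is the compact group $U(2p)$, so its degree-$(-2)$ part $\mathfrak m_V^{(2)}$ is zero; the joint-harmonic condition therefore reduces to annihilation by $\mathfrak m_W^{(2)}$ alone, i.e.\ by the operators $\cF_\psi(d\iota(\overline p_{ab}))$ of \eqref{fock_p2}, which you already handled correctly via $z$-independence. (Note also that $p_{ab}$ in \eqref{Fock tW} is degree $+2$, hence a raising operator, not a lowering one; and the compact positive roots on the $G(W)$-side are carried by $k_{ab}$, not $p_{ab}$.) Once you drop the spurious appeal to $y_{ab}$ and $p_{ab}$, your argument goes through unchanged.
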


\subsection{The correspondence of limits of discrete series}\label{conclusion} 

Assume that $\epsilon_\psi = \sqrt{-1}$. Then, we have:
\begin{prop}\label{DS_corr}
Put $(V,W) = (V_{m,0}, W_{p,q})$ if $e_\H = 1$ and $(V,W) = (V_{p,q}, W_{n,0})$ if $e_\H=-1$.  
Let $\sigma$ be an irreducible limit of discrete series representation of $G(V)(\R)$ having the Harish-Chandra parameter $(\mu_\sigma, \Psi_\sigma)$. Then, $\theta_\psi(\sigma, W)$ is non-zero if and only if  $\xi_\bullet^{\sqrt{-1}}(\mu_\sigma, \Psi_\sigma) \in \cY$. Moreover, if $\theta_\psi(\sigma, W) \not=0$, then its Harish-Chandra parameter is $\xi_\bullet^{\sqrt{-1}}(\mu_\sigma, \Psi_\sigma)$.
\end{prop}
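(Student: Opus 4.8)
The plan is to reduce the statement to the two explicit computations carried out in \S\ref{eH=1_cpt} and \S\ref{eH=-1_cpt}, i.e.\ to the anisotropic cases treated by Kashiwara--Vergne (Fact \ref{KVfact}) and by the Fock-model calculation (Proposition \ref{q=0_desc}). First I would fix $\epsilon_\psi = \sqrt{-1}$ throughout, as in the hypothesis, and recall that for the dual pair in question one of the two groups ($G(W_{p,q})$ if $e_\H=1$, $G(V_{p,q})$ if $e_\H=-1$) is, up to the pieces already handled, built from an anisotropic group by adding a split part; but for the limit-of-discrete-series statement itself the relevant reductive member is \emph{compact} ($G(V_{m,0})$ resp.\ $G(W_{n,0})$), so the theta lift is governed entirely by a $K$-type / joint-harmonics computation. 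The first step is therefore to translate Fact \ref{KVfact} (in the case $e_\H=1$) and Proposition \ref{q=0_desc} (in the case $e_\H=-1$) from the language of highest weights of $K$-types to the language of Harish-Chandra parameters $(\mu,\Psi)\in\cX$, $\cY$ introduced in \S\ref{Arch_computation}. Concretely: an irreducible (limit of) discrete series $\sigma$ of $G(V)(\R)$ with parameter $(\mu_\sigma,\Psi_\sigma)$ has a distinguished lowest/minimal $K_+$-type, the "Vogan--Zuckerman" $K$-type, whose highest weight is a known affine-linear function of $\mu_\sigma$; and the theta lift of a one-dimensional representation of the compact member is pinned down by Fact \ref{KVfact} / Proposition \ref{q=0_desc}. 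Matching these two affine formulas is exactly the definition of $\xi_\bullet^{\sqrt{-1}}$ given in \S\ref{Arch_computation} (the maps $\xi^u$, $\xi_\btru^u$, $\xi_\btrd^u$), so the non-vanishing criterion "$\xi_\bullet^{\sqrt{-1}}(\mu_\sigma,\Psi_\sigma)\in\cY$" and the identification of the Harish-Chandra parameter of $\theta_\psi(\sigma,W)$ will follow once the bookkeeping of the two bases $X^*(S_+)=\Z^m$, $X^*(S_-)=\Z^n$ is reconciled with the coordinates used in \cite{KV78} and in \eqref{fock_h1}--\eqref{fock_p2}.

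The second step handles the reduction from a general compact member to the literature statements. For $e_\H=1$, Fact \ref{KVfact} gives both the non-vanishing criterion (via the signature $\mu(\sigma)$ and the constraint on the $\nu_j$) and the minimal $K$-type \eqref{KVcorr} of $\Theta_\psi(\sigma)$; I would observe that the "signature $+$'' branch corresponds to the lift landing in the orthogonal group $\rO(W_{p,q})$ and produces $\xi_\btru^{\sqrt{-1}}$ or $\xi_\btrd^{\sqrt{-1}}$ according to which chamber of $\cY$ the image weight lies in, while the "signature $-$'' branch accounts for the other of the two up/down alternatives; the $K$-type \eqref{KVcorr2} obtained with $-W_{0,n}$ in place of $-W_{n,0}$ covers the remaining sign of the discriminant. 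For $e_\H=-1$, Proposition \ref{q=0_desc} says the polynomial $v(\underline r)$ of \eqref{max_vct1} is simultaneously a joint-harmonic vector and a highest-weight vector for both $\Delta_c^+$ and $\Delta_c^-$, and computes its $S_+\times S_-$-weight; reading off that weight and running over $\underline r\in\Z^p$ exhausts exactly those $(\mu_\sigma,\Psi_\sigma)$ with $\xi^{\sqrt{-1}}(\mu_\sigma,\Psi_\sigma)\in\cY$, and the weight formula \emph{is} $\xi^{\sqrt{-1}}$ after the identifications. The passage from "the compact group is $G(V_{p,0})$'' to "the compact group is $G(V_{p,q})$ with $q=0$'' is vacuous; if $q>0$ on the noncompact side, that is absorbed because $W_{n,0}$ (resp.\ $V_{m,0}$) remains the compact member, so no further reduction is needed for this proposition as stated.

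The third step is to invoke the characterization of $A_\fq(\lambda)$-modules: a limit of discrete series of $G(V)$ is cohomologically induced from a one-dimensional character of a $\theta$-stable Levi, and by \cite[Proposition 6.1]{VZ84} (quoted in Remark \ref{point}\eqref{point0}) the theta lift of such a module is determined by the theta lift of the inducing character together with compatibility of the cohomological induction functors on both sides. So once the base case (lift of a character of the compact member) is matched with $\xi_\bullet^{\sqrt{-1}}$ as above, the general limit-of-discrete-series case follows by the functoriality of $A_\fq(\lambda)$ under theta. Here I would lean on the fact, recorded in Remark \ref{point}\eqref{point0}, that Li's argument via \cite{VZ84} is valid for all the cases in Fact \ref{pre_R}, hence for all pairs $(\mu_\sigma,\Psi_\sigma)\in\cX$ relevant here.

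I expect the main obstacle to be the sign and normalization bookkeeping in the first step: identifying precisely which of $\xi^{\sqrt{-1}}$, $\xi_\btru^{\sqrt{-1}}$, $\xi_\btrd^{\sqrt{-1}}$ matches the Kashiwara--Vergne minimal $K$-type \eqref{KVcorr}/\eqref{KVcorr2}, and verifying that the Fock-model weight in Proposition \ref{q=0_desc}(3) coincides with $\xi^{\sqrt{-1}}$ once one accounts for the shifts by $\rho$, the flip of chambers encoded in $\Delta_c^\pm$, and the choice $\epsilon_\psi=\sqrt{-1}$ (the latter is exactly the point flagged in Remark \ref{point}: M\oe glin's opposite convention would give $\xi_\bullet^{-\sqrt{-1}}$). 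The rest — non-vanishing is equivalent to the image pair lying in $\cY$, and the resulting Harish-Chandra parameter being the image — is then a formal consequence of the explicit formulas and of the $A_\fq(\lambda)$ functoriality, with no new analytic input.
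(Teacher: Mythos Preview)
There is a genuine gap: your first and second steps rest on the claim that one member of the dual pair is compact --- you assert that ``the relevant reductive member is compact ($G(V_{m,0})$ resp.\ $G(W_{n,0})$)'' and that Fact~\ref{KVfact}/Proposition~\ref{q=0_desc} therefore apply directly. This is false. When $e_\H=1$ we have $G(V_{m,0})\cong\Sp_{2m}(\R)$ via Morita (\S\ref{morita}), and when $e_\H=-1$ the group $G(W_{n,0})$ is the noncompact $\SO^*(2n)$. The subscript ``$0$'' records the form over $\H$, not compactness of the real group. The anisotropic computations in \S\ref{eH=1_cpt}--\S\ref{eH=-1_cpt} instead treat the case where the \emph{other} member (the one carrying the $(p,q)$ index) has $q=0$; for general $(p,q)$ neither group is compact and the theta lift is not determined by a single $K$-type computation.

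The paper fills this gap with two ingredients you have not supplied. First, for the forward direction it combines Przebinda's infinitesimal-character correspondence \cite[Theorem~1.13]{Prz96} with a joint-harmonics analysis (Lemma~\ref{JHlem}) that decomposes $\W=\W_{p,0}\oplus\W_{0,q}$ and constructs a harmonic highest-weight vector as a tensor $v_{p,0}\otimes v_{0,q}$ of anisotropic pieces; only then does the $A_\fq(\lambda)$-characterization of \cite[Proposition~6.1]{VZ84} pin down the Harish-Chandra parameter of $\theta_\psi(\sigma,W)$. Second, for non-vanishing it runs a see-saw/Hom--tensor argument: writing the lowest $K$-type of $\sigma$ as $\tau_1\boxtimes\tau_2$ for the compact factors $G(V_{p,0})\times G(V_{0,q})$ (resp.\ $G(W_{p,0})\times G(W_{0,q})$), lifting each $\tau_i$ via the genuinely compact correspondence, and then showing $\Theta_\psi(\tau_1,W)\otimes\Theta_\psi(\tau_2,W)$ surjects onto the candidate $\pi$ (equation~\eqref{argli}), whence $\theta_\psi(\pi,V)=\sigma$. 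Your Step~3 gestures at $A_\fq(\lambda)$-functoriality but does not supply either mechanism; in particular, \cite[Proposition~6.1]{VZ84} is a characterization theorem, not a statement that cohomological induction commutes with theta.
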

This proposition implies that the non-trivial additive character defining the Weil representation in \cite{Li89} is $\psi$ with $\epsilon_{\psi} = \sqrt{-1}$. The proof goes the same line with \cite{Li89}. However, we write the proof for the readers since we discuss a bit extended version.

The strategy of the proof is the use of the characterization of the module ``$A_{\mathfrak{q}}(\lambda)$'' in terms of infinitesimal characters and $K$-types \cite[Proposition 6.1]{VZ84} (see also \cite[Proposition 6.1]{Li89}). We put $(\mu, \Psi) = \xi_\bullet^{\sqrt{-1}}(\mu_\sigma, \Psi_\sigma)$. Denote by $\chi[\mu]$ the infinitesimal character obtained by $\eta$ via the Harish-Chandra isomorphism. Denote by $\mathfrak{z}^{G(W)}$ the algebra of the $G(W)$-fixed points of the center $\mathfrak{z}$ of the universal enveloping algebra of $\fg(W)$. Then, the restriction of an infinitesimal character of an irreducible component of $\theta_\psi(\sigma, W)|_{G_0(W)}$ to $\mathfrak{z}^{G(W)}$ is determined uniquely from $\theta_\psi(\sigma, W)$ if it is non-zero, which we denote by $\chi_{\theta_\psi(\sigma, W)}$. Then, by \cite[Theorem 1.13]{Prz96}, we obtain
\begin{align}\label{infin_corr}
\chi[\mu]|_{\mathfrak{z}^{G(W)}} = \chi_{\theta_\psi(\sigma, W)}.
\end{align}

Then, we analyze the $K$-types correspondence in the space of the joint harmonics \cite{How89}. We denote by $\underline{1}_k$ the element $(1, \ldots, 1)$ of $\Z^k$ for a positive integer $k$.
\begin{lem}\label{JHlem}
Assume that $\theta_\psi(\sigma, W) \not=0$.
\begin{enumerate}
\item The lowest $K$-type of $\sigma$ is given by $\mu_\sigma + \rho(\Psi_\sigma) - 2 \rho(\Delta_c^+)$. \label{LKTfml}
\item The $K$-type $\mu_\sigma + \rho(\Psi_\sigma) - 2 \rho(\Delta_c^+)$ occurs in the space of joint harmonics.\label{LKT is min}
\item  The space of joint harmonics contains 
\[
(\mu_\sigma +  \rho(\Psi_\sigma) - 2 \rho(\Delta_c^+))\boxtimes  \xi_{\bullet0}^{\sqrt{-1}}(\mu_\sigma + \rho(\Psi_\sigma) - 2 \rho(\Delta_c^+))
\]
as representation of $K_+ \times K_-$. Here, we put 
\[
\xi_{\bullet0}^{\sqrt{-1}}(a) =  \begin{cases} \xi_\bullet^{\sqrt{-1}}(a - (p-q)\cdot \underline{1}_m) & (e_\H = 1), \\  \xi_\bullet^{\sqrt{-1}}(a) + (p-q) \cdot \underline{1}_n & (e_\H = -1)
\end{cases}
\]
for $a \in \Z^m$. \label{JHlem3}
\item We have
\[
\xi_{\bullet0}^{\sqrt{-1}}(\mu_\sigma + \rho(\Psi_\sigma) - 2 \rho(\Delta_c^+)) = \mu + \rho(\Psi) - 2\rho(\Delta_c^-).
\] \label{JHlem4}
\end{enumerate}
\end{lem}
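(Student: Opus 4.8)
\textbf{Proof proposal for Lemma \ref{JHlem}.}

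The plan is to prove the four assertions essentially in the order stated, since each feeds into the next, and to follow the strategy of \cite{Li89} with the Fock-model computations of \S\ref{eH=1_cpt}--\S\ref{eH=-1_cpt} supplying the explicit information. For part \eqref{LKTfml} I would recall the standard description of the lowest $K$-type of a (limit of) discrete series representation in terms of its Harish-Chandra parameter and the chosen positive system: the lowest $K$-type has highest weight $\mu_\sigma + \rho(\Psi_\sigma) - 2\rho(\Delta_c^+)$, where $\rho(\Psi_\sigma)$ (resp. $\rho(\Delta_c^+)$) is the half-sum of the roots in $\Psi_\sigma$ (resp. of the compact positive roots); this is Blattner's formula in the form used in \cite[Theorem~9.20]{Kna01} for discrete series, extended to limits of discrete series as in \cite{Har66}. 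So \eqref{LKTfml} is a bookkeeping statement once the conventions for $S_+$, $\Delta_c^+$ fixed in \S\ref{Arch_computation} are matched against the standard ones.

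For part \eqref{LKT is min} the key input is the general structure theorem of Howe \cite{How89} and its refinement by Przebinda: when $\theta_\psi(\sigma, W) \neq 0$, the joint harmonics realize a bijective correspondence of certain $K_+$-types and $K_-$-types, and the $K_+$-type of minimal degree occurring in $\sigma$ — which is precisely the lowest $K$-type by the minimality in part \eqref{LKTfml} — must occur in the space of joint harmonics (this is exactly the content that makes the oscillator representation ``see'' the lowest $K$-type). I would cite the relevant statement from \cite{How89} (or \cite{KV78} in case \eqref{vsp I}, \cite{LPTZ03} in case \eqref{vsp III}) and check that the degree filtration used there is compatible with our normalization. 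For part \eqref{JHlem3} I would then carry out the explicit Fock-model computation: in case \eqref{vsp I} one uses Fact \ref{KVfact} together with the $K$-type formulas \eqref{KVcorr}, \eqref{KVcorr2}, and in case \eqref{vsp III} one uses the explicit maximal vectors $v(\underline{r})$ of \eqref{max_vct1} and Proposition \ref{q=0_desc}, which identifies the $\mathrm{Lie}(S_+)\times\mathrm{Lie}(S_-)$-weight of $v(\underline{r})$. Matching that weight against $\mu_\sigma + \rho(\Psi_\sigma) - 2\rho(\Delta_c^+)$ on the $K_+$-side pins down the $\underline{r}$, and then the $K_-$-weight computed by the same proposition is, after the shift by $(p-q)\underline{1}$, exactly $\xi_{\bullet 0}^{\sqrt{-1}}(\mu_\sigma + \rho(\Psi_\sigma) - 2\rho(\Delta_c^+))$; the shift by $(p-q)$ accounts for the discrepancy between the ``naive'' weight read off from the polynomial model and the genuine $K$-type, coming from the metaplectic $\tfrac12$-shift (the term $\epsilon_\psi\cdot\delta_{a,b}\cdot m$ in \eqref{fock_p1} and its analogue).

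Part \eqref{JHlem4} is then a purely combinatorial identity: one must verify that applying $\xi_{\bullet 0}^{\sqrt{-1}}$ to $\mu_\sigma + \rho(\Psi_\sigma) - 2\rho(\Delta_c^+)$ yields $\mu + \rho(\Psi) - 2\rho(\Delta_c^-)$, where $(\mu,\Psi) = \xi_\bullet^{\sqrt{-1}}(\mu_\sigma,\Psi_\sigma)$. Since $\xi_\bullet^{\sqrt{-1}}$ is (up to signs and a coordinate permutation) a signed permutation of coordinates sending $\Psi_\sigma$ to $\Psi$, it suffices to check that it carries $\rho(\Psi_\sigma) - 2\rho(\Delta_c^+)$ to $\rho(\Psi) - 2\rho(\Delta_c^-)$ up to the constant vector $\mp(p-q)\underline{1}$; this I would do by writing out $\rho(\Psi_\sigma)$, $\rho(\Delta_c^+)$, $\rho(\Psi)$, $\rho(\Delta_c^-)$ in coordinates using the definitions of $\Delta_c^\pm$ from \S\ref{Arch_computation} and the explicit formulas for $\xi^{\pm\sqrt{-1}}$, $\xi_\btru^{\pm\sqrt{-1}}$, $\xi_\btrd^{\pm\sqrt{-1}}$, treating the cases $e_\H = \pm 1$ and $n = m$ versus $n = m+1$ separately. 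The main obstacle, and the step requiring the most care, will be part \eqref{JHlem3}: correctly tracking the metaplectic shift and the $(p-q)$-correction through the Fock model so that the polynomial-model weight is translated into the genuine $K$-type weight, and doing so uniformly across the several cases of $\xi_\bullet$. The combinatorial identity in \eqref{JHlem4} is routine but lengthy, and parts \eqref{LKTfml}, \eqref{LKT is min} are essentially citations once conventions are aligned.
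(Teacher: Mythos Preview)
Your overall strategy matches the paper's, and parts \eqref{LKTfml}, \eqref{LKT is min}, \eqref{JHlem4} are handled as the paper does (the paper cites \cite[\S7]{Vog79} for \eqref{LKTfml}, the degree formulae of \cite[Lemma~1.4.5]{Pau98} and \cite[Lemma~3.4]{LPTZ03} for \eqref{LKT is min}, and carries out exactly the case-by-case combinatorics you describe for \eqref{JHlem4}).

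There is, however, a gap in your plan for \eqref{JHlem3}. Proposition~\ref{q=0_desc} and the vectors $v(\underline{r})$ of \eqref{max_vct1} are established only for the \emph{anisotropic} pair $G(V_{p,0})\times G(W_{n,0})$ (i.e.\ $q=0$); they do not by themselves give joint harmonics for the full pair $G(V_{p,q})\times G(W_{n,0})$. The paper bridges this by decomposing $\W = \W_{p,0}\oplus\W_{0,q}$ and hence $\Sym(\L)=\Sym(\L_{p,0})\otimes\Sym(\L_{0,q})$, taking products $v_0 = v_{p,0}\otimes v_{0,q}$ of minors from the two factors, and then explicitly verifying that $v_0$ is killed by $\cF_\psi(\mathfrak{m}_V^{(2)})$ and $\cF_\psi(\mathfrak{m}_W^{(2)})$. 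This last verification is the substantive step: one analyzes the shape of the second-order differential operators arising from the centralizers $M_V$, $M_W$ and checks they annihilate products of minors built separately from the $\L_{p,0}$- and $\L_{0,q}$-variables. Only after this does Proposition~\ref{q=0_desc} (applied to each factor) yield the claimed $K_+\times K_-$-type. Your proposal invokes Proposition~\ref{q=0_desc} directly without this tensor-product and harmonicity step, so as written the argument for \eqref{JHlem3} would not go through when $p,q>0$.
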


\begin{proof}
The proof of the assertion \eqref{LKTfml} is contained in \cite[\S7]{Vog79}. Then, by the formula of the degree of the $K$-types (c.f. \cite[Lemma 1.4.5]{Pau98}, \cite[Lemma 3.4]{LPTZ03}), we have $\mu_\sigma + \rho(\Psi_\sigma) - 2\rho(\Delta_c^+)$ has the minimal degree. This proves \eqref{LKT is min}. 

We prove \eqref{JHlem3}. We only discuss in the $e_\H=-1$ case for simplicity. The parallel proof goes for $e_\H=1$ cases except that some replacements of symbols are necessary because not $V_{p,0}, V_{0,q}$ but $W_{p,0}, W_{0,q}$ are anisotropic. We denote by $\W_{p,0}$ the tensor product $V_{p,0} \otimes W_{n,0}$, and by $\W_{0,q}$ the tensor product $V_{0,q}\otimes W_{n,0}$.  We denote by $\L$ a maximal subspace of $\W\otimes \C$ so that the Hermitian form $(x,y) \mapsto -\sqrt{-1}\la\la x, \overline{y} \ra\ra$ on $\L$ is negatively defined and nondegenerate. Then, $\L$ decomposes into $\L_{p,0}\oplus \L_{0,q}$ along with $\W\otimes \C = (\W_{p,0}\otimes \C) \oplus (\W_{0,q}\otimes\C)$. As in \S\ref{eH=-1_cpt}, we can take a basis $\{z_{ab}, w_{ab} \mid 1\leq a \leq p, 1 \leq b \leq n\}$ of $\L_{p,0}$, and a basis $\{z_{ab}, w_{ab}\mid p+1 \leq a \leq m, 1\leq b \leq n\}$ of $\L_{0,q}$. Denote by $\mathscr{D}_{p,0}$ (resp. $\mathscr{D}_{0,q}$) the set of the minors of either of the matrices
\[
(z_{ab})_{1 \leq a \leq p, 1 \leq b \leq n}, \ (w_{ab})_{1 \leq a \leq p, 1 \leq b \leq n} \quad (\mbox{resp. } (z_{ab})_{p+1 \leq a \leq m, 1 \leq b \leq n} \ (w_{ab})_{p+1 \leq a \leq m, 1 \leq b \leq n} ).
\]
For example, the polynomial $v(\underline{r})$ of \eqref{max_vct1} is contained in $\mathscr{D}_{p,0}$.
Let $v_{p,0} \in \Sym(\L_{p,0})$ (resp. $v_{0,q} \in \Sym(\L_{0,q}$) be a product of polynomials in $\mathscr{D}_{p,0}$ (resp. $\mathscr{D}_{0,q}$), and let $v_0$ be the polynomial in $\Sym(\L) = \Sym(\L_{p,0})\otimes \Sym(\L_{0,q})$ given by $v_{p,0}\otimes v_{0,q}$. Then, we can verify that $v_0$ lies in the space of joint harmonics as follows. For a Lie sub-algebra $\mathfrak{l}$ of $\fsp(\W)$, we denote by $\mathfrak{l}^{(2)}$ the set of $X \in \mathfrak{l}$ whose image $\cF_\psi(X)$ is belonging to the $\C$-subspace of $\Omega_\psi(\W\otimes\C)$ spanned by 
\[
\frac{\partial^2}{\partial z_{ab}\partial z_{cd}}, \quad \frac{\partial^2}{\partial z_{ab}\partial w_{cd}}, \quad \frac{\partial^2}{\partial w_{ab}\partial w_{cd}}
\]
for various $a,b,c,d$. 
Denote by $M_V$ the centralizer of $K_-$ in $\Sp(\W)$, and by $\mathfrak{m}_V$ its Lie algebra. For $X \in \mathfrak{m}_V$ and $x \in \H$, one can show that 
\[
X \cdot E_{ab}(x) = \sum_{c=1}^m E_{cb}(x_c)
\]
for some $x_1, \ldots, x_m \in \H$. Hence, an element of $\cF_\psi(\mathfrak{m}_V^{(2)})$ is of the form
\[
\sum_{a,b,c}t_{a,b,c}\cdot \frac{\partial^2}{\partial z_{ac}\partial z_{bc}} + u_{a,b,c} \cdot \frac{\partial^2}{\partial z_{ac}\partial w_{bc}} + v_{a,b,c}\cdot \frac{\partial^2}{\partial w_{ac}\partial w_{bc}}
\]
where $t_{a,b,c}, u_{a,b,c}, v_{a,b,c} \in \C$. This implies that $\cF_\psi(\mathfrak{m}_V^{(2)}) \cdot v_0 = 0$. Similary, one can show that $\cF_\psi(\mathfrak{m}_W)\cdot v_0=0$ where we denote by $M_W$ the centralizer of $K_+$ in $\Sp(\W)$, and by $\mathfrak{m}_W$ its Lie algebra. Hence, $v_0$ lies in the space of joint harmonics. By combining this with Proposition \ref{q=0_desc}, we have \eqref{JHlem3}.

Finally, we prove \eqref{JHlem4}. Assume $n=m$. By the definition of $\pi$, we have 
\[
\begin{cases}
\xi^{\sqrt{-1}}(\Psi_\sigma) = \Psi \cup \{2\beta_1, \ldots, 2\beta_n\} & (e_\H = 1), \\
\xi^{\sqrt{-1}}(\Psi_\sigma \setminus \{ 2\alpha_1, \ldots, 2\alpha_m \}) = \Psi & (e_\H =-1),
\end{cases}
\]
which implies $\xi^{\sqrt{-1}}(\rho(\Psi_\sigma)) = \rho(\Psi) + \underline{\epsilon}$. Here, $\underline{\epsilon} \in \Z^m$ is defined in \S\ref{choice_real}. Moreover, we have
\[
\xi^{\sqrt{-1}}(2\rho(\Delta_c^+) + e_\H\cdot (p-q) \cdot \underline{\epsilon}) = 2\rho(\Delta_c^-) + \underline{\epsilon}.
\]
Hence, we have \eqref{JHlem4}. Then, assume $e_\H = 1$ with $n= m + 1$ which equals to $p+q$. In this case, we have
\begin{align*}
&\xi_\btru^{\sqrt{-1}}(\Psi_\sigma) = \Psi \cup \{2\beta_k\}_{k\not=p} \setminus \{\beta_k \pm \beta_p\}_{k\not= p}, \\
&\xi_\btrd^{\sqrt{-1}}(\Psi_\sigma) =  \Psi \cup \{2\beta_k\}_{k\not=n} \setminus \{\beta_k \pm \beta_n\}_{k\not= n},
\end{align*}
which implies $\xi_\bullet^{\sqrt{-1}}(\rho(\Psi_\sigma)) = \rho(\Psi)$ for $\bullet = \btru$ or $\btrd$. Moreover, we have
\[
\xi_\bullet^{\sqrt{-1}}(2\rho(\Delta_c^+) + (p-q)\cdot \underline{1}_m) = 2\rho(\Delta_c^-) 
\]
where $\underline{1}_m = (1, \ldots, 1) \in \Z^m$. Hence, we have \eqref{JHlem4}. Then, we assume $n=m+1$ with $e_\H = -1$. In this case, we have
\[
\xi^{\sqrt{-1}}(\Psi_\sigma \setminus \{2\alpha_k\}_{k=1}^m) = \Psi \setminus \{\frac{p+1 -k}{|p+1-k|}\beta_k \pm \beta_{p+1}\}_{k\not= p+1} 
\]
which implies $\xi^{\sqrt{-1}}(\rho(\Psi_\sigma)) = \rho(\Psi)$. Moreover, we have
\[
\xi^{\sqrt{-1}}(2\rho(\Delta_c^+)) = 2\rho(\Delta_c^-) + (p-q)\cdot \underline{1}_n.
\]
Hence, we have \eqref{JHlem4} in all cases, and we complete the proof of Proposition \ref{JHlem}. 
\end{proof}

By Lemma \ref{JHlem} and \cite[Proposition 6.1]{VZ84}, we have the following:
\begin{cor}
If $\theta_\psi(\sigma, W) \not=0$, then we have $\xi_\bullet^{\sqrt{-1}}(\mu_\sigma, \Psi)\in \cY$ and the  Harish-Chandra parameter of $\theta_\psi(\sigma, W)$ is given by $\xi_\bullet^{\sqrt{-1}}(\mu_\sigma, \Psi)$.
\end{cor}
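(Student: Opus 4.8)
The plan is to assemble the corollary directly from the pieces that Lemma \ref{JHlem} has already put in place, together with the characterization of $A_{\mathfrak q}(\lambda)$-modules in \cite[Proposition 6.1]{VZ84}. First I would fix notation: write $(\mu_\sigma,\Psi_\sigma)\in\cX$ for the Harish-Chandra parameter of $\sigma$ and set $(\mu,\Psi)=\xi_\bullet^{\sqrt{-1}}(\mu_\sigma,\Psi_\sigma)$ as in the proof of Lemma \ref{JHlem}. Assume $\theta_\psi(\sigma,W)\neq0$; let $\pi_0$ be an irreducible component of $\theta_\psi(\sigma,W)\big|_{G_0(W)(\R)}$. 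The goal is to show $\pi_0$ is the (limit of) discrete series representation attached to $(\mu,\Psi)$, which forces $(\mu,\Psi)\in\cY$ and pins down the Harish-Chandra parameter.

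The key steps, in order: (1) By Lemma \ref{JHlem}\eqref{LKTfml} and the formula $\xi_{\bullet0}^{\sqrt{-1}}$ of Lemma \ref{JHlem}\eqref{JHlem3}, the $K_-$-type $\mu+\rho(\Psi)-2\rho(\Delta_c^-)$ occurs in $\theta_\psi(\sigma,W)$, and by Lemma \ref{JHlem}\eqref{LKT is min} it sits in minimal degree, so it is a lowest $K_-$-type of $\pi_0$; here I invoke the degree formula (c.f. \cite[Lemma 1.4.5]{Pau98}, \cite[Lemma 3.4]{LPTZ03}) exactly as in the Lemma's proof. (2) By \eqref{infin_corr}, i.e. \cite[Theorem 1.13]{Prz96}, the infinitesimal character of $\pi_0$ restricted to $\mathfrak z^{G(W)}$ equals $\chi[\mu]\big|_{\mathfrak z^{G(W)}}$; combined with the regularity/dominance inequalities defining membership in $\cY$, this identifies the candidate infinitesimal character with the one attached to $(\mu,\Psi)$. (3) Now apply \cite[Proposition 6.1]{VZ84}: a unitary $(\fg,K)$-module whose infinitesimal character is $\chi[\mu]$ and which contains the $K$-type $\mu+\rho(\Psi)-2\rho(\Delta_c^-)$ with the appropriate multiplicity is cohomologically induced, and when $\mu$ is dominant regular with respect to $\Psi$ it is exactly the discrete series (or limit of discrete series) $\pi(\mu,\Psi)$. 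This is the same mechanism Li uses in \cite[Proposition 6.1]{Li89}; I would cite both. (4) Conclude $\pi_0=\pi(\mu,\Psi)$, hence $(\mu,\Psi)=\xi_\bullet^{\sqrt{-1}}(\mu_\sigma,\Psi_\sigma)\in\cY$, and since $\pi_0$ was an arbitrary component and the Harish-Chandra parameter of $\theta_\psi(\sigma,W)$ is well-defined, $\theta_\psi(\sigma,W)$ has Harish-Chandra parameter $\xi_\bullet^{\sqrt{-1}}(\mu_\sigma,\Psi_\sigma)$.

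I expect the main obstacle to be step (3): verifying that the hypotheses of \cite[Proposition 6.1]{VZ84} are genuinely met — in particular that the lowest $K_-$-type occurs with multiplicity one in $\theta_\psi(\sigma,W)$ and that no other $K_-$-type of equal or smaller degree appears, so that the $A_{\mathfrak q}(\lambda)$-characterization applies without ambiguity — and in handling the boundary case where $\mu$ is only weakly dominant (the genuine limit-of-discrete-series situation), where one must be careful that the induced module is nonzero and irreducible. This is exactly where the coherent-continuation compatibility (Corollary \ref{coh_vs_theta}) and the $\cX,\cY$ bookkeeping of \S\ref{gen_rep} do the work: one reduces the limit case to the regular case by choosing a positive $\nu$, applies the regular statement, and then transports back. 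The remaining steps are essentially citations and the $K$-type bookkeeping already carried out in Lemma \ref{JHlem}, so I would keep them brief.
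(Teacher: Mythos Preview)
Your proposal is correct and follows essentially the same route as the paper: the corollary is deduced directly from Lemma \ref{JHlem} (lowest $K$-type identification via joint harmonics) together with the infinitesimal character correspondence \eqref{infin_corr} and the characterization \cite[Proposition 6.1]{VZ84}. The paper states this in one line; your elaboration of the steps and the caution about the limit-of-discrete-series boundary case are reasonable but not additional content beyond what the paper intends.
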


It remains to show that if $\xi_\bullet^{\sqrt{-1}}(\mu_\sigma, \Psi)\in \cY$ then $\theta_\psi(\sigma, W)$ is non-zero. We only discuss the $e_\H = -1$ case for simplicity. The parallel proof goes for $e_\H=1$ cases except that some replacements of symbols are necessary because not $V_{p,0}, V_{0,q}$ but $W_{p,0}, W_{0,q}$ are anisotropic. By the assumption, we can take an irreducible limit of discrete series representation $\pi$ of $G_0(W)(\R)$ so that its Harish-Chandra parameter is $(\mu, \Psi)$.  Let $\tau_1$ be an irreducible representation of $G(V_{p,0})(\R)$, and let $\tau_2$ be an irreducible representation of $G(V_{0,q})(\R)$ so that $\tau_1\boxtimes\tau_2$ is the lowest $K$-type of $\sigma$. Then, by Proposition \ref{q=0_desc}, we have both $\Theta_\psi(\tau_1, W)$ and $\Theta_\psi(\tau_2, W)$ are non-zero. Moreover, one can show the assertion \eqref{JHlem} of Lemma \ref{JHlem} although we do not assume $\theta_\psi(\sigma, W) \not=0$. This implies that the tensor product representation $\Theta_\psi(\tau_1, W) \otimes \Theta_\psi(\tau_2, W)$ of $G(W)(\R)$ has a $K$-type whose highest weight is $\mu + \rho(\Psi) - 2\rho(\Delta_c^-)$, and that every irreducible summand of  $\Theta_\psi(\tau_1, W) \otimes \Theta_\psi(\tau_2, W)$ has the infinitesimal character $\chi[\eta]$. Hence, by \cite[Proposition 6.1]{VZ84}, we have 
 \begin{align}\label{argli}
{\rm Hom}^{G(W)}(\Theta_\psi(\tau_1, V)\otimes \Theta_\psi(\tau_2, V), \pi) \not=0.
\end{align}
Since the left-hand side of \eqref{argli} coincides with ${\rm Hom}^{G(W_{0,p})\times G(W_{0,q})}(\Theta_\psi(\pi, V), \tau_1\boxtimes\tau_2)$, we have $\Theta_\psi(\pi, V)$ is non-zero. However, using \cite[Proposition 6.1]{VZ84} again, we have $\theta_\psi(\pi, V)$ is nothing other than $\sigma$. This implies that $\theta_\psi(\sigma, W)$ is non-zero.
Therefore, we finish the proof of Proposition \ref{DS_corr}.


\section{Appendix: Annotation on Fact \ref{pre_R} (ii)}\label{notes on psi}

The local theta correspondence for the dual pair $(G(V_{m,0}), G(-W_{p,q}))$ with $e_\H =1$ and either $p+q = m$ or $m+1$ has been also described by M{\oe}gline \cite{Moe89}. We remark again that $-W_{p,q}$ is a free left module over $\H$ and the signature of $-W_{p,q}^\natural$ is $(2p,2q)$ (see \S\ref{morita}). Moreover, Paul \cite{Pau05} extended it to all symplectic-orthogonal dual pairs of equal or almost equal ranks. However, there is an error in \cite[\S I.4]{Moe89} when quoting the result of \cite{KV78}. The author expects that \cite{Moe89} and \cite{Pau05} are valid if we change the choice of the non-trivial additive character $\psi$ of $\R$ so that $\epsilon_\psi = \sqrt{-1}$, but he have not verified it strictly. 
In the following, we will discuss this further. 

Recall that we put $K_+ = G(V_{m,0})(\R) \cap \GL(\R(i))$ and  $K_- = G(-W_{p,0})(\R) \times G(-W_{0,q})(\R)$. Then, $K_+$ (resp. $K_-$) is a maximal compact subgroup of $G(V_{m,0})(\R)$ (resp. $G(-W_{p,q})(\R)$) containing $S_+(\R)$ (resp. $S_-(\R)$). Let $\sigma_1$ be an irreducible representation of $G(-W_{p,0})(\R)$, let $\sigma_2$ be an irreducible representation of $G(-W_{0,q})(\R)$, let $(a_1, \ldots, a_k, 0, \ldots, 0) \in \Z^p$  be the highest weight of $\tau_1$, and let $(b_1, \ldots, b_l, 0, \ldots, 0) \in \Z^q$ be the highest weight of $\tau_2$. Denote by $\mathscr{K}$ the space of joint harmonics in the Fock model of the Weil representation $\omega_{\psi, \Y}$ of $\Mp(V_{m,0}\otimes (-W_{p,q}))$ where $\Y$ is a maximal isotropic subspace of $V_{m,0}\otimes(-W_{p,q})$ (see Remark \ref{natural_Weil}). M{\oe}gline, taking $\psi$ so that $\epsilon_\psi = -\sqrt{-1}$, asserted the following (\cite[pp.~9]{Moe89}).
\begin{enumerate}[(i)]
\item Let $\tau$ be an irreducible representation of $K_+$. If $\tau\boxtimes(\sigma_1\boxtimes \sigma_2))$ appears in $\mathscr{K}$, then we have $\tau \subset \tau_p'(\sigma_1)\otimes\tau_q(\sigma_2)$. \label{Moe1.4}
\end{enumerate}
However, this is not consistent with Fact \ref{KVfact}. One can verify this in the simplest case $q=0$. 
To resolve this error, we replace the choice of the additive character $\psi$: one can show that the assertion \eqref{Moe1.4} is true if we take $\psi$ so that $\epsilon_\psi = \sqrt{-1}$. Since the argument of the latter part (pp.~10--11) of \cite[\S I.4]{Moe89} do not use $\psi$, we have \cite[Corollary, \S I.4]{Moe89} by replacing $\psi$ so that $\epsilon_\psi = \sqrt{-1}$.


\section{Appendix: Annotation on Fact \ref{mez_fml}} \label{spec_correction}

In the case $F=\R$, assuming the twisted version of Hypothesis \ref{Flem}, Mezo proved the twisted endoscopic character relation by constructing the spectral transfer factors, that is, the coefficients of the trace distributions associated with the irreducible representations in given $L$-packet \cite{Mez13}\cite{Mez16}. In this paper, we use the construction to obtain Langlands parameters from Harish-Chandra parameters. However, there is a sign error in the computations expanding $\Delta_{II}$. In this appendix, we point out the sign error (\S\ref{sign_error}), summarize the updates of the transfer factors (\S\ref{update_tf}), and prove Fact \ref{mez_fml}. 

\subsection{A sign error} \label{sign_error}

Let $G$ be an arbitrary connected reductive group over $\R$. 
We use the notations and terminologies of \cite{Mez13}. In particular,  we choose $a$- and $\chi$- data in the same way as in \cite{Mez13}. We put $\gamma' = \eta_1(x)\gamma_1$ and $\delta' = x\delta$. In \cite[(76)]{Mez13}, the second factor $\Delta_{II}(\gamma', \delta')$ is computed by
\[
\sqrt{-1}^{\dim \mathfrak{u}_{(G^*)^{\theta^*}} - \dim \mathfrak{u}_H}\cdot \frac{\det(1- \Ad \gamma'; \overline{u}_H) \cdot |\det(1 - \Ad {\delta'}^*\theta^*; \overline{u}_{G^*})| }{\det(1 - \Ad {\delta'}^*\theta^*; \overline{u}_{G^*})\cdot |\det(1- \Ad \gamma'; \overline{u}_H)|}.
\]
However, it should be replaced with 
\begin{align}\label{DeltaI}
&(-\sqrt{-1})^{\dim \mathfrak{u}_{(G^*)^{\theta^*}} - \dim \mathfrak{u}_H}\cdot \frac{\det(1- \Ad \gamma'; \overline{u}_H) \cdot |\det(1 - \Ad {\delta'}^*\theta^*; \overline{u}_{G^*})| }{\det(1 - \Ad {\delta'}^*\theta^*; \overline{u}_{G^*})\cdot |\det(1- \Ad \gamma'; \overline{u}_H)|}  \\
&\times  \prod_{\alpha_{res} < 0} \chi_\alpha (N(\alpha({\delta'}^*))). \notag
\end{align}

\subsection{A note on geometric transfer factors}\label{update_tf}
We only discuss the theory of standard endoscopies (i.e. $\theta = \Id$). In this case Langlands and Shelstad \cite{LS87} gave a definition of the relative or absolute geometric transfer factor $\Delta_0$ by
\[
\Delta = \Delta_I\Delta_{II}\Delta_{III}\Delta_{IV}.
\]
Here, $\Delta_I, \Delta_{II}, \Delta_{IV}$ are the factors defined in \cite{LS87}, and we put $\Delta_{III} = \Delta_{III_1}\Delta_{III_2}$ for simplicity. The twisted version was also defined in \cite{KS99}. In \cite{Mez13}, Mezo used this definition. However, some errors were pointed out by Waldspurger, and Kottwitz and Shelstad updated the definition of the geometric transfer factors \cite{KS12}. One of the modified definitions is  
\[
\Delta_I^{-1}\Delta_{II}\Delta_{III}^{-1}\Delta_{IV}
\]
which we will denote by $\Delta_0'$. Here, the definition of $\Delta_I$ is also modified in \cite{KS12}. 
Kaletha's transfer factor $\Delta'$ which we use in this paper to define the Langlands parameter is an appropriate normalization of $\Delta_0'$.

Now, we observe the quotient $\Delta_0'/\Delta$ when $G$ is a quasi-split connected reductive group over $\R$. The modified version of $\Delta_I$ in \cite{KS12} coincides with the original $\Delta_I$ in \cite{LS87} and \cite{KS99} if the base field is $\R$. Moreover, we have $\Delta_I^{-1} = \Delta_I$ in this case. Hence, we have
\begin{align} \label{Delta'}
(\Delta_0' / \Delta)(\gamma, \delta) &= (\Delta_{III}(\gamma, \delta))^{-2} \notag\\
&=\langle (\delta^*, \gamma_1), a_{T'} \rangle^2 \notag\\ 
&= \prod_{\alpha_{res} < 0} \chi_\alpha(N(\alpha({\delta}^*)))^{-1}. 
\end{align}

\subsection{The proof of Fact \ref{mez_fml}}

In this subsection, we assume that $G=G_0(V)$. Put $G^\# = G_0(V_c^\#)$ and take $(z, \varphi) \in \RIT^*(V_c^\#, V)$. According to the character identity \cite[(60)]{Mez13}, the value of the parameter $\iota_\psi[\fa, z, \varphi](\pi)(s)$ is the product of the Kottwitz sign $e(G)$ and the spectral transfer factor $\Delta_{spec}(\phi_{H_1}, \pi)$ that is computed explicitly from the geometric transfer factors \cite[pp.~59]{Mez13}. 
Now, we consider the setting of \ref{mez_fml}. In particular, $\theta = \Id$. Since the center of $G$ is anisotropic, we have $n_\theta = 1$ (for the definition of $n_\theta$, see \cite[pp.~56]{Mez13}) and we have \cite[(115)]{Mez13} is $1$.  Since the Kottwitz sign $e(G)$ is given by $(-1)^{q_G - q_{G^\#}}$ (\cite{Kot83}), we have 
\[
\frac{\sgn(H)}{(-1)^{q^{\sqrt{-1}\mu}}} = (-1)^{q_H - q_G} = e(G)\cdot (-1)^{(q_H - q_{G^\#})}
\]
where $q_H, q_{G^\#}$, and $q_G$ are the symbols as in \S\ref{HCLpar}. Finally, we have $\dim \mathfrak{u}_H = \# \Delta_{B_H}$, and $\dim \mathfrak{u}_G = \#\Delta_B$. Therefore, by taking \S\S\ref{sign_error}--\ref{update_tf} into account, we have   
\begin{align}\label{new_fml}
\iota_\phi[\fa, z, \varphi](\pi)(s) &= (-1)^{q_H - q_{G^\#}}\cdot(-\sqrt{-1})^{\dim \mathfrak{u}_{G} - \dim \mathfrak{u}_H} \notag \\ 
&\quad \times \epsilon_L(G, H ; \psi)\cdot \Delta_I(\gamma_1, \delta_g)\cdot \la \inv_z^\varphi(\delta_g, \delta_h), (\Ad g)^\wedge(s) \ra.
\end{align}
We remark here that the products of the values of the $\chi$-data appearing in \eqref{DeltaI} and \eqref{Delta'} cancel each other.


\bibliographystyle{alpha}
\bibliography{lrc.bib}

\end{document}